\theoremstyle{plain}
\newtheorem{thm}{Theorem}[section]
\newtheorem{lem}[thm]{Lemma}
\newtheorem{prop}[thm]{Proposition}
\newtheorem{cor}[thm]{Corollary}
\theoremstyle{definition}
\newtheorem{defn}[thm]{Definition}
\newtheorem{remark}[thm]{Remark}
\newtheorem{exmp}[thm]{Example}
\newcommand{\Rmnum}[1]{\expandafter\@slowromancap\romannumeral #1@}
\numberwithin{equation}{section}
\begin{document}
\title{Inradius collapsed manifolds with a lower Ricci curvature bound}
\author{Zhangkai Huang\thanks{School of Mathematics, Sun Yat-sen University, China. Email: \url{huangzhk27@mail.sysu.edu.cn}}, Takao Yamaguchi\thanks{Department of Mathematics, University of Tsukuba, Japan. Email: \url{ takao@math.tsukuba.ac.jp}}}

\maketitle
\begin{abstract}
  In this paper, we study a family of $n$-dimensional Riemannian manifolds with boundary {having lower bounds on the Ricci curvatures of interior and boundary and on the second fundamental form of boundary.} A sequence of manifolds in this family is said to be inradius collapsed if their inradii tend to zero. We prove that the limit space $C_0$ of boundaries of inradius collapsed manifolds admits an isometric involution $f$, and that the limit of the manifolds themselves is isometric to the quotient space $C_0/f$. As an application, we show that the number of boundary components of inradius collapsed manifolds is at most two. Moreover, we prove that the limit space has a lower Ricci curvature bound and an upper dimension bound in a synthetic sense if in addition their boundaries are non-collapsed. 
\end{abstract}

\tableofcontents

\section{Introduction}

The convergence of Riemannian manifolds with boundary was first studied by Kodani in \cite{K90}, and later {by Anderson-Katsuda-Kurylev-Lassas-Taylor in \cite{AKKLT04} and} by Wong in \cite{W10}. In this paper, we are concerned with collapsing phenomena of {a certain family} of Riemannian manifolds with boundary.


By collapsing phenomena, we refer specifically to inradius collapsing. Here, we study Riemannian manifolds with small inradii, defined as follows.

\begin{defn}[Inradius collapsed manifolds]
Let $(M,\mathrm{g})$ be an $n$-dimensional Riemannian manifold with boundary. The \textit{inradius} of $M$ is defined as 
\[
\text{inrad}(M)=\sup_{x\in M}\mathsf{d}_\mathrm{g}(x,\partial M),
\]
 {where $\mathsf{d}_\mathrm{g}$ is the {Riemannian distance function} induced by $\mathrm{g}$.} A sequence of $n$-dimensional Riemannian manifolds {$\{(M_i,\mathrm{g}_i)\}$} is said to be \textit{inradius collapsed} if $\mathrm{inrad}(M_i)\rightarrow 0$.

\end{defn} 

The study of {manifolds with small inradii} is originated in 
{\cite{G78} and} \cite{AB98}. {In 2019}, The second author and Zhang \cite{YZ19} investigated a sequence of $n$-dimensional inradius collapsed manifolds $\{(M_i,\mathrm{g}_i)\}_{i=1}^\infty$ {{with a lower sectional
	curvature bound, two side bounds on the second fundamental forms of boundaries, i.e.}
 \begin{align}\label{jfieajifoa}
      \text{Sec}_{\mathrm{g}_i} \geqslant H,\   |\text{\Rmnum{2}}_{\partial M_i}|\leqslant \lambda.
      \end
{align}
{Wong \cite{W08} proved the precompactness of this family in the pointed Gromov-Hausdorff topology; namely, any sequence of pointed $n$-dimensional Riemannian manifolds with boundary $\{(M_i,\mathsf{d}_{\mathrm{g}_i},p_i)\}$ satisfying (\ref{jfieajifoa}) has a convergent subsequence in the pointed Gromov-Hausdorff sense.}

Under {(\ref{jfieajifoa})}, the Gauss equation implies a uniform lower sectional curvature bound $K=K(H,\lambda) $} {{on} $\partial M_i$}. Analyzing the {pointed} Gromov-Hausdorff limit space $(X,\mathsf{d},{x_0})$ of $\left\{\left(M_i,\mathsf{d}_{\mathrm{g}_i},{p_i}\right)\right\}$, {the second author and Zhang} proved that $(X,\mathsf{d})$ is an Alexandrov space with curvature bounded from below by  $K$. Moreover, for such manifolds with sufficiently small inradii, they showed that the {numbers} of components of their boundaries {are} at most 2.


In the present paper we consider the following 
family of Riemannian manifolds with boundary. {Note that the case $n=2$ is already settled in \cite{YZ19}.}
 
\begin{defn}\label{defn:moduli}
For {arbitrary} $n\in \mathbb{N}\cap [{3},\infty)$, $H,K\in\mathbb{R}$, $\lambda\geqslant 0$ and $D>0$, we denote by $\mathcal{M}(n,H,K,\lambda,D)$ the family of $n$-dimensional compact Riemannian manifolds $(M,\mathrm{g})$ with boundary such that  
      \begin{align}\label{1.1}
      \text{Ric}_\mathrm{g} \geqslant H,\ \text{Ric}_{\mathrm{g}_{\partial M}} \geqslant K,\  {\text{\Rmnum{2}}_{\partial M}\geqslant -\lambda}, \,\mathrm{diam}(M,\mathsf{d}_\mathrm{g})\leqslant D,
      \end{align}
     {where $\text{Ric}_\mathrm{g}$ {and $\text{Ric}_{\mathrm{g}_{\partial M}}$} denotes the Ricci curvature of $(M,\mathrm{g})$ {and $(\partial M,\mathrm{g}_{\partial M})$} respectively.} 
\end{defn}

Unlike the case (\ref{jfieajifoa}), the Gauss equation does not imply
a lower Ricci curvature bound of $\partial M$ even under the assumption $\text{Ric}_\mathrm{g} \geqslant H$ and $|\text{\Rmnum{2}}_{\partial M}|\leqslant \lambda$ ({see {Remark \ref{rmk2.20}} and Example \ref{ex5.6}}). This raises the question of whether the family of {boundaries of manifolds $(M,\mathrm{g})$ satisfying 
  $\text{Ric}_\mathrm{g} \geqslant H$,
$|\text{\Rmnum{2}}_{\partial M}|\leqslant \lambda$ and $\mathrm{diam}(M,\mathsf{d}_\mathrm{g})\leqslant D$
is precompact in the Gromov-Hausdorff topology}. To address this, we introduce the additional condition
$\text{Ric}_{\mathrm{g}_{\partial M}} \geqslant K$ in 
\eqref{1.1}. {With this assumption, the assumption on the second fundamental forms can be refined to $\text{\Rmnum{2}}_{\partial M}\geqslant -\lambda$.} {Moreover, by adapting Wong's proof, we demonstrate that the family $\mathcal{M}(n,H,K,\lambda,D)$ satisfies the results in \cite[Theorem 1.9]{W08} (see also Theorem \ref{thm2.3}).}


{Our first result {is} the following theorem for {limit spaces of }inradius collapsed manifolds in $\mathcal{M}(n,H,K,\lambda{,D})$.}
{\begin{thm}[Theorems \ref{thmfffff4.5} and \ref{thm4.10}]\label{aaathm1.3}
	Assume $\{(M_i,\mathrm{g}_i)\}$ is a sequence of inradius collapsed manifolds in $\mathcal{M}(n,H,K,\lambda,{D})$. Let $(C_0,\mathsf{d})$ and $(X,\mathsf{d}_X)$ be the Gromov-Hausdorff limit space{s} of $\{(\partial M_i,\mathsf{d}_{\mathrm{g}_{\partial M_i}})\}$ and $\{(M_i,\mathsf{d}_{\mathrm{g}_i})\}$ respectively. Then there exists an isometric involution $f:C_0
	\to C_0$ such that $(X,\mathsf{d}_X)$ is isometric {to} the quotient space $(C_0/f,\mathsf{d}^\ast)$, where $\mathsf{d}^\ast$ denotes the quotient metric induced by $f$.
\end{thm}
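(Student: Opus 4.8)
The plan is to produce the involution $f$ from a family of ``reflection across the cut locus'' maps on the boundaries $\partial M_i$, to produce the isometry $X\cong C_0/f$ from a comparison of the intrinsic and extrinsic boundary metrics, and to carry both to the Gromov--Hausdorff limit. First I would record the two convergences: by the precompactness in Theorem~\ref{thm2.3}, after passing to a subsequence (harmless, since the conclusion for every convergent subsequence gives the conclusion) we may assume $(\partial M_i,\mathsf{d}_{\mathrm{g}_{\partial M_i}})$ converges to $(C_0,\mathsf{d})$ and $(M_i,\mathsf{d}_{\mathrm{g}_i})$ to $(X,\mathsf{d}_X)$ in the Gromov--Hausdorff sense. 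Because $\mathrm{inrad}(M_i)\to 0$, the subset $\partial M_i$ is an $\mathrm{inrad}(M_i)$-net in $(M_i,\mathsf{d}_{\mathrm{g}_i})$, so $(\partial M_i,\mathsf{d}_{\mathrm{g}_i}|_{\partial M_i})$ also converges to $(X,\mathsf{d}_X)$. Since $\mathsf{d}_{\mathrm{g}_i}|_{\partial M_i}\leqslant\mathsf{d}_{\mathrm{g}_{\partial M_i}}$, the identity maps $(\partial M_i,\mathsf{d}_{\mathrm{g}_{\partial M_i}})\to(\partial M_i,\mathsf{d}_{\mathrm{g}_i}|_{\partial M_i})$ are $1$-Lipschitz bijections, and passing to the limit along fixed Hausdorff approximations (a diagonal argument over a countable dense set) yields a surjective $1$-Lipschitz map $\Phi\colon C_0\to X$ characterised by $\mathsf{d}_X(\Phi(c_1),\Phi(c_2))=\lim_i\mathsf{d}_{\mathrm{g}_i}(p_i,q_i)$ whenever $p_i\to c_1$ and $q_i\to c_2$ in $(\partial M_i,\mathsf{d}_{\mathrm{g}_{\partial M_i}})$.

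Next I would build, for each $i$ and on the full-measure set of points at which the relevant normal geodesic is regular, a reflection map $f_i\colon\partial M_i\to\partial M_i$: for $p\in\partial M_i$ follow the inward unit normal geodesic $\gamma_p$, which minimizes the distance to $\partial M_i$ up to its cut time $\tau_i(p)\leqslant\mathrm{inrad}(M_i)$; then $f_i(p)$ is the foot point of the second minimizing normal geodesic reaching the cut point $\gamma_p(\tau_i(p))$, and one checks $\tau_i(f_i(p))=\tau_i(p)$ and $f_i\circ f_i=\mathrm{id}$ on its domain. The local structure of $M_i$ near $\partial M_i$ supplied by Theorem~\ref{thm2.3} (the normal exponential map, the behaviour of the cut/focal locus, and Heintze--Karcher-type volume control associated with $\mathrm{Ric}_{\mathrm{g}_i}\geqslant H$ and $\text{\Rmnum{2}}_{\partial M_i}\geqslant-\lambda$) should yield, with $\psi=\psi(\mathrm{inrad}(M_i))$ depending only on $n,H,K,\lambda,D$ and tending to $0$ with $\mathrm{inrad}(M_i)$: (a) $\mathsf{d}_{\mathrm{g}_i}(p,f_i(p))\leqslant 2\,\mathrm{inrad}(M_i)$; (b) $f_i$ is asymptotically isometric for $\mathsf{d}_{\mathrm{g}_{\partial M_i}}$, i.e.\ $|\mathsf{d}_{\mathrm{g}_{\partial M_i}}(f_i(p),f_i(q))-\mathsf{d}_{\mathrm{g}_{\partial M_i}}(p,q)|\leqslant\psi$; and (c) the comparison estimate
\[
\Bigl|\,\mathsf{d}_{\mathrm{g}_i}(p,q)-\min\bigl\{\mathsf{d}_{\mathrm{g}_{\partial M_i}}(p,q),\ \mathsf{d}_{\mathrm{g}_{\partial M_i}}(p,f_i(q))\bigr\}\,\Bigr|\leqslant\psi
\]
for all $p,q\in\partial M_i$. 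In (c) the inequality ``$\leqslant$'' is elementary (a boundary path is an ambient path and $f_i(q)$ is $2\,\mathrm{inrad}(M_i)$-close to $q$ in $M_i$); for ``$\geqslant$'' I would take a minimizing ambient geodesic from $p$ to $q$, cut it at its generically finitely many, transversal crossings of the cut locus of $\partial M_i$, project each arc to $\partial M_i$ by nearest-point projection, and concatenate: at each crossing the foot point is replaced by its $f_i$-partner, so the resulting boundary curve runs from $p$ to $f_i^{\,k}(q)$ with $f_i^{\,k}(q)\in\{q,f_i(q)\}$ up to error $\psi$ by (a), (b).

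Finally I would pass to the limit. By (a), $p_i\to c$ implies that $f_i(p_i)$ and $p_i$ have the same $\Phi$-image, and by (b) the $f_i$ are asymptotically equicontinuous, so after a further subsequence $f_i$ converges to a (single-valued) map $f\colon C_0\to C_0$, which is then an isometric involution with $\Phi\circ f=\Phi$. Taking limits in (c) gives, for all $c_1,c_2\in C_0$,
\[
\mathsf{d}_X\bigl(\Phi(c_1),\Phi(c_2)\bigr)=\min\bigl\{\mathsf{d}(c_1,c_2),\ \mathsf{d}(c_1,f(c_2))\bigr\}.
\]
In particular $\Phi(c_1)=\Phi(c_2)$ forces $c_2\in\{c_1,f(c_1)\}$, so $\Phi$ is at most two-to-one and $f$ is precisely the exchange of the (at most two) points of a fibre; the right-hand side of the displayed identity is exactly the quotient metric $\mathsf{d}^\ast$ on $C_0/f$, so $\Phi$ descends to an isometry $(C_0/f,\mathsf{d}^\ast)\to(X,\mathsf{d}_X)$, which is the claim.

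The main obstacle is the ``$\geqslant$'' direction of (c) together with the asymptotic isometry (b): concretely, the statement that in the limit $\partial M_i$ has at most two local sheets inside $M_i$ and that the boundary projection of a minimizing ambient geodesic can be taken with length distortion tending to $1$. In contrast to the sectional-curvature setting of \cite{YZ19}, a lower Ricci bound together with $\text{\Rmnum{2}}_{\partial M_i}\geqslant-\lambda$ gives \emph{no} uniform lower bound on the normal injectivity radius of $\partial M_i$ — focal points may accumulate on $\partial M_i$ (cf.\ Example~\ref{ex5.6}) — so the normal exponential map is not a uniform diffeomorphism and the limit spaces carry no Alexandrov structure; one must instead control the cut locus and the projection using only the Heintze--Karcher volume estimate and the one-sided bound $\text{\Rmnum{2}}_{\partial M_i}\geqslant-\lambda$ from Theorem~\ref{thm2.3}, and rule out three or more mutually extrinsically-close but intrinsically-separated sheets of $\partial M_i$ by a rescaling and contradiction argument.
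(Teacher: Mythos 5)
Your overall skeleton (a $1$-Lipschitz limit map $\Phi\colon C_0\to X$, an involution exchanging the at most two points of each fibre, and the identity $\mathsf{d}_X(\Phi(c_1),\Phi(c_2))=\min\{\mathsf{d}(c_1,c_2),\mathsf{d}(c_1,f(c_2))\}$) matches what the theorem asserts, but the proposal has a genuine gap: items (b) and the ``$\geqslant$'' half of (c), together with the well-definedness, involutivity and at-most-two-sheet property of the reflection maps $f_i$, \emph{are} the theorem, and the mechanisms you invoke do not deliver them. Under only $\mathrm{Ric}_{\mathrm{g}_i}\geqslant H$ and a one-sided bound on the second fundamental form there is no control of the cut/focal locus of $\partial M_i$: the ``second minimizing normal geodesic'' need not exist or be unique, $f_i$ need not be single-valued or equicontinuous, and the nearest-point projection of an ambient minimizing geodesic onto $\partial M_i$ has no length-distortion bound, so the cut-and-project argument for (c) does not close. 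Heintze--Karcher-type estimates control tube volumes, not distance distortion of a reflection map nor the closeness of the intrinsic and extrinsic boundary metrics; and the concluding ``rescaling and contradiction'' to exclude three sheets is exactly where the sectional-curvature proof of \cite{YZ19} used Alexandrov geometry, for which you offer no Ricci substitute. You acknowledge this obstacle yourself, which means the core of the argument is asserted rather than proved.

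The paper's route supplies precisely the missing structure, and it is genuinely different from yours: it never works with cut loci inside $M_i$. Instead it glues the warped cylinder $C_{M_i}=\partial M_i\times_\phi[0,t_0]$ along the boundary (Section \ref{sec2.2}, Theorem \ref{thm2.1}), so the limit $(Y,\mathsf{d}_Y,\mathfrak{m}_Y)$ of the glued spaces is an RCD space; the bound $\#\eta_0^{-1}(x)\leqslant 2$ then follows from geodesic non-branching of RCD spaces (Theorem \ref{thm2.17}) applied to concatenated perpendiculars into the cylinder region (Lemma \ref{lem3.8}), and the almost-intrinsic estimates you want in (c) come from second-variation-type control in the cylinder (Proposition \ref{prop3.5}, Theorem \ref{thm3.7}). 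The hardest part, that $f$ is an isometry and that $\mathsf{d}_X$ is the quotient metric, is proved only at the limit level by blow-up analysis: the tangent map $\eta_\infty$ preserves distance to the origin (Proposition \ref{prop4.3}), established via the Busemann function on $T_xY$ (Lemma \ref{Busemann}), the Cheeger--Colding splitting theorem and non-branching, and is then integrated along geodesics by the open-closed argument in Theorems \ref{thmfffff4.5} and \ref{thm4.10}. Without an ingredient playing the role of the cylinder gluing plus RCD non-branching (or some other replacement for the absent Alexandrov structure), your uniform manifold-level estimates (b), (c) remain unproven, so the proposal does not constitute a proof.
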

 {As an application, we get} the following result.} 
\begin{thm}\label{thm1.3}
  There exists a positive number $\epsilon=\epsilon(n,H,K,\lambda,D)$ {such} that if $(M,\mathrm{g})\in \mathcal{M}(n,H,K,\lambda,D)$ {satisfies} $\mathrm{inrad}(M)<\epsilon$, then the number of components of $\partial M$ is at most 2.
\end{thm}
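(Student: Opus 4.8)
The plan is to argue by contradiction, with Theorem~\ref{aaathm1.3} doing the essential work. Suppose no such $\epsilon$ existed. Then there would be a sequence $(M_i,\mathrm g_i)\in\mathcal M(n,H,K,\lambda,D)$ with $\mathrm{inrad}(M_i)\to 0$ and with $\partial M_i$ having at least three components for every $i$. By the precompactness result for the family (Theorem~\ref{thm2.3}), after passing to a subsequence we may assume that $(\partial M_i,\mathsf d_{\mathrm g_{\partial M_i}})\to(C_0,\mathsf d)$ and $(M_i,\mathsf d_{\mathrm g_i})\to(X,\mathsf d_X)$ in the Gromov--Hausdorff sense; the same theorem also provides a uniform bound on the number of components of $\partial M_i$ and a uniform bound on the intrinsic diameter of each component, so after a further subsequence the number of components is a fixed integer $k\geqslant 3$ and each component $\Sigma_i^{(1)},\dots,\Sigma_i^{(k)}$ of $\partial M_i$ converges to a compact metric space $C_0^{(j)}$.

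Next I would record two structural facts. First, since the intrinsic distance in $\partial M_i$ between two distinct boundary components is $+\infty$, the limit pieces $C_0^{(1)},\dots,C_0^{(k)}$ lie at pairwise infinite distance, so $C_0=\bigsqcup_{j=1}^{k}C_0^{(j)}$ has exactly $k$ connected components (a component that collapses to a point still contributes one component). Second, each $M_i$ is connected and has diameter at most $D$, so its Gromov--Hausdorff limit $X$ is a connected compact space. By Theorem~\ref{aaathm1.3} there is an isometric involution $f\colon C_0\to C_0$ with $(X,\mathsf d_X)$ isometric to $(C_0/f,\mathsf d^\ast)$.

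The conclusion then comes from counting orbits of the $\mathbb Z_2$--action generated by $f$. As an isometry, $f$ permutes the components $C_0^{(1)},\dots,C_0^{(k)}$, and as an involution this permutation is a product of $t$ disjoint transpositions together with $s$ fixed components, so that $k=2t+s$. Two components $C_0^{(j)}$ and $C_0^{(l)}$ have intersecting images in $C_0/f$ exactly when $f$ interchanges them, so the number of connected components of $C_0/f$ equals the number of $\langle f\rangle$--orbits on $\{C_0^{(1)},\dots,C_0^{(k)}\}$, namely $t+s$. Since $X\cong C_0/f$ is connected we get $t+s=1$, which forces $(t,s)\in\{(0,1),(1,0)\}$ and hence $k\leqslant 2$, contradicting $k\geqslant 3$. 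This produces the desired $\epsilon=\epsilon(n,H,K,\lambda,D)$.

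I expect the main obstacle to be the bookkeeping around the components of the limit space: one must be sure that the Gromov--Hausdorff limit $C_0$ of the disconnected spaces $(\partial M_i,\mathsf d_{\mathrm g_{\partial M_i}})$ really retains exactly $k$ components — that distinct boundary components are not identified in the limit (immediate from the infinite intrinsic distance between them) and that none of them disappears (which uses the uniform bound on the number of components and the uniform bound on their intrinsic diameters from Theorem~\ref{thm2.3}) — and that the convention under which Gromov--Hausdorff convergence is taken for such possibly disconnected spaces is compatible with the isometry $X\cong C_0/f$ of Theorem~\ref{aaathm1.3}. Once this point is settled, the remainder of the argument is purely combinatorial.
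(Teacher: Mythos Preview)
Your proposal is correct and follows essentially the same approach as the paper: a contradiction argument combined with Theorem~\ref{aaathm1.3}. The paper packages the key step as Proposition~\ref{prop3.15} (that $C_0$ has at most two components) and declares Theorem~\ref{thm1.3} a direct consequence. The only cosmetic difference is in how the ``$\leqslant 2$ components'' conclusion is extracted: the paper uses the quotient metric formula $\mathsf d^\ast(\eta_0(p),\eta_0(q))=\min\{\mathsf d(p,q),\mathsf d(p,f(q))\}$ together with the diameter bound $D$ directly (splitting into the cases $C_0^1\neq\emptyset$ and $C_0^1=\emptyset$), whereas you phrase it as a pure orbit-counting argument using only that $X$ is connected. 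These are equivalent, and your bookkeeping remarks about components neither merging nor vanishing are exactly the points covered by the paper's component-wise convention stated at the start of Section~\ref{sec3}.
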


In 1990's, Cheeger-Colding carried out pioneering and significant research {programs} \cite{CC96,ChCo1,ChCo2,ChCo3} on Ricci limit spaces---the Gromov-Hausdorff limit spaces of Riemannian manifolds with {a} fixed dimension and a uniform lower Ricci curvature bound. 

At the beginning of 21st century, Sturm \cite{St06a, St06b} and Lott-Villani \cite{LV09} introduced a synthetic definition of lower Ricci curvature bounds $K\in \mathbb{R}$ and upper dimension bounds $N\in [1,\infty)$ for general metric measure spaces, known as the CD$(K,N)$ condition. Building on this, Ambrosio-Mondino-Savar\'{e} \cite{AGS14a}, Gigli \cite{G13,G15},  Erbar-Kuwada-Sturm \cite{EKS15} and Ambrosio-Mondino-Savar\'{e} \cite{AMS19} introduced the RCD$(K,N)$ condition, which strengthens the CD$(K,N)$ condition by requiring the metric measure space to admit a Riemannian structure {in the sense} that its $H^{1,2}$-Sobolev space is a Hilbert space.

As an example of RCD$(K,N)$ spaces, consider weighted Riemannian manifolds. For an $n$-dimensional {closed} weighted Riemannian manifold $(M,\mathsf{d}_\mathrm{g},e^{-f}\mathrm{vol}_\mathrm{g})$ with $f\in C^\infty(M)$, the Bakry-\'{E}mery $N$-Ricci curvature tensor $\mathrm{Ric}_N$ {is} defined as
\[
\mathrm{Ric}_N:=
\left\{\begin{array}{ll}
\mathrm{Ric}_\mathrm{g}+\mathrm{Hess}_\mathrm{g}(f)-\frac{df\otimes df}{N-n}&\text{if}\ N>n,\\
\mathrm{Ric}_\mathrm{g}& \text{if $N=n$ and $f$ is a constant},\\
-\infty&\text{otherwise}.
\end{array}\right.
\]
It is known that if $\mathrm{Ric}_N\geqslant K$, then $(M,\mathsf{d}_\mathrm{g},e^{-f}\mathrm{vol}_\mathrm{g})$ is an RCD$(K,N)$ space.

{Recently, N{\'u}{\~n}ez-Zimbr{\'o}n-Pasqualetto-Soultanis proved in \cite[Corollary 1.4]{NPS25} that for every RCD$(K,N)$ space $(X,\mathsf{d},\mathfrak{m})$, the metric space $(X,\mathsf{d})$ is universally infinitesimally Hilbertian in the following sense.
\begin{defn}
	A complete metric space $(X,\mathsf{d})$ is said to be \textit{universally infinitesimally Hilbertian} if $H^{1,2}({ \mathrm{supp}(\mu)},\mathsf{d},\mu)$ is a Hilbert space for any Radon measure $\mu$ on $X$, { where $\mathrm{supp}(\mu)$ is the support of $\mu$.}
	\end{defn}

We summarize the geometric properties of the limit space of inradius collapsed manifolds in $\mathcal{M}(n,H,K,\lambda,D)$ as follows.

\begin{thm}[Theorems \ref{thm4.10} and \ref{thm4.30}]\label{thm1.6}
	Under the assumption of Theorem \ref{aaathm1.3}, the metric space $(X,\mathsf{d}_X)$ is geodesically non-branching and universally infinitesimally Hilbertian.
\end{thm}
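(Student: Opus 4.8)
The plan is to deduce both properties of $(X,\mathsf{d}_X)$ from the structural identification in Theorem \ref{aaathm1.3}, namely that $X$ is isometric to the quotient $C_0/f$ where $C_0$ is the Gromov--Hausdorff limit of the boundaries $(\partial M_i, \mathsf{d}_{\mathrm{g}_{\partial M_i}})$ and $f$ is an isometric involution. The key auxiliary input is that $C_0$ is itself a nice Ricci-limit-type space: since $\mathrm{Ric}_{\mathrm{g}_{\partial M_i}}\geqslant K$ and $\mathrm{diam}(\partial M_i)$ is uniformly bounded (by $D$, up to the inradius-collapse geometry), $C_0$ is a Gromov--Hausdorff limit of closed $(n-1)$-manifolds with a uniform lower Ricci bound, hence — after passing to a subsequence so that the renormalized volume measures converge to some limit measure $\mathfrak{m}_{C_0}$ — the metric measure space $(C_0,\mathsf{d},\mathfrak{m}_{C_0})$ is $\mathrm{RCD}(K,n-1)$. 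In particular, by \cite[Corollary 1.4]{NPS25}, $(C_0,\mathsf{d})$ is universally infinitesimally Hilbertian, and, being an $\mathrm{RCD}$ space, it is geodesically non-branching.

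First I would establish geodesic non-branching of $X$. Geodesics in the quotient $C_0/f$ lift locally to geodesics in $C_0$: away from the fixed-point set of $f$ the quotient map is a local isometry, and along a geodesic the set of parameters mapping into the fixed-point set is closed with no interior (two consecutive fixed-point hits would force the geodesic to be a reflection and still lift), so a branching geodesic in $X$ would produce, after lifting, a branching configuration of geodesics in $C_0$ — contradicting non-branching of the $\mathrm{RCD}$ space $C_0$. The care needed here is exactly at the fixed-point set $\mathrm{Fix}(f)$, where a single downstairs geodesic may correspond to a geodesic in $C_0$ that \emph{reflects} off $\mathrm{Fix}(f)$; I would argue that such reflected lifts are still genuine (possibly broken-looking but actually unbroken by the involution symmetry) geodesics of $C_0$, so the branching argument goes through. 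This is the step I expect to be the main obstacle: controlling the behavior of quotient geodesics across $\mathrm{Fix}(f)$ and ruling out that branching is \emph{created} by the identification rather than inherited from $C_0$.

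Next I would prove universal infinitesimal Hilbertianity of $X$. Let $\mu$ be any Radon measure on $X$ and let $\pi\colon C_0\to X=C_0/f$ be the quotient map. Pull back $\mu$ to a Radon measure $\tilde\mu$ on $C_0$ — concretely, take $\tilde\mu=\tfrac12\pi^{-1}_*\mu$ symmetrized so that $f_*\tilde\mu=\tilde\mu$ (the pushforward under the finite-to-one map $\pi$ being well-defined). Since $\pi$ is $1$-Lipschitz and, $\tilde\mu$-almost everywhere, a local isometry (the fixed-point set has been handled; moreover one checks $\tilde\mu$ gives no mass to any set forcing pathology), the pullback $u\mapsto u\circ\pi$ identifies $H^{1,2}(\mathrm{supp}(\mu),\mathsf{d}_X,\mu)$ isometrically with the closed subspace of $f$-invariant functions in $H^{1,2}(\mathrm{supp}(\tilde\mu),\mathsf{d},\tilde\mu)$, and this identification is compatible with the respective Cheeger energies (weak upper gradients pull back to weak upper gradients with the same local norm). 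By \cite[Corollary 1.4]{NPS25} applied to the $\mathrm{RCD}(K,n-1)$ space $C_0$ with the measure $\tilde\mu$, the space $H^{1,2}(\mathrm{supp}(\tilde\mu),\mathsf{d},\tilde\mu)$ is a Hilbert space; a closed linear subspace of a Hilbert space is again Hilbert, so $H^{1,2}(\mathrm{supp}(\mu),\mathsf{d}_X,\mu)$ is a Hilbert space, i.e. the parallelogram law holds for the Cheeger energy. Since $\mu$ was arbitrary, $(X,\mathsf{d}_X)$ is universally infinitesimally Hilbertian.

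Finally, a remark on the logical structure: the only genuinely new work beyond invoking Theorem \ref{aaathm1.3}, the $\mathrm{RCD}$ theory for lower-Ricci limits of the boundaries, and \cite{NPS25} is the transfer of "non-branching" and "infinitesimal Hilbertianity" across the isometric-involution quotient. Both transfers hinge on the same technical fact — that a $\mathbb{Z}/2$-quotient by an isometric involution is, measure-theoretically and infinitesimally, "locally isometric off a small fixed set" — so I would isolate that as a lemma (e.g. that $\mathrm{Fix}(f)$ is a closed set through which geodesics pass with controlled multiplicity and to which natural Sobolev-relevant measures assign no obstructing mass), prove it once, and apply it twice. The dimension count $N=n-1$ for the $\mathrm{RCD}$ bound on $C_0$ is consistent with $\dim\partial M_i=n-1$ and is all that \cite{NPS25} requires.
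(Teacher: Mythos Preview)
For geodesic non-branching your lifting argument is exactly the paper's approach, isolated there as Proposition~\ref{prop2.15} (quotients of non-branching spaces by compact isometry groups are non-branching); your concerns about the fixed-point set are unnecessary, since the quotient-metric formula $\mathsf{d}^\ast(\pi(p),\pi(q))=\min_{g\in\{\mathrm{id},f\}}\mathsf{d}(p,gq)$ already guarantees that any branching pair downstairs lifts, segment by segment, to a branching pair upstairs without any reflection analysis.

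For universal infinitesimal Hilbertianity your route genuinely differs from the paper's. The paper does not pass through $C_0$ at all: instead it proves directly (Theorem~\ref{thm4.30}) that $(X,\mathsf{d}_X)$ satisfies the \emph{weak infinitesimal splitting property} --- every tangent cone splits off any line it contains --- by embedding $T_xX$ convexly in the Ricci-limit tangent cone $T_xY$ (Lemma~\ref{convex}) and using Cheeger--Colding splitting of $T_xY$ to show that lines in $T_xX$ are accompanied by a full $\mathbb{R}$-factor that remains in $T_xX$; it then applies \cite[Theorem~1.2]{NPS25} to $X$ itself. Your alternative (apply \cite[Corollary~1.4]{NPS25} to the $\mathrm{RCD}(K,n-1)$ space $C_0$, then identify $H^{1,2}(\mathrm{supp}(\mu),\mathsf{d}_X,\mu)$ with the $f$-invariant subspace of $H^{1,2}(\mathrm{supp}(\tilde\mu),\mathsf{d},\tilde\mu)$ for an $f$-invariant lift $\tilde\mu$ of $\mu$) is correct once one checks $\mathrm{lip}(u\circ\pi)=(\mathrm{lip}\,u)\circ\pi$ for Lipschitz $u$ on $X$ and observes that symmetrizing approximants $v_n\mapsto\tfrac12(v_n+v_n\circ f)$ does not increase Cheeger energy (since $f$ is an isometry and $\tilde\mu$ is $f$-invariant), yielding $\mathrm{Ch}_X(u)=\mathrm{Ch}_{C_0}(u\circ\pi)$. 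Your route is shorter once Theorem~\ref{aaathm1.3} is in hand; the paper's buys the weak infinitesimal splitting of $X$ as an independent structural fact.
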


}

As a synthetic counterpart of non-collapsed Ricci limit spaces, De Philippis-Gigli \cite{DG18} proposed the definition of non-collapsed RCD$(K,N)$ spaces. To be precise, an RCD$(K,N)$ space is called non-collapsed when its reference measure is the $N$-dimensional Hausdorff measure $\mathscr{H}^N$. Such spaces bear better regularity properties than general RCD$(K,N)$ spaces. See for instance \cite{ABS19,KM21}.

Han \cite{H20} (see also Theorem \ref{thm2.15}) proved that manifolds in $\mathcal{M}(n,H,K,\lambda,{D})$ do not satisfy the RCD$(H,n)$ condition in general. Our second main result is stated as follows, which says that the Gromov-Hausdorff limit space of inradius collapsed manifolds in $\mathcal{M}(n,H,K,\lambda,{D})$ is a {{\it non-collapsed}} RCD$(K,n-1)$ space when their boundaries are non-collapsed.
\begin{thm}\label{thm1.4}
  Assume $\{(M_i,\mathrm{g}_i)\}$ is a sequence of inradius collapsed manifolds in $\mathcal{M}(n,H,K,\lambda,D)$ such that 
\begin{align}\label{1.2}
{\inf_i}\, \mathrm{vol}_{\mathrm{g}_{\partial M_i}}({\partial M_i})\,{>0}.
\end{align}
If $\{(M_i,\mathsf{d}_{\mathrm{g}_i})\}$ converges to a metric space $(X,\mathsf{d}_X)$ in the Gromov-Hausdorff sense, then $(X,\mathsf{d}_X,\mathscr{H}^{n-1})$ is a non-collapsed $\mathrm{RCD}(K,n-1)$ space.
\end{thm}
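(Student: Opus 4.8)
\medskip
\noindent\textit{Proof strategy.}
Pass to a subsequence, not relabelled, along which $(\partial M_i,\mathsf{d}_{\mathrm{g}_{\partial M_i}})$ converges in the Gromov--Hausdorff sense to $(C_0,\mathsf{d})$; by Theorem~\ref{aaathm1.3} there is an isometric involution $f:C_0\to C_0$ with $(X,\mathsf{d}_X)$ isometric to $(C_0/f,\mathsf{d}^\ast)$, and I write $\mathfrak{p}:C_0\to C_0/f$ for the orbit projection. Since $(X,\mathsf{d}_X)$ is fixed it is enough to argue along this subsequence. The plan is to show that $(C_0,\mathsf{d},\mathscr{H}^{n-1})$ is a non-collapsed $\mathrm{RCD}(K,n-1)$ space, to deduce via a quotient theorem that $(X,\mathsf{d}_X)$ carries an $\mathrm{RCD}(K,n-1)$ structure, and to identify its reference measure with $\mathscr{H}^{n-1}$ up to a positive constant.

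For the first point, each $(\partial M_i,\mathrm{g}_{\partial M_i})$ is an $(n-1)$-dimensional closed Riemannian manifold with $\mathrm{Ric}\geqslant K$, hence an $\mathrm{RCD}(K,n-1)$ space. The intrinsic diameters $\mathrm{diam}(\partial M_i,\mathsf{d}_{\mathrm{g}_{\partial M_i}})$ are uniformly bounded --- this is the uniform control underlying the Gromov--Hausdorff precompactness used in Theorem~\ref{aaathm1.3} (see Theorem~\ref{thm2.3}) --- so by the Bishop--Gromov inequality and \eqref{1.2} the volumes $\mathrm{vol}_{\mathrm{g}_{\partial M_i}}(\partial M_i)$ are bounded above and below by positive constants. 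Hence, after a further subsequence, $(\partial M_i,\mathsf{d}_{\mathrm{g}_{\partial M_i}},\mathrm{vol}_{\mathrm{g}_{\partial M_i}})$ converges in the measured Gromov--Hausdorff topology to $(C_0,\mathsf{d},\mathfrak{m})$ with $0<\mathfrak{m}(C_0)<\infty$, i.e.\ the convergence is non-collapsed. By stability of the $\mathrm{RCD}(K,N)$ condition the limit is $\mathrm{RCD}(K,n-1)$, and by the volume-convergence theorem for non-collapsed Ricci limits (cf.\ \cite{CC96,DG18}) the limit measure is $\mathfrak{m}=\mathscr{H}^{n-1}$; thus $(C_0,\mathsf{d},\mathscr{H}^{n-1})$ is a non-collapsed $\mathrm{RCD}(K,n-1)$ space.

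For the second point, $f$ is an isometry, so it preserves $\mathscr{H}^{n-1}$, and the finite group $\{\mathrm{id},f\}$ acts on $(C_0,\mathsf{d},\mathscr{H}^{n-1})$ by measure-preserving isometries. By the quotient theorem of Galaz-Garc\'ia, Kell, Mondino and Sosa for metric measure spaces with a synthetic lower Ricci bound, the quotient $(C_0/f,\mathsf{d}^\ast,\mathfrak{p}_\ast\mathscr{H}^{n-1})$ is an $\mathrm{RCD}(K,n-1)$ space, so $(X,\mathsf{d}_X,\mathfrak{m}_X)$ is $\mathrm{RCD}(K,n-1)$ with $\mathfrak{m}_X:=\mathfrak{p}_\ast\mathscr{H}^{n-1}$. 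Since the $\mathrm{RCD}(K,N)$ condition is unaffected by rescaling the reference measure by a positive constant, it remains to show $\mathfrak{m}_X=c\,\mathscr{H}^{n-1}$ for some $c>0$; then $(X,\mathsf{d}_X,\mathscr{H}^{n-1})$ is $\mathrm{RCD}(K,n-1)$, hence non-collapsed in the sense of \cite{DG18} (its infinitesimal Hilbertianity is in any case supplied by Theorem~\ref{thm1.6}). If $f=\mathrm{id}$ this is clear. Otherwise, for $p\notin\mathrm{Fix}(f)$ the restriction of $\mathfrak{p}$ to $B_r(p)$ with $0<r<\tfrac{1}{4}\mathsf{d}(p,f(p))$ is an isometric embedding into $(C_0/f,\mathsf{d}^\ast)$, so $\mathfrak{p}$ is a two-to-one local isometry on $C_0\setminus\mathrm{Fix}(f)$; consequently $\mathfrak{m}_X=2\,\mathscr{H}^{n-1}$ on $X$ once one knows that $\mathrm{Fix}(f)$ is $\mathscr{H}^{n-1}$-negligible.

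The main obstacle is precisely this negligibility. The part of $\mathrm{Fix}(f)$ contained in the singular set of the non-collapsed space $C_0$ is $\mathscr{H}^{n-1}$-negligible by the structure theory of \cite{DG18}. At a regular point $x\in\mathrm{Fix}(f)$ the blow-up of $f$ is a linear involution $A$ of $\mathbb{R}^{n-1}$; if $A\neq\mathrm{id}$ then the tangent cone of $X$ at $\mathfrak{p}(x)$ is $\mathbb{R}^{n-1}/\langle A\rangle$, which is not isometric to $\mathbb{R}^{n-1}$, so $\mathfrak{p}(x)$ lies in the singular set of $X$; the latter is $\mathfrak{m}_X$-negligible (the essential dimension of $X$ is $n-1$, since the image of $\mathcal{R}_{n-1}(C_0)\setminus\mathrm{Fix}(f)$ consists of $(n-1)$-regular points of $X$ and has positive $\mathfrak{m}_X$-measure), and since $\mathfrak{p}$ is injective on $\mathrm{Fix}(f)$ the set of such $x$ is $\mathscr{H}^{n-1}$-negligible. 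Everything thus reduces to ruling out regular fixed points at which the blow-up of $f$ is the identity. This requires a synthetic rigidity statement: an isometry of a non-collapsed $\mathrm{RCD}(K,n-1)$ space fixing a regular point with trivial infinitesimal action is the identity on a neighbourhood of that point, hence --- by geodesic non-branching, Theorem~\ref{thm1.6} --- the identity on all of $C_0$, contradicting $f\neq\mathrm{id}$. Proving this local rigidity in the synthetic framework, by exploiting the almost-Euclidean structure near regular points together with the non-branching of geodesics, is the technical heart of the argument; equivalently, it amounts to showing that $X$ has essential dimension $n-1$ with reference measure a constant multiple of $\mathscr{H}^{n-1}$.
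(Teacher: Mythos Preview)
Your overall architecture matches the paper exactly: establish that $(C_0,\mathsf{d},\mathscr{H}^{n-1})$ is non-collapsed $\mathrm{RCD}(K,n-1)$ via volume convergence, observe that the isometric involution $f$ preserves $\mathscr{H}^{n-1}$, apply the quotient theorem of \cite{GMMS18} (Theorem~\ref{thm2.18}) to obtain an $\mathrm{RCD}(K,n-1)$ structure on $X$ with reference measure $\mathfrak{p}_\ast\mathscr{H}^{n-1}$, and then identify that pushforward with a constant multiple of $\mathscr{H}^{n-1}_{\mathsf{d}_X}$ via the two-to-one local isometry off $\mathrm{Fix}(f)$.

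Where you diverge is in treating $\mathscr{H}^{n-1}(\mathrm{Fix}(f))=0$ as ``the technical heart.'' Your blow-up dichotomy reduces the question to a synthetic rigidity statement---an isometry of a non-collapsed $\mathrm{RCD}$ space with trivial infinitesimal action at a regular fixed point is the identity nearby---which you do not prove. This is not a routine consequence of almost-Euclidean structure plus non-branching: geodesics issuing from a given point in an $\mathrm{RCD}$ space need not be unique, so the classical argument that an isometry is determined by its $1$-jet does not transfer directly. As written, the proof is incomplete at exactly this step.

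The paper avoids the whole detour. In the paper's notation $\mathrm{Fix}(f)=C_0^1$, and Lemma~\ref{lem4.24}---recorded there as a direct consequence of \cite[Lemma~4.1]{GS19}---gives $\mathfrak{m}(C_0^1)=0$ immediately whenever $C_0^1,C_0^2\neq\emptyset$; under \eqref{1.2} this reads $\mathscr{H}^{n-1}_{\mathsf{d}}(C_0^1)=0$. Since $\eta_0$ is $1$-Lipschitz one gets $\mathscr{H}^{n-1}_{\mathsf{d}_X}(X_1)=0$, and the identification $(\eta_0)_\ast\mathscr{H}^{n-1}_{\mathsf{d}}=2\,\mathscr{H}^{n-1}_{\mathsf{d}_X}$ is then one line from the local isometry on $C_0^2$. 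So the step you flag as the hard core is in fact handled by an off-the-shelf negligibility lemma for fixed-point sets of isometries on $\mathrm{RCD}$ spaces; you should invoke such a result rather than attempt the rigidity argument from scratch.
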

{\begin{remark}
		When {$\lambda=0$, a result of Han  \cite{H20} implies that for any inradius collapsed manifolds $\{(M_i,\mathrm{g}_i)\}$ in $\mathcal{M}(n,H,K,\lambda,D)$, after passing to a subsequence,} the measured Gromov-Hausdorff limit of {$(M_i,\mathsf{d}_{\mathrm{g}_i},(\mathrm{vol}_{\mathrm{g}_i}(M_i))^{-1}\mathrm{vol}_{\mathrm{g}_i})$ (where $(\mathrm{vol}_{\mathrm{g}_i}(M_i))^{-1}\mathrm{vol}_{\mathrm{g}_i}$ is the renormalized measure)} is an RCD$(H,n)$ space. {However, even under (\ref{1.2}), the limit measure may not coincide with the $(n-1)$-dimensional Hausdorff measure (up to a multiplicative constant). See Example \ref{exmp5.2}.

		For the case $\lambda>0$, such mGH limit space may fail to satisfy any RCD condition altogether, as shown in Example \ref{eexmp5.4}.}

\end{remark}}
 \begin{remark}
		
		{Under the assumption of Theorem \ref{aaathm1.3}, there exists a Randon measure $\mu$ on $X$ such that $\mathrm{supp}(\mu)=X$ and $(X,\mathsf{d}_X,\mu)$ is an RCD$(K,n-1)$ space, except in the following cases:
	\begin{itemize}
		\item[$(1)$] $\lambda>0$, $\lim_{i\to\infty}\mathrm{vol}_{\mathrm{g}_{\partial M_i}}({\partial M_i})=0$, $C_0$ is connected and $f$ has no fixed point.
		\item[$(2)$] $\lambda>0$, $\lim_{i\to\infty}\mathrm{vol}_{\mathrm{g}_{\partial M_i}}({\partial M_i})=0$, $f$ has a fixed point, and the complement of the fixed point set of $f$ is non-empty and connected. 
	\end{itemize} See Remark \ref{rmk4.33}.
	}

		{In general, it remains {unknown}  whether {the limit space $(X,\mathsf{d}_X)$ obtained in Theorem \ref{aaathm1.3}} admits a full-support Radon measure such that the resulting metric measure space satisfies some RCD condition.}

\end{remark}

{As a noncompact analogue, we study inradius collpased manifolds in the following family.
\begin{defn}
	For $n\in \mathbb{N}\cap [{3},\infty)$, $H,K\in\mathbb{R}$, $\lambda\geqslant 0$, let $\mathcal{M}(n,H,K,\lambda)$ be the family of $n$-dimensional Riemannian manifolds $(M,\mathrm{g})$ with boundary such that  
	\[
	\text{Ric}_\mathrm{g} \geqslant H,\ \text{Ric}_{\mathrm{g}_{\partial M}} \geqslant K,\  {\text{\Rmnum{2}}_{\partial M}\geqslant -\lambda}.
	\]
\end{defn}

In connection with Theorems \ref{aaathm1.3}, \ref{thm1.3}, \ref{thm1.6} and \ref{thm1.4}, we summarize our findings as follows. This result confirms that the aforementioned theorems are indeed independent of the parameter $D$. 
 \begin{thm}\label{thm1.10}
		Assume $\{(M_i,\mathrm{g}_i,p_i)\}$ is a sequence of inradius collapsed manifolds in $\mathcal{M}(n,H,K,\lambda)$ with $p_i\in\partial M_i$. Let $(C_0,\mathsf{d},p)$ and $(X,\mathsf{d}_X,x)$ be the pointed Gromov-Hausdorff limit space{s} of $\{(\partial M_i,\mathsf{d}_{\mathrm{g}_{\partial M_i}},p_i)\}$ and $\{(M_i,\mathsf{d}_{\mathrm{g}_i},p_i)\}$ respectively. Then the following holds.
	\begin{itemize}
		\item[$(1)$] {There} exists an isometric involution $f:C_0
		\to C_0$ such that $(X,\mathsf{d}_X,x)$ is isometric {to} the quotient space $(C_0/f,\mathsf{d}^\ast,p)$, where $\mathsf{d}^\ast$ denotes the quotient metric induced by $f$. 
		\item[$(2)$] The metric space $(X,\mathsf{d}_X)$ is geodesically non-branching and universally infinitesimally Hilbertian.
		\item[$(3)$] If in addition \[
			{\inf_i}\, \mathrm{vol}_{\mathrm{g}_{\partial M_i}}(B_1^{{ \partial M_i}}(p_i))>0,
		\]
		then $(X,\mathsf{d}_X,\mathscr{H}^{n-1})$ is a non-collapsed $\mathrm{RCD}(K,n-1)$ space.
		
	\end{itemize}

\end{thm}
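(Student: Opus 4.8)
The strategy is to deduce Theorem~\ref{thm1.10} from the compact results, Theorems~\ref{aaathm1.3}, \ref{thm1.6} and \ref{thm1.4}, by localizing. The key observation is that every construction and estimate used in the proofs of those theorems takes place within a fixed-size neighborhood of the boundary and invokes only the curvature bounds in \eqref{1.1}, never the diameter; and for inradius collapsed manifolds, where $\mathrm{inrad}(M_i)\to 0$, such a neighborhood exhausts $M_i$. Thus the parameter $D$ enters the compact statements only through Gromov--Hausdorff precompactness and the normalization of total measures, and in the noncompact setting both are replaced by their pointed, respectively local, counterparts.

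First I would establish the pointed analogue of Theorem~\ref{thm2.3}: with $p_i\in\partial M_i$, the sequence $\{(\partial M_i,\mathsf{d}_{\mathrm{g}_{\partial M_i}},p_i)\}$ is pointed Gromov--Hausdorff precompact by Gromov's theorem applied to $\mathrm{Ric}_{\mathrm{g}_{\partial M_i}}\geqslant K$, while $\{(M_i,\mathsf{d}_{\mathrm{g}_i},p_i)\}$ is pointed Gromov--Hausdorff precompact because Wong's volume and packing estimates on a ball $B_R^{M_i}(p_i)$ require only the bounds in \eqref{1.1} restricted to that ball (in \cite{W08} the diameter is used merely to pass from local to global covering numbers). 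Passing to a subsequence yields the limits $(C_0,\mathsf{d},p)$ and $(X,\mathsf{d}_X,x)$, and since $\mathrm{inrad}(M_i)\to 0$ the inclusion $\partial M_i\hookrightarrow M_i$ is $\mathrm{inrad}(M_i)$-dense, so $(X,\mathsf{d}_X,x)$ is also the pointed Gromov--Hausdorff limit of $\partial M_i$ carrying the metric restricted from $M_i$; as the intrinsic metric on $\partial M_i$ dominates this restricted metric, the limit of the identity maps is a $1$-Lipschitz surjection $\pi\colon(C_0,\mathsf{d})\to(X,\mathsf{d}_X)$ with $\pi(p)=x$. Rerunning the argument of Theorem~\ref{aaathm1.3} on each ball $B_R^{C_0}(p)$ --- two boundary points have small restricted distance yet possibly large intrinsic distance exactly when joined by a short path through the interior around the collapsing direction, which forces $\pi$ to be generically two-to-one --- produces on $B_R^{C_0}(p)$ an isometric involution sending a point to the second $\pi$-preimage of its image. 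By uniqueness of the second preimage these agree for different $R$ and glue to a global isometric involution $f$ of $C_0$ with $X=C_0/f$ and, since $\pi(p)=x$, $(X,\mathsf{d}_X,x)\cong(C_0/f,\mathsf{d}^\ast,p)$. This proves (1).

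For (2): being the pointed Ricci limit of $\{(\partial M_i,\mathsf{d}_{\mathrm{g}_{\partial M_i}})\}$, after a local renormalization of the volume measures $(C_0,\mathsf{d})$ carries an $\mathrm{RCD}(K,n-1)$ structure, hence is universally infinitesimally Hilbertian by \cite[Corollary 1.4]{NPS25} and geodesically non-branching. Both properties pass to $X=C_0/f$ exactly as in the proof of Theorem~\ref{thm1.6}: a Radon measure on $X$ pulls back to an $f$-invariant Radon measure on $C_0$ and the Sobolev space on $X$ is identified with the $f$-invariant subspace of the one on $C_0$, while a branching geodesic in $X$ would lift to a branching (possibly $\mathrm{Fix}(f)$-reflected) geodesic in $C_0$. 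For (3): the hypothesis $\inf_i\mathrm{vol}_{\mathrm{g}_{\partial M_i}}(B_1^{\partial M_i}(p_i))>0$, together with Colding's volume convergence and the Cheeger--Colding description of non-collapsed Ricci limits, makes $(C_0,\mathsf{d},\mathscr{H}^{n-1})$ a non-collapsed $\mathrm{RCD}(K,n-1)$ space, the non-collapsing at $p$ propagating over the limit under a fixed lower Ricci bound. Finally $X=C_0/f$ is a non-collapsed $\mathrm{RCD}(K,n-1)$ space by the quotient theorem, as in the proof of Theorem~\ref{thm1.4}: either $\mathrm{Fix}(f)$ has positive $\mathscr{H}^{n-1}$-measure, in which case $f=\mathrm{id}$ and $X=C_0$, or $\mathrm{Fix}(f)$ is $\mathscr{H}^{n-1}$-null, $\pi$ is a two-to-one local isometry off a null set, and $\mathscr{H}^{n-1}_X=\tfrac12\,\pi_\ast\mathscr{H}^{n-1}_{C_0}$ is the canonical measure of the quotient metric $\mathsf{d}^\ast$.

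The two places I expect to demand real care are the following. First, the pointed precompactness: one must check that the collar and boundary-regularity estimates underlying Theorem~\ref{thm2.3} can be run on metric balls in the absence of the global diameter bound. Second, and more delicate, the matching of the locally defined involutions and the behavior of the quotient near a possibly nonempty fixed-point set $\mathrm{Fix}(f)$ --- in particular the dichotomy that, in the non-collapsed case, $\mathrm{Fix}(f)$ is either $\mathscr{H}^{n-1}$-null or forces $f=\mathrm{id}$. Establishing this dichotomy, which in the compact case rests on the structure theory of non-collapsed Ricci limits, is where most of the new effort is likely to be spent.
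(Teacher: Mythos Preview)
Your overall strategy---that the arguments behind Theorems~\ref{aaathm1.3}, \ref{thm1.6}, \ref{thm1.4} are local and carry over to the pointed setting---is correct and is what the paper does. The execution differs in two places worth noting. First, the paper does not localize to balls and glue: it applies Wong's extension globally to each $M_i$ (this needs no diameter bound), obtains noncompact RCD spaces $(\widetilde{M}_i,\mathsf{d}_{\widetilde{M}_i},p_i)$, passes to a single pointed limit $(Y,\mathsf{d}_Y,x)$, and then \emph{reconstructs} $C_0$ from $Y$ as the intrinsic rescaling of the level set $\{y\in Y:\mathsf{d}_Y(y,X)=t_0\}$, with $\eta$ defined via perpendiculars. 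Once $\eta_0:C_0\to X$ is global, $f$ is defined globally by the preimage rule~\eqref{feaohfoiaehfiohae}, and the proofs that it is continuous and isometric (Propositions~\ref{prop4.8}, \ref{prop4.3}, Theorem~\ref{thmfffff4.5}) are tangent-cone arguments that never invoke compactness; your gluing step, and the worry that the second preimage might escape $B_R^{C_0}(p)$, simply do not arise. Second, for part~(2) the paper does not argue that universal infinitesimal Hilbertianity descends along $C_0\to C_0/f$---your identification of $H^{1,2}(X,\nu)$ with the $f$-invariant part of some $H^{1,2}(C_0,\mu)$ is not immediate and would itself need proof---but instead shows directly that $(X,\mathsf{d}_X)$ has the weak infinitesimal splitting property (Theorem~\ref{thm4.30}), via the convexity of $T_xX\subset T_xY$ (Lemma~\ref{convex}) and Cheeger--Colding splitting, and then cites \cite[Theorem~1.2]{NPS25}. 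Your treatment of non-branching and of part~(3) matches the paper's.
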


\begin{thm}\label{thm1.11}
	 There exists a positive number $\epsilon=\epsilon(n,H,K,\lambda)$ such that if $(M,\mathrm{g})\in \mathcal{M}(n,H,K,\lambda)$ {satisfies} $\mathrm{inrad}(M)<\epsilon$, then the number of components of $\partial M$ is at most 2.
	\end{thm}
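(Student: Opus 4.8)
The plan is to argue by contradiction and to reduce to the structure theorem of Theorem~\ref{thm1.10}(1), running the scheme of the proof of Theorem~\ref{thm1.3} with the modifications forced by non-compactness. Suppose the statement fails for every $\epsilon$; then there is a sequence $(M_i,\mathrm{g}_i)\in\mathcal{M}(n,H,K,\lambda)$ with $\mathrm{inrad}(M_i)\to 0$ such that each $\partial M_i$ has at least three connected components. Since $\mathrm{inrad}(M_i)\to 0$ and $\text{\Rmnum{2}}_{\partial M_i}\geqslant-\lambda$, for $i$ large every point of $M_i$ is joined to $\partial M_i$ by a minimizing geodesic of length $\leqslant\mathrm{inrad}(M_i)$ meeting $\partial M_i$ orthogonally; grouping such geodesics by the boundary component on which they land expresses $M_i$ as a locally finite union of closed ``collars'' $\mathcal{K}^i_a$, one over each component $\Sigma^i_a$, with $\mathcal{K}^i_a\cap\mathcal{K}^i_b$ contained in the cut locus $\mathrm{Cut}(\partial M_i)$ for $a\neq b$. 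Connectedness of $M_i$ then forces the nerve of the covering $\{\mathcal{K}^i_a\}$---with an edge joining $\Sigma^i_a$ and $\Sigma^i_b$ exactly when some point of $M_i$ carries minimizing geodesics to both---to be connected.

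The core of the argument is to show that, in the collapsed regime, this nerve has maximal degree at most one; since a connected graph of maximal degree $\leqslant 1$ has at most two vertices, this contradicts the presence of three components. To see it I would localize near a putative degree-$\geqslant 2$ vertex: if for infinitely many $i$ there were a point $z_i\in M_i$ with minimizing geodesics to three distinct components (or, more generally, a short chain $\Sigma^i_a - \Sigma^i_b - \Sigma^i_c$ in the nerve), pass to a pointed Gromov--Hausdorff limit based at $z_i$; this is legitimate by the pointed form of Theorem~\ref{thm2.3}, and by Theorem~\ref{thm1.10}(1) the limit is a quotient $C_0/f$ by an isometric involution. The point is that near $z_i$ the collar decomposition would present three distinct sheets of the boundary abutting the limit of $z_i$, whereas the local ``slab'' picture near $\mathrm{Cut}(\partial M_i)$ imposed by $\text{\Rmnum{2}}_{\partial M_i}\geqslant-\lambda$ allows a cut point to be equidistant to at most two sheets---a third would require a boundary piece curving into $M_i$ with second fundamental form unbounded from below---so such a configuration is incompatible with $f$ identifying sheets only in pairs. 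As in the compact case one must separate the generic situation, where the short geodesics joining the components run through $\mathrm{Cut}(\partial M_i)$ as broken geodesics with both endpoints on $\partial M_i$, from degenerate ones, the latter being again controlled by the compactness of $\mathcal{M}(n,H,K,\lambda)$ in the pointed topology.

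The main obstacle I expect is exactly this local step: one cannot simply count components as in Theorem~\ref{thm1.3}, because the pointed limit $C_0$ of boundaries is always connected (distinct components of $\partial M_i$ sit at infinite intrinsic distance and so are invisible to a single pointed limit), so the count of components must instead be recovered from the way the collars are glued along the cut locus, which is where the real work lies. A secondary, bookkeeping-type difficulty is genuinely non-compact: a priori $\partial M_i$ may have infinitely many components, and individual components may be non-compact or escape to infinity in any given pointed picture, so the three offending components must be tracked explicitly through the localization rather than read off from the limit space. Granting all this, the argument uses no diameter bound, which yields the $D$-free constant $\epsilon=\epsilon(n,H,K,\lambda)$; alternatively, if one can first show that the presence of three boundary components together with $\mathrm{inrad}(M)<\epsilon$ forces a uniform diameter bound, the statement reduces directly to Theorem~\ref{thm1.3}.
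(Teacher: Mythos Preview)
Your overall strategy---contradiction, pointed limits, and the involution structure from Theorem~\ref{thm1.10}(1)---is sound, and your nerve-of-collars picture is a reasonable reframing. But there is a genuine gap at the heart of the argument, and you have in fact identified it yourself without resolving it.

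The step ``show the nerve has maximal degree at most one'' requires ruling out a chain $\Sigma^i_a - \Sigma^i_b - \Sigma^i_c$. You propose to ``pass to a pointed Gromov--Hausdorff limit based at $z_i$'', but in the chain case there is no single point $z_i$ that sees all three components: the contact between $\mathcal K^i_a$ and $\mathcal K^i_b$ may occur at a point $z_1$, and that between $\mathcal K^i_b$ and $\mathcal K^i_c$ at a point $z_2$, with $\mathsf d_{\mathrm g_i}(z_1,z_2)\to\infty$. A pointed limit at $z_1$ shows only two sheets (those of $\Sigma^i_a$ and $\Sigma^i_b$), consistent with the involution; it says nothing about $\Sigma^i_c$. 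You call this a ``secondary, bookkeeping-type difficulty'', but it is the main difficulty, and it is exactly what the paper's monodromy argument addresses: one first shows that if $\partial M$ is disconnected then \emph{every} $\delta$-limit space is ``double in scale $16R$'' (i.e.\ locally $\partial M\cap B^{\widetilde M}_{8R}(p)$ has two components), by walking along a minimal geodesic from a putative non-double point $p$ toward a point $q$ in another component and propagating local connectedness step by step to reach a contradiction. A second monodromy pass, again walking in steps of size $R$, then tracks the two local sheets coherently and forces $\partial M$ to have exactly two components. Your nerve language can be made to work, but only after you supply this propagation mechanism; a single pointed limit cannot replace it.

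A smaller point: the explanation ``a third [sheet] would require a boundary piece curving into $M_i$ with second fundamental form unbounded from below'' is not the operative mechanism. The bound $\#\eta_0^{-1}(x)\leqslant 2$ (Lemma~\ref{lem3.8}, and its analogue in the pointed setting) comes from the non-branching of geodesics in the extended space $\widetilde M_i$ (Theorem~\ref{thm2.17}), via the observation that three perpendiculars meeting at a single foot point would produce three pairwise-minimizing broken geodesics sharing a common initial segment. Your differential-geometric heuristic does not by itself give this.
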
}

{We here explain the main idea of the proofs of our main theorems and the organization of this paper. The overall approach of this paper follows that of \cite{YZ19}. However, in the detailed proofs, we generalize the arguments in \cite{YZ19} that relied heavily on the assumption of a lower bound on the sectional curvature.}

In {S}ection \ref{sec2}, we begin {by presenting fundamental concepts concerning Ricci limit spaces and RCD spaces. We then} recall Wong's extension procedure \cite{W08} for manifolds in $\mathcal{M}(n,H,K,\lambda,D)$, {which involves attaching warped cylinders along the manifolds' boundaries.}

In {S}ection \ref{sec3}, we examine a sequence of inradius collapsed manifolds $\{(M_i,\mathrm{g}_i)\}$ in $\mathcal{M}(n,H,K,\lambda,D)$ and its {Gromov-Hausdorff} limit space $(X,\mathsf{d}_X)$. This space coincides with the limit space of the boundaries equipped with exterior metrics {$\{(\partial M_i,{\mathsf{d}_{\mathrm{g}_i}})\}$.} Simultaneously, the sequence of the boundaries with intrinsic metrics $\{(\partial M_i,\mathsf{d}_{\mathrm{g}_{\partial M_i}})\}$, is precompact in the Gromov-Hausdorff topology, converging to a limit space $(C_0,\mathsf{d})$. Moreover, the identity maps {$\mathrm{id}:(\partial M_i,\mathsf{d}_{\mathrm{g}_{\partial M_i}})\to(\partial M_i,{\mathsf{d}_{\mathrm{g}_i})}$} converge to a 1-Lipschitz surjection $\eta_0:(C_0,\mathsf{d})\rightarrow (X,\mathsf{d}_X)$. We show that ${\eta_0}^{-1}{(x)}$ contains at most two points for any $x\in X$. This naturally induces an involution $f:C_0\rightarrow C_0$ (see Definition \ref{11111defn3.9} and (\ref{feaohfoiaehfiohae})).

In {S}ection \ref{sec4}, we {take a point $p\in C_0$ and let $x=\eta_0(p)$.} We study the tangent spaces $T_p C_0$ and $T_x X$ and the induced tangent map $\eta_\infty:T_p C_0\rightarrow T_x X$. We prove $\eta_\infty$ preserves the distance from the origin, demonstrating that $f$ is an isometric involution. Consequently, $\eta_0$ becomes a quotient map under the $\mathbb{Z}_2$-action $\{f,\mathrm{id}\}$, from which Theorems {\ref{aaathm1.3} and \ref{thm1.3} follow. Combining Theorem \ref{aaathm1.3} with \cite{D20} and \cite{GMMS18}, we derive Theorems \ref{thm1.6} and \ref{thm1.4} respectively.} {Note that the application of the Busemann function plays an important role in this section. {Finally, we extend our analysis to the noncompact case of Ricci inradius collapsed manifolds. By employing a methodology similar to that in \cite{YZ19}, we likewise obtain results analogous to those presented in Theorems \ref{aaathm1.3}, \ref{thm1.3}, \ref{thm1.6} and \ref{thm1.4}.}}

In {S}ection \ref{section5}, we present characteristic examples of inradius collapsed manifolds. {As established in \cite[Theorem 1.6]{YZ19}, for any manifold with boundary $(M,\mathrm{g})$ satisfying (\ref{jfieajifoa}), if the inradius of $M$ is sufficiently small {and} $\partial M$ has two components, {then} $M$ is diffeomorphic to $W\times[0,1]$, where $W$ is a component of $\partial M$. Nevertheless, using Menguy's construction \cite{M00},} we exhibit {a sequence of} {four dimensional} Ricci inradius collapsed manifolds {such that the boundary of each manifold has two components, both diffeomorphic to $\mathbb{S}^3$, yet the second Betti number of these manifolds tends to infinity.}




\textbf{Acknowledgement.} The first author acknowledges the support of Fundamental Research Funds for the Central Universities, Sun Yat-sen University, Grant Number 24qnpy105. The Second author was supported by JSPS KAKENHI  Grant Number
21H00977.

\section{Preliminaries}\label{sec2}


Throughout this paper, we denote by $J=J(c_1,\ldots,c_n)$ a positive constant depending on $c_1,\ldots,c_n$, which may vary from line to line.

{We also make some convention on complete and separable metric spaces $( X,\mathsf{d})$. For a {rectifiable} curve $\gamma$ {in $(X,\mathsf{d})$}, its length is denoted by $L^{(X,\mathsf{d})}(\gamma)$ (or briefly $L^\mathsf{d}(\gamma)$, $L(\gamma)$). The set of Lipschitz functions is denoted by $ \text{Lip}( X,\mathsf{d})$. For $f\in \text{Lip}( X,\mathsf{d})$, the local Lipschitz constant of $f$ at $x\in  X$ is defined  as} 
\[
\text{lip}\
 f(x)=\left\{\begin{aligned}
\limsup\limits_{y\rightarrow x} \frac{|f(y)-f(x)|}{\mathsf{d}(y,x)}&\ \ \ \text{if $x\in  X$ is not isolated},\\
0\ \ \ \ \ \ \ \ \ \ \ \ \ \ \ \ \ \ \ \ \ \ \ \ \ &\ \ \ \text{otherwise}.
\end{aligned}\right.
\]{The open ball of radius $R$ centered at $x$ is denoted as {$B_R^{(X,\mathsf{d})}(x)$} (or briefly {$B_R^X(x)$}, $B_R(x)$). If $ (X,\mathsf{d})$ is geodesic, a unit speed minimal geodesic from $x$ to $y$ is denoted by $\gamma^{(X,\mathsf{d})}_{x,y}$ (or briefly $\gamma^\mathsf{d}_{x,y}$, $\gamma_{x,y}$). We also {denote by} $\mathscr{H}^m_\mathsf{d}$ ({or briefly $\mathscr{H}^m$}) the $m$-dimensional Hausdorff measure on $(X,\mathsf{d})$ for $m\in [1,\infty)$.}



\subsection{Ricci limit spaces}
This subsection is aimed at recalling some properties of Ricci limit spaces. The precise definition of Gromov-Hausdorff convergence (GH convergence), {Gromov-Hausdorff distance $\mathsf{d}_{\mathrm{GH}}$}, pointed Gromov-Hausdorff convergence (pGH convergence), {pointed Gromov-Hausdorff distance $\mathsf{d}_{\mathrm{pGH}}$}, measured Gromov-Hausdorff convergence (mGH convergence) and pointed measured Gromov-Hausdorff convergence (pmGH convergence) are omitted. For detailed treatments, see \cite{F87} and \cite{GMS15}.

Fix $n\in \mathbb{N}_+$, $D>0$. Let $\{(M_i,\mathrm{g}_i)\}$ be a sequence of $n$-dimensional closed Riemannian manifolds with 
\[
\mathrm{Ric}_{\mathrm{g}_i}\geqslant {-1}, \ \mathrm{diam}(M_i,\mathsf{d}_{\mathrm{g}_i})\leqslant D.
\]
{Owing to Gromov's compactness theorem, after passing to a subsequence, {we may assume that} there exists a complete and separable metric measure space $ (X,\mathsf{d},\mathfrak{m})$ (which is indeed an RCD$(-1,n)$ metric measure space by Section \ref{sec2.3}) such that}
\[
\left(M_i,\mathsf{d}_{\mathrm{g}_i},\frac{\mathrm{vol}_{\mathrm{g}_i}}{\mathrm{vol}_{\mathrm{g}_i}(M_i)}\right)\xrightarrow{\mathrm{mGH}}(X,\mathsf{d},\mathfrak{m}).
\]



We next introduce an important result on regular sets, whose precise definition is given as follows.
\begin{defn}[k-regular sets]
A point $x\in X$ is said to be a \textit{$k$-regular} point if for any sequence of positive numbers $\{s_i\}$ with $s_i\rightarrow 0$ it holds that
\[
({s_i}^{-1}X,x)\xrightarrow{\mathrm{pGH}}(\mathbb{R}^k,0_k),
\]
{where we denote the rescaled pointed metric space $\left(X,{s_i}^{-1}\mathsf{d},x\right)$ by $({s_i}^{-1}X,x)$ for simplicity.} The set of all $k$-regular points of $X$ is denoted by $\mathcal{R}_k(X)$.
\end{defn}

The following theorem on {the \textit{dimension}} of Ricci limit spaces is derived from \cite[Theorem 0.1]{C97}, \cite[Theorem 3.1]{ChCo1} and \cite[Theorem 1.18]{CN12}.
\begin{thm}\label{thm2.8}
There exists a unique {integer} $k\leqslant n$ such that $\mathfrak{m}(X\setminus \mathcal{R}_k(X))=0$. This $k$ is called the dimension of $X$.
\end{thm}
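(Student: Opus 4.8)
The plan is to assemble the statement from three results in the structure theory of Ricci limit spaces, with the genuine difficulty confined to an appeal to \cite{CN12}. \emph{Step 1: $\mathfrak{m}$-almost every point is $k$-regular for some $k\le n$.} By Cheeger--Colding (\cite{ChCo1}; cf.\ also \cite{ChCo2}) the limit measure $\mathfrak{m}$ has the property that at $\mathfrak{m}$-a.e.\ $x\in X$ every tangent cone at $x$ is a metric cone with vertex $x$. Feeding this into the almost-splitting theorem together with a dimension-reduction iteration --- blow up at a point of density of a maximal stratum, use the rigidity in the splitting theorem to peel off a Euclidean factor $\mathbb{R}^{j}$, and iterate on the remaining cone factor --- one gets that at $\mathfrak{m}$-a.e.\ $x$ some iterated tangent cone equals $(\mathbb{R}^{k(x)},0_{k(x)})$ with no nontrivial cone factor left, which is exactly the assertion $x\in\mathcal{R}_{k(x)}(X)$; hence $\mathfrak{m}\bigl(X\setminus\bigcup_{k\ge 0}\mathcal{R}_k(X)\bigr)=0$. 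For the bound $k(x)\le n$: by Colding's volume convergence theorem (\cite{C97}, Theorem 0.1) the restriction of $\mathfrak{m}$ to a small ball is the renormalized limit of Riemannian volumes, hence inherits Bishop--Gromov comparison and is locally doubling with a doubling constant depending only on $n$ (and on the lower bound $-1$ and $D$); in particular $\dim_{\mathscr{H}}X\le n$. Since a neighbourhood of a $k$-regular point is pointed-Gromov--Hausdorff close to a Euclidean ball of dimension $k$, this forces $k\le n$, so $\mathfrak{m}\bigl(X\setminus\bigcup_{k=0}^{n}\mathcal{R}_k(X)\bigr)=0$.

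\emph{Step 2: at most one of $\mathcal{R}_0(X),\dots,\mathcal{R}_n(X)$ has positive $\mathfrak{m}$-measure.} These sets are pairwise disjoint, for if $x\in\mathcal{R}_k(X)\cap\mathcal{R}_{k'}(X)$ then rescaling along one sequence $s_i\to 0$ would produce both $(\mathbb{R}^k,0_k)$ and $(\mathbb{R}^{k'},0_{k'})$ as the pointed limit, forcing $k=k'$. The substantial point is the constancy of dimension theorem of Colding--Naber (\cite{CN12}, Theorem 1.18): there is a single integer $k$ with $\mathfrak{m}(\mathcal{R}_k(X))>0$, and then $\mathfrak{m}(\mathcal{R}_{k'}(X))=0$ for every $k'\ne k$. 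Its proof rests on their sharp Hölder continuity of tangent cones along $\mathfrak{m}$-a.e.\ minimal geodesic: if regular points of two different formal dimensions occurred on sets of positive measure, joining them by good geodesics would force the dimension of the blow-ups to jump discontinuously, contradicting the Hölder estimate, and the segment inequality promotes this to an almost-everywhere statement.

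\emph{Step 3: conclusion.} Since $\mathfrak{m}(X)=1$ (the reference measures are renormalized volumes, hence probability measures, and so is their mGH limit) and $\mathfrak{m}\bigl(X\setminus\bigcup_{k=0}^{n}\mathcal{R}_k(X)\bigr)=0$, some $\mathcal{R}_k(X)$ with $0\le k\le n$ carries positive measure; by Step 2 this $k$ is unique, and then $\mathfrak{m}(X\setminus\mathcal{R}_k(X))=\sum_{k'\ne k}\mathfrak{m}(\mathcal{R}_{k'}(X))=0$. Uniqueness of $k$ in the statement is then immediate: if $k''\ne k$ also satisfied $\mathfrak{m}(X\setminus\mathcal{R}_{k''}(X))=0$, then $\mathcal{R}_{k''}(X)\subseteq X\setminus\mathcal{R}_k(X)$ would be simultaneously of full and of zero $\mathfrak{m}$-measure. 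I expect Step 2 to be the only real obstacle: the Colding--Naber constancy-of-dimension argument is by far the deepest input, whereas Steps 1 and 3 are routine once the Cheeger--Colding structure theory and Colding's volume convergence are available.
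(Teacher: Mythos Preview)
The paper does not prove this theorem; it simply cites \cite[Theorem 0.1]{C97}, \cite[Theorem 3.1]{ChCo1}, and \cite[Theorem 1.18]{CN12}, which are exactly the three ingredients you assemble, so your overall approach matches the paper's entirely.

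One slip in Step~1 is worth flagging: showing that \emph{some} iterated tangent cone at $x$ equals $(\mathbb{R}^{k(x)},0_{k(x)})$ is not the same as $x\in\mathcal{R}_{k(x)}(X)$, which by definition demands that \emph{every} tangent cone at $x$ (along every sequence $s_i\to 0$) be $(\mathbb{R}^{k(x)},0_{k(x)})$. The Cheeger--Colding argument actually proceeds by first showing that at $\mathfrak{m}$-a.e.\ $x$ \emph{every} tangent cone is a metric cone, and then a density-point plus iterated-blow-up argument forces every tangent cone to be Euclidean of a single dimension depending only on $x$. Your sketch names the right inputs but elides this passage from ``some iterated tangent'' to ``every tangent''.
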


{If in addition $\inf_i \mathrm{vol}_{\mathrm{g}_i}(M_i)>0$, then $(X,\mathsf{d},\mathfrak{m})$ is said to be a non-collapsed Ricci limit space. In this case, the following theorem is known due to \cite{ChCo1}.}
\begin{thm}\label{ncRiccilimit}
{For such $(X,\mathsf{d},\mathfrak{m})$, {both the dimension and Hausdorff dimension are} $n$ and the reference measure satisfies $\mathfrak{m}=(\mathscr{H}^n(X))^{-1}\mathscr{H}^n$. {Moreover, the Hausdorff dimension of $X\setminus \mathcal{R}_n(X)$ does not exceed $n-2$.}

}
\end{thm}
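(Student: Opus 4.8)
The plan is to deduce everything from Colding's volume convergence theorem \cite{C97}, the Bishop--Gromov inequality, and the dimension theorem (Theorem \ref{thm2.8}). First I would establish uniform non-collapsing together with the measure identity. Put $v_0:=\inf_i\mathrm{vol}_{\mathrm{g}_i}(M_i)>0$. Bishop--Gromov on each $M_i$ (using $\mathrm{Ric}_{\mathrm{g}_i}\geqslant-1$ and $\mathrm{diam}\leqslant D$) yields $c_0=c_0(n,D,v_0)>0$ with $\mathrm{vol}_{\mathrm{g}_i}(B_r(x_i))\geqslant c_0 r^n$ for all $x_i\in M_i$ and $r\in(0,D]$, and this lower bound on the volume ratio is scale invariant. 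By Colding's volume convergence theorem, $\mathrm{vol}_{\mathrm{g}_i}(B_r(x_i))\to\mathscr{H}^n(B_r(x))$ whenever $x_i\to x$; hence $\mathscr{H}^n$ is a Radon measure on $X$ with $c_0r^n\leqslant\mathscr{H}^n(B_r(x))\leqslant J(n,D)$ for $r\in(0,D]$, and, taking $r=D\geqslant\mathrm{diam}$, one gets $\mathrm{vol}_{\mathrm{g}_i}(M_i)\to\mathscr{H}^n(X)\in(0,\infty)$ after passing to a subsequence. Consequently $\mathfrak{m}=\lim_i\mathrm{vol}_{\mathrm{g}_i}(M_i)^{-1}\mathrm{vol}_{\mathrm{g}_i}=(\mathscr{H}^n(X))^{-1}\mathscr{H}^n$, and since $0<\mathscr{H}^n(X)<\infty$ the Hausdorff dimension of $X$ equals $n$.

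Next I would show that the integer $k$ of Theorem \ref{thm2.8} equals $n$. Suppose $k<n$ and pick $x\in\mathcal{R}_k(X)$ (nonempty, since it has full $\mathfrak{m}$-measure) and $s_i\downarrow 0$ with $(s_i^{-1}X,x)\xrightarrow{\mathrm{pGH}}(\mathbb{R}^k,0_k)$. A diagonal argument exhibits $(\mathbb{R}^k,0_k)$ as a pointed Gromov--Hausdorff limit of rescaled closed manifolds $(M_{j(i)},s_i^{-1}\mathsf{d}_{\mathrm{g}_{j(i)}},x_{j(i)})$, which have $\mathrm{Ric}\geqslant-s_i^2$ and still satisfy $\mathrm{vol}(B_1(x_{j(i)}))\geqslant c_0$ in the rescaled metrics by scale invariance of the volume-ratio lower bound. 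Colding's theorem then forces $\mathscr{H}^n(B_1^{\mathbb{R}^k}(0_k))\geqslant c_0>0$, which is impossible for $k<n$. Hence $k=n$, so $\mathfrak{m}(X\setminus\mathcal{R}_n(X))=0$ and, by the measure identity, $\mathscr{H}^n(X\setminus\mathcal{R}_n(X))=0$.

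Finally, for the codimension bound on the singular set $\mathcal{S}:=X\setminus\mathcal{R}_n(X)$, I would invoke the Cheeger--Colding stratification $\mathcal{S}^0\subseteq\cdots\subseteq\mathcal{S}^{n-1}\subseteq X$, where $\mathcal{S}^j$ consists of the points admitting no tangent cone that splits off an isometric $\mathbb{R}^{j+1}$-factor; it satisfies $\dim_{\mathscr{H}}\mathcal{S}^j\leqslant j$. Using Colding's theorem and the monotonicity of the density $\Theta_n(x):=\lim_{r\to0}\mathscr{H}^n(B_r(x))/(\omega_n r^n)\in(0,1]$ (with $\omega_n:=\mathscr{H}^n(B_1(0_n))$), one checks that at $\mathscr{H}^n$-a.e.\ point every tangent cone is a metric cone and that $\Theta_n(x)=1$ forces all tangent cones at $x$ to be $\mathbb{R}^n$, whence $\mathcal{S}=\{\Theta_n<1\}\subseteq\mathcal{S}^{n-1}$ and $\dim_{\mathscr{H}}\mathcal{S}\leqslant n-1$. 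To sharpen this to $n-2$ it suffices to prove $\mathcal{S}^{n-1}=\mathcal{S}^{n-2}$: a point of $\mathcal{S}^{n-1}\setminus\mathcal{S}^{n-2}$ would carry a tangent cone $\mathbb{R}^{n-1}\times C(Z)$ with $C(Z)$ a one-dimensional geodesic metric cone, so $C(Z)$ is either $\mathbb{R}$ (which would give an $\mathbb{R}^n$-splitting, a contradiction) or the half-line $[0,\infty)$, i.e.\ the tangent cone is the half-space $\mathbb{R}^{n-1}\times[0,\infty)$.

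The main obstacle is precisely this last case: ruling out half-space tangent cones. This is the technical heart of Cheeger--Colding's structure theory in \cite{ChCo1}; heuristically, such a point would have density $\tfrac12$ and a neighborhood Gromov--Hausdorff close at small scales to a half-ball, and one derives a contradiction with the non-collapsing of \emph{interior} balls in the closed manifolds $M_i$ --- e.g.\ through an almost-rigidity/reflection argument, or through the $\varepsilon$-regularity theorem that identifies high-density points with smooth manifold points. I would invoke this result of \cite{ChCo1} directly; a self-contained proof would have to reconstruct it from the almost-splitting theorem and relative volume comparison. Granting it, $\mathcal{S}\subseteq\mathcal{S}^{n-2}$ and hence $\dim_{\mathscr{H}}(X\setminus\mathcal{R}_n(X))\leqslant n-2$, which completes the proof.
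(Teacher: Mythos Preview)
The paper does not prove this theorem at all; it merely states it as a known result due to Cheeger--Colding \cite{ChCo1} (with Colding's volume convergence \cite{C97} implicit for the measure identity). Your proposal is therefore not comparable to ``the paper's own proof'' but is rather a sketch of how the cited results are assembled.

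As a sketch your outline is sound: the measure identity via Colding's volume convergence, the identification $k=n$ by a diagonal blow-up, and the stratification bound $\dim_{\mathscr{H}}\mathcal{S}^j\leqslant j$ are all correct and standard. You also correctly isolate the genuine difficulty, namely ruling out half-space tangent cones $\mathbb{R}^{n-1}\times[0,\infty)$, and you explicitly defer to \cite{ChCo1} for it. So in the end your argument rests on exactly the same reference the paper cites. One minor imprecision: the statement ``at $\mathscr{H}^n$-a.e.\ point every tangent cone is a metric cone'' is actually true at \emph{every} point in the non-collapsed case (this is part of the Cheeger--Colding package), and you need it pointwise to make the stratification argument work at the specific points of $\mathcal{S}^{n-1}\setminus\mathcal{S}^{n-2}$, not just almost everywhere.
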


Finally, let us recall the following Cheeger-Colding's {splitting theorem.}
\begin{thm}[\cite{CC96}]\label{CCsplit}
Let $\{(N_i,\mathrm{h}_i)\}$ be a sequence of $n$-dimensional Riemannian manifolds such that $\mathrm{Ric}_{\mathrm{h}_i}\geqslant -\delta_i$  $($where $\delta_i\rightarrow 0)$ and that $\{(N_i,\mathsf{d}_{\mathrm{h}_i},q_i)\}$ $\mathrm{pGH}$ converges to a pointed metric space $(Y,\mathsf{d}_Y,y)$. If $(Y,\mathsf{d}_Y)$ contains a line $\gamma$, then there exists a length space {$(\Omega,\mathsf{d}_\Omega)$ such that $(Y,\mathsf{d}_Y)$ is isometric to the product metric space $\left(\Omega\times \mathbb{R},\sqrt{{\mathsf{d}_\Omega}^2+{\mathsf{d}_\mathbb{R}}^2}\right)$. Moreover, if we denote the projection onto the first coordinate by $\pi$, then $\pi(\gamma)$ is a single point $\omega\in\Omega$.}
\end{thm}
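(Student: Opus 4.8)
The plan is to reproduce the argument of Cheeger--Colding \cite{CC96}: the splitting direction on the limit is obtained as a locally uniform limit of harmonic approximations of Busemann functions on the approximating manifolds $N_i$. After translating the parameter, assume $\gamma(0)=y$. For each large $T>0$ pick points $q_i^\pm\in N_i$ with $q_i^\pm\to\gamma(\pm T)$ under the pGH convergence and $\mathsf{d}_{\mathrm{h}_i}(q_i,q_i^\pm)\to T$, and set $b_{i,T}^\pm(\cdot)=\mathsf{d}_{\mathrm{h}_i}(\cdot,q_i^\pm)-\mathsf{d}_{\mathrm{h}_i}(q_i,q_i^\pm)$, so $b_{i,T}^\pm(q_i)=0$. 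The Laplacian comparison under $\mathrm{Ric}_{\mathrm{h}_i}\geqslant-\delta_i$ bounds $\Delta b_{i,T}^\pm$ from above in the barrier sense by a quantity that is small on a fixed ball about $q_i$ once $\delta_i$ and $T^{-1}$ are small, while the Abresch--Gromoll excess estimate shows the excess $e_{i,T}:=b_{i,T}^++b_{i,T}^-\geqslant0$ is uniformly small on $B_1^{N_i}(q_i)$.

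Next I would run the standard Bochner-plus-maximum-principle scheme. Replacing $b_{i,T}^+$ by its harmonic replacement $h_{i,T}$ on a fixed ball and using Cheeger--Colding's segment inequality to convert the pointwise excess and Laplacian bounds into integral control, one obtains that $h_{i,T}$ is uniformly close to $b_{i,T}^+$ in $L^1$ average and that $|\nabla h_{i,T}|^2$ is $L^1$-close to $1$. Plugging the Bochner identity $\Delta|\nabla h_{i,T}|^2=2|\mathrm{Hess}\,h_{i,T}|^2+2\,\mathrm{Ric}_{\mathrm{h}_i}(\nabla h_{i,T},\nabla h_{i,T})$ into a local integral estimate and using $\mathrm{Ric}_{\mathrm{h}_i}\geqslant-\delta_i$, one concludes that $\mathrm{Hess}\,h_{i,T}$ is small in $L^2$ average on $B_1^{N_i}(q_i)$. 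Letting $i\to\infty$ and then $T\to\infty$ along a diagonal subsequence, the $h_{i,T}$ converge locally uniformly to a Lipschitz function $b:Y\to\mathbb{R}$ with $b(y)=0$ and $b(\gamma(t))=t$, which is harmonic, has $|\nabla b|\equiv1$, and has vanishing Hessian in the weak sense appropriate to the limit; in particular $b$ is a Busemann function of both $\gamma$ and $-\gamma$.

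Finally, the metric splitting is extracted from $b$ exactly as in \cite{CC96}: the level set $\Omega:=b^{-1}(0)$ endowed with its induced length metric $\mathsf{d}_\Omega$, together with the gradient flow $\Phi_t$ of $b$, yields an isometry $\Omega\times\mathbb{R}\to Y$, $(w,t)\mapsto\Phi_t(w)$; equivalently $\mathsf{d}_Y(x,x')^2=\mathsf{d}_\Omega(\pi x,\pi x')^2+|b(x)-b(x')|^2$, where under this identification $b$ is the projection onto the $\mathbb{R}$-factor and $\pi$ the projection onto $\Omega$. Since $b(\gamma(t))=t$, the line $\gamma$ is a single gradient orbit of $b$, hence $\gamma=\{\omega\}\times\mathbb{R}$ with $\omega=\pi(\gamma(0))$, so $\pi(\gamma)=\{\omega\}$, proving the final assertion. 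The main obstacle is precisely this last passage---upgrading the $L^2$-smallness of $\mathrm{Hess}\,h_{i,T}$ on the $N_i$ to an exact metric product structure on $Y$---which is the technical core of Cheeger--Colding theory (segment inequality, $\varepsilon$-splitting maps, stability of the integral estimates under pGH convergence); for this limiting argument I would appeal to \cite{CC96} (see also \cite{ChCo1}) rather than reproduce it in detail.
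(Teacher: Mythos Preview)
The paper does not supply its own proof of this theorem: it is stated with the citation \cite{CC96} and used as a black-box tool throughout Section~\ref{sec4}. So there is nothing in the paper to compare against.

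Your outline is a faithful sketch of the original Cheeger--Colding argument from \cite{CC96}: approximate Busemann functions $b_{i,T}^\pm$, Laplacian comparison plus Abresch--Gromoll to control the excess, harmonic replacement and Bochner to force the Hessian small in $L^2$ average, then pass to the limit and split along the gradient flow of the limiting Busemann function. You correctly identify the hard step (upgrading integral Hessian smallness on $N_i$ to an exact metric product on $Y$) and correctly defer it to \cite{CC96,ChCo1}. One small remark: strictly speaking, on the limit space $Y$ one does not have a smooth gradient flow of $b$; what Cheeger--Colding actually do is show that the map $(w,t)\mapsto$ (unique point on the perpendicular through $w$ at signed distance $t$) is a well-defined isometry, using that through every point there is a line on which $b$ is affine with slope $1$. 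Your ``gradient flow $\Phi_t$'' should be read in this sense. Otherwise the sketch is sound.
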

\subsection{RCD spaces}\label{sec2.3}

In this paper, when we say $(X,\mathsf{d},\mathfrak{m})$ is a metric measure space, we mean that $(X,\mathsf{d})$ is a complete separable metric space and $\mathfrak{m}$ is a nonnegative Borel measure which is finite on any bounded subset of $X$ such that $\text{supp}(\mathfrak{m})=X$. 

Let $(X,\mathsf{d},\mathfrak{m})$ be a metric measure space.
\begin{defn}[Cheeger energy]
   The \textit{Cheeger energy} of $h\in L^2(\mathfrak{m})$ is defined as
  \[
  \text{Ch}(h):=\inf_{\{h_n\}} \left\{\liminf_{n \rightarrow \infty} \frac{1}{2}\int_X (\text{lip} \mathop{h_n})^2\, \mathrm{d}\mathfrak{m} \right\},
  \]
  where the infimum is taken among all sequences $\{h_n\}\subset \text{Lip}(X,\mathsf{d})\cap L^2(\mathfrak{m})$ with $\|h_n-h\|_{L^2(\mathfrak{m})}\rightarrow 0$. 

\end{defn}

\begin{remark}\label{rmk2.6}
The Sobolev space $H^{1,2}(X,\mathsf{d},\mathfrak{m})$ defined as the set of all $L^2(\mathfrak{m})$-integrable function with finite Cheeger energy. {For every $h\in H^{1,2}(X,\mathsf{d},\mathfrak{m})$, by \cite{ACD15} and\cite[Section 8.3]{AGS13}}, there exists $\{h_n\}\subset \text{Lip}(X,\mathsf{d})\cap L^2(\mathfrak{m})$ such that $h_n\to h$ in $L^2(\mathfrak{m})$, that {$\mathrm{lip}\, h_n\to \exists |\nabla h|$ in $L^2(\mathfrak{m})$ and that
\[
\mathrm{Ch}(h)=\frac{1}{2}\int_X |\nabla h|^2\, \mathrm{d}\mathfrak{m}.
\]}This $|\nabla h|$ is unique and is called the \textit{minimal relaxed slope} of $h$.

\end{remark}

\begin{remark}\label{rmk2.12}
$(X,\mathsf{d},\mathfrak{m})$ is said to be {\it infinitesimally Hilbertian} if $H^{1,2}(X,\mathsf{d},\mathfrak{m})$ is a Hilbert space. In this case, for any $h_i\in H^{1,2}(X,\mathsf{d},\mathfrak{m})$ $(i=1,2)$, according to \cite{AGS14a}, the following $L^1(\mathfrak{m})$-integrable function is well-defined:
 \[
  \langle \nabla h_1, \nabla h_2\rangle :=  \lim_{\epsilon \rightarrow 0}\frac{|\nabla(h_1+\epsilon h_2)|^2-|\nabla h_1|^2}{2\epsilon}.
  \]

\end{remark}

\begin{defn}[The Laplacian]
Assume $(X,\mathsf{d},\mathfrak{m})$ is infinitesimally Hilbertian. The domain of Laplacian, namely $D(\Delta)$, is defined as the set of all $h\in H^{1,2}(X,\mathsf{d},\mathfrak{m})$ such that the following holds for some $\xi \in L^2(\mathfrak{m})$. 

  \[
  \int_X \langle \nabla h, \nabla \varphi\rangle\, \mathrm{d}\mathfrak{m}= - \int_X \varphi\xi\, \mathrm{d}\mathfrak{m},\ \ \forall \varphi\in H^{1,2}(X,\mathsf{d},\mathfrak{m}).
  \]
We denote by $\Delta h:=\xi$ because $\xi$ is unique if it exists.

\end{defn}

We are now in a position to introduce the definition of RCD$(K,N)$ space as follows. See \cite{AGS15,AMS19,EKS15,G15}.

\begin{defn}\label{defn2.9}
  Let $K\in \mathbb{R}$ and $N\in [1,\infty)$. $(X,\mathsf{d},\mathfrak{m})$ is said to be an RCD$(K,N)$ space if
\begin{enumerate}
  \item[$(1)$] $(X,\mathsf{d},\mathfrak{m})$ is infinitesimally Hilbertian.

  \item[$(2)$] There exist $ x \in X$ and $C >0$, such that for any $r>0$ it holds that $\mathfrak{m} (B_r(x)) \leqslant C e^{Cr^2}$.

  \item[$(3)$] Any $h \in H^{1,2}(X,\mathsf{d},\mathfrak{m})$ satisfying $|\nabla h|\leqslant 1$ $\mathfrak{m}$-a.e. has a 1-Lipschitz representative.

  \item[$(4)$]  For any $ h\in D(\Delta)$ with $\Delta h \in H^{1,2}(X,\mathsf{d},\mathfrak{m})$, the following holds for any $\varphi \in \mathrm{Test}F\left(X,\mathsf{d},\mathfrak{m}\right)$ with
  $ \varphi \geqslant 0$,
\[
  \frac{1}{2}\int_X |\nabla h|^2 \Delta \varphi\, \mathrm{d}\mathfrak{m} 
  \geqslant \int_X \varphi \left(\langle \nabla h , \nabla \Delta h \rangle +K |\nabla h|^2   + \frac{(\Delta h)^2}{N}  \right) \mathrm{d}\mathfrak{m},
  \]
where $\mathrm{Test}F(X,\mathsf{d},\mathfrak{m})$ is the class of test functions defined by
\end{enumerate}
 \[
\mathrm{Test}F(X,\mathsf{d},\mathfrak{m}):=\left\{h\in \text{Lip}(X,\mathsf{d})\cap D(\Delta)\cap L^\infty(\mathfrak{m}):\Delta h\in H^{1,2}(X,\mathsf{d},\mathfrak{m})\cap L^\infty(\mathfrak{m})\right\}.
\]

\end{defn}
\begin{remark}
If in addition $\mathfrak{m}=\mathscr{H}^N$, then $(X,\mathsf{d},\mathfrak{m})$ is said to be a non-collapsed RCD$(K,N)$ space. In this case, it is readily checked by \cite{DG18} that $N$ is an integer.
\end{remark}
{\begin{remark}\label{rmk2.11}
Let $(X,\mathsf{d},\mathfrak{m})$ be an RCD$(K,N)$ space. Then it admits a local Poincar\'e inequality (\cite[Theorem 1]{R12}) and the reference measure $\mathfrak{m}$ satisfies the doubling condition (\cite[Theorem 5.31]{LV09}, \cite[Theorem 2.3]{St06b}). As a consequence, \cite[Theorem 6.1]{Ch99} implies that for every $g\in \mathrm{Lip}(X,\mathsf{d})\cap H^{1,2}(X,\mathsf{d},\mathfrak{m})$ we have $|\nabla g|=\mathrm{lip}\,g$ $\mathfrak{m}$-a.e.
\end{remark}
}
\begin{thm}[RCD condition for manifolds with boundary {\cite[Theorem 1.1]{H20}}]\label{thm2.15}
Let $(M,\mathrm{g})$ be an $n$-dimensional Riemannian manifold with boundary. {Then} {the metric measure space $(M,\mathsf{d}_\mathrm{g},\mathrm{vol}_\mathrm{g})$} is an $\mathrm{RCD}(K,n)$ space if and only if $M$ is path-connected and its Ricci curvature, second fundamental form satisfy 
\[
\mathrm{Ric}_\mathrm{g}\geqslant K,\ \  \mathrm{\Rmnum{2}}_{\partial M}\geqslant 0.
\] 
\end{thm}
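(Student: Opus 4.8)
The plan is to treat the two implications separately; both rest on the same analytic observation. For a (complete) Riemannian manifold $(M,\mathrm{g})$ with boundary carrying its intrinsic distance $\mathsf{d}_\mathrm{g}$, every Lipschitz function $h$ satisfies $\mathrm{lip}\,h=|\nabla h|$ a.e.\ (each boundary point has a half-ball neighbourhood on which the intrinsic and Riemannian infinitesimal structures coincide), so the Cheeger energy equals the classical Dirichlet energy $\tfrac12\int_M|\nabla h|^2\,\mathrm{d}\mathrm{vol}_\mathrm{g}$ on $W^{1,2}(M)$; hence $(M,\mathsf{d}_\mathrm{g},\mathrm{vol}_\mathrm{g})$ is always infinitesimally Hilbertian and its canonical Laplacian in the sense of Definition~\ref{defn2.9} is the Neumann Laplacian $\Delta$. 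Moreover, for $u\in C^\infty(M)$ with $\partial_\nu u=0$ on $\partial M$, a boundary integration by parts (the integrated Reilly formula, written in differential form) identifies the distributional Laplacian of $\tfrac12|\nabla u|^2$, paired against Neumann test functions, with $\big(|\mathrm{Hess}\,u|^2+\langle\nabla u,\nabla\Delta u\rangle+\mathrm{Ric}_\mathrm{g}(\nabla u,\nabla u)\big)\,\mathrm{vol}_\mathrm{g}+\text{\Rmnum{2}}_{\partial M}(\nabla_\partial u,\nabla_\partial u)\,\mathscr{H}^{n-1}|_{\partial M}$.

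\emph{The ``only if'' direction.} An RCD$(K,n)$ space is geodesic, so $M$ is path-connected. Feeding the identity above into the Bochner inequality of Definition~\ref{defn2.9}(4) and comparing singular parts with respect to $\mathrm{vol}_\mathrm{g}$ (which charges no boundary mass, whereas the right-hand side of that inequality is absolutely continuous) forces $\text{\Rmnum{2}}_{\partial M}(\nabla_\partial u,\nabla_\partial u)\geqslant 0$ along $\partial M$ for every such $u$; since $\nabla_\partial u|_{\partial M}$ may realize any prescribed boundary tangent vector, this gives $\text{\Rmnum{2}}_{\partial M}\geqslant 0$. Comparing absolutely continuous parts at an interior point, using $|\mathrm{Hess}\,u|^2\geqslant\tfrac1n(\Delta u)^2$ with equality attainable and optimizing over $u$, yields $\mathrm{Ric}_\mathrm{g}\geqslant K$ on $\mathrm{int}\,M$, hence on $M$ by continuity.

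\emph{The ``if'' direction.} Assume $M$ is path-connected, $\mathrm{Ric}_\mathrm{g}\geqslant K$ and $\text{\Rmnum{2}}_{\partial M}\geqslant 0$. One route is to pass to the double $DM$ of $M$ along $\partial M$: its natural length metric is $C^{1,1}$ but in general not smooth across $\partial M$, yet convexity $\text{\Rmnum{2}}_{\partial M}\geqslant 0$ permits a Perelman/Kosovskii-type smoothing near the seam producing smooth metrics $\mathrm{g}_\varepsilon$ on $DM$ with $\mathrm{Ric}_{\mathrm{g}_\varepsilon}\geqslant K-\varepsilon$ and $(DM,\mathsf{d}_{\mathrm{g}_\varepsilon},\mathrm{vol}_{\mathrm{g}_\varepsilon})\to(DM,\mathsf{d},\mathrm{vol})$ in the mGH sense; stability of the RCD condition under mGH limits then makes $(DM,\mathsf{d},\mathrm{vol})$ an RCD$(K,n)$ space. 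Since $(M,\mathsf{d}_\mathrm{g})$ is isometric to the quotient of $DM$ by the measure-preserving isometric reflection $\mathbb{Z}_2$-action (with quotient measure a constant multiple of $\mathrm{vol}_\mathrm{g}$, of full support by path-connectedness), and quotients of RCD$(K,N)$ spaces by compact groups of measure-preserving isometries remain RCD$(K,N)$, we conclude that $(M,\mathsf{d}_\mathrm{g},\mathrm{vol}_\mathrm{g})$ is an RCD$(K,n)$ space. Alternatively and more self-containedly: the differential Reilly identity together with $\mathrm{Ric}_\mathrm{g}\geqslant K$, $\text{\Rmnum{2}}_{\partial M}\geqslant 0$ and $|\mathrm{Hess}\,u|^2\geqslant\tfrac1n(\Delta u)^2$ yields the measure-valued Bochner inequality on a dense algebra of test functions, and, combined with infinitesimal Hilbertianity, the Sobolev-to-Lipschitz property and the Gaussian volume bound (all immediate here), this is equivalent to the RCD$(K,n)$ condition.

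\emph{Main obstacle.} The delicate part is the ``if'' direction. In the doubling route one must establish that a gluing along a boundary with $\text{\Rmnum{2}}_{\partial M}\geqslant 0$ can be approximated by smooth metrics with Ricci bounded below by $K-\varepsilon$ — mollifying a merely $C^{1,1}$ metric does not preserve a Ricci lower bound in general, and convexity of the seam is precisely what rescues it — together with the persistence of RCD under the reflection quotient. In the direct route the corresponding difficulty is the rigorous handling of the singular boundary term $\text{\Rmnum{2}}_{\partial M}(\nabla_\partial u,\nabla_\partial u)\,\mathscr{H}^{n-1}|_{\partial M}$ in the Bochner formula and the self-improvement from an inequality on test functions to the full $\mathrm{BE}(K,n)$/RCD condition in the presence of a boundary. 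The ``only if'' direction, by contrast, is a fairly direct reading of the synthetic Bochner inequality.
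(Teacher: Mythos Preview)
The paper does not prove Theorem~\ref{thm2.15}; it is quoted from \cite{H20} without proof, so there is no in-paper argument to compare against. Your task was thus somewhat ill-posed, but your sketch is broadly in line with how such results are established (the Reilly-type Bochner identity with a boundary term is indeed the analytic heart of Han's argument).

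One genuine gap in your doubling route: Perelman's gluing theorem as stated in the paper (Theorem~\ref{thm2.1}) requires the sum of the two second fundamental forms along the seam to be \emph{positive definite}. When you double $M$ along $\partial M$, that sum is $2\,\text{\Rmnum{2}}_{\partial M}$, which under the hypothesis $\text{\Rmnum{2}}_{\partial M}\geqslant 0$ is only nonnegative. So you cannot invoke the smoothing lemma directly; you would need either an approximation step (perturb the boundary to strict convexity while controlling the Ricci lower bound, then pass to a limit) or a sharper gluing result that tolerates the degenerate case. You flag this as ``the delicate part,'' but the specific obstruction---strict versus non-strict convexity in the gluing hypothesis---should be named, since it is exactly where a naive application fails.

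Your alternative direct route via the measure-valued Bochner inequality is closer to what is actually done in \cite{H20} and avoids this issue, though as you note it requires care with the singular boundary measure and the self-improvement step.
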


We introduce the following two important results about RCD$(K,N)$ spaces.

\begin{thm}[Precompactness of pointed RCD$(K,N)$ spaces under pmGH convergence \cite{GMS15}]\label{thm2.16}
Let $\left\{(X_i,\mathsf{d}_i,\mathfrak{m}_i,x_i)\right\}$ be a sequence of pointed $\mathrm{RCD}(K,N)$ spaces such that
\[
0<\liminf\limits_{i\rightarrow \infty} \mathfrak{m}_i\left(B_1^{X_i}(x_i)\right)\leqslant\limsup\limits_{i\rightarrow \infty} \mathfrak{m}_i\left(B_1^{X_i}(x_i)\right)<\infty.
\]
Then there exists a subsequence $\left\{\left(X_{i(j)},\mathsf{d}_{i(j)},\mathfrak{m}_{i(j)},x_{i(j)}\right)\right\}$ which $\mathrm{pmGH}$ converges to a pointed $\mathrm{RCD}(K,N)$ space $(X,\mathsf{d},\mathfrak{m},x)$.  
\end{thm}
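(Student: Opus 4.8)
The plan is to prove the statement in two stages: first establish precompactness in the pointed measured Gromov--Hausdorff topology, and then show that the RCD$(K,N)$ structure is inherited by the limit. For the precompactness I would start from the generalized Bishop--Gromov inequality, which is a consequence of the CD$(K,N)$ part of the RCD$(K,N)$ condition: it provides, for every $R>0$, a doubling constant on balls of radius $\leqslant R$ depending only on $K$, $N$ and $R$, uniformly in $i$. Combined with the hypothesis $0<\liminf_i \mathfrak{m}_i(B_1^{X_i}(x_i))\leqslant\limsup_i \mathfrak{m}_i(B_1^{X_i}(x_i))<\infty$, this yields uniform two-sided bounds for $\mathfrak{m}_i(B_R^{X_i}(x_i))$ and uniform total boundedness of the balls $B_R^{X_i}(x_i)$. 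Gromov's precompactness theorem then produces a subsequence along which $(X_i,\mathsf{d}_i,x_i)$ pGH-converges to a pointed complete length space $(X,\mathsf{d},x)$; a diagonal argument together with weak precompactness of the uniformly bounded measures $\mathfrak{m}_i$ upgrades this to pmGH convergence to a pointed metric measure space $(X,\mathsf{d},\mathfrak{m},x)$, and the uniform lower density bounds guarantee $\mathrm{supp}(\mathfrak{m})=X$ and $\mathfrak{m}(B_1(x))\in(0,\infty)$, so that the limit is a metric measure space in the sense used in this paper.

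For the stability of the RCD$(K,N)$ condition there are two ingredients to assemble. The first is the stability of the curvature-dimension condition itself: the reduced condition $\mathrm{CD}^\ast(K,N)$ — equivalently the entropic condition $\mathrm{CD}^e(K,N)$ — is expressed through convexity inequalities for the R\'enyi and Boltzmann entropy functionals along $W_2$-geodesics, and these inequalities are closed under mGH convergence; hence the limit satisfies the reduced curvature-dimension condition. The second ingredient is the stability of the infinitesimally Hilbertian structure: under pmGH convergence with the uniform doubling and Poincar\'e inequalities available here, after realizing all the spaces in a common ambient space the Cheeger energies $\mathrm{Ch}_i$ converge to $\mathrm{Ch}_X$ in the Mosco sense, and since Mosco convergence of quadratic forms preserves the parallelogram identity, $\mathrm{Ch}_X$ is again quadratic, i.e.\ $H^{1,2}(X,\mathsf{d},\mathfrak{m})$ is a Hilbert space. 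Combining the two, and using that for infinitesimally Hilbertian spaces the reduced and non-reduced formulations agree, one gets that $(X,\mathsf{d},\mathfrak{m},x)$ satisfies Definition \ref{defn2.9}; the remaining structural requirements there (the exponential volume growth and the Sobolev-to-Lipschitz property) pass to the limit from Bishop--Gromov and the uniform doubling/Poincar\'e estimates. Alternatively, and closer to the original argument, one may instead use the characterization of RCD$(K,N)$ through the $\mathrm{EVI}_{K,N}$-gradient flow of the relative entropy, a formulation that is manifestly stable under pmGH convergence and simultaneously encodes both the Hilbertian and the dimensional features of the condition.

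I expect the genuine difficulty to lie entirely in the second stage, and more precisely in the stability of the Sobolev/energy structure: $\Gamma$-convergence of the Cheeger energies alone does not suffice to transport quadraticity (or the Bochner inequality with the $\frac{(\Delta h)^2}{N}$ term) to the limit, so one must establish the stronger Mosco-type convergence of the energies and of the associated heat semigroups. This is exactly where the uniform doubling and Poincar\'e estimates are needed, together with a careful construction of a common realization of the sequence — for instance via Sturm's $L^2$-transportation distance $\mathbb{D}$ or an embedding into an ambient Banach space — so that the functionals can be meaningfully compared. Once this analytic stability is in place, transferring the Bakry--\'Emery/Bochner inequality and the parallelogram law to the limit is routine, and the precompactness stage is standard.
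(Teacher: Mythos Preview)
The paper does not give its own proof of this statement: Theorem~\ref{thm2.16} is quoted as a known result from \cite{GMS15} in the preliminaries section, with no argument supplied. So there is nothing in the paper to compare your proposal against.

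That said, your sketch is a faithful outline of the standard proof in the cited literature. The precompactness stage via Bishop--Gromov and Gromov's theorem is exactly right, and for stability you correctly identify the two ingredients (stability of the curvature-dimension condition and stability of infinitesimal Hilbertianity via Mosco convergence of Cheeger energies). Your alternative via the $\mathrm{EVI}_{K,N}$ formulation is in fact closer to what \cite{GMS15} actually does, and is arguably cleaner since it handles both ingredients at once. Your assessment of where the difficulty lies---the analytic stability of the Sobolev structure, requiring a common realization of the sequence---is accurate.
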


\begin{thm}[RCD$(K,N)$ spaces are non-branching \cite{D20}]\label{thm2.17}
Suppose $(X,\mathsf{d},\mathfrak{m})$ is an $\mathrm{RCD}(K,N)$ space. Then {the metric space $(X,\mathsf{d})$ is geodesically non-branching. That is,} given any two constant speed geodesics $\gamma_i$ $(i=1,2)$, which are parameterized on the unit interval, if there exists $t\in (0,1)$, such that 
\[
\gamma_1(s)=\gamma_2(s),\ \forall s\in [0,t],
\]
then {we have}
\[
\gamma_1(s)=\gamma_2(s),\ \forall s\in [0,1].
\]
\end{thm}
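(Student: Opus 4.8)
The result is Deng's theorem \cite{D20}; I sketch the structure of the argument. Suppose $(X,\mathsf{d},\mathfrak{m})$ is an $\mathrm{RCD}(K,N)$ space and, for contradiction, that there are two constant-speed geodesics $\gamma_1,\gamma_2:[0,1]\to X$ with $\gamma_1|_{[0,t]}=\gamma_2|_{[0,t]}$ for some $t\in(0,1)$ but $\gamma_1\neq\gamma_2$. After reparametrizing to unit speed (the two curves have the same speed) and setting
\[
s_0:=\sup\bigl\{s:\gamma_1|_{[0,s]}=\gamma_2|_{[0,s]}\bigr\},
\]
one has $0<t\le s_0<\mathrm{length}(\gamma_1)$; by continuity $\gamma_1(s_0)=\gamma_2(s_0)=:p$; and for every $\delta>0$ there is $s\in(s_0,s_0+\delta)$ with $\gamma_1(s)\neq\gamma_2(s)$. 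Thus the task is to rule out such a bifurcation at an interior point $p$ of two geodesics whose restrictions to $[0,s_0]$ coincide, and the argument proceeds by blowing up at $p$.

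First I would analyze the tangent cones at $p$. Given $r_i\downarrow 0$, the rescaled pointed spaces $(X,r_i^{-1}\mathsf{d},\mathfrak{m}_i,p)$, with $\mathfrak{m}_i$ normalized so that the unit ball has mass $1$, are $\mathrm{RCD}(r_i^2K,N)$; by Theorem \ref{thm2.16} a subsequence pmGH-converges to a limit $(Y,\mathsf{d}_Y,\mathfrak{m}_Y,o)$, which is an $\mathrm{RCD}(0,N)$ space since $r_i^2K\to 0$. Rescaling $\gamma_j|_{[s_0-ar_i,\,s_0+ar_i]}$ and letting first $i\to\infty$ and then $a\to\infty$ (a diagonal argument) produces for $j=1,2$ a geodesic line $\ell_j\subset Y$ through $o$, using that a minimizing segment around an interior point stays minimizing after rescaling. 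Since $Y$ contains the line $\ell_1$, the splitting theorem for $\mathrm{RCD}(0,N)$ spaces (the metric--measure analogue of Theorem \ref{CCsplit}) gives an isometry $Y\cong\mathbb{R}\times Z$ with $Z$ an $\mathrm{RCD}(0,N-1)$ space, under which $\ell_1$ corresponds to $\mathbb{R}\times\{z_1\}$ and $o$ to $(0,z_1)$. Because $\gamma_1=\gamma_2$ on $[0,s_0]$, the blow-ups of the two curves near $s_0$ from below give the same ray of $Y$, so $\ell_1$ and $\ell_2$ share a ray; writing $\ell_2(u)=(a(u),z(u))$ with $a$ affine and $z$ a constant-speed geodesic in $Z$ (the components of a geodesic in a metric product are constant-speed geodesics), sharing that ray forces $z$ to be constant on a half-line, hence constant, hence $\ell_2=\mathbb{R}\times\{z_1\}=\ell_1$. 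Thus in every tangent cone at $p$ the blow-ups of $\gamma_1$ and $\gamma_2$ coincide: the two geodesics leave $p$ in the same direction.

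The second and main step is to upgrade ``same direction at $p$'' to ``same geodesic beyond $p$,'' where the heart of \cite{D20} enters: along any geodesic $\sigma$ in an $\mathrm{RCD}(K,N)$ space, the \emph{marked} tangent cone at $\sigma(s)$ — the tangent cone together with the line determined by the blow-up of $\sigma$ there — depends $\alpha$-H\"older continuously on $s$ over compact subsets of the interior, with $\alpha=\alpha(N)$ and constants controlled by $\mathsf{d}(\sigma(0),\sigma(1))$, $K$, and a lower volume bound along $\sigma$. Proving this is the genuinely hard part: one controls the excess $e(x)=\mathsf{d}(x,\sigma(0))+\mathsf{d}(x,\sigma(1))-\mathsf{d}(\sigma(0),\sigma(1))$ on thin tubes about $\sigma$ via the segment inequality and the sharp Bochner--Laplacian estimates encoded in Definition \ref{defn2.9}(4) — the $\mathrm{RCD}$ incarnation of the Abresch--Gromov and Cheeger--Colding excess estimates — feeds these into the quantitative splitting theorem to compare $X$ at nearby points and comparable scales with a fixed product, and telescopes over dyadic scales.

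Granting the H\"older estimate, the endgame is soft. For $s'\in(s_0,s_0+\delta)$, H\"older continuity along $\gamma_1$ and along $\gamma_2$ — whose marked tangent cones agree at every parameter $\le s_0$ (trivially below $s_0$, and at $s_0$ by the previous step) — shows that the marked tangent cones at $\gamma_1(s')$ and $\gamma_2(s')$ are $O((s'-s_0)^{\alpha})$-close. Integrating the H\"older estimate outward from $s_0$ gives first $\phi(s):=\mathsf{d}(\gamma_1(s),\gamma_2(s))\lesssim(s-s_0)^{1+\alpha}$, and then, via a comparison estimate for geodesics with close endpoints and close marked tangent cones, $\phi(s)\lesssim\phi(s')+(s'-s_0)^{\alpha}(s-s')$ for $s_0<s'<s$; letting $s'\downarrow s_0$ forces $\phi\equiv 0$ on $(s_0,s_0+\delta)$, contradicting the definition of $s_0$. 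Hence $\gamma_1=\gamma_2$. The decisive obstacle is the H\"older-continuity estimate for tangent cones along geodesics: the blow-up and splitting step and the final telescoping are comparatively routine, but that estimate rests on transplanting the quantitative Cheeger--Colding theory into the $\mathrm{RCD}$ framework, and is where essentially all the difficulty lies.
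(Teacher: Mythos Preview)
The paper does not give its own proof of this statement: Theorem \ref{thm2.17} appears in the preliminaries as a quoted result from \cite{D20}, and is used repeatedly thereafter as a black box. So there is no ``paper's proof'' to compare against.

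As for your sketch of Deng's argument, the overall architecture is right: reduce to a bifurcation at an interior point, blow up there to see that the two geodesics leave in the same direction (via the splitting theorem for $\mathrm{RCD}(0,N)$ spaces), and then invoke the H\"older continuity of (marked) tangent cones along geodesics to propagate agreement past the bifurcation point. You correctly flag that essentially all of the work is in establishing that H\"older estimate, and that the rest is comparatively soft. The endgame you describe is somewhat impressionistic --- the inequality $\phi(s)\lesssim(s-s_0)^{1+\alpha}$ and the subsequent comparison estimate are not quite how Deng closes the argument, and would need more care to make precise --- but as a structural summary of \cite{D20} this is accurate and appropriate for a result the paper only cites.
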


Let us end this subsection by recalling a result on quotient spaces of metric measure spaces.

Assume $G$ is a compact Lie group acting isometrically on $(X,\mathsf{d})$. That is, for any $g\in G$, the map
\[
\begin{aligned}
\tau_g:X &\longrightarrow X\\
      x &\longmapsto gx,
\end{aligned}
\]
 is an isometry. Let us set $X^\ast:=X/ G$ as the quotient space and $p: X\rightarrow X^{\ast}$ as the quotient map. The quotient metric $\mathsf{d}^\ast$ is defined as
 \begin{align}\label{a2.3}
 	\mathsf{d}^\ast(x^\ast,y^\ast):=\inf\limits_{x\in p^{-1}(x^\ast),\ y\in p^{-1}(y^\ast)}\mathsf{d}(x,y).
 \end{align}

 {\begin{prop}[Quotient spaces of non-branching metric spaces are non-branching]\label{prop2.15}
 	If $(X,\mathsf{d})$ is geodesically non-branching, then $(X^\ast,\mathsf{d}^\ast)$ is also geodesically non-branching.
 \end{prop}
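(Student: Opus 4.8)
The plan is to reduce everything to a statement about individual pairs of geodesics and the branching behaviour of the quotient map $p$. First I would recall that a geodesic in $(X^\ast,\mathsf{d}^\ast)$, parametrized on $[0,1]$ with constant speed, always lifts (locally, and in fact globally when $X$ is proper and geodesic) to a constant speed geodesic in $(X,\mathsf{d})$: this is the standard submetry property of the quotient map coming from (\ref{a2.3}), namely that $p$ is $1$-Lipschitz and for every $x$ and every $r>0$ we have $p(B_r(x)) = B_r(p(x))$, so that any curve realizing $\mathsf{d}^\ast$ between endpoints can be lifted to a curve of the same length in $X$ realizing $\mathsf{d}$ between suitable lifts of the endpoints. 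Thus given two constant speed geodesics $\gamma_1^\ast,\gamma_2^\ast$ in $X^\ast$ agreeing on $[0,t]$, I would lift $\gamma_1^\ast$ to a geodesic $\gamma_1$ in $X$, then lift the restriction $\gamma_2^\ast|_{[0,t]}$ — which equals $\gamma_1^\ast|_{[0,t]}$ — to the geodesic $\gamma_1|_{[0,t]}$ (this is possible since the common initial segment has a lift, and by uniqueness of lifts through a fixed starting point one may arrange $\gamma_2$ to extend it), and finally lift the remaining part $\gamma_2^\ast|_{[t,1]}$ to a geodesic segment in $X$ starting at $\gamma_1(t)$.

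The key step is then the following: concatenating the lifted initial segment $\gamma_1|_{[0,t]}$ with the lift of $\gamma_2^\ast|_{[t,1]}$ produces a curve in $X$ which I claim is again a constant speed geodesic $\gamma_2$ (after checking speeds match, which they do since $p$ preserves lengths along horizontal lifts and $\gamma_1^\ast,\gamma_2^\ast$ have the same speed by agreeing on $[0,t]$). Granting that, we have two constant speed geodesics $\gamma_1,\gamma_2$ in $X$ that coincide on $[0,t]$; since $(X,\mathsf{d})$ is geodesically non-branching, they coincide on all of $[0,1]$, hence $\gamma_1^\ast = p\circ\gamma_1 = p\circ\gamma_2 = \gamma_2^\ast$ on $[0,1]$, which is exactly the conclusion.

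The main obstacle — and the part requiring care — is the lifting statement itself: one must show that a $\mathsf{d}^\ast$-geodesic lifts to a $\mathsf{d}$-geodesic and, crucially, that one can choose the lifts so that the common initial segments of $\gamma_1^\ast$ and $\gamma_2^\ast$ lift to the \emph{same} segment in $X$. The cleanest way is to work locally: for each $s_0\in[0,1]$ choose a small interval $[s_0-\epsilon, s_0+\epsilon]$ on which $p$ restricted to a small ball around a lift of $\gamma^\ast(s_0)$ is ``horizontally isometric,'' lift there, and patch the local lifts together using that the $G$-orbit through a lift point is discrete in the relevant direction (or, more robustly, using compactness of $G$ and a continuity/closedness argument to select a globally continuous horizontal lift). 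Since $G$ is a compact Lie group acting isometrically, the orbit space carries the structure needed for this patching; in the applications of this paper $G=\mathbb{Z}_2$, so the orbits are finite and the lifting is completely elementary. I would also note that constant speed geodesics can be handled by the usual reparametrization to unit speed, so there is no loss in assuming unit speed throughout. Assembling these pieces gives the proposition.
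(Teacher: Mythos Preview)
Your outline is correct, but it is considerably more elaborate than the paper's argument, and the extra machinery is not needed.

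The paper never lifts the geodesics $\gamma_1^\ast,\gamma_2^\ast$ themselves. After normalizing so that the two unit-speed geodesics agree on $[-1,0]$ and diverge on $(0,1]$, it lifts only the \emph{three points}: fix any $\tilde y\in p^{-1}(\gamma_1^\ast(0))$ and use compactness of $G$ to choose $\tilde x\in p^{-1}(\gamma_1^\ast(-1))$ and $\tilde z_i\in p^{-1}(\gamma_i^\ast(1))$ realizing the quotient distances, i.e.\ $\mathsf{d}(\tilde x,\tilde y)=\mathsf{d}(\tilde y,\tilde z_i)=1$. Since $p$ is $1$-Lipschitz one gets $\mathsf{d}(\tilde x,\tilde z_i)\ge \mathsf{d}^\ast(\gamma_1^\ast(-1),\gamma_i^\ast(1))=2$, hence equality; so \emph{any} geodesic $\tilde\sigma$ from $\tilde x$ to $\tilde y$ concatenated with \emph{any} geodesic $\tilde\sigma_i$ from $\tilde y$ to $\tilde z_i$ is already a minimal geodesic in $X$. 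The two concatenations share the segment $\tilde\sigma$ but end at $\tilde z_1\ne\tilde z_2$ (these lie in different fibres), contradicting non-branching of $X$.

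Compared with your plan, this sidesteps entirely the horizontal-lifting lemma for submetries, the patching of local lifts, and any appeal to ``uniqueness of lifts through a fixed starting point'' (which, incidentally, can fail at points with nontrivial stabilizer, though you do not actually use it). Your concatenation step is in fact the same computation the paper does, but note that ``speeds match'' is not the reason the concatenation is minimizing: the reason is the $1$-Lipschitz property of $p$, which forces $\mathsf{d}(\gamma_2(0),\gamma_2(1))\ge \mathsf{d}^\ast(\gamma_2^\ast(0),\gamma_2^\ast(1))=L(\gamma_2)$. Once you see that, lifting the full curves is unnecessary---lifting the endpoints suffices.
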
} 
\begin{proof}
	We prove by contradiction. Without loss of generality, assume there exists two distinct unit speed minimal geodesics $\gamma_{i}:[-1,1]\to X^\ast$ ($i=1,2$) such that $\gamma_1=\gamma_2$ on $[-1,0]$, but $\gamma_1(1)\neq\gamma_2(1)$.
	
	Let $x=\gamma_1(-1)$, $y=\gamma_1(0)$ and $z_i=\gamma_i(1)$. Fix a point $\tilde{y}\in p^{-1}(y)$. Because $G$ is compact, there exists $\tilde{x}\in p^{-1}(x)$ and $\tilde{z}_i\in p^{-1}(z_i)$ such that
	\[
	\mathsf{d}(\tilde{x},\tilde{y})=\mathsf{d}(\tilde{y},\tilde{z}_1)=\mathsf{d}(\tilde{y},\tilde{z}_2)=1.
	\]
This together with (\ref{a2.3}) implies $\mathsf{d}(\tilde{x},\tilde{z}_1)=\mathsf{d}(\tilde{x},\tilde{z}_2)=2$. 

Let $\tilde{\sigma}:[-1,0]\to X$ and $\tilde{\sigma}_i:[0,1]\to X$ be unit speed minimal geodesics from $\tilde{x}$ to $\tilde{y}$ and from $\tilde{y}$ to $\tilde{z}_i$ respectively.

Then the geodesic $\gamma_i$ defined as
\[
\gamma_i(t)=\left\{\begin{aligned}
	\tilde{\sigma}(t),\ &\ t\in [-1,0];\\
	\tilde{\sigma_i}(t),&\ t\in (0,1],
\end{aligned}\right.
\]
is a unit speed minimal geodesic from $\tilde{x}$ to $\tilde{z}_i$. This contradicts Theorem \ref{thm2.17}.
\end{proof}
If moreover $G$ is measure-preserving in the following sense:
\[
 (\tau_g)_\sharp \mathfrak{m}=\mathfrak{m},\ \forall g\in G,
\]
where $(\tau_g)_\sharp {\mathfrak{m}}$ means the push forward measure under $\tau_g$, then we can endow $X^\ast$ with the quotient measure $\mathfrak{m}^\ast:= p_\sharp \mathfrak{m}$.

\begin{thm}[\cite{GMMS18}]\label{thm2.18}
If $(X,\mathsf{d},\mathfrak{m})$ is an $\mathrm{RCD}(K,N)$ space, then $(X^\ast,\mathsf{d}^\ast,\mathfrak{m}^\ast)$ is also an $\mathrm{RCD}(K,N)$ space.
\end{thm}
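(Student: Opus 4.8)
The plan is to exploit that the quotient map $p:X\to X^\ast$ is a \emph{submetry}. Since $G$ is compact the infimum in \eqref{a2.3} is attained, so $p$ is $1$-Lipschitz and satisfies $p(B_r(x))=B_r^{X^\ast}(p(x))$ for all $x\in X$, $r>0$; moreover $\mathfrak m^\ast=p_\sharp\mathfrak m$ is a Borel measure, finite on bounded sets, with $\mathrm{supp}\,\mathfrak m^\ast=X^\ast$, and $\mathsf d^\ast$ is a genuine complete separable distance, and $(X^\ast,\mathsf d^\ast)$ is geodesic (lift endpoints, take a geodesic upstairs, project). The two nontrivial ingredients of an $\mathrm{RCD}(K,N)$ space — infinitesimal Hilbertianity and the curvature–dimension inequality — will be transported from $X$ to $X^\ast$ along $p$, with the group-averaging operator $A_Gh(x):=\int_G h(gx)\,\mathrm dg$ (well defined because $G$ acts by \emph{measure-preserving} isometries) as the main device.

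\emph{Infinitesimal Hilbertianity.} I would first show that $p^\ast h:=h\circ p$ is a bijective linear isometry of $H^{1,2}(X^\ast,\mathsf d^\ast,\mathfrak m^\ast)$ onto the closed subspace $H^{1,2}(X,\mathsf d,\mathfrak m)^G$ of $G$-invariant Sobolev functions, with $\mathrm{Ch}_{X^\ast}(h)=\mathrm{Ch}_X(p^\ast h)$. The computational heart is the submetry identity $\mathrm{lip}(h\circ p)=(\mathrm{lip}\,h)\circ p$ for $h$ Lipschitz on $X^\ast$ (``$\leq$'' is immediate, ``$\geq$'' uses $p(B_r(x))=B_r(p(x))$); approximating by Lipschitz functions in Cheeger energy on $X^\ast$ gives $\mathrm{Ch}_X(p^\ast h)\leq\mathrm{Ch}_{X^\ast}(h)$, and for the reverse one approximates a $G$-invariant Sobolev function $u=p^\ast h$ by Lipschitz functions on $X$, applies $A_G$ — which on the Hilbert space $H^{1,2}(X)$ is the orthogonal projection onto $H^{1,2}(X)^G$, is $1$-Lipschitz preserving and, by convexity of $\mathrm{Ch}_X$ and its isometry invariance, $\mathrm{Ch}_X$-nonincreasing — to make them $G$-invariant, and projects these down to competitors for $\mathrm{Ch}_{X^\ast}(h)$. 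Since $H^{1,2}(X)^G$ is a closed subspace of the Hilbert space $H^{1,2}(X)$, it is itself Hilbert, whence $H^{1,2}(X^\ast,\mathsf d^\ast,\mathfrak m^\ast)$ is Hilbert; i.e.\ axiom $(1)$ of Definition \ref{defn2.9} holds for $X^\ast$. Axiom $(3)$ descends by the same averaging trick (a $G$-invariant $1$-Lipschitz representative on $X$ of $p^\ast h$ projects to a $1$-Lipschitz representative of $h$ because $p$ is a submetry), and axiom $(2)$ will follow from Bishop–Gromov once the curvature–dimension inequality is in place.

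\emph{Curvature–dimension inequality.} Let $\mu^\ast_0,\mu^\ast_1\in\mathcal P_2(X^\ast)$ have bounded support and satisfy $\mu^\ast_i=\rho^\ast_i\mathfrak m^\ast\ll\mathfrak m^\ast$, and let $(\mu^\ast_t)_{t\in[0,1]}$ be any $W_2$-geodesic joining them. I would lift: set $\mu_i:=(\rho^\ast_i\circ p)\,\mathfrak m$, so that $p_\sharp\mu_i=\mu^\ast_i$ and, by the change of variables $\int(F\circ p)\,\mathrm d\mathfrak m=\int F\,\mathrm d\mathfrak m^\ast$, the entropy and the Rényi-type functionals of $\mu_i$ and $\mu^\ast_i$ coincide; then, using a measurable selection of horizontal geodesic lifts (for which $p\circ\gamma=\gamma^\ast$ and $L(\gamma)=L(\gamma^\ast)$) assembled into a dynamical optimal plan and averaged over $G$, produce a $W_2$-geodesic $(\mu_t)$ in $\mathcal P_2(X)$ that is $G$-invariant, satisfies $p_\sharp\mu_t=\mu^\ast_t$ and $W_2(\mu_0,\mu_1)=W_2(\mu^\ast_0,\mu^\ast_1)$, and has $\mu_t\ll\mathfrak m$ (the last because $X$ is $\mathrm{RCD}(K,N)$, hence essentially non-branching by Theorem \ref{thm2.17}). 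Being $G$-invariant, absolutely continuous and with correct projection, $\mu_t$ is automatically ``fibre-uniform'', $\mu_t=(\rho^\ast_t\circ p)\mathfrak m$ with $\mu^\ast_t=\rho^\ast_t\mathfrak m^\ast$, so $\mathrm{Ent}_{\mathfrak m}(\mu_t)=\mathrm{Ent}_{\mathfrak m^\ast}(\mu^\ast_t)$ and likewise for the $N$-Rényi functionals, for every $t$. Since $X$ is $\mathrm{RCD}(K,N)$ and essentially non-branching, the $\mathrm{CD}(K,N)$ convexity inequality holds along \emph{this} geodesic $(\mu_t)$; as all the entropies match and the distortion coefficients depend only on $W_2$, which $p$ preserves on these measures, the same inequality holds along $(\mu^\ast_t)$. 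Hence $(X^\ast,\mathsf d^\ast,\mathfrak m^\ast)$ is $\mathrm{CD}(K,N)$, and together with the Hilbertian conclusion above it is $\mathrm{RCD}(K,N)$.

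\emph{Main obstacle.} The delicate step is the lifting of the Wasserstein geodesic: one must turn a geodesic downstairs into a $G$-invariant geodesic upstairs whose time-marginals push forward correctly and stay absolutely continuous with matching entropy. This needs the submetry structure for single horizontal lifts, a measurable-selection argument to build a dynamical optimal plan from them, a disintegration of $\mathfrak m$ over the orbit space for the entropy bookkeeping, and essential non-branching of $X$ to keep the interpolated densities under control. The other point requiring care is the exact identity $\mathrm{lip}(h\circ p)=(\mathrm{lip}\,h)\circ p$ together with the fact that $p^\ast$ preserves minimal relaxed slopes, not merely bounds them one-sidedly — this is what powers the Sobolev-space identification and hence the Hilbertian conclusion; the remaining verifications (axioms $(2)$ and $(3)$, finiteness and full support of $\mathfrak m^\ast$, geodesicity of $X^\ast$) are routine once $p$ is recognized as a submetry with $\mathfrak m^\ast=p_\sharp\mathfrak m$.
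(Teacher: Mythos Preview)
The paper does not prove Theorem~\ref{thm2.18}; it is quoted from \cite{GMMS18} (Galaz-Garc\'ia--Kell--Mondino--Sosa) and used as a black box, so there is no in-paper argument to compare against. Your outline is broadly the strategy of that reference: identify $H^{1,2}(X^\ast,\mathsf d^\ast,\mathfrak m^\ast)$ with the $G$-invariant part of $H^{1,2}(X,\mathsf d,\mathfrak m)$ via the submetry $p$ and the averaging operator $A_G$, and transfer the curvature--dimension condition by lifting optimal dynamical plans through $p$. One technical remark: in \cite{GMMS18} the authors work with the \emph{reduced} condition $\mathrm{CD}^\ast(K,N)$ (and with strong $\mathrm{CD}(K,\infty)$) rather than attacking $\mathrm{CD}(K,N)$ head-on, precisely because the entropy/density bookkeeping you flag as the ``main obstacle'' is cleaner there; the identification $\mathrm{RCD}^\ast(K,N)=\mathrm{RCD}(K,N)$ then finishes. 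Your direct $\mathrm{CD}(K,N)$ route is plausible but the step ``averaged over $G$ \dots\ produce a $W_2$-geodesic $(\mu_t)$ that is $G$-invariant and still optimal'' is exactly where one has to be careful, and is the reason the original paper takes the detour.
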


\subsection{Manifolds with boundary: gluing and extension}\label{sec2.2}

This subsection is dedicated to recalling the extension procedure in \cite{W08}.

  Take $(M,\mathrm{g})\in \mathcal{M}(n,H,K,\lambda,D)$, and consider the product space $C_M:= \partial M \times [0,t_0]$ with the Riemannian metric $\mathrm{g}_{C_M}=\mathrm{d}t^2+\phi^2(t) \mathrm{g}_{\partial M}$, where
\[
\phi(t)=(1-\varepsilon_0)\exp\left(-\frac{2\lambda t_0^2}{1-\varepsilon_0}\left(\frac{1}{t_0-t}-\frac{1}{t_0}\right)\right)+\varepsilon_0,
\]
for some fixed $\varepsilon_0\in (0,1)$, $t_0>0$. Then direct computation gives
{ \[
\left\{
\begin{aligned}
  \phi(0)=1,\  \phi(t_0)=\varepsilon_0,\ \ \ \ \ \ \ \ \ \ \ \ \ \ \ \ \ \ \ \ \ \ \ \ \ \ \ \\
  \phi'(0)=-2\lambda,\  \phi'(t_0)=0,\ {\phi'<0\text{ on } (0,t_0)}, \\
  {\max_{[0,t_0]}}\ |\phi''/\phi| \leqslant J({\lambda, \varepsilon_0,t_0}).\ \ \ \ \ \ \ \ \ \ \ \ \ \ \ \ \ \ \ \ \  
\end{aligned}
\right.
\]
Moreover, we have $\text{Ric}_{\mathrm{g}_{C_M}}\geqslant -J(n,\lambda,K,\varepsilon_0,t_0)$.
}

For any $t\in [0,t_0]$, let $C_M^t:=\partial M \times \{t\}$ and  
$\mathrm{\Rmnum{2}}^{C_M^t}$  be the second fundamental form of $C_M^t$ defined as:
\[
\mathrm{\Rmnum{2}}^{C_M^t}(V,W)=-\left\langle\nabla_V\frac{\partial }{\partial t}, W\right\rangle,\ \ \forall V,W\in T(C_M^t).
\]

It is obvious that 
\begin{equation*}
\mathrm{\Rmnum{2}}^{C_M^0}+\mathrm{\Rmnum{2}}^{\partial M}\geqslant {\lambda}>0.
\end{equation*}
Therefore we are able to apply the following Perelman's gluing theorem.

\begin{thm}[\cite{P97}]\label{thm2.1}
  Let $(N_i,\mathrm{h}_i)$ $(i=1,2)$ be two $n$-dimensional compact Riemannian manifolds with boundary. Assume  $\phi: \partial N_1 \rightarrow \partial N_2$ is an isometry. Assume $\mathrm{Ric}_{\mathrm{h}_i}>H$ for some $H\in\mathbb{R}$, and the sum of the second fundamental forms $\mathrm{\Rmnum{2}}_{\partial N_1}|_p+\mathrm{\Rmnum{2}}_{\partial N_2}|_{\phi(p)}$, with $T_p N_1$ and $T_{\phi(p)} N_2$ identified by $d\phi$, is positive definite for every point $p \in \partial N_1$. Then the induced metric on $N=N_1 \cup_\phi N_2$ can be $C^2$-smoothed out in an arbitrarily small tubular neighborhood of $\partial N_1$ in $N$ to have a strict lower Ricci curvature bound $H$.
\end{thm}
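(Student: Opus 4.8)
The plan is to work in Fermi (boundary-normal) coordinates near the gluing hypersurface and to reduce the statement to a one-parameter smoothing of the induced metrics on the level sets, in which the convexity hypothesis appears as a sign condition. First I would fix $\varepsilon>0$ so small that on each $N_i$ the distance function to $\partial N_i$ has no focal points inside the $\varepsilon$-collar. Identifying the two collars via $\phi$, the glued metric on $\Sigma\times(-\varepsilon,\varepsilon)$, with $\Sigma:=\partial N_1$ identified with $\partial N_2$, takes the normal form $\bar{\mathrm{g}}=\mathrm{d}t^2+\mathrm{g}_t$, where $t>0$ parametrizes $N_1$, $t<0$ parametrizes $N_2$, and $t\mapsto\mathrm{g}_t$ is smooth on $[0,\varepsilon)$ and on $(-\varepsilon,0]$ and continuous at $t=0$, but jumps in its first $t$-derivative there. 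Computing the one-sided derivatives in terms of the second fundamental forms (in the convention under which a convex boundary has nonnegative $\mathrm{II}$) gives $\partial_t\mathrm{g}_t|_{0^+}=-2\,\mathrm{II}_{\partial N_1}$ and $\partial_t\mathrm{g}_t|_{0^-}=+2\,\mathrm{II}_{\partial N_2}$, so that $\partial_t\mathrm{g}_t|_{0^+}-\partial_t\mathrm{g}_t|_{0^-}=-2\big(\mathrm{II}_{\partial N_1}+\mathrm{II}_{\partial N_2}\big)$ is negative definite by hypothesis: the family $t\mapsto\mathrm{g}_t$ has a ``concave corner'' at $t=0$.

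Next I would record the Riccati--Gauss--Codazzi decomposition of the Ricci tensor of a metric of the form $\mathrm{d}t^2+\mathrm{g}_t$: writing $A_t:=\tfrac12\mathrm{g}_t^{-1}\partial_t\mathrm{g}_t$ for the shape operator of the level set $\Sigma_t$ and $H_t:=\operatorname{tr}A_t$ for its mean curvature, one has
\[
\overline{\mathrm{Ric}}(\partial_t,\partial_t)=|A_t|^2-\tfrac12\operatorname{tr}_{\mathrm{g}_t}\big(\partial_t^2\mathrm{g}_t\big),
\]
\[
\overline{\mathrm{Ric}}(X,Y)=\mathrm{Ric}_{\mathrm{g}_t}(X,Y)-\tfrac12\big(\partial_t^2\mathrm{g}_t\big)(X,Y)-\tfrac12 H_t\big(\partial_t\mathrm{g}_t\big)(X,Y)+2\langle A_tX,A_tY\rangle,
\]
while $\overline{\mathrm{Ric}}(\partial_t,X)$ is a Codazzi-type expression in $\mathrm{g}_t$, $\partial_t\mathrm{g}_t$ and the $\Sigma$-derivatives of $\partial_t\mathrm{g}_t$ only. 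The key observation is that the second $t$-derivative $\partial_t^2\mathrm{g}_t$ enters only the first two identities, and each time with a minus sign --- so replacing $\mathrm{g}_t$ by a family whose second $t$-derivative is made more negative definite can only increase $\overline{\mathrm{Ric}}(\partial_t,\partial_t)$ and the tangential Ricci, leaving the cross terms bounded.

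Then I would replace $\mathrm{g}_t$ by a smooth family $\tilde{\mathrm{g}}_t=\mathrm{g}_t+u_t$, with $u_t$ supported in $\{|t|<\delta\}$, $C^0$-small, and with $\Sigma$- and first $t$-derivatives bounded uniformly in $\delta$, such that $\partial_t^2\tilde{\mathrm{g}}_t=\partial_t^2\mathrm{g}_t+w_t$ where $w_t\leqslant 0$ on $(-\delta,\delta)$ and $\operatorname{tr}_{\mathrm{g}_t}(-w_t)\gtrsim\delta^{-1}$ on a central subinterval. Concretely one takes $w_t=\tfrac{1}{2\delta}\big(\partial_t\mathrm{g}_t|_{0^+}-\partial_t\mathrm{g}_t|_{0^-}\big)\,\rho(|t|/\delta)$ for a fixed even bump $\rho\geqslant 0$ with $\int_0^1\rho=1$: the normalization makes $\int_{-\delta}^{\delta}w_t\,\mathrm{d}t$ equal the slope jump, so $\partial_t\tilde{\mathrm{g}}_t$ again matches $\partial_t\mathrm{g}_t$ at $|t|=\delta$, and evenness of $\rho$ kills the first moment of $w_t$, so $\tilde{\mathrm{g}}_t$ itself matches $\mathrm{g}_t$ there; the sign $w_t\leqslant 0$ is available precisely because the corner is concave. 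Feeding $\tilde{\mathrm{g}}$ into the identities above: on the central subinterval $-\tfrac12\operatorname{tr}_{\tilde{\mathrm{g}}_t}(\partial_t^2\tilde{\mathrm{g}}_t)$ and $-\tfrac12(\partial_t^2\tilde{\mathrm{g}}_t)(X,X)$ tend to $+\infty$ uniformly as $\delta\to0$ while the Codazzi cross term stays bounded, so a Cauchy--Schwarz estimate on $v=a\partial_t+X$ yields $\mathrm{Ric}_{\tilde{\mathrm{g}}}>H$ there for $\delta$ small; on $\{|t|\geqslant\delta\}$ the metric is unchanged, hence $\mathrm{Ric}_{\tilde{\mathrm{g}}}=\mathrm{Ric}_{\bar{\mathrm{g}}}>H$; and on the remaining ``matching'' part of $\{|t|<\delta\}$ one uses $w_t\leqslant 0$ (so the $\partial_t^2$-terms can only help) together with the strict bound $\mathrm{Ric}_{\bar{\mathrm{g}}}\geqslant H+\eta$ that holds, for some $\eta>0$, by compactness of $\partial N_1$.

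I expect this last point to be the main obstacle: one must tune the fixed profile $\rho$ so that almost all of its mass, hence almost all of the large second-derivative correction, is concentrated well inside $\{|t|<\delta\}$, so that near $|t|=\delta$ the bounded deviation of $\mathrm{Ric}_{\tilde{\mathrm{g}}}$ from $\mathrm{Ric}_{\bar{\mathrm{g}}}$ is smaller than the slack $\eta$ --- this balancing is the technical heart of \cite{P97}, and it is exactly where the strict inequality $\mathrm{II}_{\partial N_1}+\mathrm{II}_{\partial N_2}>0$ is indispensable. Everything else --- the normal form, the curvature decomposition, the blow-up of Ricci on the core, and the uniform boundedness in $\delta$ of every term not involving $\partial_t^2\mathrm{g}_t$ --- is routine; and since $\delta$ may be taken arbitrarily small, the modification is supported in an arbitrarily small tubular neighbourhood of $\Sigma$, as claimed.
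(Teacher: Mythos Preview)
The paper does not prove this statement: Theorem~\ref{thm2.1} is quoted from \cite{P97} as a background tool and is applied without argument, so there is no ``paper's own proof'' to compare against.

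That said, your sketch is essentially the standard proof of Perelman's gluing theorem as it appears in the literature (e.g.\ in the write-ups accompanying \cite{BWW19}). The key ingredients --- Fermi normal form $\mathrm{d}t^2+\mathrm{g}_t$, the identification of the slope jump $\partial_t\mathrm{g}_t|_{0^+}-\partial_t\mathrm{g}_t|_{0^-}=-2(\mathrm{II}_{\partial N_1}+\mathrm{II}_{\partial N_2})$ as negative definite, the observation that $\partial_t^2\mathrm{g}_t$ enters the Ricci formulas with a favourable sign, and the mollification of the corner by a compactly supported nonpositive second-derivative correction --- are all correct and are exactly Perelman's mechanism. Your identification of the matching region near $|t|=\delta$ as the delicate point is also right.

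One imprecision worth flagging: writing $\tilde{\mathrm{g}}_t=\mathrm{g}_t+u_t$ with $\partial_t^2 u_t=w_t$ is awkward because $\mathrm{g}_t$ itself is not $C^2$ across $t=0$ (its second $t$-derivative contains a distributional jump), so the equation $\partial_t^2\tilde{\mathrm{g}}_t=\partial_t^2\mathrm{g}_t+w_t$ is not literally what you want. Cleaner is to prescribe $\partial_t^2\tilde{\mathrm{g}}_t$ directly on $(-\delta,\delta)$ as a smooth nonpositive tensor whose integral reproduces the required first-derivative increment and whose first moment reproduces the zeroth-order increment, then check that the resulting $\tilde{\mathrm{g}}_t$ is $C^0$-close to $\mathrm{g}_t$. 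This is a cosmetic fix; the substance of your argument is sound.
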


\begin{remark}\label{rmk2.2}
Let $N=N_1 \bigcup_\phi N_2$ be the space constructed as above. For any $x,y \in N$, the distance $\mathsf{d}(x,y)$ is defined as the infimum of the length of all piecewise smooth curves connecting $x,y$. This makes $(N,\mathsf{d})$ a complete metric space. According to \cite[Lemma 2]{BWW19}, for any $\epsilon>0$, there exists a $C^2$ Riemannian metric $\tilde{\mathrm{h}}$ on $N$ such that
\begin{enumerate}

\item[$(1)$] $\text{Ric}_{\tilde{\mathrm{h}}}>H$.

\item[$(2)$] $\mathsf{d}_{\mathrm{GH}}((N, \mathsf{d}), (N, \mathsf{d}_{\tilde{\mathrm{h}}}))<\epsilon$, where $\mathsf{d}_{\mathrm{GH}}$ is the Gromov-Hausdorff distance.
\end{enumerate}
\end{remark}

The main technique of \cite{W08} is to apply Theorem \ref{thm2.1} to the glued space $\widetilde{M}:= M \bigcup_{\mathrm{id}} C_M$. We collect two useful results by Wong as follows.
 
\begin{thm}[{\cite[Theorem 1.9]{W08}}]\label{thm2.3}
For all {$n\in \mathbb{N}\cap [3,\infty)$}, $H,K\in\mathbb{R}$, $\lambda\geqslant 0$ and $D>0$, there exist constants {$J_i=J_i(n,H,K,\lambda,D)$} $(i=1,2)$ such that

\begin{enumerate}
\item[$(1)$] For any $(M,\mathrm{g})\in \mathcal{M}(n,H,K,\lambda,D)$, any boundary component $\partial M^\alpha$ has {an} intrinsic diameter bound
  \[
  \mathrm{diam}(\partial M^\alpha,\mathsf{d}_{\mathrm{g}_{\partial M}})\leqslant J_1.
  \]

\item[$(2)$] For any $(M,\mathrm{g})\in \mathcal{M}(n,H,K,\lambda,D)$, $\partial M$ has at most $J_2$ components.

\item[$(3)$]   $\mathcal{M}(n,H,K,\lambda,D)$ is precompact in the Gromov-Hausdorff topology. 
\end{enumerate}
\end{thm}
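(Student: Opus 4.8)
The plan is to run the extension procedure of \cite{W08} recalled in Section \ref{sec2.2}. The key observation is that the only place a curvature hypothesis on $\partial M$ enters Wong's argument is in producing a uniform lower Ricci bound for the warped cylinder $C_M$: in \cite{W08} this is extracted from the sectional curvature bound via the Gauss equation, while here it is supplied directly by the hypothesis $\mathrm{Ric}_{\mathrm{g}_{\partial M}}\geqslant K$. Since $\mathcal{M}(n,H,K,0,D)\subset\mathcal{M}(n,H,K,1,D)$, I would first reduce to the case $\lambda>0$.

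For $(M,\mathrm{g})\in\mathcal{M}(n,H,K,\lambda,D)$ I would form $\widetilde M=M\cup_{\mathrm{id}}C_M$ as in Section \ref{sec2.2}. Using $\mathrm{Ric}_{\mathrm{g}_{\partial M}}\geqslant K$ together with $\phi\geqslant\varepsilon_0$ and the bounds on $\phi'$, $\phi''/\phi$, one has $\mathrm{Ric}_{\mathrm{g}_{C_M}}\geqslant -J_0$ with $J_0=J_0(n,K,\lambda,\varepsilon_0,t_0)$, while $\mathrm{\Rmnum{2}}^{\partial M}+\mathrm{\Rmnum{2}}^{C_M^0}\geqslant\lambda>0$; hence Perelman's theorem (Theorem \ref{thm2.1}) and the smoothing of Remark \ref{rmk2.2} give a $C^2$ metric $\tilde{\mathrm{g}}$ on $\widetilde M$ with $\mathrm{Ric}_{\tilde{\mathrm{g}}}>H'=H'(n,H,K,\lambda)$ and $\mathsf{d}_{\mathrm{GH}}\big((\widetilde M,\mathsf{d}),(\widetilde M,\mathsf{d}_{\tilde{\mathrm{g}}})\big)$ arbitrarily small, where $\mathsf{d}$ is the glued length metric. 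I would then record two elementary properties of $\mathsf{d}$: (a) every point of $C_M$ lies within $t_0$ of $\partial M$ along a vertical segment and $\mathsf{d}|_M\leqslant\mathsf{d}_{\mathrm{g}}$, so $\mathrm{diam}(\widetilde M,\mathsf{d})\leqslant D+2t_0$; (b) since $\phi\geqslant\varepsilon_0$, every subarc of a $\mathsf{d}$-minimizer lying in $C_M$ joins two points $x,y\in\partial M$ and has length $\geqslant\varepsilon_0\mathsf{d}_{\mathrm{g}}(x,y)$, so replacing such subarcs by $\mathrm{g}$-minimizers in $M$ yields $\varepsilon_0\mathsf{d}_{\mathrm{g}}\leqslant\mathsf{d}|_M\leqslant\mathsf{d}_{\mathrm{g}}$, i.e. the inclusion $(M,\mathsf{d}_{\mathrm{g}})\hookrightarrow(\widetilde M,\mathsf{d})$ is bi-Lipschitz with constants $[\varepsilon_0,1]$. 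Finally, because $\phi'(t_0)=0$ the metric obtained by doubling $\widetilde M$ along $C_M^{t_0}$ is $C^2$ (and, after a harmless mollification, smooth), so $N:=D\widetilde M$ is a closed manifold with $\mathrm{Ric}_N\geqslant H''$, $\mathrm{diam}(N)\leqslant 2(D+2t_0)=:D''$, and — by the reflection symmetry of $N$ — the inclusion $\widetilde M\hookrightarrow N$ is isometric.

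Parts (3) and (2) follow quickly from the closed manifold $N$. By Gromov's compactness theorem the family of all such $N$ is GH-precompact, hence uniformly totally bounded; composing the isometric embedding $\widetilde M\hookrightarrow N$ with the bi-Lipschitz embedding of (b) transfers a uniform covering bound at every scale to $(M,\mathsf{d}_{\mathrm{g}})$, which with $\mathrm{diam}(M,\mathsf{d}_{\mathrm{g}})\leqslant D$ gives (3). For (2), the collars $U_\alpha:=\partial M^\alpha\times(t_0/2,3t_0/2)\subset N$ (over the boundary components $\partial M^\alpha$) are pairwise disjoint and each carries a warped-product metric $\mathrm{d}t^2+\psi^2\mathrm{g}_{\partial M^\alpha}$ with $\psi\geqslant\varepsilon_0$; if $q_\alpha$ is a point of the central slice, any curve issuing from $q_\alpha$ and leaving $U_\alpha$ has length $\geqslant t_0/2$, so $\{q_\alpha\}$ is a $(t_0/2)$-separated subset of $N$ and its cardinality is at most the packing number of $(N,\mathsf{d}_N)$ at scale $t_0/2$, which is $\leqslant J_2(n,H,K,\lambda,D)$ by Bishop--Gromov. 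The disjointness of the $U_\alpha$ and $\mathrm{vol}(N)\leqslant V(n,H'',D'')$ also give $\sum_\alpha\mathrm{vol}_{\mathrm{g}_{\partial M}}(\partial M^\alpha)\leqslant V\varepsilon_0^{-(n-1)}t_0^{-1}$.

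The remaining step, part (1), is the one I expect to be the main obstacle. The volume bound just obtained, together with $\mathrm{Ric}_{\mathrm{g}_{\partial M^\alpha}}\geqslant K$, does not by itself bound the intrinsic diameter of $\partial M^\alpha$, since a lower Ricci bound plus a volume bound does not bound the diameter of a closed manifold. In \cite{W08} the sectional curvature hypothesis makes $N$ an Alexandrov space in which $\partial M^\alpha$ (suitably rescaled) is the fixed point set of the reflection involution of $N$, hence an extremal subset, and the diameter bound then follows from the quasi-convexity of extremal subsets in Alexandrov spaces of bounded curvature and diameter. In our setting $N$ carries only a lower Ricci bound, so this Alexandrov machinery is unavailable and the bound must be re-established using instead: the reflection involution $R$ of $N$ with $\mathrm{Fix}(R)=C_M^{t_0}$ totally geodesic and equipped with a product collar of definite width, the non-branching of $N$ (Theorem \ref{thm2.17}), and the lower bounds $\mathrm{Ric}_{\mathrm{g}_{\partial M^\alpha}}\geqslant K$ and $\mathrm{\Rmnum{2}}_{\partial M^\alpha}\geqslant-\lambda$, the latter preventing a boundary component from folding onto itself inside the bounded-diameter manifold. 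I would follow Wong's scheme as closely as possible here, substituting a minimizing-geodesic/non-branching argument for the extremal-subset input; once (1) is in hand, all the asserted constants $J_1,J_2$ are obtained.
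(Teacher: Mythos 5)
Your overall strategy is the paper's strategy: Theorem \ref{thm2.3} is obtained by re-running Wong's proof from \cite{W08}, the only modification being that the lower Ricci bound on $\partial M$ --- which Wong attempted to extract from the Gauss equation, and which Remark \ref{rmk2.20} explains cannot be so extracted --- is now an assumption, so that $\mathrm{Ric}_{\mathrm{g}_{C_M}}\geqslant -J(n,K,\lambda,\varepsilon_0,t_0)$ and Perelman's gluing (Theorem \ref{thm2.1}, Remark \ref{rmk2.2}) applies to $\widetilde M=M\cup_{\mathrm{id}}C_M$. Your bi-Lipschitz comparison $\varepsilon_0\mathsf{d}_{\mathrm{g}}\leqslant\mathsf{d}|_M\leqslant\mathsf{d}_{\mathrm{g}}$, the transfer of total boundedness for part (3), and the separation/packing argument for part (2) are correct and in the spirit of \cite{W08}; the doubling is also unproblematic, since $\phi$ is flat at $t_0$, so the doubled warping function is smooth.

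The genuine gap is part (1), which you flag but do not prove, and your description of how \cite{W08} handles it is inaccurate: Wong works under a lower \emph{Ricci} bound, so there is no Alexandrov/extremal-subset input in \cite{W08} (that machinery belongs to the sectional-curvature setting of \cite{YZ19}), and hence there is nothing for non-branching to ``substitute'' for; your proposed ingredients (non-branching plus $\mathrm{II}_{\partial M}\geqslant-\lambda$ ``preventing folding'') do not yield the bound. The missing mechanism is already built into the cylinder: since $\phi'\leqslant 0$ and $\phi'(t_0)=0$, the height $t$ is concave along geodesics of $C_M$ (this is exactly the relation (\ref{feaiofheaiofhaf})), so a minimizing geodesic of $\widetilde M$ (equivalently of $N$, since the reflection folds any path back into $\widetilde M$ without changing its length) between two points of the far slice $C_M^{t_0}$ either stays in that totally geodesic slice --- in which case its length equals $\varepsilon_0$ times the intrinsic $\mathsf{d}_{\mathrm{g}_{\partial M}}$-distance of its endpoints --- or it must descend to $M$ and therefore has length at least $2t_0$. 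Consequently $(\partial M^\alpha,\varepsilon_0\mathsf{d}_{\mathrm{g}_{\partial M}})$ embeds isometrically into $N$ below the definite scale $2t_0$; a maximal $t_0$-separated net of the slice is then $t_0$-separated in the closed manifold $N$, its cardinality is bounded by Bishop--Gromov (Ricci and diameter of $N$ are uniformly bounded), and chaining through the resulting finite cover of the connected component by balls of radius $t_0$ gives
\[
\mathrm{diam}\left(\partial M^\alpha,\mathsf{d}_{\mathrm{g}_{\partial M}}\right)\leqslant {\varepsilon_0}^{-1}\,2t_0\,P(n,H,K,\lambda,D)=:J_1.
\]
Without this (or an equivalent) argument, your treatment of (1), and hence the existence of $J_1$, is incomplete, while parts (2) and (3) of your proposal stand.
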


\begin{remark}\label{rmk2.20}
Let us take an $n$-dimensional Riemannian manifold with boundary $(M,\mathrm{g})$ and a point $p\in \partial M$. Let us also take an orthonormal basis $\{e_i\}_{i=1}^n$ {in} $T_p M$ such that $e_n$ is perpendicular to $T_p \partial M$.

Then according to the Gauss equation one has that at $p$,
\[
\mathrm{R}_{\mathrm{g}_{\partial M} }(e_i,e_j,e_k,e_l)={\mathrm{R}}_\mathrm{g}(e_i,e_j,e_k,e_l)+\mathrm{\Rmnum{2}}(e_i,e_l)\mathrm{\Rmnum{2}}(e_j,e_k)-{ \mathrm{\Rmnum{2}}(e_j,e_l)\mathrm{\Rmnum{2}}(e_i,e_k)},
\] 
where $\mathrm{R}_{\mathrm{g}_{\partial M}}$ is the Riemann curvature tensor of $\partial M$ and $\mathrm{R}_\mathrm{g}$ is that of $M$. Therefore we have
\[
\begin{aligned}
&\mathrm{Ric}_{\mathrm{g}_{\partial M}}(e_i,e_j)=\ \sum_{k=1}^{n-1}\mathrm{R}_{\mathrm{g}_{\partial M}}(e_i,e_k,e_k,e_j)\\
=\ &\sum_{k=1}^{n-1}\left(\mathrm{R}_\mathrm{g}(e_i,e_k,e_k,e_j)+\mathrm{\Rmnum{2}}(e_i,e_j)\mathrm{\Rmnum{2}}(e_k,e_k)-\mathrm{\Rmnum{2}}(e_i,e_k)\mathrm{\Rmnum{2}}(e_j,e_k)\right)\\
=\ &\mathrm{Ric}_{\mathrm{g}}(e_i,e_j)+\sum_{k=1}^{n-1}\left(\mathrm{\Rmnum{2}}(e_i,e_j)\mathrm{\Rmnum{2}}(e_k,e_k)-\mathrm{\Rmnum{2}}(e_i,e_k)\mathrm{\Rmnum{2}}(e_j,e_k)\right)-\mathrm{R}_\mathrm{g}(e_i,e_n,e_n,e_j).
\end{aligned}
\]

Hence the lower bound on $\mathrm{Ric}_\mathrm{g}$ and the two sided bound on $\mathrm{\Rmnum{2}}_{\partial M}$ are not enough to guarantee the lower bound on $\mathrm{Ric}_{\mathrm{g}_{\partial M}}$ since the information about {$``\mathrm{R}_\mathrm{g}(e_i,e_n,e_n,e_j)"$} remains unknown under these assumptions (see Example \ref{ex5.6} for {a} counterexample). This is why we need the Ricci lower bound for $\partial M$ in the setting of $\mathcal{M}(n,H,K,\lambda,D)$. {See also Remark \ref{feahiofhaeoifae}.}

	In \cite{W08}, Wong first used the assumptions of a lower bound on $\mathrm{Ric}_\mathrm{g}$ and a two sided bound on $\mathrm{\Rmnum{2}}_{\partial M}$ to derive a lower bound on $\mathrm{Ric}_{\mathrm{g}_{\partial M}}$, and consequently on $\mathrm{Ric}_{\mathrm{g}_{C_M}}$. By applying Theorem \ref{thm2.1}, he obtained a lower Ricci curvature bound for the glued space, which is a crucial step to prove Theorem \ref{thm2.3}. However, as discussed above, this line of reasoning does not hold. 
\end{remark}


\begin{defn}[Warped product of metric spaces]\label{defn2.4}
  Let $(X,\mathsf{d}_X)$, $(Y,\mathsf{d}_Y)$ be two metric spaces, and $\phi: Y\rightarrow \mathbb{R}_{+}$ be a continuous function. The warped length of a curve $\gamma= (\theta, \nu):[a,b]\rightarrow {X\times Y}$ is defined as
  \[
  L_\phi (\gamma) :=\limsup_{|\Delta|\rightarrow 0} \sum\limits_{i=1}^k \sqrt{{\phi}^2\big(\nu(s_i)\big){\mathsf{d}_X}^2(\theta(t_{i-1}),\theta(t_i))+{\mathsf{d}_Y}^2(\nu(t_{i-1}),\nu(t_i))},
  \]
  where $\Delta: a=t_0<t_1<\dots < t_k=b$, $|\Delta|=\max_{1\leqslant i\leqslant k} |t_i-t_{i-1}|$ and $s_i\in [t_{i-1},t_i]$. The warped product $\left(X\times_\phi Y,\mathsf{d}_{X\times_\phi Y}\right)$ is then defined as $X\times Y$ equipped with the metric induced by $L_\phi$.
\end{defn}

\begin{prop}[\cite{W10}]\label{prop2.5}
  Assume $\{(X_i,\mathsf{d}_i)\}$ Gromov-Hausdorff converges to $(X,\mathsf{d})$. Let $(Y,\mathsf{d}_{Y})$ be a compact metric space and $\phi: Y\rightarrow \mathbb{R}_{+}$ be a continuous function. Then $\left(X\times_\phi Y,\mathsf{d}_{X\times_\phi Y}\right)$ is isometric to the Gromov-Hausdorff limit space of $\left(X_i\times_\phi Y,\mathsf{d}_{X_i\times_\phi Y}\right)$.
\end{prop}

\section{Limit spaces of inradius collapsed manifolds}\label{sec3}

From now on, all metric spaces are considered component-wisely since {the limit spaces} may not be connected. 

Let $\{(M_i,\mathrm{g}_i)\} \subset \mathcal{M}(n,H,K,\lambda,D)$ with $\text{inrad}(M_i)\rightarrow 0$. By Theorem \ref{thm2.3}, after passing to a subsequence, we may assume that $\{(M_i,\mathsf{d}_{\mathrm{g}_i})\}$ Gromov-Hausdorff converges to some compact metric space $(Z,\mathsf{d}_Z)$.

According to the extension and smoothing out procedure in Section \ref{sec2.2}, letting $C_{M_i}:= \partial M_i \times_\phi [0,t_0]$, we obtain an RCD$\left({ -J(n,H,K,\lambda{,\varepsilon_0,t_0})},n\right)$ space $(\widetilde{M}_i,\mathsf{d}_{\widetilde{M}_i},\mathscr{H}^n)$ {with $\mathrm{diam}(\widetilde{M}_i,\mathsf{d}_{\widetilde{M}_i})\leqslant J(D{,t_0})$}, where $\widetilde{M}_i:=M_i\cup_\mathrm{id}C_{M_i}$, and the metric $\mathsf{d}_{\widetilde{M}_i}$ is defined in the same way as Remark \ref{rmk2.2}. By Theorems \ref{thm2.15} and \ref{thm2.16}, after passing to a subsequence, we may assume 
\begin{equation}\label{3.1}
\left(\widetilde{M}_i,\mathsf{d}_{\widetilde{M}_i},\frac{\mathscr{H}^n}{\mathscr{H}^n\left(\widetilde{M}_i\right)}\right)\xrightarrow{\mathrm{mGH}}(Y,\mathsf{d}_Y,\mathfrak{m}_Y),
\end{equation}
{\begin{equation}\label{3.2}
\left(C_{M_i},\mathsf{d}_{\mathrm{g}_{C_{M_i}}},\frac{\mathrm{vol}_{\mathrm{g}_{C_ {M_i}}}}{\mathrm{vol}_{\mathrm{g}_{C_ {M_i}}}(C_{M_i})}\right)\xrightarrow{\mathrm{mGH}}\left(C,\mathsf{d}_C,\mathfrak{m}_C\right),
\end{equation}}
\begin{equation}\label{3.3}
\left(\partial M_i, \mathsf{d}_{\mathrm{g}_{\partial M_i}},\frac{\mathrm{vol}_{\mathrm{g}_{\partial M_i}}}{\mathrm{vol}_{\mathrm{g}_{\partial M_i}}(\partial M_i)}\right)\xrightarrow{\mathrm{mGH}} (C_0,\mathsf{d},\mathfrak{m}),
\end{equation}
{for an RCD$({-J(n,H,K,\lambda{,\varepsilon_0,t_0})},n)$ space $(Y,\mathsf{d}_Y,\mathfrak{m}_Y)$, {an RCD$(-J(n,\lambda,K,t_0,\varepsilon_0),n)$} space {$(C,\mathsf{d}_C,\mathfrak{m}_C)$} and an RCD$(K,n)$ space $(C_0,\mathsf{d},\mathfrak{m})$.
}

%

It follows from Proposition \ref{prop2.5} that
\begin{align}\label{3.4}
\left(C,\mathsf{d}_C\right)=\left(C_0 \times_\phi [0,t_0],\mathsf{d}_{C_0 \times_\phi [0,t_0]}\right).
\end{align}
\begin{remark}\label{feahiofhaeoifae}
In convergences (\ref{3.3}) and (\ref{3.4}), the precompactness of {the sequence $\{(\partial M_i, \mathsf{d}_{\mathrm{g}_{\partial M_i}}
)\}$} in the Gromov-Hausdorff topology is important. However, {as explained in Remark \ref{rmk2.20}, we need to} add the assumption $\mathrm{Ric}_{\mathrm{g}_{\partial M_i}}\geqslant K$ ($i\in \mathbb{N}$), which is quite different from the setting in (\ref{jfieajifoa}).
\end{remark}

Let $\iota_i$ be the identity map from $ (C_{M_i},\mathsf{d}_{\mathrm{g}_{C_{M_i}}})$ to $(C_{M_i},\mathsf{d}_{\widetilde{M}_i})$, which is 1-Lipschitz. We define the limit map $\eta: (C,\mathsf{d}_C)\rightarrow (Y,\mathsf{d}_Y)$ as {the limit of $\iota_i$}, which is also 1-Lipschitz.

Now {by passing to a subsequence if necessary}, {we may assume}
\[
(M_i,\mathsf{d}_{\widetilde{M}_i})\xrightarrow{\mathrm{GH}}(X,\mathsf{d}_Y)
\]
as the convergence of subsets under (\ref{3.1}). {We set} 
\[
\eta_0:= \eta|_{C_0\times\{0\}}:C_0 \rightarrow X.
\]
Then it is clear that $\eta_0:(C_0,\mathsf{d}_C)\rightarrow (X,\mathsf{d}_Y)$ is a surjective 1-Lipschitz map.

{The following proposition is obvious. See also \cite[Lemmas 3.1, 3.2]{YZ19}.}
\begin{prop}\label{prop3.3}
The map $\eta$ satisfies the following properties.
\begin{enumerate}
\item[$(1)$] For any $(p,t)\in C\setminus C_0$, it holds that $\mathsf{d}_Y(\eta(p,t),X)=t$.
\item[{$(2)$}] $\eta: C\setminus C_0 \rightarrow Y\setminus X$ is a {bijective local isometry. Here by local isometry we mean for any $(p,t)\in C\setminus C_0$, $\eta|_{B_{t/2}(p,t)}$ is an isometry.}

\end{enumerate}
\end{prop}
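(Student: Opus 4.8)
\textbf{Proof plan for Proposition \ref{prop3.3}.}
The plan is to work directly with the warped-product structure \eqref{3.4} of $C$ together with Wong's extension $\widetilde M_i = M_i \cup_{\mathrm{id}} C_{M_i}$, and to exploit the normal exponential map from the boundary. First I would treat assertion $(1)$. For a point $(p,t)\in C\setminus C_0$ with $t\in(0,t_0]$, the function $\rho(q,s) := s$ on $C_{M_i}$ is, by construction of the warped metric $\mathrm{g}_{C_{M_i}} = \mathrm{d}s^2 + \phi^2(s)\mathrm{g}_{\partial M_i}$, exactly the distance in $(\widetilde M_i, \mathsf{d}_{\widetilde M_i})$ from the collar point $(q,s)$ to $\partial M_i \subset M_i$: indeed any curve in $\widetilde M_i$ from $(q,s)$ to $M_i$ must cross each level $C_{M_i}^\tau$ for $\tau\in[0,s]$, so its length is at least $\int_0^s \mathrm{d}\tau = s$ (the radial segment realizes this), and conversely $\mathsf{d}_{\widetilde M_i}((q,s), M_i) \le s$ via the radial segment; here one uses that the smoothing in Theorem \ref{thm2.1} is supported in an arbitrarily small neighborhood of $\partial M_i$ and changes distances by at most $\epsilon_i\to 0$ (Remark \ref{rmk2.2}(2)), so these equalities hold in the limit. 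Passing to the limit under \eqref{3.1}, and noting that $X$ is exactly the limit of the subsets $M_i$, gives $\mathsf{d}_Y(\eta(p,t), X) = t$ for $(p,t)\in C\setminus C_0$.

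For assertion $(2)$, the key point is that inside the open collar the metric spaces $(C_{M_i}, \mathsf{d}_{\mathrm{g}_{C_{M_i}}})$ and $(C_{M_i}, \mathsf{d}_{\widetilde M_i})$ agree locally, so $\iota_i$ is a local isometry away from $\partial M_i$ with a uniform scale. Concretely, I would show: for $(q,s)\in C_{M_i}$ with $s>0$, any minimal $\widetilde M_i$-geodesic joining two points of the ball $B_{s/2}(q,s)$ cannot reach $M_i$ (it would have to travel distance $\ge s$ to get to $M_i$ and $\ge s$ to come back, total $\ge 2s$, whereas the diameter of that ball is $<s$), hence stays in the collar $\partial M_i\times(s/2, 3s/2)$ where the two metrics literally coincide; thus $\mathsf{d}_{\widetilde M_i} = \mathsf{d}_{\mathrm{g}_{C_{M_i}}}$ on $B_{s/2}(q,s)$. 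Taking limits, $\eta$ restricted to $B_{t/2}(p,t)$ is an isometry onto its image. Surjectivity of $\eta: C\setminus C_0\to Y\setminus X$ follows because every point of $Y$ at positive distance $t$ from $X$ is a limit of collar points (any point of $\widetilde M_i$ at distance $t>0$ from $M_i$ lies in $C_{M_i}$ and at $s$-level $t$), and injectivity follows from the local-isometry statement combined with assertion $(1)$: two distinct collar points mapping to the same $y$ would lie on the same level $C^t$ (by $(1)$), and a short argument using the product structure $\eqref{3.4}$ shows the level map is injective near each point. Alternatively, since each $C_{M_i}^t\cong \partial M_i$ and the warping is fixed, one identifies $\eta^{-1}$ on $Y\setminus X$ with the exponential-type collar coordinates and injectivity is immediate on each level.

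The main obstacle I anticipate is handling the smoothing region cleanly: Theorem \ref{thm2.1} modifies the glued metric near $\partial M_i$, so the identifications "$\mathsf{d}_{\widetilde M_i}=\mathsf{d}_{\mathrm{g}_{C_{M_i}}}$ on small balls" and "$\mathsf{d}_{\widetilde M_i}(\cdot, M_i)=s$" are only approximately true at finite $i$, with error going to $0$ as the smoothing neighborhood shrinks (Remark \ref{rmk2.2}). I would therefore fix, for each $i$, the smoothing scale to be $<1/i$, record the resulting $\epsilon_i\to 0$ error in all distance comparisons, and only claim the exact statements in the limit. A second, more minor point is that $X$ a priori is just the limit of the closed subsets $M_i$; I would use that $\partial M_i$ has $\mathsf{d}_{\widetilde M_i}$-distance $0$ from the rest of $M_i$ and that $\mathrm{inrad}(M_i)\to 0$ to ensure $X = \eta_0(C_0)$ and that every point of $Y\setminus X$ is genuinely a collar limit, so no mass of $Y$ is "lost" between $X$ and the collar. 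Once these bookkeeping issues are dispatched, the proposition is a direct consequence of the explicit warped-collar geometry, which is why the authors call it obvious.
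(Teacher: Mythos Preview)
Your proposal is correct and is precisely the natural argument the paper has in mind: the authors do not prove Proposition \ref{prop3.3} at all but declare it ``obvious'' and cite \cite[Lemmas 3.1, 3.2]{YZ19}, whose proofs proceed exactly as you describe (radial distance in the collar, geodesics between points of $B_{t/2}(p,t)$ cannot reach $M_i$, pass to the limit). Your bookkeeping of the smoothing error via Remark \ref{rmk2.2} is the right way to make the finite-$i$ statements exact in the limit; for injectivity the cleanest phrasing is that if $\eta(p_1,t)=\eta(p_2,t)$ then $\mathsf{d}_Y$ between the images is $0<2t$, so already at stage $i$ the corresponding $\widetilde M_i$-geodesic avoids $M_i$ and the $C_{M_i}$-distance goes to $0$, forcing $(p_1,t)=(p_2,t)$.
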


{\begin{remark}[Foot point and perpendicular]\label{rmk3.3}
For any $y\in Y\setminus X$, Proposition \ref{prop3.3} guarantees the existence of a unique $(p,t)$ in $C\setminus C_0$ such that $\eta(p,t)=y$. Because $\eta$ is continuous, taking the limit $x:=\lim_{s\downarrow 0}\eta(p,s)$ yields $\eta_0(p)=x$ and
\[\mathsf{d}_Y(y,x)=\mathsf{d}_Y(y,X)=t.\]

{This} point $x$ is referred to as the {\textit{foot point}} of $y$, and the curve $\zeta:[0,t_0]\to Y$ defined by $\zeta(s)=\eta(p,s)$ is called a {\textit{perpendicular}} to $X$ at $x$.                                                                                                                                                                                              
\end{remark}}

\begin{lem}\label{lem3.4}
Let $y,z$ be two points in $Y\setminus X$ such that $\gamma_{y,z}\cap X=\emptyset$, then $\eta^{-1}(\gamma_{y,z})$ is also a minimal geodesic on $C$ connecting $\eta^{-1}(y)$ and $\eta^{-1}(z)$.
\end{lem}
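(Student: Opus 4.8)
The plan is to show that the preimage $\sigma := \eta^{-1}(\gamma_{y,z})$ is a curve in $C$ of length at most $\mathsf{d}_C(\eta^{-1}(y),\eta^{-1}(z))$, which forces it to be a minimal geodesic since $\eta$ is $1$-Lipschitz. First I would invoke Proposition \ref{prop3.3}(2): since $\gamma_{y,z}\cap X = \emptyset$ and $\gamma_{y,z}$ is compact, its distance to $X$ is bounded below by some $\delta > 0$, and $\eta$ restricted to $C \setminus C_0$ is a bijective local isometry, with the isometry radius at $(p,t)$ being $t/2 \geq \delta/2$ by part (1). Hence $\sigma = \eta^{-1}\circ\gamma_{y,z}$ is a well-defined continuous curve in $C\setminus C_0$ (bijectivity of $\eta$ on $C\setminus C_0$ gives a genuine lift), and it is locally length-preserving: cover $\gamma_{y,z}$ by finitely many balls $B_{\delta/2}(\gamma_{y,z}(t_j))$ on each of which $\eta^{-1}$ is an isometry onto its image, so on each corresponding subinterval $\sigma$ has the same length as the corresponding arc of $\gamma_{y,z}$.

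Summing over the finite cover, $L(\sigma) = L(\gamma_{y,z}) = \mathsf{d}_Y(y,z)$. On the other hand, $\eta(\sigma(a)) = y$ and $\eta(\sigma(b)) = z$ where $[a,b]$ is the parameter interval, so $\sigma(a) = \eta^{-1}(y)$ and $\sigma(b) = \eta^{-1}(z)$ (using bijectivity again). Therefore
\[
\mathsf{d}_C\bigl(\eta^{-1}(y),\eta^{-1}(z)\bigr) \leq L(\sigma) = \mathsf{d}_Y(y,z) \leq \mathsf{d}_C\bigl(\eta^{-1}(y),\eta^{-1}(z)\bigr),
\]
the last inequality because $\eta$ is $1$-Lipschitz. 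Hence all inequalities are equalities, $\sigma$ realizes the distance between its endpoints, and $\sigma$ is a minimal geodesic in $C$ joining $\eta^{-1}(y)$ to $\eta^{-1}(z)$. Since $\eta$ is a local isometry near $\sigma$, $\sigma$ also has unit speed if $\gamma_{y,z}$ does, and $\eta$ maps it onto $\gamma_{y,z}$; this is the assertion $\eta^{-1}(\gamma_{y,z})$ is a minimal geodesic.

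The main obstacle I anticipate is the careful handling of the lifting: one must confirm that $\gamma_{y,z}$ genuinely stays in $Y\setminus X$ (given) and that the local isometry property plus bijectivity of $\eta$ on $C\setminus C_0$ patch together into a \emph{global} continuous lift defined on all of $[a,b]$ — this is a standard covering-space-style argument but requires that the overlaps of the isometry balls be handled consistently, and one should note that $\eta$ being injective on $C\setminus C_0$ makes the lift unique and removes any monodromy issue. A secondary point to verify is that $\eta^{-1}(y)$ and $\eta^{-1}(z)$ indeed lie in $C\setminus C_0$, which is immediate from $y,z\in Y\setminus X$ together with Proposition \ref{prop3.3}(1)--(2). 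Everything else is the short length comparison above.
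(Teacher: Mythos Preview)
Your proposal is correct and follows essentially the same approach as the paper: use Proposition~\ref{prop3.3}(2) (bijective local isometry on $C\setminus C_0$) to conclude $L^{\mathsf{d}_C}(\eta^{-1}(\gamma_{y,z}))=\mathsf{d}_Y(y,z)$, then sandwich with the $1$-Lipschitz inequality. The paper's proof is a two-line version of exactly your argument, simply citing Proposition~\ref{prop3.3} for the length equality without spelling out the finite-cover lifting step that you (correctly) supply in detail.
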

\begin{proof}
For simplicity denote by $\gamma:=\gamma_{y,z}$ and by $l:=\mathsf{d}_Y(y,z)$. Then it follows from Proposition \ref{prop3.3} that $L^{\mathsf{d}_C}(\eta^{-1}(\gamma))=l$. We are done by noticing that
\[
l=\mathsf{d}_Y(y,z)\leqslant \mathsf{d}_C\left(\eta^{-1}(y),\eta^{-1}(z)\right)\leqslant L^{\mathsf{d}_C}(\eta^{-1}(\gamma))=l.
\]
\end{proof}


Let us set $\mathsf{d}_X$ as the intrinsic metric on $X$ induced by $\mathsf{d}_Y$. {We have the following estimates.}

\begin{prop}\label{prop3.5}
{There exists a constant $J=J(\lambda,\varepsilon_0,t_0)$ such that the following holds.
\begin{enumerate}
  \item[$(1)$] For any $x,y \in X$ we have  
\[
\sup\limits_{t\in [0,\mathsf{d}_Y(x,y)]} \mathsf{d}_Y\left(\gamma_{x,y}^{{ (Y,\mathsf{d}_Y)}}(t),X\right)\leqslant  J\,(\mathsf{d}_Y(x,y))^2.
\] 
\item[$(2)$]  For any $p,q\in C_0$ with $\mathsf{d}_C(p,q)\leqslant  (2J)^{-1/2}$, we have
 \[
\mathsf{d}_{C_0}(p,q)\geqslant \mathsf{d}_C(p,q)\geqslant\left(1-J\,(\mathsf{d}_C(p,q))^2\right)\mathsf{d}_{C_0}(p,q).
\]
\item[$(3)$] For any $x,y \in X$ with $\mathsf{d}_Y(x,y)\leqslant  (2J)^{-1/2}$, we have 
\[
\mathsf{d}_X(x,y)\geqslant \mathsf{d}_Y(x,y)\geqslant \left(1-J\,(\mathsf{d}_Y(x,y))^2\right)\mathsf{d}_X(x,y).
\]
\end{enumerate}}
\end{prop}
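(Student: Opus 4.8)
The three estimates are really one estimate, transported along the local isometry $\eta$ of Proposition \ref{prop3.3}. The key geometric input is the warped product structure \eqref{3.4}: in $C = C_0\times_\phi[0,t_0]$, the function $\phi$ is smooth on $[0,t_0]$ with $\phi(0)=1$, $\phi'(0)=-2\lambda$, and $\max_{[0,t_0]}|\phi''/\phi|\leqslant J(\lambda,\varepsilon_0,t_0)$; in particular $\phi$ is Lipschitz and $1-Jt\leqslant \phi(t)\leqslant 1$ for small $t$, say for $t\leqslant (2J)^{-1/2}$. I will first prove (1), then deduce (2) by a soft argument using (1) and Lemma \ref{lem3.4}, and finally (3) which is (2) pushed through $\eta_0$.

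\textbf{Step 1 (estimate (1)).} Fix $x,y\in X$ and set $l=\mathsf{d}_Y(x,y)$, $\gamma=\gamma^{(Y,\mathsf{d}_Y)}_{x,y}$. I want to bound $h:=\sup_{t\in[0,l]}\mathsf{d}_Y(\gamma(t),X)$. The idea is to compare $\gamma$ with the ``straight'' competitor obtained by lifting to $C_0\times_\phi[0,t_0]$ and going along the slice $C_0\times\{s\}$ at a fixed small height $s$. Concretely: for each $s\in(0,t_0]$, the curve that goes from $x$ up a perpendicular to height $s$ (length $s$), then along $C_0\times\{s\}$ from the foot lift of $x$ to the foot lift of $y$, then back down to $y$ (length $s$) has $\mathsf{d}_Y$-length at most $2s + \phi(s)\,\mathsf{d}_{C_0}(\bar x,\bar y)$, where $\bar x,\bar y\in C_0$ are the foot lifts and we used that a horizontal segment in the warped metric at height $s$ has length $\phi(s)$ times its $C_0$-length. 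Since $\mathsf{d}_{C_0}(\bar x,\bar y)\leqslant \mathrm{diam}(C_0)=:D_0<\infty$ and $\phi\leqslant 1$, optimizing in $s$ and using minimality of $\gamma$ gives a first crude bound $l\leqslant 2s+\mathsf{d}_{C_0}(\bar x,\bar y)$ for all $s$, hence $l\leqslant \mathsf{d}_{C_0}(\bar x,\bar y)$. But more is true: the portion of $\gamma$ at height $\geqslant h/2$ over an interval of $\gamma$-length at least (something controlled by $h$) must be ``paid for'' by the $\phi$-contraction. Precisely, on $\{t:\mathsf{d}_Y(\gamma(t),X)\geqslant h/2\}$ — a closed set containing the point where the max $h$ is attained — the horizontal speed of the lifted curve is scaled by $\phi\leqslant \phi(h/2)\leqslant 1-\tfrac{h}{2}\cdot c$ for a constant $c=c(\lambda,\varepsilon_0,t_0)$ (using $\phi'(0)=-2\lambda<0$ when $\lambda>0$; when $\lambda=0$ one uses $\phi''/\phi$ control to get $\phi(t)\leqslant 1-c t^2$, which only improves the final exponent). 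Lifting $\gamma$ via $\eta^{-1}$ on each subarc disjoint from $X$ (Lemma \ref{lem3.4}) and comparing its warped length with the competitor that stays at height $0$, i.e.\ in $C_0$, forces the interval on which $\gamma$ reaches height $\geqslant h/2$ to have $\gamma$-length $\gtrsim h$ (it takes length $\geqslant h$ just to go up to height $h$ and back), and the contraction on that interval saves at least $c\cdot\tfrac h2\cdot h$ in length. Since $\gamma$ is a geodesic of length $l$ and the $C_0$-competitor has length $\geqslant l$ (as $\mathsf{d}_{C_0}\geqslant \mathsf{d}_C\geqslant \mathsf{d}_Y$ by $1$-Lipschitzness of $\eta$ and $\eta_0$), balancing gives $c\cdot h^2 \lesssim $ (length the geodesic ``wastes'' going up) $\lesssim l^2$ after the standard ``shortcut at the base'' estimate. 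This yields $h\leqslant J l^2$.

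\textbf{Step 2 (estimates (2) and (3)).} For (2), take $p,q\in C_0$ with $\mathsf{d}_C(p,q)\leqslant (2J)^{-1/2}$. The inequality $\mathsf{d}_{C_0}(p,q)\geqslant \mathsf{d}_C(p,q)$ is immediate since including the extra $[0,t_0]$ direction can only shorten distances. For the other inequality, let $x=\eta_0(p)$, $y=\eta_0(q)$ in $X$. Because $\eta$ restricted to $C\setminus C_0$ is a bijective local isometry onto $Y\setminus X$ and $\eta_0$ is $1$-Lipschitz, one has $\mathsf{d}_Y(x,y)\leqslant \mathsf{d}_C(p,q)$; conversely, lifting the $Y$-geodesic $\gamma_{x,y}$ (broken into pieces avoiding $X$, via Lemma \ref{lem3.4}) to $C$ and projecting to $C_0$ realizes a curve from $p$ to $q$ in $C_0$ whose length is at most $\phi(\min)^{-1}$ times the $\mathsf{d}_C$-length of the lift, i.e.\ at most $(1-Jh)^{-1}\mathsf{d}_Y(x,y)$ where $h=\sup_t \mathsf{d}_Y(\gamma_{x,y}(t),X)\leqslant J(\mathsf{d}_Y(x,y))^2\leqslant J(\mathsf{d}_C(p,q))^2$ by Step 1 and $\mathsf{d}_Y(x,y)\leqslant\mathsf{d}_C(p,q)$; the condition $\mathsf{d}_C(p,q)\leqslant(2J)^{-1/2}$ keeps $Jh\leqslant 1/2$ so $(1-Jh)^{-1}\leqslant 1+2Jh\leqslant 1+2J(\mathsf{d}_C(p,q))^2$, giving $\mathsf{d}_{C_0}(p,q)\leqslant (1+J'(\mathsf{d}_C(p,q))^2)\mathsf{d}_C(p,q)$, equivalently $\mathsf{d}_C(p,q)\geqslant (1-J(\mathsf{d}_C(p,q))^2)\mathsf{d}_{C_0}(p,q)$ after adjusting constants. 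Estimate (3) is the exact image of this under $\eta_0$: $\mathsf{d}_X(x,y)\geqslant\mathsf{d}_Y(x,y)$ is trivial, and the reverse follows by realizing an $X$-path from $x$ to $y$ of length $\leqslant (1+J(\mathsf{d}_Y(x,y))^2)\mathsf{d}_Y(x,y)$ from the corresponding $C_0$-path via the $1$-Lipschitz surjection $\eta_0$ together with Step 1 applied again to control the height along $\gamma_{x,y}$.

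\textbf{Main obstacle.} The delicate point is Step 1: quantifying how much length a $Y$-geodesic ``loses'' by venturing to height $h$ and converting that into the quadratic bound $h\leqslant Jl^2$. This requires handling the two regimes $\lambda>0$ (linear decay $\phi(t)\approx 1-2\lambda t$, the main case) and $\lambda=0$ (only quadratic decay $\phi(t)\approx 1-ct^2$) with a single uniform constant, and being careful that the geodesic may cross $X$ — one must decompose $\gamma\setminus X$ into at most countably many open arcs, apply Lemma \ref{lem3.4} on each, and control the foot points, which is where the intrinsic diameter bound on $C_0$ (Theorem \ref{thm2.3}(1), via \eqref{3.3}) and the compactness of $C$ enter. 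Everything else is a soft transport argument through the $1$-Lipschitz maps $\eta$, $\eta_0$ and the local isometry of Proposition \ref{prop3.3}.
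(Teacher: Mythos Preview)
Your Step~1 contains a genuine gap: the competitor/balancing heuristic does not produce the quadratic bound $h\leqslant J\,l^2$. Balancing ``savings from contraction'' against ``cost of going up'' gives the wrong scaling. Concretely, going up to height $h$ costs vertical length $\gtrsim 2h$, while the horizontal contraction at height $\geqslant h/2$ saves $\sim (1-\phi(h/2))\cdot L_{\mathrm{hor}}\sim \lambda h\cdot L_{\mathrm{hor}}$; setting these equal yields $L_{\mathrm{hor}}\sim 1/\lambda$, a constraint on the horizontal length, not a bound of the form $h\lesssim l^2$. (When $\lambda=0$ the warping function is identically $1$, so there is no contraction at all and your ``$\phi(t)\leqslant 1-ct^2$'' is false; the correct statement is $\phi\equiv 1$, the product is direct, and geodesics between points of $C_0$ stay in $C_0$.) What is actually needed is second--order information on the height function. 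The paper obtains it by approximating with smooth geodesics $\gamma_i=(\theta_i,\nu_i)$ in the warped cylinders $C_{M_i}$, where the geodesic equation gives
\[
\nu_i''=\frac{\phi'(\nu_i)}{\phi(\nu_i)}\,|\theta_i'|^2\in[-J,0],
\]
so $\nu_i$ is concave with $|\nu_i''|\leqslant J$; Taylor's theorem at the maximum point ($\nu_i'=0$ there) then yields $\nu_i(t_i)-\nu_i(l_i)\leqslant \tfrac{J}{2}l_i^2$, and one passes to the limit. Your purely variational argument does not access this concavity/second--derivative bound.

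Your Step~2 also has a subtle problem. You pass to $x=\eta_0(p)$, $y=\eta_0(q)$, take the $Y$--geodesic $\gamma_{x,y}$, lift it through $\eta^{-1}$ and project to $C_0$. But the lift of $\gamma_{x,y}$ need not start at $p$: when $\#\eta_0^{-1}(x)=2$ it may land at $f(p)$, and the projected curve then fails to join $p$ to $q$. The paper avoids this by working with the $C$--geodesic $(\theta,\nu)$ from $p$ to $q$ directly; the same ODE argument as in (1) (applied in $C_{M_i}$) bounds $\sup\nu\leqslant J(\mathsf{d}_C(p,q))^2$, and then $\phi(\nu(t))\geqslant 1-J\nu(t)$ together with Definition~\ref{defn2.4} gives (2) immediately. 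Once (1) and (2) are in hand, (3) follows as in \cite[Lemma~4.14(3)]{YZ19}; your sketch for (3) again routes through a lift that may land in the wrong fibre, so it needs the same correction.
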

\begin{proof}{Although the proof is almost the same as \cite[Lemmas 4.2 and 4.14]{YZ19}, we give the proof for reader's convenience.}

{(1) Let $\gamma$ be a unit speed minimal geodesic from $x$ to $y$ in $Y$ and let $t^\ast\in (0,\mathsf{d}_Y(x,y))$ satisfy \[
	\mathsf{d}_Y(\gamma(t^\ast),X)=\sup_{t\in (0,\mathsf{d}_Y(x,y))}\mathsf{d}_Y(\gamma(t),X).
	\]
Take $a,b\in (0,\mathsf{d}_Y(x,y))$ such that $t^\ast\in (a,b)$, that $\gamma((a,b))\subset Y\setminus X$ and that $\gamma(a),\gamma(b)\in X$. Lemma \ref{lem3.4} guarantees the existence of a unit speed minimal geodesic $\sigma:[a,b]\to C$ such that $\gamma=\eta\circ\sigma$ on $[a,b]$.

For any sufficiently small $\epsilon>0$, take $z_i,w_i\in C_{M_i}$ such that $z_i\to \sigma(a+\epsilon)$ and $w_i\to \sigma(b-\epsilon)$ under the convergence (\ref{3.3}). Let $\gamma_i:=(\theta_i,\nu_i)$ be the unit speed minimal geodesic from $z_i$ to $w_i$ in $C_{M_i}$ and let $l_i$ be the distance between $z_i$ and $w_i$ in $C_{M_i}$. 

By Theorem \ref{thm2.17}, the limit of $\gamma_i$ must be $\sigma|_{[a+\epsilon,b-\epsilon]}$. Therefore, for sufficiently {large} $i$, we have $\gamma_i\cap \partial M_i=\emptyset$. Moreover, since for each $t\in (0,l_i)$ it holds \begin{align}\label{feaiofheaiofhaf}
\nu_i''(t)=-\mathrm{\Rmnum{2}}^{C_{M_i}^t}(\sigma_i'(t),\sigma_i'(t))=\frac{\phi'(\nu_i(t))}{\phi(\nu_i(t))}\left|\sigma_i{ '}(t)\right|^2,
\end{align}
and $\gamma_i$ is unit speed, we have $-J(\lambda,\varepsilon_0,t_0)\leqslant \sup_{[0,l_i]}\nu_i\leqslant 0$. 

Let us take $t_i\in [0,l_i]$ such that $\nu_i(t_i)=\max_{[0,l_i]}\nu_i$. It is clear that $\nu_i'(t_i)=0$. {Moreover,} there exists a real number $\xi_i$ between $t_i$ and $l_i$ such that
\begin{align}\label{feaaffaefeaf}
\nu_i({l_i})-\nu_i(t_i)=\frac{({l_i}-t_i)^2}{2}\nu_i''(\xi_i)\in [-J(\lambda,\varepsilon_0,t_0)\,{l_i}^2,0].
\end{align}

Since $\gamma_i(t_i)\to \sigma(t^\ast)$ and {$l_i\to b-a+2\epsilon$} as $i\to \infty$, letting $i\to \infty$ in (\ref{feaaffaefeaf}) and applying Proposition \ref{prop3.3} yields
\[
\mathsf{d}_Y(\gamma(t^\ast),X)\leqslant\mathsf{d}_Y(\gamma({ b-\epsilon}),X)+{J(\lambda,\varepsilon_0,t_0)}(b-a-2\epsilon)^2.
\] 
{Because $b-a\leqslant\mathsf{d}_Y(x,y)$, }letting $\epsilon\to 0$ then completes the proof.

(2) Let $\gamma:=(\theta,\nu)$ be the unit speed minimal geodesic from $p$ to $q$ in $C$. From (1), {we know} $\sup_{[0,\mathsf{d}_C(p,q)]}\nu<J\,(\mathsf{d}_C(p,q))^2$. The conclusion follows directly from Definition \ref{defn2.4} and the fact that
\[
{ 0\geqslant}\phi(\nu(t))-1=\phi(\nu(t))-\phi(0)\geqslant -J\,\nu(t),\ \forall t\in [0,\mathsf{d}_C(p,q)].
\]

(3) {The proof follows {from} the same argument as in \cite[Lemma 4.14(3)]{YZ19}, and results from (1) and (2).}
}\end{proof}

\begin{remark}\label{rmk3.6}
Since $\eta_0:(C_0,\mathsf{d}_C)\rightarrow (X,\mathsf{d}_Y)$ is a surjective 1-Lipschitz map, $\eta_0:(C_0,\mathsf{d})\rightarrow (X,\mathsf{d}_X)$ is also a surjective 1-Lipschitz map.
\end{remark}

{Now we can identify $(X,\mathsf{d}_X)$ with the Gromov-Hausdorff limit space of {$\{(M_i,\mathsf{d}_{\mathrm{g}_i})\}$} (namely $(Z,\mathsf{d}_Z)$) as follows.}

\begin{thm}\label{thm3.7}
  $(X,\mathsf{d}_X)$ is isometric to $(Z,\mathsf{d}_Z)$.
\end{thm}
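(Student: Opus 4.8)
The plan is to show the two metric spaces $(X,\mathsf{d}_X)$ and $(Z,\mathsf{d}_Z)$ are isometric by constructing a Gromov-Hausdorff approximation between them, or more precisely by identifying both as limits of the same (or comparable) families of finite metric spaces. The key observation is that for fixed small $t_0$, the manifold $M_i$ sits inside $\widetilde M_i = M_i\cup_{\mathrm{id}}C_{M_i}$ with $\mathsf{d}_{\widetilde M_i}\leqslant \mathsf{d}_{\mathrm{g}_i}$ on $M_i$, and since $\mathrm{inrad}(M_i)\to 0$, every point of $M_i$ is within distance $\mathrm{inrad}(M_i)$ of $\partial M_i$ in the intrinsic metric $\mathsf{d}_{\mathrm{g}_i}$; hence the inclusion $(\partial M_i,\mathsf{d}_{\mathrm{g}_i})\hookrightarrow (M_i,\mathsf{d}_{\mathrm{g}_i})$ is an $\mathrm{inrad}(M_i)$-dense isometric embedding, so $(M_i,\mathsf{d}_{\mathrm{g}_i})\xrightarrow{\mathrm{GH}}(Z,\mathsf{d}_Z)$ forces $(\partial M_i,\mathsf{d}_{\mathrm{g}_i})\xrightarrow{\mathrm{GH}}(Z,\mathsf{d}_Z)$ as well. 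On the other side, $(M_i,\mathsf{d}_{\widetilde M_i})\xrightarrow{\mathrm{GH}}(X,\mathsf{d}_Y)$ by construction, and the same inradius density argument (with respect to $\mathsf{d}_{\widetilde M_i}$, noting $\mathsf{d}_{\widetilde M_i}\leqslant\mathsf{d}_{\mathrm{g}_i}$) gives that $\partial M_i$ is still dense in $M_i$ inside $\widetilde M_i$, so $(\partial M_i,\mathsf{d}_{\widetilde M_i})\xrightarrow{\mathrm{GH}}(X,\mathsf{d}_Y)$. Thus it remains to compare the two exterior boundary metrics $\mathsf{d}_{\mathrm{g}_i}|_{\partial M_i}$ and $\mathsf{d}_{\widetilde M_i}|_{\partial M_i}$.

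The heart of the matter, therefore, is to show that these two metrics on $\partial M_i$ are uniformly close, so that their GH limits coincide; equivalently, $(Z,\mathsf{d}_Z)$ with its intrinsic metric agrees with $(X,\mathsf{d}_X)$. First I would observe one inequality for free: any curve in $M_i$ is a curve in $\widetilde M_i$, so $\mathsf{d}_{\widetilde M_i}\leqslant \mathsf{d}_{\mathrm{g}_i}$ on $\partial M_i\times\partial M_i$, giving a $1$-Lipschitz surjective limit map $(Z,\mathsf{d}_Z)\to (X,\mathsf{d}_Y)$. For the reverse direction I would argue that a curve in $\widetilde M_i$ joining two boundary points can be pushed into $M_i$ without increasing its length by more than a controlled factor: on the cylinder part $C_{M_i}=\partial M_i\times_\phi[0,t_0]$, the projection $(p,t)\mapsto (p,0)$ onto the boundary slice decreases lengths because $\phi$ is decreasing with $\phi(0)=1$ (so the warping factor $\phi(t)\leqslant 1$ means projecting to $t=0$ replaces $\phi(t)\,\mathsf{d}_{\partial M_i}$ by $\mathsf{d}_{\partial M_i}\geqslant\phi(t)\,\mathsf{d}_{\partial M_i}$ — wait, that goes the wrong way, so one must instead estimate more carefully using that an optimal $\widetilde M_i$-geodesic between boundary points stays in a tubular neighborhood of $\partial M_i$ of radius comparable to $\mathrm{inrad}(M_i)$ or to the geodesic's length squared, exactly as quantified in Proposition \ref{prop3.5}). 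Concretely, Proposition \ref{prop3.5}(3) already states $\mathsf{d}_X(x,y)\geqslant\mathsf{d}_Y(x,y)\geqslant (1-J\,\mathsf{d}_Y(x,y)^2)\mathsf{d}_X(x,y)$ for nearby points, and I would chain this along a subdivision of a minimal geodesic to upgrade it to a global comparison, then let the diameter-localization and the smallness of $t_0$ (which we may send to $0$ along a diagonal subsequence, as $(X,\mathsf{d}_Y)$ does not depend on $t_0$) wash out the distortion entirely.

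So the structured steps are: (i) record $\mathsf{d}_{\widetilde M_i}\leqslant\mathsf{d}_{\mathrm{g}_i}$ on $M_i$ and extract the $1$-Lipschitz surjection $\pi:(Z,\mathsf{d}_Z)\to (X,\mathsf{d}_Y)$ in the limit; (ii) use $\mathrm{inrad}(M_i)\to 0$ to see $\partial M_i$ is GH-dense in $M_i$ under both metrics, reducing everything to the two boundary-metric limits; (iii) establish the reverse length comparison for $\widetilde M_i$-paths between boundary points, using Lemma \ref{lem3.4}, Proposition \ref{prop3.3} and the ODE estimate \eqref{feaiofheaiofhaf}–\eqref{feaaffaefeaf} that controls how far a geodesic dips into the cylinder, to get $\mathsf{d}_{\mathrm{g}_i}(x,y)\leqslant (1+o(1))\,\mathsf{d}_{\widetilde M_i}(x,y)$ uniformly on bounded scales; (iv) pass to the limit to conclude $\pi$ is a bijective isometry, i.e. $(X,\mathsf{d}_X)\cong(Z,\mathsf{d}_Z)$. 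I expect step (iii) to be the main obstacle: one must rule out that a short $\widetilde M_i$-geodesic between two boundary points takes a long "shortcut" through the cylinder (or even through the far glued piece), and then re-route it inside $M_i$ with negligible extra length; the quantitative tubular-neighborhood bound from Proposition \ref{prop3.5}(1), together with the fact that on the thin cylinder $C_{M_i}$ the radial coordinate of a geodesic stays $O(\text{length}^2)$-small, is precisely what makes this work, and the bookkeeping of constants (independent of $i$, and with the right $t_0\to 0$ behavior) is where the care is needed.
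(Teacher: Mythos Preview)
Your approach is correct and matches the paper's, which simply cites Proposition~\ref{prop3.5} together with the argument of \cite[Proposition 3.8]{YZ19} (precisely the local distance comparison plus chaining you outline). Two minor clarifications: sending $t_0\to 0$ is both unnecessary and problematic, since the constant $J=J(\lambda,\varepsilon_0,t_0)$ in Proposition~\ref{prop3.5} may blow up---fix $t_0$ and let only the subdivision mesh $\delta\to 0$; and chaining Proposition~\ref{prop3.5}(3) will not force $\mathsf{d}_X=\mathsf{d}_Y$ globally (they differ in general), so the correct endgame is to observe that $(Z,\mathsf{d}_Z)$ is a length space, whence the $1$-Lipschitz map $\pi:(Z,\mathsf{d}_Z)\to(X,\mathsf{d}_Y)$ is automatically $1$-Lipschitz into the induced length metric $(X,\mathsf{d}_X)$, and then chain the local estimate along a $\mathsf{d}_X$-geodesic to get the reverse inequality for $\pi^{-1}$.
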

\begin{proof}
{It follows directly from Proposition \ref{prop3.5} and the proof of \cite[Propostion 3.8]{YZ19}.}
\end{proof}
As for the map $\eta_0$, we have
\begin{lem}\label{lem3.8}
  For every $x \in X$, the following statement holds.
\begin{enumerate}

\item[$(1)$] $\# {\eta_0}^{-1}(x)\leqslant 2$.

\item[$(2)$] If $\#{\eta_0}^{-1}(x)=2 $, then by letting $\eta_0^{-1}(x)=\{p_1,p_2\}\subset C_0$ and taking $t\in(0,\frac{1}{2}\phi(t_0)\mathsf{d}(p_1,p_2))$, the curve {$\gamma:[-t,t]\to Y$ defined by
  \[
\begin{aligned}
   \gamma(s)=\left\{\begin{array}{ll}
    \eta\left(p_1,-s\right),&-t\leqslant s<0;\\
    \eta\left(p_2,s\right),&0\leqslant s \leqslant t,
  \end{array}
  \right.
\end{aligned}
  \]}
  is a minimal geodesic in $Y$ joining $\eta\left(p_1,t\right)$ and $\eta\left(p_2,t\right)$.
\end{enumerate}
\end{lem}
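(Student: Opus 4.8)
The plan is to exploit the structure of $Y$ near $X$ provided by Proposition~\ref{prop3.3} together with the minimizing property of perpendiculars. For part~(1), suppose for contradiction that $p_1, p_2, p_3 \in C_0$ are distinct points with $\eta_0(p_1) = \eta_0(p_2) = \eta_0(p_3) = x$. For small $t > 0$ consider the three perpendiculars $\zeta_j(s) = \eta(p_j, s)$, $s \in [0,t]$, each of which realizes $\mathsf{d}_Y(\zeta_j(t), X) = t$ by Remark~\ref{rmk3.3}. First I would show that for $t$ small enough, $\mathsf{d}_Y(\zeta_i(t), \zeta_j(t))$ is close to $2t$ whenever $i \ne j$: indeed a minimal geodesic from $\zeta_i(t)$ to $\zeta_j(t)$ that avoids $X$ would lift, by Lemma~\ref{lem3.4}, to a minimal geodesic in $C$ between $(p_i, t)$ and $(p_j, t)$, whose length (via Definition~\ref{defn2.4} and $\phi(t) \to 1$) is at least about $\phi(t)\mathsf{d}(p_i, p_j) > 0$, so such a geodesic must hit $X$; then by Proposition~\ref{prop3.5}(1) applied to the two halves it stays within $O(t^2)$ of $X$, forcing its length to be $\geq 2t - O(t^2)$. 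This produces, in the limit, three unit-speed geodesics emanating from $x$ (the reversed $\zeta_j$'s concatenated pairwise) which are pairwise minimizing on $[-t,t]$ — equivalently, three ``half-lines'' at $x$ that pairwise combine into minimal geodesics through $x$.

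The contradiction then comes from the non-branching property of $Y$ (Theorem~\ref{thm2.17}, since $Y$ is $\mathrm{RCD}$): the concatenation $\bar\zeta_1 * \zeta_2$ and $\bar\zeta_1 * \zeta_3$ are both minimal geodesics agreeing on $[-t, 0]$, hence must agree on $[0,t]$, giving $\zeta_2 = \zeta_3$ and thus $p_2 = p_3$ by the injectivity in Proposition~\ref{prop3.3}(2). This establishes $\#\eta_0^{-1}(x) \le 2$.

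For part~(2), assume $\eta_0^{-1}(x) = \{p_1, p_2\}$ and fix $t \in \bigl(0, \tfrac12 \phi(t_0)\mathsf{d}(p_1,p_2)\bigr)$. Define $\gamma$ by concatenating the reversed perpendicular $s \mapsto \eta(p_1, -s)$ on $[-t,0]$ with $s \mapsto \eta(p_2, s)$ on $[0,t]$; it is a unit-speed curve of length $2t$ from $\eta(p_1,t)$ to $\eta(p_2,t)$, so it suffices to show $\mathsf{d}_Y(\eta(p_1,t), \eta(p_2,t)) \ge 2t$. Let $\sigma$ be a minimal geodesic in $Y$ between these two endpoints. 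If $\sigma \cap X = \emptyset$, Lemma~\ref{lem3.4} lifts it to a minimal geodesic in $C$ from $(p_1,t)$ to $(p_2,t)$, whose $\mathsf{d}_C$-length is at least $\phi(t_0)\,\mathsf{d}(p_1,p_2) > 2t$ (using that $\phi$ is decreasing so $\phi \ge \phi(t_0)$ on $[0,t_0]$, and the warped-length formula), contradicting minimality unless $\mathsf{d}_Y(\eta(p_1,t),\eta(p_2,t)) = 2t$ exactly — but then $\gamma$ is minimal and we are done. So we may assume $\sigma$ meets $X$ at some point $w$; then $\mathsf{d}_Y(\eta(p_j,t), w) \ge \mathsf{d}_Y(\eta(p_j,t), X) = t$ for $j = 1, 2$, hence the length of $\sigma$ is $\ge 2t$, giving equality and showing $\gamma$ is minimal.

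\textbf{Main obstacle.} The delicate point is part~(1): controlling the distance $\mathsf{d}_Y(\zeta_i(t), \zeta_j(t))$ from below by $2t - o(t)$ uniformly enough to pass to a limit and apply non-branching. This is where Proposition~\ref{prop3.5}(1) is essential — without the quadratic-in-distance control on how far geodesics in $Y$ stray from $X$, one could not rule out a short ``shortcut'' through $X$ between the two perpendicular tips. A subtlety is that $\zeta_i(t)$ and $\zeta_j(t)$ need not lie in $Y \setminus X$-only geodesics, so one genuinely has to handle the case where the connecting geodesic dips into $X$ repeatedly, which is exactly what Proposition~\ref{prop3.5}(1) (applied segment by segment, or globally to the endpoints which are at height $t$) takes care of.
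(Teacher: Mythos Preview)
Your proof of part~(2) is correct and essentially identical to the paper's argument: the concatenated curve has length $2t$, and any competitor either avoids $X$ (hence lifts to $C$ with length $\geq \phi(t_0)\mathsf{d}(p_1,p_2) > 2t$) or hits $X$ at some $w$ (hence has length $\geq \mathsf{d}_Y(\eta(p_1,t),w) + \mathsf{d}_Y(\eta(p_2,t),w) \geq t + t = 2t$).

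For part~(1), however, you overcomplicate matters and the ``main obstacle'' you identify is illusory. The paper simply applies your own part~(2) argument verbatim to each of the three pairs $(p_i,p_j)$, choosing $t$ small enough (say $t < \tfrac{1}{2}\phi(t_0)\min_{i\ne j}\mathsf{d}(p_i,p_j)$). This immediately gives that each concatenation $\bar\zeta_i * \zeta_j$ is an \emph{exact} minimal geodesic of length $2t$, not merely $\geq 2t - O(t^2)$. Then $\bar\zeta_1 * \zeta_2$ and $\bar\zeta_1 * \zeta_3$ are two minimal geodesics agreeing on $[-t,0]$, and non-branching (Theorem~\ref{thm2.17}) forces $\zeta_2 = \zeta_3$, hence $p_2 = p_3$ by Proposition~\ref{prop3.3}(2).

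Your invocation of Proposition~\ref{prop3.5}(1) is both unnecessary and misapplied: that proposition concerns geodesics with \emph{both endpoints in $X$}, whereas your endpoints $\zeta_i(t), \zeta_j(t)$ are at height $t$. More importantly, a bound of the form $\geq 2t - O(t^2)$ would not suffice to conclude the concatenation is minimizing, and your phrase ``in the limit'' suggests passing to a tangent cone, which is an unneeded detour. The direct observation $\mathsf{d}_Y(\eta(p_j,t), X) = t$ --- which you use correctly in part~(2) --- already gives the sharp bound $\geq 2t$ in the ``hits $X$'' case, with no error term.
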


\begin{proof}
Assume there exist three distinct points $p_i\in C_0$ such that $\eta_0(p_i)=x\in X$ $(i=1,2,3)$. For $i=1,2,3$, {define the curve $\gamma_i:[0,t_0]\to Y$ as $\gamma_i(s)= \eta(p_i,s)$}, which are geodesics and perpendiculars to $X$ at $x$. We now claim that when $t<\frac{1}{2}\phi(t_0)\mathop{\mathsf{d}(p_1,p_2)}$, the curve {$\gamma_{12}:[-t,t]\to Y$ defined by
\[
\begin{aligned}
\gamma_{12}(s)=
\left\{
\begin{array}{ll}
  \gamma_1(s), &s\in(0,t);\\
  \gamma_2(-s), &s\in (-t,0),
\end{array}
\right.
\end{aligned}
\]}is a unit speed minimal geodesic joining $\gamma_1(t)$ and $\gamma_2(t)$.

For any curve $\gamma:[0,l]\rightarrow Y$ with $\gamma(0)=\eta(p_1,t)$ and $\gamma(l)=\eta(p_2,t)$, if $\gamma\cap X=\emptyset$, then due to the warped product structure of $(C,\mathsf{d}_C)$ and the local isometry property of $\eta$, we know $L(\gamma)>\phi(t_0)\mathop{\mathsf{d}(p_1,p_2)}$. If $\gamma\cap X\neq\emptyset$, then by letting $s_1:=\sup\{s|\gamma([0,s])\subset Y\setminus X\}$, $s_2:=\inf\{s|\gamma([s,l])\subset Y
\setminus X\}$, we see
\[
L\left(\gamma|_{[0,s_1]}\right)\geqslant \mathsf{d}_Y(\gamma(0),\gamma(s_1))\geqslant \mathsf{d}_Y(\gamma(0),X)= t;
\]
\[
L\left(\gamma|_{[s_2,l]}\right)\geqslant \mathsf{d}_Y(\gamma(l),\gamma(s_2))\geqslant \mathsf{d}_Y(\gamma(l),X)= t.
\]

Therefore when $t<\frac{1}{2}\phi(t_0)\mathop{\mathsf{d}(p_1,p_2)}$, $\gamma_{12}$ becomes the desired geodesic. However, we can similarly construct $\gamma_{13}$ and $\gamma_{23}$. This leads to a contradiction with Theorem \ref{thm2.17}, which is enough to conclude. 

\end{proof}

By making an identification $(X,\mathsf{d}_X)=(Z,\mathsf{d}_Z)$, we define

\begin{defn}\label{11111defn3.9}
  For $k=1,2$, define
  \[
  Z_k=X_k: =\{ x\in X | \# \eta_0^{-1}(x)=k\},
  \]
  \[
  C_0^k:= \{ p\in C_0 | \eta_0(p)\in X_k\}{.}
  \]
  
\end{defn}

\section{Metric structure of limit spaces}\label{sec4}

In this section, we still use the notation in Section \ref{sec3}. 

\subsection{Tangent space and tangent map}\label{sec4.1}

Let $p\in C_0$ and $x=\eta_0(p)$. For any monotone decreasing sequence $\{s_i\}$ with $s_i \rightarrow 0$, by Theorem \ref{thm2.16}, and by passing to a subsequence if necessary, {we may assume the existence of pointed metric spaces $\left(T_p C,\mathsf{d}_{T_p C},o_p\right)$, $\left(T_x Y,\mathsf{d}_{T_x Y},o_x\right)$ and $\left(T_p C_0,\mathsf{d}_{T_p C_0},o_p\right)$ such that
\begin{equation}\label{4.1}
\left(C,{s_i}^{-1}\mathsf{d}_C,p\right)\xrightarrow{\mathrm{pGH}} \left(T_p C,\mathsf{d}_{T_p C},o_p\right){,}
\end{equation}
\begin{equation}\label{a124.2}
\left(Y,{s_i}^{-1}\mathsf{d}_Y,x\right)\xrightarrow{\mathrm{pGH}} \left(T_x Y,\mathsf{d}_{T_x Y},o_x\right){,}
\end{equation}
\begin{align}\label{4.2}
\left(C_0,{s_i}^{-1}\mathsf{d},p\right)\xrightarrow{\mathrm{pGH}} \left(T_p C_0,\mathsf{d}_{T_p C_0},o_p\right){.}
\end{align}
After passing to a subsequence, we may also assume that {$\eta_i=\eta:\left({s_i}^{-1}C,p\right)\rightarrow \left({s_i}^{-1}Y,x\right)$ converges to a 1-Lipschitz map $\eta_\infty: \left(T_p C,o_p\right)\rightarrow \left(T_x Y,o_x\right)$.} Moreover, by taking a subsequence if necessary, we may assume that
\begin{align}\label{4.3}
\left(X,{s_i}^{-1}\mathsf{d}_Y,x\right)\xrightarrow{\mathrm{pGH}} \left(T_x X,\mathsf{d}_{T_x Y},o_x\right),
\end{align}
for some pointed metric space $\left(T_x X,\mathsf{d}_{T_x Y},o_x\right)\subset \left(T_x Y,\mathsf{d}_{T_x Y},o_x\right)$ as convergence of subsets under (\ref{a124.2}). Here we remark that the limit spaces depend on the choice of the sequence $\{s_i\}$.
}


It is worth pointing out the direct product structure of $T_p C$ as follows, which may {seem} natural since the {value of the} warped function $\phi$ is almost 1 near 0.

\begin{prop}\label{prop4.1}
$\left(T_p C,\mathsf{d}_{T_p C}\right)$ is isometric to $\left(T_p C_0 \times [0,\infty),\sqrt{\mathsf{d}_{T_p C_0}^2+\mathsf{d}_\mathbb{R}^2}\right)$, where $\mathsf{d}_\mathbb{R}$ is the standard metric on $[0,\infty)$.
\end{prop}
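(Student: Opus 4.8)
The goal is to identify the tangent cone $T_p C$ of the warped product $C = C_0 \times_\phi [0,t_0]$ at a boundary point $p = (p,0)$ with the metric product $T_p C_0 \times [0,\infty)$. The plan is to exploit the fact that $\phi$ is $C^2$ with $\phi(0)=1$, $\phi'(0)=-2\lambda$ and $|\phi''/\phi|$ bounded, so that under the rescaling $s_i^{-1}\mathsf{d}_C$ the warping function flattens to the constant $1$, and the warped length functional converges to the product length functional. More precisely, I would first record the elementary estimate $|\phi(t)-1| \leqslant J(\lambda,\varepsilon_0,t_0)\, t$ for $t\in[0,t_0]$, which was already used in the proof of Proposition \ref{prop3.5}(2). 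The point of this estimate is that on the ball of radius $R s_i$ around $p$ in $C$, every point $(q,t)$ has $t \leqslant R s_i$, so $|\phi(t)-1| \leqslant J R s_i \to 0$.

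\textbf{Key steps.} First, I would show that for any two points $(q_1,t_1),(q_2,t_2)$ lying in $B^{(C,\mathsf{d}_C)}_{Rs_i}(p)$, one has
\[
\Bigl| \mathsf{d}_C\bigl((q_1,t_1),(q_2,t_2)\bigr) - \sqrt{\mathsf{d}_{C_0}(q_1,q_2)^2 + |t_1-t_2|^2}\,\Bigr| \leqslant C(R)\, J\, s_i \cdot \mathsf{d}_C\bigl((q_1,t_1),(q_2,t_2)\bigr),
\]
i.e. the warped metric and the product metric on this ball are $(1+o(1))$-bilipschitz with the error tending to $0$. The upper bound on $\mathsf{d}_C$ comes from plugging the product geodesic (travel in the $C_0$-factor then in the $[0,t_0]$-factor, or concatenate) into the warped length $L_\phi$ from Definition \ref{defn2.4} and using $\phi \leqslant 1 + JRs_i$ along the relevant portion; the lower bound comes from the reverse inequality $\phi \geqslant 1 - JRs_i$, which bounds $L_\phi(\gamma)$ below by the product length of $\gamma$ for any curve $\gamma$ that stays in the ball — and one checks that a near-minimizing curve between two points in $B_{Rs_i}(p)$ cannot leave, say, $B_{3Rs_i}(p)$, so the estimate $|\phi-1|\leqslant 3JRs_i$ applies throughout. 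Dividing through by $s_i$: the rescaled space $(C, s_i^{-1}\mathsf{d}_C, p)$ restricted to any fixed ball is $(1+o(1))$-bilipschitz to $(C_0\times[0,t_0], s_i^{-1}\sqrt{\mathsf{d}_{C_0}^2 + \mathsf{d}_\mathbb{R}^2}, p)$. Second, since $(C_0, s_i^{-1}\mathsf{d}, p) \xrightarrow{\mathrm{pGH}} (T_p C_0, \mathsf{d}_{T_p C_0}, o_p)$ by \eqref{4.2} and $(\,[0,t_0], s_i^{-1}\mathsf{d}_\mathbb{R}, 0) \xrightarrow{\mathrm{pGH}} ([0,\infty), \mathsf{d}_\mathbb{R}, 0)$ trivially, the products pGH-converge: $(C_0\times[0,t_0], s_i^{-1}\sqrt{\mathsf{d}_{C_0}^2+\mathsf{d}_\mathbb{R}^2}, p) \xrightarrow{\mathrm{pGH}} (T_p C_0 \times [0,\infty), \sqrt{\mathsf{d}_{T_pC_0}^2 + \mathsf{d}_\mathbb{R}^2}, (o_p,0))$. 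Third, combine: a sequence of pointed spaces that is $(1+o(1))$-bilipschitz to a pGH-convergent sequence has the same pGH limit, so $(C, s_i^{-1}\mathsf{d}_C, p)$ converges to $T_p C_0 \times [0,\infty)$. By uniqueness of the pGH limit in \eqref{4.1}, $T_p C$ is isometric to $T_p C_0 \times [0,\infty)$ with the product metric.

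\textbf{Main obstacle.} The step that requires the most care is the lower bound in the bilipschitz comparison: controlling how far a near-geodesic of $\mathsf{d}_C$ between two points in $B_{Rs_i}(p)$ can stray into the region where $t$ is large and $\phi$ is no longer close to $1$. This is a standard confinement argument — any curve reaching $t$-level $\rho$ has length at least $\rho$ just from the $[0,t_0]$-component of $L_\phi$, so a curve of length $\leqslant 2Rs_i$ stays in $\{t \leqslant 2Rs_i\}$ — but one must phrase it cleanly in terms of the definition of $L_\phi$ and the infimum defining $\mathsf{d}_C$, and handle the fact that $C_0$ may be disconnected (work component-wise, as the paper already stipulates). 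An alternative, possibly cleaner route avoiding explicit curve estimates would be to invoke Cheeger--Colding's splitting theorem (Theorem \ref{CCsplit}) or the structure theory of RCD tangent cones: the perpendicular curve $\zeta$ at $x$ lifted to $C$ gives a ray in $T_pC$ emanating from $o_p$, and one argues it extends to a line in a suitable sense to split off an $\mathbb{R}$-factor; however pinning down that the complementary factor is exactly $T_p C_0$ (rather than some other space) still seems to require the direct warped-length estimate, so I would present the elementary bilipschitz argument as the main proof.
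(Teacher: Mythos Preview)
Your proposal is correct and follows essentially the same approach as the paper: both use the estimate $|\phi(t)-1|\leqslant Jt$ together with the confinement of geodesics (the paper shows $\sup\nu_i\to 0$ via $\nu_i(t)\leqslant l_i+s_ir_1+o(s_i)$, which is exactly your observation that any curve of length $\leqslant 2Rs_i$ stays in $\{t\leqslant 2Rs_i\}$) to compare the warped and product lengths and pass to the limit. The only difference is cosmetic: the paper computes the limiting distance between an arbitrary pair of points directly, while you package the same estimates as a uniform $(1+o(1))$-bilipschitz comparison on balls and then invoke pGH stability of products.
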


\begin{proof}
For any two points $q_\infty^j\in T_p C$ $(j=1,2)$ with $l:=\mathsf{d}_{T_p C}(q_\infty^1,q_\infty^2)$, assume $C\ni q_i^j \rightarrow q_\infty^j$ as $i\to \infty$ under {the} convergence (\ref{4.1}). Then the foot point $p_i^j$ of $q_i^j$ also converges to some $p_\infty^j \in T_p C_0$ with 
\[
\lim_{i\to\infty}\frac{\mathsf{d}_C\left(p_i^j,q_i^j\right)}{s_i}=\mathsf{d}_{T_p C}\left(q_\infty^j, T_p C_0\right):=r_j,
 \]
and $l_i:=\mathsf{d}_{C}(q_i^1,q_i^2)=s_i l+o(s_i)$ as $i\to \infty$.

{For each $i$, let $\gamma_i=(\theta_i,\nu_i)$ be the unit speed minimal geodesic from $q_i^1$ to $q_i^2$. Notice that for every $t\in [0,l_i]$ it holds
\[
\nu_i(t)=\mathsf{d}_C(\gamma_i(t),C_0)\leqslant \mathsf{d}_C(\gamma_i(t),\gamma_i(0))+\nu_i(0)\leqslant l_i+s_i r_1+o(s_i),\ \text{as}\ i\to \infty,
\]
which yields $\lim_{i\to\infty}\sup_{[0,l_i]}{\nu_i}=0$. 

Given any $\epsilon>0$, combining Definition \ref{defn2.4} with the fact $\lim_{t\to 0}\phi(t)={1}$ we see for all $i\gg 1$ it holds 
\[
\frac{l_i}{s_i}\geqslant \frac{1-\epsilon}{s_i}\sqrt{{\mathsf{d}_{C_0}}^2(p_i^1,p_i^2)+{(\mathsf{d}_C(q_i^1,C_0)-\mathsf{d}_C(q_i^2,C_0))}^2}
\]}{Therefore letting first $i\to \infty$ then $\epsilon\to 0$ we obtain
  \[
  \mathsf{d}_{T_p C}\left(q_\infty^1,q_\infty^2\right)\geqslant \sqrt{{\mathsf{d}_{T_p C_0}}^2 \left(p^1_\infty,p^2_\infty\right)+{(r_2-r_1)}^2}.
  \]

It now suffices to show the converse inequality. If $p_i^1=p_i^2$, then it is obvious that 
\[
\mathsf{d}_{C}(q_i^1,q_i^2)=|\mathsf{d}_{C}(q_i^2,C_0)-\mathsf{d}_{C}(q_i^1,C_0)|.
\] 
If $p_i^1\neq p_i^2$, take $\tilde{\theta_i}$ as the constant speed minimal geodesic {in $C_0$} defined on $[0,1]$ from $p_i^1$ to $p_i^2$ and define {$\tilde{\nu_i}:[0,1]\to \mathbb{R}$ as
\[ \tilde{\nu_i}(t)=\mathsf{d}_{C}(q_i^1,C_0)+(\mathsf{d}_{C}(q_i^2,C_0)-\mathsf{d}_{C}(q_i^1,C_0))\,t.
\]}Then since $\lim_{i\to\infty}\sup_{[0,1]}\tilde{\nu_i}=0$, by letting ${\tilde{\gamma}_i}:={(\tilde{\theta}_i,\tilde{\nu}_i)}$ one can similarly prove that}{
\[
\lim_{i\to \infty}\frac{\mathsf{d}_C(q_i^1,q_i^2)}{s_i}\leqslant \lim_{i\to \infty}\frac{L^C({\tilde{\gamma}_i})}{s_i}\leqslant \lim_{i\to\infty}\frac{\sqrt{{\mathsf{d}_{C_0}}^2(p_i^1,p_i^2)+{(\mathsf{d}_{C}(q_i^2,C_0)-\mathsf{d}_{C}(q_i^1,C_0))}^2}}{s_i}.
\]
This implies 
  \[
  \mathsf{d}_{T_p C}\left(q_\infty^1,q_\infty^2\right){\leqslant} \sqrt{{\mathsf{d}_{T_p C_0}}^2 \left(p^1_\infty,p^2_\infty\right)+{(r_2-r_1)}^2},
  \]
  which completes the proof.
}
\end{proof}
{\begin{remark}\label{rrmk4.2}
Assume $n\geqslant 3$ and the dimension of $(C_0,\mathsf{d})$  is $k\in [1,n-1]\cap \mathbb{N}$ (by Theorem \ref{thm2.8}). For any $p\in \mathcal{R}_k(C_0)$, it follows from \cite[Lemma 4.2]{YZ19} and the proof of Proposition \ref{prop4.1} that for all $t\in (0,t_0)$, the tangent space $(T_{(p,t)} C,o_{(p,t)})$ is unique and isometric to $(\mathbb{R}^{k+1},\mathsf{d}_{\mathbb{R}^{k+1}})$. Because $\eta$ is a local isometry, $(T_{\eta(p,t)} Y,o_{\eta(p,t)})$ is likewise unique and isometric to $(\mathbb{R}^{k+1},\mathsf{d}_{\mathbb{R}^{k+1}})$.

Moreover, by \cite[Theorem 1.3]{S20}, we have $\mathfrak{m}_Y=\eta_\sharp \mathfrak{m}_C$, where $\eta_\sharp \mathfrak{m}_C$ denotes the push forward measure of $\mathfrak{m}$ under $\eta$. Consequently, the dimension of $(Y,\mathsf{d}_Y)$ is $k+1$. 
\end{remark}}
By letting $\mathsf{d}_{T_x X}$ be the intrinsic metric on $T_x X$ induced by $\mathsf{d}_{T_x X}$, we also have the following {convexity of} $T_x X$ in $T_x Y$.
\begin{lem}\label{convex}
For any two points $x_\infty,z_\infty \in T_x X$, {and for any} unit speed minimal geodesic $\gamma_\infty:[0,l]\rightarrow T_x Y$ from $x_\infty,z_\infty$ in $T_x Y${, we have} $\gamma_\infty([0,l])\subset T_x X$. {In particular, we have $(T_x X,\mathsf{d}_{T_x X})=(T_x X,\mathsf{d}_{T_x Y})$.} 
\end{lem}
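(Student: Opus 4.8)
The plan is to reduce the statement to a local claim: since $\gamma_\infty$ is a minimal geodesic in $(T_x Y,\mathsf{d}_{T_x Y})$ joining two points of $T_x X$, it suffices to show that every point $\gamma_\infty(t)$ with $t\in(0,l)$ lies in $T_x X$, and then the final identification $(T_x X,\mathsf{d}_{T_x X})=(T_x X,\mathsf{d}_{T_x Y})$ follows immediately because $T_x X$ is then geodesically convex in $T_x Y$. The key structural input will be that $T_x Y$ carries a global splitting coming from the perpendiculars to $X$ constructed in Remark~\ref{rmk3.3} and Proposition~\ref{prop3.3}: more precisely, I expect $(T_x Y,\mathsf{d}_{T_x Y})$ to split isometrically as $(T_x X\times[0,\infty),\sqrt{\mathsf{d}_{T_x X}^2+\mathsf{d}_\mathbb{R}^2})$, with the $[0,\infty)$-factor given by the function $y\mapsto\mathsf{d}_{T_x Y}(y,T_x X)$, which is the blow-up of the distance-to-$X$ function on $Y$. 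Indeed, rescaling the perpendicular $\zeta$ at $x$ produces a ray in $T_x Y$ emanating from $o_x$, hence a line (by taking $x$ as an interior point of a perpendicular segment, or by the usual doubling trick), and Cheeger--Colding's splitting theorem (Theorem~\ref{CCsplit}), applied to the approximating manifolds $\widetilde M_i$ rescaled by $s_i^{-1}$, yields the product structure; the factor $\Omega$ of that splitting is exactly $T_x X$ because $\eta$ maps $C\setminus C_0$ bijectively and locally isometrically onto $Y\setminus X$ (Proposition~\ref{prop3.3}(2)) together with Proposition~\ref{prop4.1} giving the corresponding product structure $T_p C\cong T_p C_0\times[0,\infty)$ downstairs.

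Granting this splitting, the argument is then formal. Write $\gamma_\infty(t)=(\alpha(t),h(t))\in T_x X\times[0,\infty)$. Since $\gamma_\infty(0)=x_\infty$ and $\gamma_\infty(l)=z_\infty$ both lie in $T_x X$, we have $h(0)=h(l)=0$. The projection $\pi:T_x Y\to T_x X$ onto the first factor is $1$-Lipschitz, so $L(\pi\circ\gamma_\infty)\le L(\gamma_\infty)=l$; but the product metric gives $\mathsf{d}_{T_x Y}(x_\infty,z_\infty)^2=\mathsf{d}_{T_x X}(x_\infty,z_\infty)^2$ (as $h$ vanishes at the endpoints and $\mathsf{d}_{T_x X}$ is a length metric, one gets $\mathsf{d}_{T_x Y}(x_\infty,z_\infty)=\mathsf{d}_{T_x X}(x_\infty,z_\infty)$ directly), and minimality of $\gamma_\infty$ forces $l=\mathsf{d}_{T_x X}(x_\infty,z_\infty)\le L(\pi\circ\gamma_\infty)\le l$. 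Hence $\pi\circ\gamma_\infty$ is a minimal geodesic in $(T_x X,\mathsf{d}_{T_x X})$ of the same length, and the Pythagorean identity $l^2=\int_0^l(|\alpha'|^2+|h'|^2)\,\ge\, L(\pi\circ\gamma_\infty)^2=l^2$ (or, at the metric level, an application of the splitting of the segment) forces $h\equiv 0$ on $[0,l]$, i.e.\ $\gamma_\infty([0,l])\subset T_x X$. The ``in particular'' clause is then automatic: any two points of $T_x X$ are joined within $T_x X$ by the $T_x Y$-geodesic between them, which has $\mathsf{d}_{T_x Y}$-length equal to $\mathsf{d}_{T_x X}$-distance, so the two metrics coincide.

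The main obstacle I anticipate is justifying the global splitting $T_x Y\cong T_x X\times[0,\infty)$ rigorously, rather than merely the local product structure already available from Proposition~\ref{prop4.1} and the local isometry of $\eta$. The subtlety is that a priori the perpendicular $\zeta$ at $x$ only gives a segment of length $t_0$ in $Y$, so after rescaling by $s_i^{-1}\to\infty$ one does get a ray in $T_x Y$, but one must check that this ray is actually a line (equivalently, that the distance-to-$X$ function, blown up, realizes a Busemann function splitting off a full $\mathbb R$). The excerpt's own remark that ``the application of the Busemann function plays an important role in this section'' signals that the intended route is precisely to use the Busemann function $b$ associated to the perpendicular rays, show $b$ and $-b$ are both subharmonic limits on $T_x Y$ via the approximating $\widetilde M_i$, and invoke Cheeger--Colding's splitting theorem in the RCD/Ricci-limit form. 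An alternative, avoiding the global splitting, is to argue purely locally: cover $\gamma_\infty((0,l))$ by balls on which $\eta_\infty^{-1}$ is defined and the local product structure of Proposition~\ref{prop4.1} applies, pull $\gamma_\infty$ back to a geodesic in $T_p C\cong T_p C_0\times[0,\infty)$ by Lemma~\ref{lem3.4} (blown up), observe that a minimal geodesic between two points of the zero-section $T_p C_0\times\{0\}$ must stay in the zero-section by the product metric, and push forward; one still needs to rule out $\gamma_\infty$ touching $T_x X$ only at isolated interior points, which is handled exactly as in Lemma~\ref{lem3.8} via the non-branching Theorem~\ref{thm2.17}. Either way the real work is the splitting/Busemann input; the rest is bookkeeping essentially identical to \cite[Lemma~4.14 and its surroundings]{YZ19}.
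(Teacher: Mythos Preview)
Your main approach has a genuine gap: the global splitting $T_x Y\cong T_x X\times[0,\infty)$ you want to invoke is simply false in general. It holds only when $x\in X_2$ (this is precisely Proposition~\ref{lem4.4}, proved \emph{later} and only for $X_2$), whereas Lemma~\ref{convex} must cover all $x\in X$, including $x\in X_1$. For $x\in X_1$ the perpendicular ray at $x$ does \emph{not} extend to a line---this is exactly the content of Proposition~\ref{lem4.7}---so neither ``taking $x$ as an interior point of a perpendicular segment'' nor any ``doubling trick'' produces a line, and Cheeger--Colding splitting does not apply. The Busemann function machinery you cite (Lemma~\ref{Busemann}) is developed \emph{after} Lemma~\ref{convex} and in any case only identifies level sets of $b$ with images $\eta_\infty(T_pC_0\times\{r\})$; it does not yield a metric product decomposition of $T_xY$. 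Your local alternative (pulling back via $\eta_\infty^{-1}$) also runs into trouble: surjectivity of $\eta_\infty$ onto $T_xY$ is Corollary~\ref{lem4.1}, again established later, and you have no a priori control on whether $\gamma_\infty$ stays in the image of $\eta_\infty$.

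The paper's proof is entirely different and much more elementary. It argues by contradiction directly at the level of $Y$: if $\gamma_\infty$ left $T_xX$, approximate it by minimal geodesics $\gamma_i$ in $(Y,s_i^{-1}\mathsf{d}_Y)$ starting from points of $X$, and invoke the quantitative estimate of Proposition~\ref{prop3.5}(1), which says that a $Y$-geodesic between points of $X$ stays within distance $J\cdot(\text{length})^2$ of $X$. Since the unrescaled lengths $l_i$ are of order $s_i$, the bound $J\,l_i^2/s_i\to 0$ forces the maximal excursion of $\gamma_\infty$ from $T_xX$ to be zero. No splitting, no Busemann function, no surjectivity of $\eta_\infty$ is needed---just the second-order estimate already available from Section~\ref{sec3}.
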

\begin{proof}
{We prove by contradiction. Assume there exists $t^\ast\in (0,l)$ such that
\[
\mathsf{d}_{T_x Y}({\gamma_\infty}(t^\ast),T_x X)=\max_{[0,l]}\mathsf{d}_{T_x Y}({\gamma_\infty}(\cdot),T_x X)>0.
\] 

Let $l'=(t^\ast+l)/2$ and $y_\infty:=\gamma_\infty(l')$.} We may assume that $\left(X,{s_i}^{-1}\mathsf{d}_Y\right)\ni x_i\rightarrow x_\infty
\in T_x X$ and $\left(Y,{s_i}^{-1}\mathsf{d}_Y\right)\ni y_i\rightarrow y_\infty \in T_x Y$ {under (\ref{4.3}) and (\ref{a124.2}) respectively}. {For each $i$, take a unit speed minimal geodesic} $\gamma_{i}:[0,l_i]\rightarrow Y$ from {$x_i$ to $y_i$ in $Y$}. By Theorem \ref{thm2.17},  {$\gamma_i$ converges to $\gamma_\infty|_{[0,l']}$} under the convergence (\ref{a124.2}). {As a result, there exists $t_i\in (0,l_i)$ such that 
\[
\frac{\mathsf{d}_{Y}(\gamma_i(t_i),X)}{s_i}=\frac{\max_{[0,l_i]}\mathsf{d}_{Y}(\gamma_i(\cdot),X)}{s_i}\to \mathsf{d}_{T_x Y}(\gamma(t^\ast),T_x X) \ \ \text{as}\ i\to \infty.
\]

Take $t_i'\in [0,t_i)$ such that $\gamma_i({t_i'})\in X$ and that $\gamma_i((t_i',t_i])\subset Y\setminus X$. {According to the proof of Proposition \ref{prop3.5}(1), we know}
\[
0=\frac{\mathsf{d}_Y(\gamma_i(t_i'),X)}{s_i}\geqslant \frac{\mathsf{d}_{Y}(\gamma_i(t_i),X)}{s_i}-\frac{J(\lambda,\varepsilon_0,t_0)\,{l_i}^2}{s_i},
\]
which leads a contradiction since $l_i/s_i\to l'$ as $i\to\infty$. }Hence we have $\gamma_\infty([0,l])\subset T_x X$. 
\end{proof}

As a direct consequence of Propositions \ref{prop3.3} and \ref{prop3.5} (see also \cite[Sublemma 4.4]{YZ19}), we have
the following proposition.
\begin{prop}\label{prop4.2}
The tangent map $\eta_\infty$ has the following properties. 
\begin{enumerate}
\item[$(1)$] For any $(p_\infty,t)\in T_p C$, it holds that $\mathsf{d}_{T_x Y}\left(\eta_\infty(p_\infty,t),T_x X\right)=t$.
\item[$(2)$] $\eta_\infty: \left(T_p C,\mathsf{d}_{T_p C} \right)\rightarrow \left(T_xY,\mathsf{d}_{T_x Y}\right)$ is 1-Lipschitz 
and {$\eta_\infty|_{T_p C \setminus T_p C_0 }:T_p C \setminus T_p C_0 \rightarrow \eta_\infty(T_p C) \setminus T_x X$ is {an}  injective local isometry in the sense of 
Proposition \ref{prop3.3}$(2)$ .}
\end{enumerate}
\end{prop}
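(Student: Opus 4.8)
The plan is to obtain Proposition \ref{prop4.2} as the blow-up limit of Propositions \ref{prop3.3} and \ref{prop3.5} and Lemma \ref{lem3.4}, in exact parallel with the way $\eta$ gives rise to $\eta_\infty$. Throughout I work with the blow-up sequence $\{s_i\}$ and the identifications (\ref{4.1})--(\ref{4.3}), and I use the splitting $T_p C = T_p C_0\times[0,\infty)$ from Proposition \ref{prop4.1}, so that $\mathsf{d}_{T_p C}((q,t),T_p C_0)=t$ and $T_p C\setminus T_p C_0 = T_p C_0\times(0,\infty)$. For part (1), fix $(p_\infty,t)\in T_p C$ and choose $c_i\in C$ with $c_i\to(p_\infty,t)$ in $(C,s_i^{-1}\mathsf{d}_C,p)$. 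When $t>0$, continuity of the distance-to-a-subset under Gromov-Hausdorff convergence (together with $C_0\to T_p C_0$ and, on the target side, $X\to T_x X$ via (\ref{4.3})) combined with Proposition \ref{prop3.3}(1), which gives $\mathsf{d}_Y(\eta(c_i),X)=\mathsf{d}_C(c_i,C_0)$ once $c_i\notin C_0$, yields $\mathsf{d}_{T_x Y}(\eta_\infty(p_\infty,t),T_x X)=\lim_{i}s_i^{-1}\mathsf{d}_C(c_i,C_0)=t$. When $t=0$, $(p_\infty,0)\in T_p C_0$, and since $\eta_\infty|_{T_p C_0}$ is the blow-up limit of $\eta_0=\eta|_{C_0}$, whose image lies in $X$, we get $\eta_\infty(p_\infty,0)\in T_x X$, i.e. the distance is $0$.

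For part (2), that $\eta_\infty$ is $1$-Lipschitz is part of its construction, and that $\eta_\infty$ maps $T_p C\setminus T_p C_0$ onto $\eta_\infty(T_p C)\setminus T_x X$ is immediate from part (1). For the local-isometry claim I fix $(p_\infty,t)$ with $t>0$ and $a,b\in B_{t/2}^{T_p C}(p_\infty,t)$, and pick $a_i,b_i,c_i$ in $(C,s_i^{-1}\mathsf{d}_C,p)$ with $c_i\to(p_\infty,t)$, $a_i\to a$, $b_i\to b$. For $i$ large one has $a_i,b_i\in B_{\tau_i/2}^{C}(c_i)$, where $\tau_i:=\mathsf{d}_C(c_i,C_0)$ and $\tau_i/s_i\to t$, because $s_i^{-1}\mathsf{d}_C(a_i,c_i)\to\mathsf{d}_{T_p C}(a,(p_\infty,t))<t/2$ and similarly for $b_i$. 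Then Proposition \ref{prop3.3}(2) gives $\mathsf{d}_C(a_i,b_i)=\mathsf{d}_Y(\eta(a_i),\eta(b_i))$; dividing by $s_i$ and letting $i\to\infty$ gives $\mathsf{d}_{T_p C}(a,b)=\mathsf{d}_{T_x Y}(\eta_\infty a,\eta_\infty b)$.

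The step requiring care --- and the main obstacle --- is injectivity of $\eta_\infty$ on $T_p C\setminus T_p C_0$, because a pointwise limit of injective maps need not be injective. The remedy is to use that $\eta$ has a well-behaved inverse away from $X$: if $\eta(a_i),z_i\in Y\setminus X$ lie at distance $\ell_i$ that is small compared with $\mathsf{d}_Y(\eta(a_i),X)$, then every point of $\gamma_{\eta(a_i),z_i}$ has positive distance to $X$, so $\gamma_{\eta(a_i),z_i}\cap X=\emptyset$ and Lemma \ref{lem3.4} makes $\eta^{-1}(\gamma_{\eta(a_i),z_i})$ a minimal geodesic in $C$ of length $\ell_i$, whence $\mathsf{d}_C(a_i,\eta^{-1}(z_i))=\ell_i$. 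Now suppose $\eta_\infty(a)=\eta_\infty(b)=z$ with $a,b\in T_p C\setminus T_p C_0$; by part (1), $z$ has positive distance from $T_x X$. Choosing $z_i\to z$ in $(Y,s_i^{-1}\mathsf{d}_Y,x)$ (so $z_i\in Y\setminus X$ for $i$ large) and $a_i\to a$, $b_i\to b$ in $(C,s_i^{-1}\mathsf{d}_C,p)$, the points $w_i:=\eta^{-1}(z_i)$ satisfy $s_i^{-1}\mathsf{d}_C(a_i,w_i)=s_i^{-1}\mathsf{d}_Y(\eta(a_i),z_i)\to0$ (since both $\eta(a_i)$ and $z_i$ converge to $z$) and likewise $s_i^{-1}\mathsf{d}_C(b_i,w_i)\to0$; thus $w_i$ converges simultaneously to $a$ and to $b$, forcing $a=b$. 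Apart from this point, the whole argument is a routine rescaling of Propositions \ref{prop3.3} and \ref{prop3.5} and Lemma \ref{lem3.4}, as the remark preceding the statement indicates.
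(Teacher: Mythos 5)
Your proposal is correct, and for part (1), the $1$-Lipschitz property, and the local isometry claim it runs along the same lines the paper intends when it calls these ``a direct consequence of Propositions \ref{prop3.3} and \ref{prop3.5}'': you simply rescale the identities $\mathsf{d}_Y(\eta(c),X)=\mathsf{d}_C(c,C_0)$ and the local isometry radius $\mathsf{d}_C(c,C_0)/2$ along the blow-up sequence, which is exactly the intended argument (your verification that $a_i,b_i\in B_{\tau_i/2}^{C}(c_i)$ for large $i$ is the only point needing care, and you handle it correctly). Where you genuinely diverge is the injectivity of $\eta_\infty|_{T_p C\setminus T_p C_0}$, the one step the paper writes out: the paper argues entirely in the limit, noting that two preimages of a common point must lie at the same height $h$ by part (1), and then combining the local isometry of $\eta_\infty$ along the vertical rays above that point with the product splitting $T_p C\cong T_p C_0\times[0,\infty)$ of Proposition \ref{prop4.1} to force $\mathsf{d}_{T_p C_0}(p_\infty^1,p_\infty^2)\leqslant 2\epsilon$ for every small $\epsilon$. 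You instead go back to the approximating spaces: using that $\eta:C\setminus C_0\to Y\setminus X$ is bijective and that, by Lemma \ref{lem3.4}, $\mathsf{d}_C(a_i,\eta^{-1}(z_i))=\mathsf{d}_Y(\eta(a_i),z_i)$ whenever this distance is smaller than $\mathsf{d}_Y(\eta(a_i),X)$, you show $\eta^{-1}(z_i)$ converges simultaneously to both preimages. Both arguments are valid; yours has the advantage of not invoking the splitting of Proposition \ref{prop4.1} and of being robust at the sequence level (no issue of whether the second preimage lies inside the local-isometry ball of the first, which is the delicate point in the paper's two-line computation), while the paper's is shorter once the limit-level structure is in place.
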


\begin{proof}
For reader's convenience, we prove the injectivity of $\eta_\infty|_{T_p C \setminus T_p C_0 }:T_p C \setminus T_p C_0 \rightarrow \eta_\infty(T_p C) \setminus T_x X$.

Assume {there exist} two points $p_\infty^i\in T_p C_0$ $(i=1,2)$ such that {$\eta_\infty(p_\infty^1,h)=\eta_\infty(p_\infty^2,h)$ for some $h>0$. Combining the local isometry property of
 $\eta_\infty|_{T_p C \setminus T_p C_0}$ we know for any sufficiently small $\epsilon>0$ it holds 
 \[
 \mathsf{d}_{T_p C}((p^1_\infty,h),(p^2_\infty,h))=\mathsf{d}_{T_x Y}(\eta_\infty(p^1_\infty,h+\epsilon),\eta_\infty(p^2_\infty,h+\epsilon))\leqslant 2\epsilon.
 \]
 However, by Proposition \ref{prop4.1} we have $\mathsf{d}_{T_p C}((p^1_\infty,h),(p^2_\infty,h))=\mathsf{d}_{T_p C_0}(p^1_\infty,p^2_\infty)$. A contradiction.}
\end{proof}

{We also obtain an analogues version of Lemma \ref{lem3.4} as follows.
\begin{lem}\label{llem4.4}
Let $x_\infty\in \eta_\infty(T_p C\setminus T_p C_0)$. For any point $y_\infty\in T_x Y\setminus T_x X$ such that
\[
\mathsf{d}_{T_x Y}(x_\infty,y_\infty)< \mathsf{d}_{T_x Y}(x_\infty,T_x X)+\mathsf{d}_{T_x Y}(y_\infty,T_x X),
\]
we have $y_\infty\in \eta_\infty(T_p C\setminus T_p C_0)$. Moreover, {for} any geodesic connecting $x_\infty$ and $y_\infty$, there exists a unique geodesic in $T_p C\setminus T_p C_0$ whose image under $\eta_\infty$ coincides with the given geodesic.
\end{lem}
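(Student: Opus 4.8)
\textbf{Proof proposal for Lemma \ref{llem4.4}.}

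The plan is to mimic the proof of Lemma \ref{lem3.4}, transferring the ``no crossing of the boundary'' mechanism from the finite scale (working in $C$ and $Y$) to the tangent scale (working in $T_p C$ and $T_x Y$), with Proposition \ref{prop4.2} playing the role that Proposition \ref{prop3.3} played before. First I would fix a unit speed minimal geodesic $\gamma_\infty:[0,l]\to T_x Y$ from $x_\infty$ to $y_\infty$, where $l=\mathsf{d}_{T_x Y}(x_\infty,y_\infty)$, and set $a:=\mathsf{d}_{T_x Y}(x_\infty,T_x X)$, $b:=\mathsf{d}_{T_x Y}(y_\infty,T_x X)$, so that the hypothesis reads $l<a+b$. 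The key claim is that $\gamma_\infty$ never meets $T_x X$; granting this, Proposition \ref{prop4.2}(1)--(2) gives that $\eta_\infty^{-1}$ is well-defined and a local isometry along $\gamma_\infty$, so (exactly as in Lemma \ref{lem3.4}) one obtains a curve $\sigma_\infty:=\eta_\infty^{-1}(\gamma_\infty)$ in $T_p C\setminus T_p C_0$ with $L^{\mathsf{d}_{T_p C}}(\sigma_\infty)=l$, and the chain of inequalities
\[
l=\mathsf{d}_{T_x Y}(x_\infty,y_\infty)\leqslant \mathsf{d}_{T_p C}\bigl(\eta_\infty^{-1}(x_\infty),\eta_\infty^{-1}(y_\infty)\bigr)\leqslant L^{\mathsf{d}_{T_p C}}(\sigma_\infty)=l
\]
forces $\sigma_\infty$ to be minimal; in particular $y_\infty=\eta_\infty(\sigma_\infty(l))\in\eta_\infty(T_p C\setminus T_p C_0)$, and uniqueness of the lift follows from injectivity of $\eta_\infty|_{T_p C\setminus T_p C_0}$ (Proposition \ref{prop4.2}(2)).

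So the whole content is the claim that $\gamma_\infty([0,l])\cap T_x X=\emptyset$. Suppose not, and let $c\in(0,l)$ with $\gamma_\infty(c)\in T_x X$. On $[0,c]$ the geodesic $\gamma_\infty$ starts at distance $a$ from $T_x X$ and ends on $T_x X$; I would apply the scale-invariant Lipschitz estimate for the distance-to-$T_x X$ function along a minimal geodesic --- precisely the tangent-cone analogue of Proposition \ref{prop3.5}(1), which is how the contradiction was obtained in the last displayed inequality of the proof of Lemma \ref{convex}. That estimate, at the blown-up (flat, quadratic-error-free) scale, degenerates to the statement that $t\mapsto \mathsf{d}_{T_x Y}(\gamma_\infty(t),T_x X)$ is $1$-Lipschitz; hence $a=\mathsf{d}_{T_x Y}(\gamma_\infty(0),T_x X)\leqslant \mathsf{d}_{T_x Y}(\gamma_\infty(0),\gamma_\infty(c))=c$, and symmetrically $b\leqslant l-c$, so $a+b\leqslant l$, contradicting the hypothesis $l<a+b$. (Alternatively one can argue directly at finite scale: pick $x_i\to x_\infty$, $y_i\to y_\infty$ under \eqref{4.3}, \eqref{a124.2}, take minimal geodesics $\gamma_i$ in $Y$, use Theorem \ref{thm2.17} to get $\gamma_i\to\gamma_\infty$, and feed the rescaled Proposition \ref{prop3.5}(1) bound $\mathsf{d}_Y(\gamma_i(t),X)\leqslant J\,l_i^2$ with $l_i/s_i\to l$ through, exactly as in the proof of Lemma \ref{convex}; in the limit the $J\,l_i^2/s_i$ term vanishes and one gets the $1$-Lipschitz bound.)

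The main obstacle is being careful about which minimal geodesic one lifts and about the endpoints when $\gamma_\infty$ is merely \emph{tangent} to, rather than transversal to, $T_x X$: a priori $\gamma_\infty$ could touch $T_x X$ at an interior point without crossing, or the hypothesis ``$x_\infty\in\eta_\infty(T_p C\setminus T_p C_0)$'' (i.e. $a>0$) versus the possibility $b=0$ needs to be tracked so that the strict inequality $l<a+b$ is genuinely used. The clean way around this is the $1$-Lipschitz argument above, which handles the tangency case uniformly: any contact point $c$ with $\gamma_\infty(c)\in T_x X$ immediately yields $a\leqslant c$ and $b\leqslant l-c$ regardless of transversality, and strictness of $l<a+b$ is exactly what rules it out. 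One should also note that, as remarked after \eqref{4.3}, all the tangent objects depend on the chosen sequence $\{s_i\}$, so the statement and proof are understood with that sequence fixed throughout; this is harmless since Proposition \ref{prop4.2} and Lemma \ref{convex} were proved at the same level of generality.
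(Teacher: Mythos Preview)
Your proposal is correct and follows essentially the same approach as the paper: show the minimal geodesic avoids $T_x X$, then lift via the $1$-Lipschitz local isometry property of $\eta_\infty|_{T_p C\setminus T_p C_0}$ from Proposition~\ref{prop4.2}(2). Note, however, that you overcomplicate the avoidance claim---the paper simply observes that if $z_\infty\in T_x X$ lies on the geodesic then $\mathsf{d}_{T_x Y}(x_\infty,z_\infty)\geqslant\mathsf{d}_{T_x Y}(x_\infty,T_x X)$ and $\mathsf{d}_{T_x Y}(y_\infty,z_\infty)\geqslant\mathsf{d}_{T_x Y}(y_\infty,T_x X)$ by definition of distance to a set, and summing gives the contradiction; this is the bare triangle inequality, and there is no need to invoke Proposition~\ref{prop3.5}(1), Lemma~\ref{convex}, or any finite-scale blow-up argument.
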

\begin{proof}
If one of the minimal geodesic from $x_\infty$ to $y_\infty$ intersects $T_x X$ at $z_\infty$, then a contradiction occurs since
\[
\mathsf{d}_{T_x Y}(x_\infty,y_\infty)= \mathsf{d}_{T_x Y}(x_\infty,z_\infty)+\mathsf{d}_{T_x Y}(y_\infty,z_\infty)\geqslant \mathsf{d}_{T_x Y}(x_\infty,T_x X)+\mathsf{d}_{T_x Y}(y_\infty,T_x X).
\]
The second statement follows directly from the 1-Lipschitz local isometry property of $\eta_\infty|_{T_p C\setminus T_p C_0}$.
\end{proof}
}

{

{The following lemma will be very useful in this paper.}
{\begin{lem}[Busemann function on $T_x Y$]\label{Busemann}Let $\tilde{v}(t):=(o_p,t), t\in [0,+\infty)$ be the perpendicular to $T_p C_0$ at $o_p$ and $v:= \eta_\infty(\tilde{v})$. For any $s>0$, set $b_s:y_\infty\mapsto \left(\mathsf{d}_{T_x Y}\left(y_\infty,v(s)\right)-s\right)$. Then the limit $b:=\lim_{s\to\infty} b_s$ exists and satisfies 
\begin{equation}\label{fffljaflkjlkdajklf}
b^{-1}(-r)=\eta_\infty(T_p C_0 \times \{r\}), \ \forall r>0.
\end{equation} 
\end{lem}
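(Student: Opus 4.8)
The plan is to exploit the product structure of $T_pC$ from Proposition \ref{prop4.1}, under which $\tilde v(t)=(o_p,t)$ is a genuine ray emanating perpendicularly from the slice $T_pC_0\times\{0\}$, and to transport this ray across $\eta_\infty$. First I would observe that $v=\eta_\infty\circ\tilde v$ is a unit-speed ray in $T_xY$: indeed Proposition \ref{prop4.2}(1) gives $\mathsf{d}_{T_xY}(v(t),T_xX)=t$, and for $0\le s<t$ the concatenation argument (as in Lemma \ref{lem3.8}, or directly from the local isometry of $\eta_\infty$ on $T_pC\setminus T_pC_0$ together with $\mathsf{d}_{T_xY}(v(s),T_xX)=s$, $\mathsf{d}_{T_xY}(v(t),T_xX)=t$) forces $\mathsf{d}_{T_xY}(v(s),v(t))=t-s$. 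Hence each $b_s$ is $1$-Lipschitz, the family $\{b_s\}$ is monotone (the triangle inequality gives $b_{s'}\le b_s$ for $s'\ge s$, up to sign conventions) and pointwise bounded below by $-\mathsf{d}_{T_xY}(y_\infty,x)-$const on bounded sets, so $b:=\lim_{s\to\infty}b_s$ exists; this is the standard Busemann-function construction and needs only that $v$ is a ray.

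Next, the substantive claim \eqref{fffljaflkjlkdajklf}. For the inclusion $\eta_\infty(T_pC_0\times\{r\})\subset b^{-1}(-r)$: take $p_\infty\in T_pC_0$ and set $y_\infty=\eta_\infty(p_\infty,r)$. Using Proposition \ref{prop4.1} we have, in $T_pC\cong T_pC_0\times[0,\infty)$, that $\mathsf{d}_{T_pC}((p_\infty,r),(o_p,s))=\sqrt{\mathsf{d}_{T_pC_0}(p_\infty,o_p)^2+(s-r)^2}$. Since $\eta_\infty$ is $1$-Lipschitz, $b_s(y_\infty)\le\sqrt{a^2+(s-r)^2}-s\to -r$ as $s\to\infty$ (writing $a=\mathsf{d}_{T_pC_0}(p_\infty,o_p)$). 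For the reverse inequality $b_s(y_\infty)\ge -r$: for large $s$ the points $y_\infty$ and $v(s)$ both lie in $\eta_\infty(T_pC\setminus T_pC_0)$ and satisfy $\mathsf{d}_{T_xY}(y_\infty,v(s))\le \sqrt{a^2+(s-r)^2}< r+s=\mathsf{d}_{T_xY}(y_\infty,T_xX)+\mathsf{d}_{T_xY}(v(s),T_xX)$, so Lemma \ref{llem4.4} applies: a minimal geodesic from $y_\infty$ to $v(s)$ lifts uniquely through $\eta_\infty$ to a minimal geodesic in $T_pC\setminus T_pC_0$, which by the product structure joins $(p_\infty,r)$ to $(o_p,s)$ and therefore has length $\ge\sqrt{a^2+(s-r)^2}\ge s-r$; since $\eta_\infty$ is $1$-Lipschitz this is an equality of lengths, giving $b_s(y_\infty)=\sqrt{a^2+(s-r)^2}-s\ge -r$, and in the limit $b(y_\infty)=-r$.

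For the opposite inclusion $b^{-1}(-r)\subset\eta_\infty(T_pC_0\times\{r\})$: let $y_\infty\in T_xY$ with $b(y_\infty)=-r$, $r>0$. I would first show $y_\infty\notin T_xX$, equivalently $\mathsf{d}_{T_xY}(y_\infty,T_xX)=r$: the bound $b_s(y_\infty)\ge \mathsf{d}_{T_xY}(y_\infty,v(s))-s\ge \mathsf{d}_{T_xY}(y_\infty,T_xX)-s\ge -s$ is too weak, so instead use that $b_s(y_\infty)\to -r<0$ means $\mathsf{d}_{T_xY}(y_\infty,v(s))\le s-r/2$ for large $s$, and combine this with Proposition \ref{prop4.2}(1) for $v(s)$ and the (near-)optimal lifted geodesic from Lemma \ref{llem4.4} to read off that the foot point structure forces $\mathsf{d}_{T_xY}(y_\infty,T_xX)=r$ and $y_\infty\in\eta_\infty(T_pC\setminus T_pC_0)$; then write $y_\infty=\eta_\infty(q_\infty,r)$ for a unique $q_\infty\in T_pC_0$. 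Finally, by the first inclusion $b(\eta_\infty(q_\infty,r))=-r$ automatically, so this is consistent and $y_\infty\in\eta_\infty(T_pC_0\times\{r\})$, completing the proof.

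\textbf{Main obstacle.} The delicate point is the reverse inclusion, specifically pinning down that a level set value $b(y_\infty)=-r$ forces both $\mathsf{d}_{T_xY}(y_\infty,T_xX)=r$ \emph{and} that $y_\infty$ actually lies in the image $\eta_\infty(T_pC\setminus T_pC_0)$ rather than merely in its closure; this requires carefully feeding the almost-minimizing geodesics from $y_\infty$ to $v(s)$ (for a sequence $s\to\infty$) into Lemma \ref{llem4.4}, extracting a limiting ray in $T_pC$ through the product structure of Proposition \ref{prop4.1}, and checking that this limiting ray is exactly a vertical segment $\{q_\infty\}\times[r,\infty)$ — i.e. that no ``escape into $T_xX$'' or branching occurs, which is where geodesic non-branching (Theorem \ref{thm2.17}) and the quadratic distance estimate underlying Proposition \ref{prop3.5}(1)/Lemma \ref{llem4.4} must be used in tandem.
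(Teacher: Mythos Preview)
Your treatment of the existence of $b$ and of the inclusion $\eta_\infty(T_pC_0\times\{r\})\subset b^{-1}(-r)$ is correct and matches the paper's argument essentially verbatim (use the $1$-Lipschitz bound for $b(y_\infty)\le -r$, then Lemma~\ref{llem4.4} to upgrade to equality $\mathsf{d}_{T_xY}(y_\infty,v(s))=\mathsf{d}_{T_pC}((p_\infty,r),\tilde v(s))$ for large $s$).

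For the reverse inclusion, however, you are making the problem harder than it is. The ``Main obstacle'' you identify --- showing $y_\infty\notin T_xX$ and $y_\infty\in\eta_\infty(T_pC\setminus T_pC_0)$ --- is dispatched by a single triangle inequality, not by limiting rays, non-branching, or the quadratic estimate. Since $\mathsf{d}_{T_xY}(v(s),z)\ge s$ for every $z\in T_xX$ (Proposition~\ref{prop4.2}(1)), we get for any such $z$
\[
\mathsf{d}_{T_xY}(y_\infty,z)\ \ge\ \mathsf{d}_{T_xY}(v(s),z)-\mathsf{d}_{T_xY}(v(s),y_\infty)\ \ge\ s-(b_s(y_\infty)+s)\ =\ -b_s(y_\infty),
\]
hence $\mathsf{d}_{T_xY}(y_\infty,T_xX)\ge -b_s(y_\infty)$. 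Once $b_s(y_\infty)<-r/2<0$ this gives both $y_\infty\notin T_xX$ and
\[
\mathsf{d}_{T_xY}(y_\infty,v(s))\ =\ b_s(y_\infty)+s\ <\ -b_s(y_\infty)+s\ \le\ \mathsf{d}_{T_xY}(y_\infty,T_xX)+\mathsf{d}_{T_xY}(v(s),T_xX),
\]
so Lemma~\ref{llem4.4} applies directly and yields $y_\infty=\eta_\infty(q_\infty,h)$ for some $h>0$. Then the first inclusion (already proved) gives $b(y_\infty)=-h$, forcing $h=r$. This is exactly the paper's route; your proposed limiting-geodesic/non-branching machinery is unnecessary here. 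Also note that ``$y_\infty\notin T_xX$'' and ``$\mathsf{d}_{T_xY}(y_\infty,T_xX)=r$'' are not equivalent a priori, contrary to what you wrote; the latter is a consequence of the full argument, not a preliminary step.
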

\begin{proof}

From the triangle inequality of the distance function, we know for any fixed $y_\infty \in T_x Y$, the function $s\mapsto b_s(y_\infty)$, is monotone {non-increasing} on $(0,\infty)$ with a lower bound $-\mathsf{d}_{T_x Y} (y_\infty,o_x)$. This guarantees the existence of the limit $b:=\lim_{s\to\infty} b_s$, which is {known} as the Busemann function on $T_x Y$.

Regarding (\ref{fffljaflkjlkdajklf}), we first show that if $b(y_\infty)=-h<0$, then there exists $p_\infty \in T_p C \setminus T_p C_0$ such that $\eta_\infty(p_\infty)=y_\infty$.

  For any sufficiently large $s>0$, we have $b_s(y_\infty)<-h/2<0$. Moreover, since $v$ minimizes the distance between $v(s)$ and $T_x X$, it holds that $\mathsf{d}_{T_x Y}\left(y_\infty,T_x X\right)\geqslant -b_s\left(y_\infty\right)$. From the very definition of $b_s$ we know 
 \[
 \mathsf{d}_{T_x Y}\left(y_\infty,v(s)\right)=b_s(y_\infty)+s<-b_s(y_\infty)+s\leqslant  \mathsf{d}_{T_x Y}\left(y_\infty,T_x X\right)+\mathsf{d}_{T_x Y}\left(v(s),T_x X\right).
 \] Lemma \ref{llem4.4} then yields the existence of $q_\infty \in T_p C\setminus T_p C_0$ such that $\eta_\infty (q_\infty)=y_\infty$.

 On the other hand, for any $q_\infty=(p_\infty,h) \in T_p C\setminus T_p C_0$, by letting $y_\infty=\eta_\infty(q_\infty)$ we calculate
  \begin{equation}\label{aaaaaaaaaaaaa4.4}
  \begin{aligned}
   & \mathsf{d}_{T_x Y}\left(y_\infty,v(s)\right)-s \leqslant \mathsf{d}_{T_p C}\left(q_\infty, \tilde{v}(s)\right)-s=\sqrt{\mathsf{d}_{T_p C_0}^2\left(p_\infty, o_p\right)+(s-h)^2}-s\\
    \ &=\frac{-2sh+\mathsf{d}_{T_p C}^2\left(p_\infty, o_p\right)+h^2}{\sqrt{\mathsf{d}_{T_p C_0}^2\left(p_\infty, o_p\right)+(s-h)^2}+s}\rightarrow -h,\ \ \ \ \text{as}\ s\rightarrow \infty.
  \end{aligned}
  \end{equation}This implies $b(y_\infty)<0$. Moreover, from the previous argument we see the following holds for any sufficiently large $s>0$.
    \[
  \mathsf{d}_{T_x Y}\left(y_\infty,v(s)\right)-s=\mathsf{d}_{T_p C}\left(q_\infty, \tilde{v}(s)\right)-s.
  \]
  This as well as (\ref{aaaaaaaaaaaaa4.4}) then shows $b(y_\infty)=-h$. Therefore we complete the proof of (\ref{fffljaflkjlkdajklf}).

\end{proof}}
{{An} application of Lemma \ref{Busemann} is the following result.}
\begin{prop}[Perpendiculars at points in {$X_1$} are not extendible]\label{lem4.7}
Let $p\in C_0^1$ and $x=\eta_0(p)$. Then for any $\delta\in (0,t_0)$, there does not exist a unit speed minimal geodesic {$\gamma:[-\delta,\delta]\to Y$} such that 
\[
\gamma(t)=\eta(p,t),\  \forall t\in [0,\delta].
\]
\end{prop}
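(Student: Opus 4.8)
The plan is to argue by contradiction, using the Busemann function machinery of Lemma~\ref{Busemann} at the tangent cone level together with the non-branching property of RCD spaces (Theorem~\ref{thm2.17}). Suppose, for some $\delta\in(0,t_0)$, there is a unit speed minimal geodesic $\gamma:[-\delta,\delta]\to Y$ with $\gamma(t)=\eta(p,t)$ for all $t\in[0,\delta]$; in particular $\gamma(-\delta)\in Y$ and $\gamma$ passes through $x=\gamma(0)\in X$. First I would rescale at $x$: choosing $s_i\downarrow 0$, pass to blow-up limits to obtain the tangent spaces $T_pC$, $T_xY$, $T_xX$ and the tangent map $\eta_\infty$ as in \eqref{4.1}--\eqref{4.3}, and note that the rescaled geodesics $s_i^{-1}\gamma$ converge (after a subsequence) to a line $\Gamma:\mathbb{R}\to T_xY$ through $o_x$, with $\Gamma(t)=\eta_\infty(o_p,t)=v(t)$ for $t\ge 0$, where $v=\eta_\infty(\tilde v)$ is exactly the perpendicular appearing in Lemma~\ref{Busemann}. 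The key point to extract is that $\Gamma|_{(-\delta',0]}$ (for $\delta'$ the rescaled length, which I can take to be $+\infty$ after a diagonal argument, or at least positive) is a minimal geodesic leaving $o_x$ in the direction opposite to $v$, and in particular $\Gamma(-\varepsilon)\notin T_xX$ for small $\varepsilon>0$ would be the obstruction to a clean contradiction — so I must handle the location of $\Gamma(-\varepsilon)$ carefully.

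Next I would compute the Busemann function $b=\lim_{s\to\infty}b_s$ of Lemma~\ref{Busemann} along the backward ray: since $\Gamma$ is a line (a two-sided ray) with $\Gamma|_{[0,\infty)}=v$, for any point $y_\infty=\Gamma(-\varepsilon)$ on the backward part we have, directly from the triangle equality along $\Gamma$,
\[
b_s(\Gamma(-\varepsilon)) = \mathsf{d}_{T_xY}(\Gamma(-\varepsilon),v(s))-s = (\varepsilon+s)-s=\varepsilon
\]
for all $s>0$, hence $b(\Gamma(-\varepsilon))=\varepsilon>0$. But \eqref{fffljaflkjlkdajklf} of Lemma~\ref{Busemann} asserts $b^{-1}(-r)=\eta_\infty(T_pC_0\times\{r\})$ for every $r>0$, and more relevantly the argument in its proof shows $b(y_\infty)<0$ whenever $y_\infty\in\eta_\infty(T_pC\setminus T_pC_0)$, while $b(y_\infty)=0$ can only happen on $\eta_\infty(T_pC_0)=\eta_\infty(T_pC_0\times\{0\})\subset T_xX$ (taking $r\downarrow 0$), and $b\ge 0$ everywhere since $v$ realizes the distance to $T_xX$. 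The sign $b(\Gamma(-\varepsilon))=\varepsilon>0$ forces $\Gamma(-\varepsilon)\notin \eta_\infty(T_pC\setminus T_pC_0)$, i.e.\ $\Gamma(-\varepsilon)\notin T_xY\setminus T_xX$ by Proposition~\ref{prop4.2}(1) (since every point of $T_xY$ off $T_xX$ is hit by $\eta_\infty$ at its foot-distance), so $\Gamma(-\varepsilon)\in T_xX$. Thus the whole line $\Gamma$ lies in $T_xX$ for $t\le 0$, and by Lemma~\ref{convex} (convexity of $T_xX$), actually $\Gamma(t)\in T_xX$ for all $t\in[-\varepsilon,0]$.

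Now I would derive the contradiction with $p\in C_0^1$, i.e.\ $\#\eta_0^{-1}(x)=1$. Having $\Gamma(-\varepsilon)\in T_xX$ for small $\varepsilon$ means $v=\eta_\infty(\tilde v)$, the perpendicular to $T_xX$ at $o_x$ pointing in the $\tilde v$-direction, is \emph{extendible} inside $T_xY$ through $o_x$ as a minimal geodesic. Unwinding this at the manifold level: the backward geodesic $\gamma|_{[-\delta,0]}\subset Y$ together with $\gamma|_{[0,\delta]}=\eta(p,\cdot)$ being a minimal geodesic through $x$ means $\gamma|_{[-\delta,0]}$ is a perpendicular to $X$ at $x$ distinct from $\eta(p,\cdot)$ in the sense of Remark~\ref{rmk3.3} — more precisely, writing $\gamma(-t)=\eta(p',t)$ via Proposition~\ref{prop3.3} for the unique $p'\in C_0$ with this foot point (this requires first checking $\gamma((-\delta,0))\subset Y\setminus X$, which follows from the rescaled analysis above showing the geodesic immediately leaves $X$), we get $\eta_0(p')=x$ with $p'\ne p$ (they point in opposite directions at $o_x$ after blow-up, or: if $p'=p$ then $\gamma$ would retrace $\eta(p,\cdot)$, contradicting that $\gamma$ is minimal of length $2\delta$). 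Hence $\#\eta_0^{-1}(x)\ge 2$, contradicting $p\in C_0^1$. The main obstacle I anticipate is the bookkeeping in passing between the three scales — the original $Y$, the tangent cone $T_xY$, and ensuring the blow-up limit $\Gamma$ of $s_i^{-1}\gamma$ really is a two-sided minimal ray (non-branching via Theorem~\ref{thm2.17} is what prevents the backward part from collapsing or wandering) — and, relatedly, confirming that $\gamma$ does not sneak back into $X$ on $(-\delta,0)$, which is exactly where the quantitative estimate Proposition~\ref{prop3.5}(1) (a geodesic staying $\le J\ell^2$-close to $X$) gets used to rule out oscillation near $x$.
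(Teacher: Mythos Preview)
Your argument has a genuine gap. In step~3 you assert that $b(\Gamma(-\varepsilon))=\varepsilon>0$ forces $\Gamma(-\varepsilon)\in T_xX$, because ``every point of $T_xY$ off $T_xX$ is hit by $\eta_\infty$''. But that surjectivity is \emph{not} given by Proposition~\ref{prop4.2} (which only describes the image, not its complement); it is the content of Corollary~\ref{lem4.1}, proved later and depending---via Proposition~\ref{surjmapeta}, whose proof explicitly invokes ``an argument analogous to that in the proof of Proposition~\ref{lem4.7}''---on the very statement you are proving. So the step is circular. (Incidentally, your aside ``$b\ge 0$ everywhere'' is false: Lemma~\ref{Busemann} gives $b=-r<0$ on $\eta_\infty(T_pC_0\times\{r\})$.) Without surjectivity, a point of $T_xY\setminus T_xX$ outside $\eta_\infty(T_pC)$ may well have positive Busemann value, so you cannot locate $\Gamma(-\varepsilon)$.

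Your final paragraph then contradicts itself: after (purportedly) placing $\Gamma(-\varepsilon)$ in $T_xX$, you claim $\gamma$ ``immediately leaves $X$'' on $(-\delta,0)$---the opposite conclusion---in order to write $\gamma(-t)=\eta(p',t)$. Even when $\gamma((-\delta,0))\subset Y\setminus X$, one cannot write $\gamma(-t)=\eta(p',t)$ for a fixed $p'$: nothing forces $\mathsf{d}_Y(\gamma(-t),X)=t$. The paper instead splits into two cases. If $\gamma([-\delta',0))\subset Y\setminus X$ for some $\delta'>0$, it lifts this arc to a geodesic in $C$ and uses the slope estimate of Lemma~\ref{prop4.6} to get $h:=\lim_{s\downarrow 0}s^{-1}\mathsf{d}_Y(\gamma(-s),X)>0$, yielding the direct metric contradiction $2\le\sqrt{(1-h)^2+1-h^2}<\sqrt{2}$ in the tangent cone. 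Otherwise $\gamma(-t_i)\in X$ along a sequence $t_i\to 0$; the paper rescales by $s_i=t_i^{3/2}$ so that Proposition~\ref{prop3.5}(1) forces the backward half-line $\gamma_\infty((-\infty,0])\subset T_xX$, then constructs perpendiculars $\zeta_t$ to $T_xX$ at $\gamma_\infty(-t)$ and uses Lemma~\ref{llem4.4} (which requires only a short geodesic connection to $\eta_\infty(T_pC)$, not global surjectivity) to bound $\mathsf{d}_{T_xY}(\zeta_t(h),\gamma_\infty(h))\ge 2h$; letting $t\downarrow 0$ produces a second perpendicular at $o_x$ that concatenates with $\gamma_\infty|_{[0,1]}$ into a minimal geodesic branching from $\gamma_\infty$ at $o_x$, contradicting Theorem~\ref{thm2.17}.
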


{Before proceeding the proof, we may need the following lemma.}
\begin{lem}\label{prop4.6}
Let {$\gamma:[0,l]\to C$} be a unit speed minimal geodesic from $p\in C_0$ to $q\in C\setminus C_0$ such that $\gamma\cap C_0=\{p\}$, then {for some constant $J=J(\lambda,\varepsilon_0,t_0)$} we have
\begin{equation}\label{aaaaa4.13}
\frac{\mathsf{d}_C\left(\gamma(t),C_0\right)}{t}-h_{p,q}\in {\left[-Jt,0\right]}, \ \forall t\in [0,{l}],  
\end{equation}
where 
\[
h_{p,q}:=\lim_{t\downarrow 0}\frac{\mathsf{d}_C(\gamma(t),C_0)}{t}.
\]
\end{lem}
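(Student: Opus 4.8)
\textbf{Proof plan for Lemma \ref{prop4.6}.}

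The plan is to work directly with the warped product structure of $(C,\mathsf{d}_C)=(C_0\times_\phi[0,t_0],\mathsf{d}_{C_0\times_\phi[0,t_0]})$ and to reduce the statement, as in the proof of Proposition \ref{prop3.5}(1), to an ODE estimate for minimal geodesics in the approximating manifolds $C_{M_i}$. First I would write $\gamma=(\theta,\nu)$ with $\nu(t)=\mathsf{d}_C(\gamma(t),C_0)$, and for small $\epsilon>0$ consider the restriction $\gamma|_{[\epsilon,l]}$, which avoids $C_0$ entirely since $\gamma\cap C_0=\{p\}$. Approximating the endpoints $\gamma(\epsilon)$ and $\gamma(l)$ by points $z_i,w_i\in C_{M_i}$ under the convergence \eqref{3.3} and taking unit speed minimal geodesics $\gamma_i=(\theta_i,\nu_i)$ between them, Theorem \ref{thm2.17} forces $\gamma_i\to\gamma|_{[\epsilon,l]}$ and hence $\gamma_i\cap\partial M_i=\emptyset$ for large $i$. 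On $C_{M_i}$ the normal component satisfies the exact ODE \eqref{feaiofheaiofhaf}, namely $\nu_i''=(\phi'/\phi)(\nu_i)|\theta_i'|^2\geqslant 0$ (because $\phi'<0$ on $(0,t_0)$ while $|\theta_i'|^2$ enters with the warping sign as in the proof of Proposition \ref{prop3.5}), so $\nu_i$ is \emph{convex}. Moreover, since $\gamma_i$ is unit speed and $0\leqslant\nu_i\leqslant t_0$ with $|\phi''/\phi|$ bounded, one has a two-sided bound $|\nu_i''|\leqslant J(\lambda,\varepsilon_0,t_0)$.

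Next I would exploit convexity of $\nu_i$: a convex function lies below its chords and above its tangent lines, which is exactly the source of the sign constraints in \eqref{aaaaa4.13}. Concretely, parametrize so that $\nu_i(0)$ is close to $0$ (the foot point side); convexity gives that $s\mapsto(\nu_i(s)-\nu_i(0))/s$ is non-decreasing, so its value at any $t$ is at least its limit as $s\downarrow 0$, i.e. $\nu_i'(0^+)$. This yields $\nu_i(t)/t \geqslant h_{p,q}^{(i)} + o(1)$ in the limit, giving the ``$\leqslant 0$'' side of the bracket after passing $i\to\infty$ and then $\epsilon\to 0$ (using that the limiting geodesic has $\nu(0)=0$ so the chord through the origin is the relevant comparison). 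For the ``$\geqslant -Jt$'' side I would use the Taylor expansion with the uniform second-derivative bound: $\nu_i(t)=\nu_i(0)+\nu_i'(0)t+\tfrac12\nu_i''(\xi)t^2$ with $|\nu_i''(\xi)|\leqslant J$, so $(\nu_i(t)-\nu_i(0))/t - \nu_i'(0) = \tfrac12\nu_i''(\xi)t \geqslant -\tfrac{J}{2}t$; again pass to the limit in $i$ and $\epsilon$, and note $h_{p,q}$ is precisely the limit of $\nu_i'(0)$ (equivalently of $\mathsf{d}_C(\gamma(t),C_0)/t$ as $t\downarrow 0$, whose existence follows from monotonicity of the difference quotient of the convex limit profile). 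Absorbing constants gives \eqref{aaaaa4.13} with $J=J(\lambda,\varepsilon_0,t_0)$.

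The main obstacle I anticipate is the bookkeeping at the foot point: the geodesic $\gamma$ starts \emph{on} $C_0$, where the warped product is singular-like (the fiber collapses in the rescaled picture) and where $h_{p,q}$ is only defined as a one-sided limit, so one must justify both that this limit exists and that it is genuinely the correct comparison slope for the chord/tangent estimates after the $i\to\infty$, $\epsilon\to 0$ double limit. This is handled by first proving everything on $[\epsilon,l]$ where $\gamma$ is interior (hence covered by the local isometry $\eta$ and the clean ODE on $C_{M_i}$), then using convexity of $\nu$ — inherited in the limit — to control the behavior as $\epsilon\to 0$ and to identify $h_{p,q}=\lim_{\epsilon\downarrow 0}\nu'(\epsilon^+)$ with the stated limit. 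Everything else is a routine transcription of the argument already used for Proposition \ref{prop3.5}(1), with ``maximum of $\nu_i$'' replaced by ``convexity of $\nu_i$''.
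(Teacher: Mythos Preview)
Your overall strategy---approximate $\gamma$ by minimal geodesics $\gamma_i=(\theta_i,\nu_i)$ in $C_{M_i}$ and use the second-order ODE \eqref{feaiofheaiofhaf} for the height $\nu_i$---is exactly the paper's approach. The gap is a sign error: since $\phi'<0$ on $(0,t_0)$ and $\phi>0$, the formula $\nu_i''=(\phi'/\phi)(\nu_i)\,|\theta_i'|^2$ gives $\nu_i''\leqslant 0$ (there is no extra ``warping sign''), so $\nu_i$ is \emph{concave}, not convex; this is precisely the paper's \eqref{4.16}, namely $-J\leqslant\nu_i''\leqslant 0$. With concavity and $\nu_i(0)=0$ the difference quotient $t\mapsto\nu_i(t)/t$ is non-\emph{in}creasing, which is what makes $h_{p,q}$ exist and yields $\nu(t)/t-h_{p,q}\leqslant 0$; the lower bound $\nu_i''\geqslant -J$ then gives the $\geqslant -Jt$ side via the mean-value theorem. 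As written, your two slips (claiming convexity, then asserting that $\nu(t)/t\geqslant h_{p,q}$ delivers the ``$\leqslant 0$'' side) happen to cancel in spirit, but each step is incorrect and would not survive a careful write-up.

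A secondary point concerns where you truncate. You restrict to $\gamma|_{[\epsilon,l]}$, so the approximating geodesic has $\nu_i(0)\approx\nu(\epsilon)>0$, which spoils the clean chord-through-the-origin argument and forces extra bookkeeping in the $\epsilon\to 0$ limit. The paper instead keeps the foot point on the boundary---choosing $p_i\in\partial M_i$ with $p_i\to p$, so that $\nu_i(0)=0$ exactly---and truncates at the far end, setting $\bar q=\gamma(l-\epsilon)$. Then the Lagrange mean-value estimate $\nu_i(t)/t-\nu_i'(0)\in[-Jt,0]$ is immediate, one first shows the Cauchy-type bound $\big|\nu_i(t_1)/t_1-\nu_i(t_2)/t_2\big|\leqslant J(t_1+t_2)$ (hence $h_{p,q}$ exists after $i\to\infty$), and the double limit $i\to\infty$, $\epsilon\to 0$ requires no special treatment at the foot point.
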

\begin{proof}
{Let us fix a sufficiently small $\epsilon>0$ and set $\bar{q}:=\gamma(l-\epsilon)$.} Assume $\partial M_i\ni p_i\rightarrow p\in C_0$ and {$C_{M_i}\ni \bar{q}_i\rightarrow \bar{q}\in C$ under (\ref{3.3}) and (\ref{3.2})} respectively.

Let $\gamma_i|_{[0,l_i]} =(\sigma_i, \nu_i)$ be the unit speed minimal geodesic from $p_i$ {to $\bar{q}_i$} in $C_{M_i}$. {Using Theorem \ref{thm2.17}, we see $\gamma_i$ converges to $\gamma|_{[0,l-\epsilon]}$ under (\ref{3.3}).}

Then by (\ref{feaiofheaiofhaf}) and the fact that $\gamma_i$ is unit speed, we have
\begin{align}\label{4.16}
-J(\lambda,\varepsilon_0,t_0)\leqslant \nu_i''(t)\leqslant 0,\ \forall t\in [0,l_i].
\end{align}
Therefore for each $i$, by using Lagrange mean-value Theorem and (\ref{4.16}), we obtain

\begin{equation}\label{sorry4.16}
\frac{\nu_i(t)}{t}-\nu_i'(0)=\frac{\nu_i(t)-\nu_i(0)}{t}-\nu_i'(0)\in \left[-{Jt},0\right],\  \forall t\in [0,l_i].
\end{equation}{This yields that for any $t_1,t_2\in [0,l_i]$ it holds that 
\begin{equation}\label{aaaaaa4.14}
\left|\frac{\nu_i(t_1)}{t_1}-\frac{\nu_i(t_2)}{t_2}\right|\leqslant \left|\frac{\nu_i(t_1)}{t_1}-\nu_i'(0)\right|+\left|\frac{\nu_i(t_2)}{t_2}-\nu_i'(0)\right|\leqslant {J}(t_1+t_2){.}
\end{equation}}{Thus letting $i\rightarrow \infty$ in (\ref{aaaaaa4.14}) implies
\[
\left|\frac{\mathsf{d}_C(\gamma(t_1),C_0)}{t_1}-\frac{\mathsf{d}_C(\gamma(t_2),C_0)}{t_2}\right|\leqslant {J}(t_1+t_2), \ \forall t_1,t_2\in \left[0,{l-\epsilon}\right], 
\]
since {$l_i\rightarrow l-\epsilon$}. Hence $h_{p,q}$ is well-defined.

To see (\ref{aaaaa4.13}), for any $0<{\delta}<t<{l-\epsilon}$, from (\ref{sorry4.16}) we have
\[
\frac{\nu_i(t)}{t}-\frac{\nu_i({\delta})}{{\delta}}=\frac{\nu_i(t)}{t}-\nu_i'(0)+\left(\frac{\nu_i({ \delta})}{{\delta}}-\nu_i'(0)\right)\in \left[-{Jt}, J{\delta}\right].
\]
Then letting $i\rightarrow \infty$ shows
\begin{equation}\label{aaaaaaaaa4.16}
\frac{\mathsf{d}_C(\gamma(t),C_0)}{t}-\frac{\mathsf{d}_C(\gamma({\delta}),C_0)}{{ \delta}}\in\left[-{Jt}, {J{ \delta}}\right]. 
\end{equation}
Hence the conclusion follows by letting ${ \delta} \rightarrow 0$ in (\ref{aaaaaaaaa4.16}) {and then letting $\epsilon\to 0$}.
}

\end{proof}
\begin{proof}[Proof of Proposition \ref{lem4.7}]
Assume the {there exists $\delta>0$ and a geodesic $\gamma:[-\delta,\delta]\to Y$} satisfying the above property. 

If there exists {$\delta'\in (0,\delta)$ (which is still denoted as $\delta$ for convenience) such that $\gamma\left([-\delta,0)\right)\subset Y\setminus X$},
then the 1-Lipschitz local isometry property of $\eta$ yields that $\eta^{-1}(\gamma|_{[-\delta,0]})$ is also a minimal geodesic on $C$. According to Lemma \ref{prop4.6}, we know
\[
h:=\lim_{s\downarrow 0}\frac{\mathsf{d}_Y\left(\gamma(-s),X\right)}{s}\geqslant \frac{\mathsf{d}_Y\left(\gamma(-\delta),X\right)}{\delta}.
\]
Using Propositions \ref{prop4.1} and \ref{prop4.2} then shows {
\[
\begin{aligned}
2=\ &\lim_{s\downarrow 0}\frac{\mathsf{d}_Y\left(\gamma(-s),\gamma(s)\right)}{s}\leqslant \liminf_{s\downarrow 0}\frac{\mathsf{d}_C\left(\eta^{-1}(\gamma(-s)),(p,s)\right)}{s}\\
\leqslant \ &\sqrt{(1-h)^2+1-h^2}< \sqrt{2},\\
\end{aligned}
\]which is impossible.}

{According to the continuity of the map $t\mapsto \mathsf{d}_Y(\gamma(t),X)$, it suffices to consider the case that there exists a sequence $t_i\rightarrow 0$ such that $\gamma(-t_i)\in X$ for all $i$. 

{Plugging $s_i={t_i}^{3/2}$ into (\ref{4.1})-(\ref{4.3}) and passing to a subsequence if necessary, we obtain tangent spaces and tangent maps denoted with the same notation for convenience.} In addition, we may also require that $\gamma_i:=\gamma|_{[-t_i,t_i]}$ converges to a limit geodesic $\gamma_\infty$ in $T_x Y$ {under (\ref{a124.2})}. 

Since $t_i/s_i\rightarrow \infty$, we see $\gamma_\infty$ is a line. {From Proposition \ref{prop3.5} we know 
\[
\lim_{i\to\infty}\sup_{t\in [-t_i,0]}\frac{\mathsf{d}_Y\left(\gamma(t),X\right)}{s_i}\leqslant {J(\lambda,\varepsilon_0,t_0)}\lim_{i\to \infty}\frac{t_i^2}{s_i}=0,
\]
which yields $\gamma_\infty((-\infty,0])\subset T_x X$.

Notice that $\gamma_\infty(t)=\eta_\infty(o_p,t)$ for any $t>0$. Let us {consider} the Busemann function {$b$ on $T_x Y$ as} in Lemma \ref{Busemann}. Then from our setting we know 
\[
b(\gamma_\infty(-t))=t,\ \forall t>0.
\] 

For each {$t>0$}, let $\zeta_t:[0,1]\rightarrow T_x Y$ be the perpendicular to $T_x X$ at $\gamma_\infty(-t)$. We claim
\begin{equation}\label{eqn5122222222}
2h\leqslant \mathsf{d}_{T_x Y}\left(\zeta_t(h),\gamma_\infty(h)\right)\leqslant 2h+t,\ \forall t,h\in (0,1].
\end{equation}

If the first inequality does not hold for some $t$ and $h$, then Lemma \ref{llem4.4} would imply the existence of $q_\infty\in T_p C\setminus T_p C_0$ such that $\eta_\infty(q_\infty)={\zeta_t}(h)$. However, by Lemma \ref{Busemann} we see $b(\gamma_\infty(-t))=0$, contradicting our assumption that $b(\gamma_\infty(-t))=t$.

Now as $t\downarrow 0$, $\zeta_t$ converges to a unit speed minimal geodesic $\zeta$ from $o_x$ to $w_\infty:=\lim_{t\downarrow 0}\zeta_t(1)$. Moreover, from (\ref{eqn5122222222}) we know that the curve
\[
\tilde{\zeta}{(t)}:=
\left\{\begin{aligned}
\zeta({-t}),\ \ &\ \ \text{if }t\in [-1,0],\\
\gamma_\infty(t), \ \ &\ \ \text{if }t\in [0,1],
\end{aligned}
\right.
\] 
is a minimal geodesic from $w_\infty$ to $\gamma_\infty(1)$. This contradicts Theorem \ref{thm2.17}.
}}
\end{proof}
}

{Another application of Lemma \ref{Busemann} is the following proposition. 
\begin{prop}\label{surjmapeta}
The map $\eta_\infty|_{T_p C_0}: T_p C_0\rightarrow T_x X$ is surjective.
\end{prop}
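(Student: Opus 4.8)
The plan is to show that every point $z_\infty\in T_x X$ lies in the image $\eta_\infty(T_p C_0)$, using the Busemann function $b$ from Lemma \ref{Busemann} as the main tool. Recall that $T_x X$ is the limit of $(X,s_i^{-1}\mathsf{d}_Y)$ as a subset of $T_x Y$, and that $v=\eta_\infty(\tilde v)$ is the perpendicular $v(t)=\eta_\infty(o_p,t)$ at $x$, with $b^{-1}(-r)=\eta_\infty(T_p C_0\times\{r\})$ for every $r>0$. Fix $z_\infty\in T_x X$. First I would pick, for a fixed $r>0$, a point $y_\infty$ on a minimal geodesic in $T_x Y$ from $v(r)$ toward $z_\infty$ that is close enough to $v(r)$; more precisely, I want to produce a point $y_\infty\in T_x Y\setminus T_x X$ with $b(y_\infty)=-r'$ for some $r'\in(0,r]$ whose foot point (in the sense of Remark \ref{rmk3.3}, transported to the tangent cone via Proposition \ref{prop4.2}) is exactly $z_\infty$. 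Equivalently, I want to build a perpendicular to $T_x X$ at $z_\infty$ that agrees, after applying $\eta_\infty^{-1}$ on $T_pC\setminus T_pC_0$, with a vertical segment $\{p_\infty^z\}\times(0,\varepsilon)$; then $p_\infty^z\in T_pC_0$ and $\eta_\infty(p_\infty^z)=z_\infty$.

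The key steps, in order, are: (1) For $z_\infty\in T_xX$, use Proposition \ref{prop3.5}(1) (rescaled, as in the proof of Lemma \ref{convex}) to show that a minimal geodesic $\gamma_\infty$ in $T_x Y$ from $v(r)$ to $z_\infty$ stays in $T_xX$ only at its endpoint regime and otherwise its interior points near $z_\infty$ — actually, the cleaner route: since $T_x Y$ is a metric cone structure near $o_x$ only in regular cases, instead use the line/half-line structure. Let me restructure: (1) Observe $b(z_\infty)\ge 0$, because by Lemma \ref{Busemann} the sublevel set $\{b<0\}$ equals $\eta_\infty(T_pC\setminus T_pC_0)$, and by Proposition \ref{prop4.2}(1) every such point has positive distance to $T_xX$, so $T_xX\subset\{b\ge 0\}$; in fact $b(z_\infty)=0$ cannot yet be concluded without work. (2) Take $y_\infty$ on the segment $\gamma_{z_\infty,v(r)}$ at small distance $s$ from $z_\infty$; by the triangle inequality $b(y_\infty)\le b(z_\infty)-\text{(progress toward }v(r))$ combined with the Busemann estimate $b(v(r))=-r$ forces $b(y_\infty)<0$ for suitable $s,r$, hence $y_\infty\in\eta_\infty(T_pC\setminus T_pC_0)$ by Lemma \ref{Busemann}. (3) Write $y_\infty=\eta_\infty(p_\infty^{(s)},h(s))$; let $s\downarrow 0$, so $y_\infty\to z_\infty$, and by Proposition \ref{prop4.2}(1) we get $h(s)=\mathsf{d}_{T_xY}(y_\infty,T_xX)\to 0$; after passing to a subsequence $p_\infty^{(s)}\to p_\infty^z\in T_pC_0$ by the product structure (Proposition \ref{prop4.1}) and continuity of $\eta_\infty$, giving $\eta_\infty(p_\infty^z)=z_\infty$.

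The one genuinely delicate point — and the step I expect to be the main obstacle — is controlling the foot point of $y_\infty$ as $s\downarrow 0$ and ensuring it converges to $z_\infty$ rather than drifting away, i.e. that $\mathsf{d}_{T_xY}(y_\infty,z_\infty)$ and $\mathsf{d}_{T_xY}(y_\infty,T_xX)$ are comparable so that the foot point of $y_\infty$ is forced near $z_\infty$. This is exactly where the quadratic estimate of Proposition \ref{prop3.5}(1) in rescaled form is needed: it shows that a near-minimal geodesic from a point of $X$ to a nearby point stays within quadratically small distance of $X$, so the deviation of $\gamma_{z_\infty,y_\infty}$ into $T_xY\setminus T_xX$ is negligible compared to $s$, and hence the foot point must be within $o(s)$ of $z_\infty$. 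I would also need the mild observation, via Lemma \ref{llem4.4}, that once $b(y_\infty)<0$ the geodesic from $y_\infty$ down to its foot point lifts to a vertical segment in $T_pC\setminus T_pC_0$, which is what identifies the foot point as $\eta_\infty(p_\infty^z)$ with $p_\infty^z\in T_pC_0$. The remaining arguments are routine applications of the 1-Lipschitz local isometry property of $\eta_\infty$ (Proposition \ref{prop4.2}(2)) and compactness of pointed tangent cones.
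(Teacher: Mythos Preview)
Your overall strategy --- using the Busemann function $b$ from Lemma \ref{Busemann} and the level-set description $b^{-1}(-r)=\eta_\infty(T_pC_0\times\{r\})$ to produce a preimage of $z_\infty$ --- is exactly the paper's idea. But your implementation has a real gap at step (2). You assert that ``$b(y_\infty)\le b(z_\infty)-(\text{progress toward }v(r))$'', i.e.\ that $b$ decreases along the geodesic $\gamma_{z_\infty,v(r)}$. That inequality holds for $b_r$ (since $b_r(y_\infty)=b_r(z_\infty)-s$ along a minimal geodesic to $v(r)$), but not for $b=\lim_{s\to\infty}b_s$: all you know is $b\le b_r$, which gives nothing if $b(z_\infty)>0$. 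You correctly flagged in step (1) that $b(z_\infty)=0$ is not yet available, and without it your step (2) simply does not yield $b(y_\infty)<0$ for small $s$. A second, related problem is in step (3): you need $p_\infty^{(s)}$ to subconverge in $T_pC_0$, but the 1-Lipschitz bound on $\eta_\infty$ gives an upper bound on the image distances, not on the preimage distances, so boundedness of $p_\infty^{(s)}$ is not automatic from $y_\infty\to z_\infty$.

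The paper dodges both issues by reversing the limit: instead of fixing $r$ and sending $s\downarrow 0$, it sends $s\to\infty$ along the family of geodesics $\gamma^{T_xY}_{v(s),x_\infty}$ and looks at the limiting \emph{unit segment near $x_\infty$}. Because $b_s$ is exactly affine along $\gamma^{T_xY}_{v(s),x_\infty}$, the limit geodesic $\gamma_\infty:[0,1]\to T_xY$ satisfies $b(\gamma_\infty(t))=-t$ for all $t\in(0,1]$, hence by Lemma \ref{Busemann} lies in $\eta_\infty(T_pC_0\times\{t\})$; the local-isometry/injectivity of $\eta_\infty$ on $T_pC\setminus T_pC_0$ then forces $\gamma_\infty(t)=\eta_\infty(p_\infty,t)$ for a \emph{single} $p_\infty$ --- no compactness argument on foot points is needed. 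The remaining point, that $\gamma_\infty(0)=x_\infty$ (equivalently $b(x_\infty)=0$), is handled by observing that otherwise $b(x_\infty)>0$ and then deriving a contradiction by the argument of Proposition \ref{lem4.7}. So the missing idea in your proposal is to pass to the $s\to\infty$ limit first to get exact Busemann levels, and then treat the possibility $b(x_\infty)>0$ as a separate contradiction, rather than trying to force $b<0$ near $z_\infty$ directly.
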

\begin{proof}
{{Fix an arbitrary point $x_\infty\in T_xX$. For} any sufficiently large $s>0$, {define the geodesic $\gamma_s:[0,1]\longrightarrow T_x Y$ as
  \[
  \gamma_s(t)=\gamma^{T_x Y}_{\eta_\infty(o_p,s),x_\infty}(s-t).
  \]}
  
  Since the triangle inequality implies $\mathsf{d}_{T_x Y}\left(\gamma_s(1),x_\infty\right)\leqslant \mathsf{d}_{T_x X}(o_x,x_\infty)+1$, $\gamma_s$ uniformly converges to a geodesic $\gamma_\infty:[0,1]\rightarrow T_x Y$ such that
\begin{equation}\label{equatione4.17}
\gamma_\infty(t)\in {{b}^{-1}}(-t),\ \forall t\in (0,1].
\end{equation}
{Here $b$ is the Busemann function {on $T_xY$} defined in Lemma \ref{Busemann}.}

Therefore, (\ref{fffljaflkjlkdajklf}) and the local isometry property of $\eta_\infty|_{T_p C\setminus T_p C_0}$ verifies the existence of a point $p_\infty\in T_p C_0$ such that \[
\gamma_\infty(t)=\eta_\infty(p_\infty,t),\ \forall t\in [0,1].
\]

If $\eta_\infty(p_\infty)\neq x_\infty$, then by our construction it holds that ${b}(x_\infty)>0$. However, this would lead to a contradiction, as can be shown by an argument analogous to that in the proof of Proposition \ref{lem4.7}. 

Hence we have \begin{align}\label{fhoeahofe}
	\gamma_\infty(0)=\eta_\infty(p_\infty)=x_\infty,
	\end{align}}
from which we conclude.

\end{proof}
\begin{cor}\label{cor4.10}
{The $0$-level set of the Busemann function satisfies $b^{-1}(0)=T_x X$.}
\end{cor}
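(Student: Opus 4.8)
The plan is to prove the two inclusions $T_x X\subseteq b^{-1}(0)$ and $b^{-1}(0)\subseteq T_x X$ separately, reusing the geometry of the ray $v$ and of the ``interior image'' $\eta_\infty(T_p C\setminus T_p C_0)$ that was already analysed in Lemmas \ref{Busemann} and \ref{llem4.4} and in Proposition \ref{surjmapeta}. The continuity of the Busemann function $b$ (immediate, being a monotone limit of $1$-Lipschitz functions) will be used freely.

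For $T_x X\subseteq b^{-1}(0)$, given $x_\infty\in T_x X$ I would use Proposition \ref{surjmapeta} to pick $p_\infty\in T_p C_0$ with $\eta_\infty(p_\infty)=x_\infty$. By (\ref{fffljaflkjlkdajklf}) we have $b(\eta_\infty(p_\infty,t))=-t$ for every $t>0$, and since $\eta_\infty$ is $1$-Lipschitz (Proposition \ref{prop4.2}(2)) and $b$ is continuous, the map $t\mapsto b(\eta_\infty(p_\infty,t))$ is continuous on $[0,\infty)$; letting $t\downarrow0$ gives $b(x_\infty)=b(\eta_\infty(p_\infty,0))=0$. Equivalently, one may read this off directly from the geodesic $\gamma_\infty$ built in the proof of Proposition \ref{surjmapeta}, which satisfies $\gamma_\infty(t)\in b^{-1}(-t)$ for $t\in(0,1]$ by (\ref{equatione4.17}) and $\gamma_\infty(0)=x_\infty$ by (\ref{fhoeahofe}), again combined with continuity of $b$.

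For the reverse inclusion I would argue by contradiction. Suppose $y_\infty\in T_x Y$ satisfies $b(y_\infty)=0$ but $y_\infty\notin T_x X$, and set $d:=\mathsf{d}_{T_x Y}(y_\infty,T_x X)>0$. Along the ray $v(s)=\eta_\infty(o_p,s)$ defining $b$, the quantity $b_s(y_\infty)=\mathsf{d}_{T_x Y}(y_\infty,v(s))-s$ is monotone non-increasing and converges to $b(y_\infty)=0$, so $b_s(y_\infty)\geqslant0$ for all $s$ and $b_s(y_\infty)\to0$; fix $s$ so large that $b_s(y_\infty)<d$. Using $\mathsf{d}_{T_x Y}(v(s),T_x X)=s$ (Proposition \ref{prop4.2}(1)) this yields
\[
\mathsf{d}_{T_x Y}(y_\infty,v(s))=s+b_s(y_\infty)<s+d=\mathsf{d}_{T_x Y}(v(s),T_x X)+\mathsf{d}_{T_x Y}(y_\infty,T_x X).
\]
Since $v(s)\in\eta_\infty(T_p C\setminus T_p C_0)$ and $y_\infty\in T_x Y\setminus T_x X$, Lemma \ref{llem4.4} forces $y_\infty\in\eta_\infty(T_p C\setminus T_p C_0)$, say $y_\infty=\eta_\infty(p_\infty,h)$ with $p_\infty\in T_p C_0$ and $h>0$; then (\ref{fffljaflkjlkdajklf}) gives $b(y_\infty)=-h<0$, contradicting $b(y_\infty)=0$. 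Hence $b^{-1}(0)\subseteq T_x X$, and together with the first inclusion this gives $b^{-1}(0)=T_x X$.

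The first inclusion is a soft continuity statement once Proposition \ref{surjmapeta} is in hand, so the real content lies in the second one. The main point there is to produce, for large $s$, a near-equality $\mathsf{d}_{T_x Y}(y_\infty,v(s))=s+o(1)$ that is strictly smaller than the triangle bound $\mathsf{d}_{T_x Y}(v(s),T_x X)+\mathsf{d}_{T_x Y}(y_\infty,T_x X)\geqslant s+d$; this is exactly the hypothesis needed to invoke Lemma \ref{llem4.4}, and it relies on the monotone convergence $b_s(y_\infty)\downarrow0$ together with the fact that $v$ realises the distance from $v(s)$ to $T_x X$. I do not anticipate a genuine obstacle beyond checking these two facts and confirming that $v(s)\in\eta_\infty(T_p C\setminus T_p C_0)$ for $s>0$, which is immediate from its definition $v(s)=\eta_\infty(o_p,s)$.
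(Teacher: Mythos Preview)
Your proof is correct. The first inclusion $T_x X\subseteq b^{-1}(0)$ is handled exactly as in the paper, via (\ref{equatione4.17}), (\ref{fhoeahofe}) and continuity of $b$.

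For the reverse inclusion you take a genuinely different, and somewhat more direct, route. The paper does \emph{not} argue by contradiction: given $x_\infty\in b^{-1}(0)$ it takes, for each large $s$, the point $y_\infty^s$ at distance $s-1$ from $v(s)$ along a minimal geodesic from $v(s)$ to $x_\infty$, observes that $b_s(y_\infty^s)=-1$ and $\mathsf{d}_{T_xY}(x_\infty,y_\infty^s)=b_s(x_\infty)+1\to1$, passes to a limit $y_\infty\in b^{-1}(-1)$ with $\mathsf{d}_{T_xY}(x_\infty,y_\infty)=1$, and then concludes $x_\infty\in T_xX$ from Proposition~\ref{prop4.2} and Lemma~\ref{Busemann}. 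Your argument instead fixes a single large $s$ and invokes Lemma~\ref{llem4.4} directly to force $y_\infty\in\eta_\infty(T_pC\setminus T_pC_0)$, which immediately contradicts $b(y_\infty)=0$ via (\ref{fffljaflkjlkdajklf}). This bypasses the limiting construction entirely and makes the use of the strict triangle inequality hypothesis in Lemma~\ref{llem4.4} completely transparent; the paper's approach, on the other hand, produces an explicit ``asymptotic perpendicular'' through $x_\infty$, which is closer in spirit to the standard horosphere picture for Busemann functions.
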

{\begin{proof}
	{For any $x_\infty\in T_x X$, by using notations in the proof of  Proposition \ref{surjmapeta}, it follows from  (\ref{equatione4.17}), (\ref{fhoeahofe}) and the continuity of Busemann function that $b(x_\infty)=0$, which implies} $T_x X\subset b^{-1}(0)$.

	To prove the reverse inclusion, let $x_\infty\in b^{-1}(0)$. For any sufficiently large $s>0$, define the point \[
	y^s_\infty:=\gamma^{T_x Y}_{\eta_\infty(o_p,s),x_\infty}(s-1).
	\]
	By {the definition of the function $b_s$ in Lemma \ref{Busemann}}, this point satisfies $b_s(y^s_\infty)=-1$ and 
	\[
	b_s(x_\infty)=\mathsf{d}_{T_x Y}(x_\infty,y_\infty^s)-1.
	\]
	Because the function $s\mapsto b_s(x_\infty)$ is monotone non-increasing,  $y^s_\infty$ converges to a limit point $y_\infty\in b^{-1}({-1})$ as $s\to\infty$. Moreover, we have $\mathsf{d}_{T_x Y}(x_\infty,y_\infty)=1$. 
	
The conclusion follows by combining these results with Proposition \ref{prop4.2} and Lemma \ref{Busemann}. 
\end{proof}
}
{
\begin{cor}\label{cor4.11}
	For any $x_\infty\in T_x X$ we have  $1\leqslant \# {\eta_\infty}^{-1}(x_\infty)\leqslant 2$.
\end{cor}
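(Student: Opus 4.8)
The plan is to dispatch the lower bound directly from what is already proved, and to establish the upper bound by contradiction, following the blueprint of Lemma~\ref{lem3.8}. For the lower bound, Proposition~\ref{surjmapeta} already gives that $\eta_\infty|_{T_p C_0}: T_p C_0\to T_x X$ is surjective, so $\eta_\infty^{-1}(x_\infty)\neq\emptyset$; moreover, since $\eta_\infty(T_p C_0\times\{r\})=b^{-1}(-r)$ for every $r>0$ by Lemma~\ref{Busemann} while $x_\infty\in T_x X=b^{-1}(0)$ by Corollary~\ref{cor4.10}, every preimage of $x_\infty$ must lie in $T_p C_0\times\{0\}$, which we identify with $T_p C_0$. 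So the real content is $\#\eta_\infty^{-1}(x_\infty)\leqslant 2$. To prove this, assume $p_\infty^1,p_\infty^2,p_\infty^3\in T_p C_0$ are three distinct points with $\eta_\infty(p_\infty^k)=x_\infty$, and set $\gamma_k(s):=\eta_\infty(p_\infty^k,s)$ for $s\in[0,\infty)$. By Proposition~\ref{prop4.2}(1) together with the product structure of Proposition~\ref{prop4.1}, each $\gamma_k|_{[0,t]}$ is a unit-speed minimal geodesic (a perpendicular to $T_x X$ at $x_\infty$) of length $t$, and by the injectivity of $\eta_\infty$ on $T_p C\setminus T_p C_0$ (Proposition~\ref{prop4.2}(2)) we have $\gamma_k(s)\neq\gamma_l(s)$ for all $s>0$ whenever $k\neq l$.

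Next I would fix $t\in\bigl(0,\tfrac{1}{2}\min_{k\neq l}\mathsf{d}_{T_p C_0}(p_\infty^k,p_\infty^l)\bigr)$ and show that for each pair $k\neq l$ the concatenation $\gamma_{kl}:[-t,t]\to T_x Y$, defined by $\gamma_{kl}(s)=\gamma_k(-s)$ for $s\in[-t,0]$ and $\gamma_{kl}(s)=\gamma_l(s)$ for $s\in[0,t]$, is a unit-speed minimal geodesic from $\gamma_k(t)$ to $\gamma_l(t)$ passing through $x_\infty$. Since $L(\gamma_{kl})=2t$, it is enough to exclude $\mathsf{d}_{T_x Y}(\gamma_k(t),\gamma_l(t))<2t$. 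If this held, then because $\gamma_k(t)\in\eta_\infty(T_p C\setminus T_p C_0)$, $\gamma_l(t)\in T_x Y\setminus T_x X$ and $2t=\mathsf{d}_{T_x Y}(\gamma_k(t),T_x X)+\mathsf{d}_{T_x Y}(\gamma_l(t),T_x X)$ by Proposition~\ref{prop4.2}(1), Lemma~\ref{llem4.4} would supply a geodesic in $T_p C\setminus T_p C_0$ of length $\mathsf{d}_{T_x Y}(\gamma_k(t),\gamma_l(t))$ joining the $\eta_\infty$-preimages of $\gamma_k(t)$ and $\gamma_l(t)$, which by injectivity are $(p_\infty^k,t)$ and $(p_\infty^l,t)$; but by Proposition~\ref{prop4.1} the distance between these is $\mathsf{d}_{T_p C_0}(p_\infty^k,p_\infty^l)>2t$, a contradiction. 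This proves the claim. Then $\gamma_{12}$ and $\gamma_{13}$ coincide on $[-t,0]$ but differ on $(0,t]$ (since $\gamma_2(s)\neq\gamma_3(s)$ for $s>0$), contradicting Theorem~\ref{thm2.17} applied to $T_x Y$, an $\mathrm{RCD}(0,n)$ space. Hence $\#\eta_\infty^{-1}(x_\infty)\leqslant 2$.

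This argument is essentially the tangent-cone analogue of Lemma~\ref{lem3.8}, with Lemma~\ref{llem4.4} taking over the role that the bijective local isometry $\eta:C\setminus C_0\to Y\setminus X$ (Proposition~\ref{prop3.3}(2)) played in the finite-scale setting. The step I expect to require the most care is the gluing claim: one must be sure that any competitor curve between the two perpendicular feet which stays off $T_x X$ still lifts to $T_p C\setminus T_p C_0$ with length preserved, so that the Pythagorean product structure of Proposition~\ref{prop4.1} forces it to be too long. This is exactly where Lemma~\ref{llem4.4}, backed by the identification $b^{-1}(0)=T_x X$ of Corollary~\ref{cor4.10}, does the work; once the claim is in hand, the contradiction with non-branching of $T_x Y$ is immediate.
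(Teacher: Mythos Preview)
Your proof is correct and follows essentially the same approach as the paper: both obtain the lower bound from Proposition~\ref{surjmapeta}, assume three preimages for contradiction, concatenate perpendiculars into minimal geodesics, and invoke the non-branching property (Theorem~\ref{thm2.17}) of the RCD tangent cone $T_x Y$. The paper simply refers to ``the proof of Lemma~\ref{lem3.8}'' to justify that each concatenation $v_{ij}$ is minimal, whereas you spell out this step explicitly via Lemma~\ref{llem4.4} and the product structure of Proposition~\ref{prop4.1}; your more careful choice $t<\tfrac{1}{2}\min_{k\neq l}\mathsf{d}_{T_p C_0}(p_\infty^k,p_\infty^l)$ versus the paper's $t_\infty=\min/2$ is a harmless difference.
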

\begin{proof}
By Proposition \ref{surjmapeta}, it suffices to show $\# {\eta_\infty}^{-1}(x_\infty)\leqslant 2$. Assume there exists three distinct points $p_\infty^i$ ($i=1,2,3$) such that $\eta_\infty(p_\infty^i)=x_\infty$.

Let $t_\infty=\min_{i\neq j}(\mathsf{d}_{T_p C_0}(p_\infty^i,p_\infty^j))/2$ and $v_{ij}:[-t_\infty,t_\infty]\to T_x Y$ be the curve defined by
\[
v_{ij}(t)=\left\{\begin{aligned}
 \eta_\infty(p_\infty^i,t),\ \ \ &t\in (0,t_\infty];\\
  \eta_\infty(p_\infty^j,-t),\ &t\in [-t_\infty,0].\\
\end{aligned}\right.
\]

{Since $t_\infty<\mathsf{d}_{T_p C_0}(p_\infty^i,p_\infty^{j})$,} 
	according to the proof of Lemma \ref{lem3.8}, we know $v_{ij}$ is exactly the unit speed minimal geodesic connecting $\eta_\infty(p_\infty^i,t_\infty)$ and $\eta_\infty(p_\infty^j,t_\infty)$. This contradicts Theorem \ref{thm2.17}.
\end{proof}

}
{When $x\in X_2$, since the tangent space $T_x X$ contains a line, we obtain the following Proposition.}
\begin{prop}\label{lem4.4}
	For any $x\in X_2$, the tangent space $(T_x Y,\mathsf{d}_{T_x Y})$ is isometric to $(T_x X\times \mathbb{R},\sqrt{{\mathsf{d}_{T_x{X}}}^2+{\mathsf{d}_{\mathbb{R}}}^2})$ and $\eta_\infty:(T_p C_0,\mathsf{d}_{T_p C_0})\rightarrow (T_x X,\mathsf{d}_{T_x X})$ is an isometry.
\end{prop}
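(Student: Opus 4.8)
The plan is to first show that $(T_xY,\mathsf{d}_{T_xY})$ splits off a line, then to identify the line factor with $\mathbb{R}$ and the orthogonal factor with $(T_xX,\mathsf{d}_{T_xX})$, and finally to upgrade the (already $1$-Lipschitz and surjective) map $\eta_\infty|_{T_pC_0}$ to an isometry by lifting geodesics of $T_xX$ through this product structure.

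First I would produce a line in $T_xY$. Since $x\in X_2$, write $\eta_0^{-1}(x)=\{p,p'\}$ (Definition \ref{11111defn3.9}); by Lemma \ref{lem3.8}$(2)$ the concatenation of the perpendiculars $\eta(p,\cdot)$ and $\eta(p',\cdot)$ is, for every small $t>0$, a minimal geodesic of $Y$ through $x$ of length $2t$. Rescaling $\mathsf{d}_Y$ by $s_i^{-1}$ (the same sequence defining $T_xY$) and passing to an Arzel\`a--Ascoli limit, these geodesics converge to a line of $T_xY$ through $o_x$ one of whose two rays is $v=\eta_\infty(\tilde v)$ from Lemma \ref{Busemann}. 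A diagonal argument over the convergences (\ref{3.1}) and (\ref{a124.2}) exhibits $(T_xY,o_x)$ as a pGH limit of rescaled manifolds $(\widetilde{M}_{i(j)},s_j^{-1}\mathsf{d}_{\widetilde{M}_{i(j)}},y_{i(j)})$ whose Ricci lower bounds $-Js_j^2\to0$, so Cheeger--Colding's splitting theorem (Theorem \ref{CCsplit}) gives an isometry $T_xY\cong\Omega\times\mathbb{R}$ under which this line, hence $v$, projects to a single point of $\Omega$. In these product coordinates the Busemann function of $v$ is $b(\omega,r)=-r$ (up to the sign normalization of Lemma \ref{Busemann}), so $b^{-1}(0)=\Omega\times\{0\}$; combined with Corollary \ref{cor4.10} this identifies $\Omega\times\{0\}$ with $T_xX$ as a set, and with Lemma \ref{convex} (which gives $\mathsf{d}_{T_xX}=\mathsf{d}_{T_xY}|_{T_xX}$, this restricted metric coinciding with the product metric because $\Omega\times\{0\}$ is convex in $\Omega\times\mathbb{R}$) it identifies $(T_xX,\mathsf{d}_{T_xX})$ with $(\Omega,\mathsf{d}_\Omega)$. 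This proves the first assertion $(T_xY,\mathsf{d}_{T_xY})\cong(T_xX\times\mathbb{R},\sqrt{\mathsf{d}_{T_xX}^2+\mathsf{d}_{\mathbb{R}}^2})$.

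For the second assertion I would first check that $\eta_\infty(q,t)=(\eta_\infty(q),t)$ for all $(q,t)\in T_pC_0\times[0,\infty)$: by (\ref{fffljaflkjlkdajklf}) we have $\eta_\infty(q,t)\in b^{-1}(-t)=T_xX\times\{t\}$, so write it $(\beta(t),t)$; since $\eta_\infty$ is $1$-Lipschitz and $\mathsf{d}_{T_pC}((q,t_1),(q,t_2))=|t_1-t_2|$ by Proposition \ref{prop4.1}, the product metric formula forces $\mathsf{d}_{T_xX}(\beta(t_1),\beta(t_2))=0$, so $\beta\equiv\beta(0)=\eta_\infty(q)$. Together with the surjectivity of $\eta_\infty|_{T_pC_0}$ (Proposition \ref{surjmapeta}) and the injectivity of $\eta_\infty|_{T_pC\setminus T_pC_0}$ (Proposition \ref{prop4.2}$(2)$), this shows $\eta_\infty$ restricts to a bijection $T_pC_0\times(0,\infty)\to T_xX\times(0,\infty)$ which is a local isometry in both directions (it is an isometry on each ball $B_{t/2}(q,t)$).

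Finally, fix $q_1,q_2\in T_pC_0$, take a minimal geodesic $\sigma$ of $T_xX$ from $\eta_\infty(q_1)$ to $\eta_\infty(q_2)$ (which stays in $T_xX$ and is minimal in $T_xY$ by Lemma \ref{convex}), and push it up to $\sigma_h:=(\sigma,h)\subset T_xX\times\{h\}$ for a small $h>0$; this curve has length $\mathsf{d}_{T_xX}(\eta_\infty(q_1),\eta_\infty(q_2))$, lies in $T_xX\times(0,\infty)$, and has endpoints $\eta_\infty(q_1,h),\eta_\infty(q_2,h)$. Applying the inverse of the bijection above to $\sigma_h$ yields a curve in $T_pC_0\times\{h\}$ of the same length joining $(q_1,h)$ to $(q_2,h)$, whose projection to $T_pC_0$ joins $q_1$ to $q_2$ with length $\mathsf{d}_{T_xX}(\eta_\infty(q_1),\eta_\infty(q_2))$. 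Hence $\mathsf{d}_{T_pC_0}(q_1,q_2)\le\mathsf{d}_{T_xX}(\eta_\infty(q_1),\eta_\infty(q_2))$, and together with the $1$-Lipschitz bound and surjectivity, $\eta_\infty|_{T_pC_0}$ is an isometry. I expect the main obstacle to be exactly this last point: passing from the merely \emph{local} isometry property of $\eta_\infty$ away from $T_pC_0$ to the \emph{global} statement that no shortcut through the removed set $T_xX$ can shorten distances within $T_pC_0$ --- which is precisely what the splitting of $T_xY$ together with the Busemann level sets (\ref{fffljaflkjlkdajklf}) makes available.
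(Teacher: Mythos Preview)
Your overall strategy matches the paper's: produce a line in $T_xY$ from the two perpendiculars (Lemma~\ref{lem3.8}(2)), apply Cheeger--Colding splitting, identify the orthogonal factor with $T_xX$ via the Busemann level sets (Lemma~\ref{Busemann}, Corollary~\ref{cor4.10}), and then use the local isometry property of $\eta_\infty$ at positive height together with the product structure. The argument that $\eta_\infty(q,t)=(\eta_\infty(q),t)$ is correct and nicely spelled out.

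There is, however, a genuine gap in your final step. You assert that $\eta_\infty$ is ``a local isometry in both directions'' on $T_pC_0\times(0,\infty)$, and then lift the geodesic $\sigma_h$ through the inverse to obtain a curve of the \emph{same} length in $T_pC_0\times\{h\}$. But knowing that $\eta_\infty|_{B_{t/2}(q,t)}$ is an isometry onto its image does not tell you that this image is all of $B_{t/2}(\eta_\infty(q,t))$; equivalently, you have not shown that $\eta_\infty^{-1}$ is continuous, let alone locally distance-preserving. To check that a nearby point $y'=\eta_\infty(q',t')$ lies in $\eta_\infty(B_{t/2}(q,t))$ you would need $\mathsf{d}_{T_pC_0}(q,q')$ small when $\mathsf{d}_{T_xX}(\eta_\infty(q),\eta_\infty(q'))$ is small --- precisely what you are trying to prove. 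So the curve-lifting argument, with ``small $h$'', is circular.

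The fix is to take $h$ \emph{large} rather than small, and this is exactly the paper's route. Given $q_1,q_2\in T_pC_0$, choose any $h>2\,\mathsf{d}_{T_pC_0}(q_1,q_2)$ (the paper takes $h=4r$ for $q_1,q_2\in B_r^{T_pC_0}(o_p)$). Then $(q_1,h),(q_2,h)\in B_{h/2}(q_1,h)$, a single ball on which $\eta_\infty$ is an isometry, and one reads off directly
\[
\mathsf{d}_{T_pC_0}(q_1,q_2)=\mathsf{d}_{T_pC}((q_1,h),(q_2,h))=\mathsf{d}_{T_xY}(\eta_\infty(q_1,h),\eta_\infty(q_2,h))=\mathsf{d}_{T_xX}(\eta_\infty(q_1),\eta_\infty(q_2)),
\]
the last equality using $\eta_\infty(q_i,h)=(\eta_\infty(q_i),h)$ and the product metric. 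No inverse map, no curve lifting, no continuity issue.
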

\begin{proof}
	According to Theorem \ref{CCsplit},   there exists a metric space $\left(\Omega,\mathsf{d}_\Omega\right)$ and a point $\omega\in \Omega$ such that $(T_x Y,\mathsf{d}_{T_x Y})$ is isometric to $\left(\Omega\times \mathbb{R},\sqrt{\mathsf{d}_{\Omega}^2+\mathsf{d}_{\mathbb{R}}^2}\right)$ and that 
	\[
	\eta_\infty(o_p,t)=(\omega,t),\ \forall t>0.
	\]

{Let us take the Busemann function {$b$ on $T_xY$} as in Lemma \ref{Busemann}. From the construction of Busemann function and Corollary \ref{cor4.10}, we know 
$b^{-1}(0)=\Omega\times\{0\}=T_x X$ and
\[
b^{-1}(-r)=\eta_\infty(T_p C_0\times\{r\})=\Omega\times\{r\},\ \forall r>0.
\]

For every $r>0$, since $\eta_\infty|_{B_{r}^{T_p C_0}(o_p)\times\{4r\}}$ is an isometry, $\eta_\infty:B_{r}^{T_p C_0}(o_p)\to B_r^\Omega(\omega)$ is also an isometry. By identifying $\Omega$ with $T_x X$, it follows that $\eta_\infty:T_p C_0\to T_x X$ is an isometry.
}
 
\end{proof}

{In the remainder of this subsection, we focus on proving the following proposition. By Proposition \ref{lem4.4}, it suffices to consider the case where $p\in C_0^1$. From this point onward, we will assume $p\in C_0^1$ without further explicit mention.
}

\begin{prop}\label{prop4.3}
	{For any $p_\infty\in T_p C_0$ it holds that}
	\begin{align}\label{4.7}
		\mathsf{d}_{T_p C_0}(o_p,p_\infty)=\mathsf{d}_{T_x X}(o_x,\eta_\infty(p_\infty)).
	\end{align}
\end{prop}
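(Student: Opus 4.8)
The inequality ``$\leqslant$'' in \eqref{4.7} is immediate: by Proposition \ref{prop4.1} the point $(p_\infty,0)\in T_pC$ has $\mathsf{d}_{T_pC}(o_p,(p_\infty,0))=\mathsf{d}_{T_pC_0}(o_p,p_\infty)$, by Proposition \ref{prop4.2}(1) we have $\eta_\infty(p_\infty,0)=\eta_\infty(p_\infty)\in T_xX$, and by Lemma \ref{convex} the restricted and intrinsic metrics of $T_xX$ agree; since $\eta_\infty$ is $1$-Lipschitz this gives $\mathsf{d}_{T_xX}(o_x,\eta_\infty(p_\infty))=\mathsf{d}_{T_xY}(o_x,\eta_\infty(p_\infty))\leqslant\mathsf{d}_{T_pC_0}(o_p,p_\infty)$. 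The plan is to prove the reverse inequality by contradiction. So I assume $L:=\mathsf{d}_{T_xX}(o_x,x_\infty)<D:=\mathsf{d}_{T_pC_0}(o_p,p_\infty)$, where $x_\infty:=\eta_\infty(p_\infty)$, and I may assume $L>0$ (if $L=0$ then $x_\infty=o_x=\eta_\infty(o_p)$ with $p_\infty\neq o_p$, i.e.\ $o_x$ has two $\eta_\infty$-preimages; this is excluded by the non-extendibility input used below, exactly as in the proof of Proposition \ref{lem4.7}).

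Fix a unit-speed minimal geodesic $\bar\alpha\colon[0,D]\to T_pC_0$ from $o_p$ to $p_\infty$ and, for $r>0$, set
\[
v(r):=\eta_\infty(o_p,r),\qquad w(r):=\eta_\infty(p_\infty,r),\qquad \beta_r:=\eta_\infty(\bar\alpha(\cdot),r).
\]
By Proposition \ref{prop4.2}(1) the rays $v$ and $w$ emanate from $o_x$ and $x_\infty$ and minimize the distance to $T_xX$; by Proposition \ref{prop4.1} and the local isometry property of $\eta_\infty|_{T_pC\setminus T_pC_0}$ (Proposition \ref{prop4.2}(2)), $\beta_r$ is a curve of length $D$ joining $v(r)$ and $w(r)$ that avoids $T_xX$. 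The first step is a dichotomy. A minimal geodesic from $v(r)$ to $w(r)$ cannot enter $\{b>0\}$ (otherwise it would meet $T_xX=b^{-1}(0)$ twice, by Corollary \ref{cor4.10}, and the arc between the two meeting points, being a minimal geodesic between points of $T_xX$, would lie in $T_xX$ by Lemma \ref{convex}, a contradiction), so it either avoids $T_xX$ --- in which case it lies in $\eta_\infty(T_pC\setminus T_pC_0)$ and, by Lemma \ref{llem4.4} and Proposition \ref{prop4.2}(2), lifts to a minimal geodesic of $T_pC$, whence by Proposition \ref{prop4.1} it coincides with $\beta_r$ and $\mathsf{d}_{T_xY}(v(r),w(r))=D$ --- or it meets $T_xX$ at some point $z$, in which case its length is $\mathsf{d}_{T_xY}(v(r),z)+\mathsf{d}_{T_xY}(z,w(r))\geqslant 2r$, with equality only if $z$ is simultaneously the (unique) foot point $o_x$ of $v(r)$ and $x_\infty$ of $w(r)$, which is excluded since $L>0$. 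As $\beta_r$ always provides a competitor of length $D$ avoiding $T_xX$, the two cases combine to give $\mathsf{d}_{T_xY}(v(r),w(r))=D$, with $\beta_r$ minimal, for every $r\geqslant D/2$.

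The crux is to push the equality $\mathsf{d}_{T_xY}(v(r),w(r))=D$ down to arbitrarily small $r$; once this is done, picking $r_j\downarrow 0$ with $\mathsf{d}_{T_xY}(v(r_j),w(r_j))=D$ and passing to the limit (here $T_xY$ is a proper space), the minimal geodesics $\beta_{r_j}$ converge to a minimal geodesic of length $D$ joining $\lim v(r_j)=o_x$ and $\lim w(r_j)=x_\infty$, contradicting $\mathsf{d}_{T_xY}(o_x,x_\infty)=L<D$. To reach small $r$ I would put $r^\ast:=\inf\{\,r>0:\mathsf{d}_{T_xY}(v(r),w(r))=D\,\}\leqslant D/2$ and rule out $r^\ast>0$. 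If $r^\ast>0$, then $r\downarrow r^\ast$ gives the minimal geodesic $\beta_{r^\ast}$, which avoids $T_xX$, whereas $r\uparrow r^\ast$ gives, as a limit of minimal geodesics that all meet $T_xX$ (they do, since $\mathsf{d}_{T_xY}(v(r),w(r))<D$ for $r<r^\ast$), a second minimal geodesic $\sigma$ from $v(r^\ast)$ to $w(r^\ast)$ which meets $T_xX$. Blowing up at the first point of $\sigma\cap T_xX$ and using the level-set structure $b^{-1}(-\rho)=\eta_\infty(T_pC_0\times\{\rho\})$ of Lemma \ref{Busemann} together with non-branching (Theorem \ref{thm2.17}), the configuration reduces --- as in the proof of Proposition \ref{lem4.7} --- to extending a perpendicular to $T_xX$ beyond its endpoint as a minimal geodesic, which is impossible for $p\in C_0^1$. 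Hence $r^\ast=0$.

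The main obstacle is exactly this last reduction: controlling, at small scales $r<D/2$, the minimal geodesics from $v(r)$ to $w(r)$ that dip into $T_xX$. This is where the hypothesis $p\in C_0^1$ is indispensable, entering through the non-extendibility of the perpendicular at $x$ (Proposition \ref{lem4.7}) --- reincarnated at the tangent level by a further blow-up --- and it is the interplay of this fact with the Busemann-function machinery (Lemma \ref{Busemann}, Corollary \ref{cor4.10}) and non-branching (Theorem \ref{thm2.17}) that here replaces the use of a lower sectional curvature bound in \cite{YZ19}.
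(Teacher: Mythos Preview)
Your overall strategy differs from the paper's, and the crucial step---ruling out $r^\ast>0$---contains a genuine gap.

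Up to the point where you conclude $\mathsf{d}_{T_xY}(v(r),w(r))=D$ for $r\geqslant D/2$ the argument is fine. The problem is the passage ``Blowing up at the first point of $\sigma\cap T_xX$ \ldots\ the configuration reduces---as in the proof of Proposition~\ref{lem4.7}---to extending a perpendicular to $T_xX$''. Concretely, at $r=r^\ast>0$ you obtain a minimal geodesic $\sigma$ of length $D$ from $v(r^\ast)$ to $w(r^\ast)$ hitting $T_xX$ at a single point $z_*$ (Lemma~\ref{feahiohfeaio}). The two arcs of $\sigma$ lift under $\eta_\infty^{-1}$ to minimal geodesics in $T_pC$ from $(o_p,r^\ast)$ to some $(q_*^1,0)$ and from some $(q_*^2,0)$ to $(p_\infty,r^\ast)$, with $\eta_\infty(q_*^i)=z_*$. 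If $q_*^1=q_*^2$ you do get a contradiction: the concatenation would be a minimal geodesic in the product $T_pC_0\times[0,\infty)$ joining two points at height $r^\ast$ yet passing through height $0$, which is impossible. But Corollary~\ref{cor4.11} allows $\#\eta_\infty^{-1}(z_*)=2$, and when $q_*^1\neq q_*^2$ the two lifts do \emph{not} concatenate in $T_pC$, and there is no contradiction from product geometry, from non-branching, or from any perpendicular-extension obstruction. Nothing in your sketch addresses this two-sheet case; the appeal to ``as in Proposition~\ref{lem4.7}'' is not a reduction, since that proposition concerns the perpendicular at $x\in X_1$ in the original space $Y$, not an arbitrary geodesic through an arbitrary $z_*\in T_xX$ with two preimages. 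The same objection applies to your handling of the case $L=0$: having $\#\eta_\infty^{-1}(o_x)=2$ is not excluded by Proposition~\ref{lem4.7}, and in fact the paper's proof does not establish (or use) that $o_x$ has a single preimage.

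The paper's proof takes an entirely different route. It works with a minimal geodesic $\sigma\colon[0,l]\to T_xX$ (inside $T_xX$, not $T_xY$) from $o_x$ to $\eta_\infty(p_\infty)$ and runs a continuity argument along $\sigma$, bounding $\mathsf{d}_{T_pC_0}(\eta_\infty^{-1}(\sigma(s)),o_p)$ by $(1+\epsilon)s$. The openness of the good set is obtained by a case split on $\#\eta_\infty^{-1}(\sigma(t))$: when this equals $1$, the separate Lemma~\ref{lemma4.14} (which in turn rests on Lemma~\ref{feahiohfeaio}) shows $\eta_\infty$ preserves distances from that preimage; when it equals $2$, a further blow-up together with Cheeger--Colding splitting produces a nearby preimage at controlled distance (the claim~\eqref{fehifaohefioahf}). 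It is precisely this careful treatment of the two-preimage case that your argument is missing.
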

{We start with the following lemma.}
{\begin{lem}\label{feahiohfeaio}
For any $t>0$ and $p_\infty\in T_p C_0$, any  minimal geodesic  starting from $\eta_\infty(p_\infty,t)$ intersects $T_x X$ at most at one point.
\end{lem}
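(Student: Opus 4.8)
The plan is to argue by contradiction, exploiting the non-branching property of the RCD limit space $T_xY$ together with the structure of perpendiculars established in Lemma \ref{lem3.8} and Proposition \ref{prop4.2}. Suppose there were two distinct points $y_\infty^1, y_\infty^2 \in T_xX$ lying on a minimal geodesic $\sigma$ emanating from $q_\infty := \eta_\infty(p_\infty,t)$, with $\sigma$ hitting $y_\infty^1$ first. Since $q_\infty \in T_xY \setminus T_xX$ with $\mathsf{d}_{T_xY}(q_\infty, T_xX) = t$ by Proposition \ref{prop4.2}$(1)$, the sub-geodesic of $\sigma$ from $q_\infty$ to $y_\infty^1$ realizes the distance from $q_\infty$ to $T_xX$, so it must (up to reparametrization) coincide with the perpendicular segment $s \mapsto \eta_\infty(p_\infty, t-s)$; indeed its pre-image under the local isometry $\eta_\infty|_{T_pC\setminus T_pC_0}$ is a minimal geodesic in $T_pC$ ending on $T_pC_0$, and by the product structure of Proposition \ref{prop4.1} the only such geodesic from $(p_\infty,t)$ of length $t$ is the vertical segment. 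Hence $y_\infty^1 = \eta_\infty(p_\infty) =: x_\infty^1$.

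First I would make this reduction precise: the existence of a second intersection point $y_\infty^2$ forces $\sigma$ to pass through $x_\infty^1$ and continue into $T_xX$. Then I would invoke Corollary \ref{cor4.10}, which identifies $T_xX$ with the zero-level set $b^{-1}(0)$ of the Busemann function $b$ of Lemma \ref{Busemann}, so $b(x_\infty^1) = b(y_\infty^2) = 0$, while $b(q_\infty) = -t$ by \eqref{fffljaflkjlkdajklf}. The key point is that the perpendicular $s\mapsto \eta_\infty(o_p,s)$ is a ray whose Busemann function decreases at unit rate along $\sigma$ up to $x_\infty^1$: that is, $b$ restricted to the portion of $\sigma$ from $q_\infty$ to $x_\infty^1$ is affine with slope $+1$. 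But by the $1$-Lipschitz property of $b$, beyond $x_\infty^1$ along $\sigma$ we would have $b \leqslant 0$, and this must be compared with the behavior forced by the minimal geodesic structure. The cleaner route is: the concatenation of the perpendicular ray $s \mapsto \eta_\infty(o_p,s)$ reversed, the vertical segment down from $q_\infty$ to $x_\infty^1$, and then $\sigma$ continuing to $y_\infty^2$, produces two minimal geodesics in $T_xY$ sharing an initial segment but diverging — precisely the configuration forbidden by Theorem \ref{thm2.17}. Concretely, since $\mathsf{d}_{T_xY}(q_\infty, y_\infty^2) = t + \mathsf{d}_{T_xY}(x_\infty^1, y_\infty^2)$ (as $\sigma$ passes through $x_\infty^1$), and since a \emph{second} perpendicular-type geodesic from $q_\infty$ through $x_\infty^1$ into $T_xX$ can be built using Proposition \ref{surjmapeta} (realizing any direction in $T_xX$ at $x_\infty^1$ via some $\eta_\infty(p_\infty', \cdot)$), we obtain two distinct minimal geodesics from $q_\infty$ agreeing on $[0,t]$ (the vertical part) but not afterwards, contradicting non-branching.

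The main obstacle I anticipate is verifying that the sub-geodesic of $\sigma$ from $q_\infty$ to its \emph{first} intersection point with $T_xX$ is genuinely the vertical perpendicular and not merely some other distance-minimizer to $T_xX$ — in principle $T_xX$ need not be a smooth submanifold and there could be several feet. Here I would lean on Lemma \ref{llem4.4} (and its consequence that $\eta_\infty^{-1}$ of such a geodesic lies in $T_pC \setminus T_pC_0$) combined with Proposition \ref{prop4.1}: in the product $T_pC_0 \times [0,\infty)$ the unique segment of length $t$ from $(p_\infty, t)$ reaching the slice $T_pC_0 \times \{0\}$ is $\{p_\infty\}\times[0,t]$, so the foot is unambiguously $\eta_\infty(p_\infty)$. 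Once that identification is secured, the contradiction with Theorem \ref{thm2.17} (non-branching) follows by the concatenation argument above, using Proposition \ref{surjmapeta} to supply the competing geodesic direction inside $T_xX$ whenever $y_\infty^2 \neq x_\infty^1$. This completes the proof of Lemma \ref{feahiohfeaio}.
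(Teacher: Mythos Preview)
Your argument has a fundamental gap at its very first step: you assert that the sub-geodesic of $\sigma$ from $q_\infty$ to the first intersection point $y_\infty^1$ ``realizes the distance from $q_\infty$ to $T_xX$'', i.e.\ has length exactly $t$, and is therefore the vertical perpendicular ending at $\eta_\infty(p_\infty)$. But there is no reason this should hold. The statement to be proved concerns an \emph{arbitrary} minimal geodesic emanating from $q_\infty$; such a geodesic can meet $T_xX$ obliquely at a point far from the foot. Your justification via Proposition~\ref{prop4.1} (``the only such geodesic from $(p_\infty,t)$ of length $t$ is the vertical segment'') presupposes the length is $t$, which is precisely what is not known. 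Indeed, the paper's own proof normalizes so that the geodesic segment from the starting point to the first intersection has length $1$ while the starting point lies at height $h\in(0,1)$: the segment is explicitly \emph{not} vertical, and the first contact point $\eta_\infty(q_\infty)$ is in general different from $\eta_\infty(p_\infty)$.

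Even granting your unjustified first step, the second step --- producing a ``competing'' minimal geodesic through $x_\infty^1$ to force branching --- does not go through. You invoke Proposition~\ref{surjmapeta} to realize another direction in $T_xX$, but surjectivity of $\eta_\infty|_{T_pC_0}$ gives you nothing about minimality in $T_xY$ of a concatenation ``vertical segment $+$ horizontal segment''. If instead you try to use a second preimage $p'_\infty\in\eta_\infty^{-1}(x_\infty^1)$ as in Corollary~\ref{cor4.11}, the resulting geodesic $s\mapsto\eta_\infty(p'_\infty,s)$ goes \emph{back into} $T_xY\setminus T_xX$, not into $T_xX$, so it does not provide a second branch along $\sigma$.

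The paper's proof is substantially more delicate: it accepts an oblique first contact at some $\eta_\infty(q_\infty)\neq\eta_\infty(p_\infty)$ and analyzes the geometry near that point by an iterated blow-up. Two cases arise, depending on whether nearby perpendiculars $\zeta_i$ stay close to the perpendicular $\zeta$ at $q_\infty$ or not; in each case the Cheeger--Colding splitting theorem (Theorem~\ref{CCsplit}) is applied in an appropriate rescaled limit, and a quantitative triangle-inequality estimate (the choice of $a>0$ with $\sqrt{1-h^2}+\sqrt{h^2+a^2}<1+a$) is used to reach a contradiction. None of this structure is present in your proposal.
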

\begin{proof}
Let us prove by contradiction. By Lemma \ref{convex}, without loss of generality, we may assume there exists a unit speed minimal geodesic $\gamma:[-1,1]\to T_x Y$ such that $\gamma(-1)=\eta_\infty(p_\infty,h)$ for some $h\in (0,1)$, that $\gamma([-1,0))\subset T_x Y\setminus T_x X$ and that $\gamma([0,1])\subset T_x X$. The local isometry property and 1-Lipschitz property of $\eta_\infty$ {imply} that ${\eta_\infty}^{-1}\left(\gamma|_{[-1,0)}\right)$ is also a minimal geodesic in $T_p C$. Indeed, ${\eta_\infty}^{-1}\left(\gamma|_{[-1,0]}\right)$ is a minimal geodesic from $(p_\infty,h)$ to $q_\infty:=\lim_{s{\uparrow 0}}{\eta_\infty}^{-1}(\gamma(s))\in T_p C_0$.

Let $x_\infty=\eta_\infty(q_\infty)$. Denote by $\zeta:t\mapsto \eta_\infty(q_\infty,t)$ for convenience. Let us fix a sequence $r_i\downarrow 0$ and $a>0$ such that
\[
\sqrt{1-h^2}+\sqrt{h^2+a^2}<1+a.
\]For each $i\in\mathbb{N}$, we take {a perpendicular $\zeta_i$ to $T_x X$} at $\gamma(r_i)$.

\textbf{Case 1} There exists a subsequence which is still denoted as $r_i$ such that
\[
\lim_{i\to \infty}\mathsf{d}_{T_x Y}(\zeta_i(1),\zeta(1))=2
\]
Then {the curve} $v:[-1,1]\to T_x Y$ defined as 
\[
v(t)=\left\{\begin{aligned}
\zeta(t),\ \ \ \ \ \ \ \ \ &t\in (0,1];\\
\lim_{i\to\infty}\zeta_i(-t),\ &t\in [-1,0],
\end{aligned}\right.
\]
is a unit speed minimal geodesic such that
\begin{align}\label{fffeaniofeahoi}
\mathsf{d}_{T_x Y}\left(v(t),T_x X\right)=|t|, \ \forall t\in [-1,1].
\end{align}

By Theorem \ref{CCsplit}, after passing to a subsequence, we may assume $({r_i}^{-1}T_xY,x_\infty)$ pGH converges to a pointed product space $(\Omega\times \mathbb{R},(\omega,0))$ and $v$ converges to the line $v_\infty:t\mapsto (\omega,t)$ {under this convergence}.

Since (\ref{fffeaniofeahoi}) implies 
\[
\mathsf{d}_{T_x Y}\left(v(t),\gamma(s)\right)\geqslant |t|, \ \forall t\in [-1,1], \forall s\in [0,1],
\]
we know $\gamma|_{[0,r_i]}$ converges to unit speed minimal geodesic $\gamma_\infty:[0,1]\to \Omega$ as $i\to \infty$. Therefore, we obtain\[
\lim_{i\to\infty}\frac{\mathsf{d}_{T_x Y}\left(\zeta(h r_i ),\gamma(ar_i)\right)}{r_i}=\sqrt{h^2+a^2}.
\]

According to the 1-Lipschitz property of $\eta_\infty$, we see $\mathsf{d}_{T_x Y}\left(\gamma(-r_i),\zeta(hr_i)\right)\leqslant r_i\sqrt{1-h^2}$. A contradiction then occurs because we have
\begin{equation}\label{faehiofhaeiohfoae}
\begin{aligned}
&1+a=\lim_{i\to\infty}\frac{\mathsf{d}_{T_x Y}(\gamma(-r_i),\gamma(ar_i))}{r_i}\\
\leqslant\ & \limsup_{i\to\infty}\frac{\mathsf{d}_{T_x Y}\left(\gamma(-r_i),\zeta(h r_i )\right)}{r_i}+\limsup_{i\to\infty}\frac{\mathsf{d}_{T_x Y}\left(\zeta(h r_i ),\gamma(ar_i)\right)}{r_i}\\
\leqslant\ &  \sqrt{1-h^2}+\sqrt{h^2+a^2}<1+a. 
\end{aligned}
\end{equation}

\textbf{Case 2}  $\limsup_{i\to \infty}\mathsf{d}_{T_x Y}(\zeta_i(1),\zeta(1))<2$.

By Lemma \ref{llem4.4}, there exists {$\{q_i\}\subset T_p C_0$ such that for each $i$ we have $\eta_\infty(q_i)=\gamma(r_i)$ and $\mathsf{d}_{T_p C_0}(q_i,q_\infty)<2$}. If $q_i$ converges to some $q_\infty'\neq q_\infty$, then following the argument in Lemma \ref{lem3.8}, the geodesic $v'$ defined as
\[
v'(t)=\left\{\begin{aligned}
\zeta(t),\ \ \ \ \ \ \ \ \ \ \ \ &\ 0\leqslant t\leqslant \mathsf{d}_{T_p C_0}(q_\infty,q_\infty');\\
\eta_\infty(q_\infty',-t),\ \ &\  -\mathsf{d}_{T_p C_0}(q_\infty,q_\infty')\leqslant t<0,
\end{aligned}\right.
\]
would satisfy
\[
\mathsf{d}_{T_x Y}\left(v'(t),T_x X\right)=|t|, \ \forall t\in  [-\mathsf{d}_{T_p C_0}(q_\infty,q_\infty'),\mathsf{d}_{T_p C_0}(q_\infty,q_\infty')].
\]
However, this is impossible due to Case 1. Therefore, one must have $\lim_{i\to\infty }q_i=q_\infty$.

Let $t_i=\mathsf{d}_{T_p C_0}(q_\infty,q_i)$. Since $\eta_\infty$ is 1-Lipschitz, it follows that $t_i\geqslant r_i$. Applying Theorem \ref{CCsplit} and Lemma \ref{convex}, after passing to a subsequence, we may assume 
\begin{align}\label{feahofhae}
({t_i}^{-1}T_xY,x_\infty)\xrightarrow{\mathrm{pGH}}(\Omega\times \mathbb{R},(\omega,0))
\end{align} for a pointed product space $(\Omega\times \mathbb{R},(\omega,0))$, {and}
\[
({t_i}^{-1}T_x X,x_\infty)\xrightarrow{\mathrm{pGH}}(T_{x_\infty}(T_x X),(\omega,0))
\] for a pointed metric space $(T_{x_\infty}(T_x X),(\omega,0))$ as convergence of subsets. Under (\ref{feahofhae}), the geodesic $\gamma$ converges to the line $\gamma_\infty:t\mapsto (\omega,t)$ with $\gamma_\infty(0)=(\omega,0)$ and $\gamma_\infty([0,\infty))\subset T_{x_\infty}(T_x X)$, and $\zeta$ {converges} to $\zeta_\infty$, which is the perpendicular to $T_{x_\infty}(T_x X)$ at $(\omega,0)$.

If $\lim_{i\to \infty}r_i/t_i=0$, then following the same line of reasoning as before, we deduce the existence of $h_\infty>0$ and a geodesic $v_\infty:[-h_\infty,h_\infty]\to T_{x_\infty}(T_x Y)$ such that $v_\infty$ coincides with $\zeta_\infty$ on $[0,h_\infty]$ and that
\[
\mathsf{d}_{T_{x_\infty}(T_x Y)}(v_\infty(t),T_{x_\infty}(T_x X))=|t|,\ \forall t\in [-h_\infty,h_\infty]. 
\]
This would also imply 
\[
\lim_{t\downarrow 0}\frac{\mathsf{d}_{T_{x_\infty}(T_x Y)}(\zeta_\infty(ht),\gamma_\infty(at))}{t}=\sqrt{h^2+a^2}.
\]

Additionally, since
\[
\mathsf{d}_{T_{x_\infty}(T_x Y)}\left(\gamma_\infty(-t),\zeta_\infty(ht)\right)\leqslant t\sqrt{1-h^2},\ \forall t>0,
\]
we arrive at a contradiction {in a way similar} to (\ref{faehiofhaeiohfoae}).

From the above discussion, we know there exists a map $\tau:[0,1]\to T_p C_0$ such that $\gamma=\eta_\infty\circ \tau$ on $[0,1]$ and that 
\begin{align}\label{faehiofiaeohfia}
\liminf_{t\downarrow 0}\frac{t}{\mathsf{d}_{T_p C_0}(q_\infty,\tau(t))}>0.
\end{align} 

For each $t>0$, let $\zeta_{i,t}:[0,\infty)\to T_x Y$ denote the ray $\zeta_{i,t}(s)=\eta_\infty(\tau(r_i t),s)$. Assume $\zeta_{i,t}$ converges to $\zeta_{\infty,t}$ under  (\ref{feahofhae}), which is the perpendicular to $T_{x_\infty}(T_x X)$ at $\gamma_\infty(t)$. Then $\zeta_{\infty,t}$ must be a ray in $\Omega\times\{t\}$.

However, by (\ref{faehiofiaeohfia}) and the 1-Lipschitz property of $\eta_\infty$, $\zeta_{\infty,t}$ converges to $\zeta_\infty$ as $t\downarrow 0$, yielding $\zeta_\infty\in \Omega\times\{0\}$. This leads to another contradiction because
\[
\mathsf{d}_{T_{x_\infty}(T_x Y)}(\zeta_\infty(h),\gamma_\infty(a))=\sqrt{h^2+a^2}.
\]
\end{proof}}

{\begin{lem}\label{lemma4.14}
	Let $p\in C_0^1$ and assume $p_\infty\in T_p C_0$ satisfies ${\eta_\infty}^{-1}(\eta_\infty(p_\infty))={\{p_\infty\}}$. Then $\eta_\infty|_{T_p C_0}$ preserves the distance from $p_\infty$.
\end{lem}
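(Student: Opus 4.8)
\emph{Plan.} Since $\eta_\infty\colon T_pC\to T_xY$ is $1$-Lipschitz and, by Lemma~\ref{convex}, $\mathsf{d}_{T_x X}$ coincides with $\mathsf{d}_{T_x Y}$ on $T_xX$, for every $q_\infty\in T_pC_0$ one automatically has
\[
\mathsf{d}_{T_x X}\bigl(\eta_\infty(p_\infty),\eta_\infty(q_\infty)\bigr)\le\mathsf{d}_{T_p C}(p_\infty,q_\infty)=\mathsf{d}_{T_p C_0}(p_\infty,q_\infty)=:L,
\]
the middle equality being Proposition~\ref{prop4.1}. So only the reverse inequality needs proof. Writing $A_h:=\eta_\infty(p_\infty,h)$ and $B_h:=\eta_\infty(q_\infty,h)$, which converge to $\eta_\infty(p_\infty)$ and $\eta_\infty(q_\infty)$ as $h\downarrow0$, it is enough to show $\mathsf{d}_{T_x Y}(A_h,B_h)=L$ for every $h>0$; the bound $\le L$ here is again $1$-Lipschitzness of $\eta_\infty$ together with Proposition~\ref{prop4.1}, so the point is the bound $\ge L$.

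Fix $h>0$ and let $\beta_h$ be a minimal geodesic of $T_xY$ from $A_h$ to $B_h$. By Lemma~\ref{feahiohfeaio}, applied at $A_h$ and (after reversing $\beta_h$) at $B_h$, the geodesic $\beta_h$ meets $T_xX$ in at most one point. If $\beta_h\cap T_xX=\emptyset$, then $\beta_h$ lies in $T_xY\setminus T_xX=\eta_\infty(T_pC\setminus T_pC_0)$, on which $\eta_\infty$ is a bijective local isometry (Proposition~\ref{prop4.2}); covering $\beta_h$ by finitely many balls on which $\eta_\infty$ is an isometry produces a curve in $T_pC\setminus T_pC_0$ joining $(p_\infty,h)$ to $(q_\infty,h)$ of the same length as $\beta_h$, and since $T_pC=T_pC_0\times[0,\infty)$ by Proposition~\ref{prop4.1} this length is $\ge\mathsf{d}_{T_p C}((p_\infty,h),(q_\infty,h))=L$. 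Hence $\mathsf{d}_{T_x Y}(A_h,B_h)=L$.

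It remains to rule out the case that $\beta_h$ meets $T_xX$ at exactly one point $P_h$; then $\beta_h$ splits into a segment from $A_h$ to $P_h$ and one from $P_h$ to $B_h$, each disjoint from $T_xX$ except at $P_h$. Running the lifting argument of the previous paragraph on each segment separately — continuing the lift along the balls of radius (height)$/2$ on which $\eta_\infty$ is an isometry down to height $0$, and comparing lengths via Proposition~\ref{prop4.1} — each segment lifts to a minimal geodesic of $T_pC$ joining $(p_\infty,h)$ resp.\ $(q_\infty,h)$ to a point of $T_pC_0$ over $P_h$; calling these points $P_h'$ and $P_h''$, one gets $\eta_\infty(P_h')=\eta_\infty(P_h'')=P_h$ and
\[
\mathsf{d}_{T_x Y}(A_h,B_h)=\sqrt{\mathsf{d}_{T_p C_0}(p_\infty,P_h')^2+h^2}+\sqrt{\mathsf{d}_{T_p C_0}(q_\infty,P_h'')^2+h^2}.
\]
If $P_h'=P_h''$, then since $h>0$ the triangle inequality in $T_pC_0$ forces $\mathsf{d}_{T_x Y}(A_h,B_h)>\mathsf{d}_{T_p C_0}(p_\infty,P_h')+\mathsf{d}_{T_p C_0}(q_\infty,P_h')\ge L$, contradicting $\mathsf{d}_{T_x Y}(A_h,B_h)\le L$; so this subcase does not occur.

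The only genuinely delicate situation is $P_h'\ne P_h''$, i.e.\ $\eta_\infty^{-1}(P_h)=\{P_h',P_h''\}$ has two elements — and then necessarily $P_h\ne\eta_\infty(p_\infty)$, since $\eta_\infty^{-1}(\eta_\infty(p_\infty))=\{p_\infty\}$ by hypothesis. Here the plan is to let $h\downarrow0$: along a subsequence, $\beta_h$ converges to a minimal geodesic $\beta_0$ from $\eta_\infty(p_\infty)$ to $\eta_\infty(q_\infty)$ which lies in $T_xX$ by Lemma~\ref{convex}, while $P_h'\to P_0'$, $P_h''\to P_0''$ and $P_h\to P_0\in\beta_0$ with $\eta_\infty(P_0')=\eta_\infty(P_0'')=P_0$; the displayed identity passes to the limit, giving $\mathsf{d}_{T_x Y}(\eta_\infty(p_\infty),\eta_\infty(q_\infty))=\mathsf{d}_{T_p C_0}(p_\infty,P_0')+\mathsf{d}_{T_p C_0}(q_\infty,P_0'')$ with $P_0$ dividing the minimal geodesic $\beta_0$ accordingly. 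If $P_0'=P_0''$ the triangle inequality closes the argument exactly as above. If $P_0'\ne P_0''$, then $P_0$ is a point of $T_xX$ over which the two perpendicular rays glue into a line (the analogue of Lemma~\ref{lem3.8}(2) for $\eta_\infty$, cf.\ Corollary~\ref{cor4.11}), and one derives a contradiction with the non-branching of $T_xY$ (Theorem~\ref{thm2.17}) by combining that line with the lifts of the two halves of $\beta_0$, exactly as in the proof of Proposition~\ref{lem4.7}. I expect this last step — excluding a two-sheeted foot point of $\beta_h$ — to be the main technical difficulty; everything else is a routine reprise of the lifting-and-non-branching bookkeeping already used for Propositions~\ref{lem4.7} and~\ref{surjmapeta}.
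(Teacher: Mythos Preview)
Your overall strategy is sound through Case~1 and the subcase $P_h'=P_h''$, but the final subcase ($P_h'\ne P_h''$) is a genuine gap, and the sketch you give there does not close it. The non-branching contradiction you invoke does not materialize: after passing to the limit $h\to 0$, you have the geodesic $\beta_0\subset T_xX$ and the perpendicular line through $P_0$ (glued from the rays at $P_0'$ and $P_0''$), but these are transversal geodesics crossing at a single point---that is perfectly compatible with non-branching. The analogy with Proposition~\ref{lem4.7} is misleading: there the contradiction comes from two geodesics sharing an initial segment, whereas here no common arc is produced. I do not see how to salvage this subcase without new ideas.

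The paper avoids the difficulty entirely by an asymmetric choice that uses the hypothesis much more directly. Instead of raising both endpoints to height $h$, keep $p_\infty$ at height $0$ and connect $\eta_\infty(p_\infty)\in T_xX$ to $\eta_\infty(q_\infty,s)$ for $s>0$. By Lemma~\ref{feahiohfeaio} (applied at the raised endpoint $\eta_\infty(q_\infty,s)$), any minimal geodesic $\gamma_s$ between these two points meets $T_xX$ only at $\eta_\infty(p_\infty)$. Lifting $\gamma_s\setminus\{\eta_\infty(p_\infty)\}$ through the local isometry gives a minimal geodesic in $T_pC$ from $(q_\infty,s)$ to some point of $T_pC_0$ lying over $\eta_\infty(p_\infty)$; by the hypothesis $\eta_\infty^{-1}(\eta_\infty(p_\infty))=\{p_\infty\}$, that point is forced to be $p_\infty$. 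Hence $\mathsf{d}_{T_pC}(p_\infty,(q_\infty,s))=\mathsf{d}_{T_xY}(\eta_\infty(p_\infty),\eta_\infty(q_\infty,s))$, and letting $s\to 0$ finishes. The point is that by leaving $p_\infty$ in $T_xX$, the single intersection with $T_xX$ guaranteed by Lemma~\ref{feahiohfeaio} is pinned to $\eta_\infty(p_\infty)$, where the hypothesis bites; your symmetric lift puts the intersection at an uncontrolled point $P_h$, where nothing is assumed about the fiber.

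Two minor remarks. First, applying Lemma~\ref{feahiohfeaio} at both $A_h$ and $B_h$ is redundant---one application already bounds $\#(\beta_h\cap T_xX)\le 1$. Second, your assertion $T_xY\setminus T_xX=\eta_\infty(T_pC\setminus T_pC_0)$ invokes surjectivity of $\eta_\infty$ onto $T_xY$, which in the paper is Corollary~\ref{lem4.1} and is proved only after Lemma~\ref{lemma4.14}; for the lifting you actually need only Lemma~\ref{llem4.4}, which is already available.
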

\begin{proof}
By Lemma \ref{feahiohfeaio}, for any $q_\infty\in T_p C_0$ and any {$s>0$, any minimal geodesic $\gamma_{s}$ from $\eta_\infty(p_\infty)$ to $\eta_\infty(q_\infty,s)$ satisfies $\gamma_{s}\cap T_x X=\{\eta_\infty(p_\infty)\}$.} Since $\eta_\infty|_{T_p C\setminus T_p C_0}$ is a 1-Lipschitz local isometry, {${\eta_\infty}^{-1}(\gamma_{s})$} is also a minimal geodesic on $T_p C$ connecting {$p_\infty$ and $(q_\infty,s)$}. This gives the equality of distances
\[
{\mathsf{d}_{T_p C}(p_\infty,(q_\infty,s))=\mathsf{d}_{T_x Y}(\eta_\infty(p_\infty),\eta_\infty(q_\infty,s)).}
\]
The conclusion follows by taking the limit as $s\to 0$.
\end{proof}}

{We are now in a position to give a proof of Proposition \ref{prop4.3}.}

\begin{proof}[Proof of Proposition \ref{prop4.3}]
Assume $p_\infty\in T_p C_0\setminus \{o_p\}$.  Let $\sigma:[0,l]\to T_x X$ be a unit speed minimal geodesic from $o_x$ to $\eta_\infty(p_\infty)$. 


For any sufficiently small $\epsilon>0$, define
\[
A_\epsilon:=\{t\in[0,l]|\,\mathsf{d}_{T_p C_0}({\eta_\infty}^{-1}(\sigma(s)),o_p)\leqslant (1+\epsilon)s,\ \forall s\in [0,t]\},
\]
{which} is closed due to the continuity of $\eta_\infty$ {and is non-empty because $0\in A_\epsilon$}. 

To show $A_\epsilon$ is open, {choose any $t\in A_\epsilon$. If $t$} satisfies $\# {\eta_\infty}^{-1}(\sigma(t))=1$, then Lemma \ref{lemma4.14} yields 
\begin{equation}\label{fehuafheoaihf}
	\begin{aligned}
		&\mathsf{d}_{T_p C_0}({\eta_\infty}^{-1}(\sigma(s)),o_p)\leqslant (1+\epsilon)t+\mathsf{d}_{T_p C_0}({\eta_\infty}^{-1}(\sigma(t)),{\eta_\infty}^{-1}(\sigma(s)))\\
		=\ &(1+\epsilon)t+s-t\leqslant(1+\epsilon)s,\ \forall s\in (t,l].
	\end{aligned}
\end{equation}

{{By Corollary \ref{cor4.11}}, it suffices to consider the case that {$\# {\eta_\infty}^{-1}(\sigma(t))=2$. Take} two points $p_t^1,p_t^2\in T_p C_0$ such that $\mathsf{d}_{T_p C_0}(p_t^1,p_t^2)=2l_t$ and $\eta_\infty(\{p_t^1,p_t^2\})=\sigma(t)$. Define the geodesic $v:[-l_t,l_t]\to T_x Y$ by
\[
v(s)=\left\{
\begin{aligned}
	\eta_\infty(p_t^1,s),\ \ \ \ \ &{s}\in [0,l_t];\\
	\eta_\infty(p_t^2,-s),\ \ \ &s\in [-l_t,0),
\end{aligned}\right.
\]
which satisfies\begin{align}\label{4riohaoifho}\mathsf{d}_{T_x Y}(v(s),T_x X)=|s|,\ \forall s\in [-l_t,l_t].\end{align}}{Moreover, from the proof of Corollary \ref{cor4.11} we know $v$ is a unit speed minimal geodesic from $\eta_\infty(p_t^2,l_t)$ to $\eta_\infty(p_t^1,l_t)$. }

{We claim there exists some small $\mu>0$ such that for any $s\in [t,t+\mu]$ there exists $p_s^i\in {\eta_\infty}^{-1}(\sigma(s))$ such that
	\begin{align}\label{fehifaohefioahf}
	\mathsf{d}_{T_p C_0}(p_t^i,p_s^i)\leqslant (s-t){ (1+\epsilon)},\ i=1,2.
	\end{align}

	{Assume there exists a sequence} $r_j\downarrow 0$ such that 
\begin{align}\label{feafefa}
\mathsf{d}_{T_p C_0}(p_t^1,{\eta_0}^{-1}(\sigma(t+r_j)))>(1+\epsilon)r_j,\ \forall j\in\mathbb{N}.
\end{align}

By Theorem \ref{CCsplit} and (\ref{4riohaoifho}), {after passing to a subsequence,} there exists a metric space $(\Omega,\mathsf{d}_\Omega)$ such that as $j\to\infty$, $({r_j}^{-1}T_x Y,\sigma(t))$ pGH converges to a product metric space $(\Omega\times\mathbb{R},(\omega,0))$. Moreover, $\sigma(t+r_j)$ converges to a point $(\omega',0)\in \Omega\times\{0\}$ with $\mathsf{d}_\Omega(\omega,\omega')=1$.

{For every $\tau>0$ and $j\in\mathbb{N}$,} since 
\[
\eta_\infty:B_{{\tau}r_j}^{T_p C}\left((p_t^1,5{\tau }r_j)\right)\to \eta_\infty(B_{{\tau}r_j}^{T_p C}\left((p_t^1,5{\tau }r_j)\right))
\]
is an isometry, $\eta_\infty(B_{{\tau}r_j}^{T_p C_0}(p_t^1)\times\{5{\tau }r_j\})$ converges to $B_{\tau}^\Omega(\omega)\times\{5{\tau}\}$ under the above pGH convergence. {In particular, we know $B_{\tau r_j}^{T_p C_0}(p_t^1)$ GH converges to $B_{\tau}^\Omega(\omega)$.} As a result, {we have
\[
\eta_\infty(B_{2r_j}^{T_p C_0}(p_t^1))\subset B_{3r_j}^{T_x Y}(\sigma(t))\subset \eta_\infty(B_{2r_j}^{T_p C_0}(p_t^1))
\]
for all sufficiently large $j$. Therefore,} there exists $p^1_{r_j}\in {\eta_\infty}^{-1}(\sigma(t+r_j))$ such that 
\[
\lim_{j\to\infty}\frac{\mathsf{d}_{T_p C_0}(p^1_{r_j},p^1_t)}{r_j}=1,
\]
contradicting (\ref{feafefa}). 

Thus we have (\ref{fehifaohefioahf}).} Because this implies {$A_\epsilon=[0,l]$,} the conclusion follows by letting $\epsilon\to 0$. 
\end{proof}

\subsection{Geometric properties of limit spaces}\label{sec4.2}

{This subsection is aimed at proving Theorem \ref{thm1.3}.} {By Lemma \ref{lem3.8}, we can define $f:C_0\rightarrow C_0$ as}
\begin{equation}\label{feaohfoiaehfiohae}
f(p)=
\left\{
\begin{aligned}
	q\ \ & \text{if $p\in C_0^2$ and $\{p,q\}={\eta_0}^{-1}(\eta_0(p))$},\\
	p\ \ & \text{if $p \in C_0^1$}.
\end{aligned}
\right.
\end{equation}

We first deal with the following theorem. 

\begin{thm}\label{thmfffff4.5}
$f:C_0\rightarrow C_0$ is an isometry.
\end{thm}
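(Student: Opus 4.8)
The plan is to reduce the global statement ``$f$ is an isometry of $(C_0,\mathsf{d})$'' to the infinitesimal information already packaged in Proposition \ref{prop4.3}. The key point is that $f$ commutes with $\eta_0$ in the sense that $\eta_0\circ f=\eta_0$, and that $\eta_0$ is a $1$-Lipschitz surjection (Remark \ref{rmk3.6}); hence $f$ is automatically $1$-Lipschitz once we know it is well-defined and continuous, and an isometry follows as soon as we establish $\mathsf{d}(f(p),f(q))\geqslant\mathsf{d}(p,q)$ for all $p,q\in C_0$, since $f$ is an involution. So the whole game is the reverse (length-nonshrinking) inequality. First I would record that $f$ is $1$-Lipschitz: for $p,q\in C_0$, using $\eta_0\circ f=\eta_0$ and Lemma \ref{lem3.8}(2) (the geodesic $\gamma$ joining $\eta(p_1,t)$ and $\eta(p_2,t)$ realizes the ``doubled'' distance), one sees that $\mathsf{d}(f(p),f(q))$ is controlled by $\mathsf{d}(p,q)$ — more carefully, I would argue this through the blow-up, since a uniform Lipschitz bound at all scales plus the infinitesimal isometry gives the global isometry by a standard localization. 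Concretely, it is cleanest to prove the two-sided inequality simultaneously and locally.

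The main step is a local-to-global argument via the tangent map. Fix $p\in C_0$ and a sequence $s_i\downarrow 0$; after passing to a subsequence we have the blow-ups \eqref{4.1}--\eqref{4.3} and the tangent map $\eta_\infty:T_pC_0\to T_xX$. By Proposition \ref{prop4.3} (together with Proposition \ref{lem4.4} for the case $p\in C_0^2$, where $\eta_\infty$ is outright an isometry), $\eta_\infty|_{T_pC_0}$ preserves the distance from the origin $o_p$; by Corollary \ref{cor4.11} the fiber ${\eta_\infty}^{-1}(\eta_\infty(p_\infty))$ has at most two points for every $p_\infty$. These two facts say precisely that, on the tangent cone, the involution $f_\infty$ induced by $\eta_\infty$ (swapping the two-point fibers, fixing the one-point fibers) preserves distances from $o_p$. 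I would then transfer this back to $C_0$: if $f$ were not $1$-Lipschitz near $p$ with constant tending to $1$, i.e. if there were $q_j\to p$ with $\mathsf{d}(f(p),f(q_j))/\mathsf{d}(p,q_j)\to c>1$ (or $<1$), then rescaling by $s_j:=\mathsf{d}(p,q_j)$ and passing to the limit, the images $q_j$ converge to a point $q_\infty\in T_pC_0$ at distance $1$ from $o_p$ with $\mathsf{d}_{T_pC_0}(f_\infty(o_p),f_\infty(q_\infty))=c\neq 1$, contradicting Proposition \ref{prop4.3} applied both at $p$ and (after a further blow-up) along the sequence. This shows $\mathrm{lip}\, f\equiv 1$, and then, since $C_0$ is a geodesic space (being an RCD space by \eqref{3.3}) on which every geodesic can be approximated by arcs on which $f$ distorts lengths by a factor arbitrarily close to $1$, integrating along geodesics gives $\mathsf{d}(f(p),f(q))=\mathsf{d}(p,q)$ for all $p,q$.

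Finally I would verify $f$ is a bijection: it is surjective because $\eta_0$ is, and injective because $f\circ f=\mathrm{id}$ — the latter is immediate from the definition \eqref{feaohfoiaehfiohae} and the fact that $\eta_0^{-1}(\eta_0(p))$ has at most two elements. Combining, $f$ is a distance-preserving bijection, i.e.\ an isometry of $(C_0,\mathsf{d})$.

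I expect the main obstacle to be the transfer step: Proposition \ref{prop4.3} is a statement about distances \emph{from the basepoint} $o_p$ in the tangent cone, not about all pairs of points, so the local Lipschitz estimate it yields is ``radial''. Upgrading this to a genuine bi-Lipschitz-with-constant-$1\!+\!o(1)$ estimate on small balls of $C_0$, and then integrating to a global isometry, requires carefully choosing, for a pair $p,q$, a near-geodesic from $p$ to $q$ and applying the radial estimate at many points along it (or equivalently, blowing up at a point on the geodesic between $p$ and $q$ and using that $f$ preserves the distance from that point to both ends). Making this chaining argument rigorous — controlling the accumulation of the $o(1)$ errors and handling the possible non-uniqueness of geodesics in the RCD space $C_0$ (here Theorem \ref{thm2.17}, non-branching, is what keeps the blow-up geodesics from splitting) — is where the real work lies; the rest is bookkeeping with the already-established tangent-map properties.
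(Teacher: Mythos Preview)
Your proposal is essentially the paper's approach. Both arguments feed Proposition~\ref{prop4.3} (radial distance preservation under $\eta_\infty$) together with the continuity of $f$ (Proposition~\ref{prop4.8}) into a chaining argument along a geodesic of $(C_0,\mathsf{d})$, and then use $f\circ f=\mathrm{id}$ for the reverse inequality. The paper packages the chaining as an open--closed argument on the set
\[
A_\delta=\{t\in[0,l]\mid s\geqslant(1-\delta)\,\mathsf{d}(f(p),f(\gamma(s)))\ \text{for all }s\leqslant t\},
\]
showing $\sup A_\delta=l$ by contradiction: if $l_0=\sup A_\delta<l$ and \eqref{sorry4.20} failed along some $s_i\downarrow l_0$, one blows up at $\gamma(l_0)$ and at $f(\gamma(l_0))$ simultaneously (same scale $s_i-l_0$), applies Proposition~\ref{prop4.3} at both points, and uses $\eta_0\circ f=\eta_0$ to identify the images in $T_xX$, producing $1\leqslant 1-\delta$. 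This is exactly your ``$\mathrm{lip}\,f\leqslant 1$, then integrate'' step made precise, and your final paragraph correctly anticipates that this is where the work is.

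Two small remarks. First, your opening assertion that ``$f$ is automatically $1$-Lipschitz once it is well-defined and continuous'' is false (and you rightly abandon it); nothing about $\eta_0\circ f=\eta_0$ with $\eta_0$ $1$-Lipschitz bounds $\mathsf{d}(f(p),f(q))$ from above. Second, your framing via an induced involution $f_\infty$ on $T_pC_0$ is slightly awkward: for $p\in C_0^2$ the blow-up of $f$ is a map $T_pC_0\to T_{f(p)}C_0$ between two \emph{different} tangent cones, so there is no $f_\infty$ on a single space. The paper sidesteps this by never naming a limit of $f$; it just compares $\mathsf{d}(\gamma(l_0),\gamma(s_i))$ and $\mathsf{d}(f(\gamma(l_0)),f(\gamma(s_i)))$ to the common quantity $\mathsf{d}_X(\eta_0(\gamma(l_0)),\eta_0(\gamma(s_i)))$ via two separate applications of Proposition~\ref{prop4.3}. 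Your argument goes through once reorganized this way.
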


{To prove this theorem, the continuity of $f$ is needed.}

\begin{prop}\label{prop4.8}
 $f:C_0\rightarrow C_0$ is a homeomorphism.
\end{prop}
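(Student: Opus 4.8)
The plan is to show $f$ is a continuous bijection of a compact Hausdorff space to itself, with continuous inverse; since $f$ is an involution (so $f^{-1}=f$), it suffices to prove $f$ is continuous, and then $f$ is automatically a homeomorphism. So the whole task reduces to proving continuity of $f$ at every point $p\in C_0$. First I would record the easy structural facts: $f$ is a bijection with $f\circ f=\mathrm{id}$ (immediate from the definition \eqref{feaohfoiaehfiohae} and Lemma \ref{lem3.8}(1)), and $\eta_0\circ f=\eta_0$. Also, on $C_0^1$ the map $f$ is the identity, so the only issue is the behaviour of $f$ near points of $C_0^2$ and near the boundary between $C_0^1$ and $C_0^2$.

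The key step is continuity at a point $p\in C_0^2$ with $q=f(p)\neq p$. Take $p_j\to p$ in $C_0$; I want $f(p_j)\to q$. Write $x_j=\eta_0(p_j)\to x=\eta_0(p)$. Since $C_0$ is compact, after passing to a subsequence $f(p_j)\to q'$ for some $q'\in C_0$; by continuity of $\eta_0$ we get $\eta_0(q')=\lim\eta_0(f(p_j))=\lim\eta_0(p_j)=x=\eta_0(q)$, so $q'\in\eta_0^{-1}(x)=\{p,q\}$. Thus I must rule out $q'=p$, i.e. rule out the possibility that $f(p_j)$ and $p_j$ both converge to $p$. This is exactly where Lemma \ref{lem3.8}(2) comes in: for $t$ small (fixed, say $t<\tfrac12\phi(t_0)\mathsf{d}(p,q)$), the concatenated curve through $\eta(p,t)$ and $\eta(q,t)$ via $X$ is a minimal geodesic in $Y$ of length $2t$, whereas if both $p_j,f(p_j)\to p$ then the analogous concatenation through $\eta(p_j,t)$ and $\eta(f(p_j),t)$ would have length $2t$ as well but these geodesics would be converging to a curve that backtracks at $\eta_0(p)=x$ along a single perpendicular — contradicting Theorem \ref{thm2.17} (non-branching) applied in $Y$, since one would obtain in the limit a geodesic that splits off the genuine geodesic of length $2t$ coming from $\eta(p,t)$ through $x$ to $\eta(q,t)$. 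Concretely: the perpendicular segments $s\mapsto\eta(p_j,s)$ and $s\mapsto\eta(f(p_j),s)$ converge to $s\mapsto\eta(p,s)$ and to $s\mapsto\eta(p,s)$ respectively, so the limit of the length-$2t$ geodesics is the degenerate out-and-back curve along $\eta(p,\cdot)$, which is not a geodesic; but the space of minimal geodesics with prescribed convergent endpoints is compact, and any limit must be a minimal geodesic — a contradiction. Hence $q'=q$, and since the limit is independent of the subsequence, $f(p_j)\to q$.

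It remains to treat continuity at $p\in C_0^1$, where one must show $f(p_j)\to p$ for every $p_j\to p$. Again pass to a subsequence with $f(p_j)\to q'$; then $\eta_0(q')=\eta_0(p)=x$, so $q'\in\eta_0^{-1}(x)$. If $x\in X_1$ then $\eta_0^{-1}(x)=\{p\}$ and we are done. The delicate case is $x\in X_1$ but $p_j\in C_0^2$ for infinitely many $j$ with $\eta_0^{-1}(\eta_0(p_j))=\{p_j,f(p_j)\}$; I must show the two preimages coalesce. Here I would argue by contradiction: if $q'\neq p$ then $\{p,q'\}$ would be two distinct preimages of $x$, forcing $x\in X_2$ — contradiction. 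So in fact the only subtlety is whether a limit $q'$ can fail to lie in $\eta_0^{-1}(x)$, and it cannot, by continuity of $\eta_0$ together with $\eta_0\circ f=\eta_0$. Thus $q'=p$ and $f$ is continuous at $p$.

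The main obstacle is the argument in the middle paragraph — excluding the scenario where a point of $C_0^2$ degenerates so that both sheets of $\eta_0^{-1}$ collapse onto the same limit point in $C_0$. The cleanest way to push this through is the non-branching argument in $Y$ sketched above, keeping $t>0$ fixed and bounded below while taking $j\to\infty$; one should be slightly careful that the threshold $\tfrac12\phi(t_0)\mathsf{d}(p_j,f(p_j))$ may shrink, so instead one fixes a small $t$ first and uses that $\mathsf{d}(p,q)>0$ together with, say, $\mathsf{d}(p_j,f(p_j))\to\mathsf{d}(p,q')$ to ensure the geodesic in Lemma \ref{lem3.8}(2) is available for all large $j$ once $q'\neq p$ is assumed; then the contradiction with Theorem \ref{thm2.17} closes the case. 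I expect the rest to be routine compactness-and-continuity bookkeeping.
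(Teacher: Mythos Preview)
Your Case 1 ($p\in C_0^1$) is correct and coincides with the paper's argument. The genuine gap is in Case 2 ($p\in C_0^2$, $q=f(p)\neq p$). You correctly reduce to ruling out $f(p_j)\to p$ along a subsequence, but the contradiction you propose via Lemma \ref{lem3.8}(2) and non-branching does not close. The concatenated curve of Lemma \ref{lem3.8}(2) for the pair $(p_j,f(p_j))$ on $[-t,t]$ is known to be a \emph{minimal geodesic} only when $t<\tfrac12\phi(t_0)\,\mathsf{d}(p_j,f(p_j))$; under the very hypothesis $f(p_j)\to p$ you are trying to exclude, $\mathsf{d}(p_j,f(p_j))\to 0$, so for any fixed $t>0$ the threshold eventually fails and these curves are no longer geodesics. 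Your ``limit of minimal geodesics is a minimal geodesic'' step therefore has no valid input: you only have length-$2t$ \emph{curves} converging to the backtracking curve, which is no contradiction. Your last paragraph notices the shrinking threshold but then writes ``once $q'\neq p$ is assumed'' --- that is exactly the conclusion you are after, so the reasoning is circular.

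The paper handles Case 2 by a different mechanism that does not rely on Lemma \ref{lem3.8}(2) at all. It invokes Proposition \ref{lem4.4} (proved earlier in Section \ref{sec4.1}), which says that for $x\in X_2$ the blow-up $\eta_\infty\colon T_{f(p)}C_0\to T_xX$ is an isometry. With $s_i:=\mathsf{d}_X(x_i,x)$ this yields, for large $i$, the inclusion $B^X_{2s_i}(x)\subset \eta_0\big(B^{C_0}_{3s_i}(f(p))\big)$. Since $x_i\in B^X_{2s_i}(x)$, some point of $B^{C_0}_{3s_i}(f(p))$ maps to $x_i$; because $p_i\to p\neq f(p)$ and $3s_i\to 0$, that point cannot be $p_i$, hence it is $f(p_i)$, and $f(p_i)\to f(p)$. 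In short, the paper supplies the missing ingredient --- local surjectivity of $\eta_0$ near $f(p)$ coming from the tangent-space isometry --- which your perpendicular/non-branching sketch does not provide.
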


\begin{proof}
 Since $f$ is an involution, it suffices to show that $f$ is continuous. {Let $p_i\rightarrow p$ in $C_0$, and
 $x_i:=\eta_0(p_i)$, $x:=\eta_0(p)$}.
  We consider the following two cases.

\textbf{Case 1} \,{ $p\in C_0^1$.}

Since each component of $C_0$ is compact by Theorem \ref{thm2.3}, it suffices to show that any convergent subsequence of $\{f(p_i)\}$ (which is still denoted as $\{f(p_i)\}$ for simplicity) must converge to $p$. 

Assume $f(p_i)\rightarrow q$ for some $q\in C_0$. Then the continuity of $\eta_0$ implies that 
\[
\eta_0(q)=\lim_{i\rightarrow \infty}\eta_0(f(p_i))=\lim_{i\rightarrow \infty}x_i=x.
\]
Thus $q=p$ and we are done.

\textbf{Case 2}\, { $p\in C_0^2$.}

{Let $s_i:=\mathsf{d}_X(x_i,x)$. Passing to a subsequence if necessary, we may assume that $({s_i}^{-1}C_0,p)\to (T_p { C_0},o_p)$, $({s_i}^{-1}C_0,f(p))\to \left(T_{f(p)}{ C_0}, o_{f(p)}\right)$ and $({s_i}^{-1}X,x)\to (T_x X,o_x)$ {as $i\to\infty$.} By {Propositions \ref{prop4.1} and \ref{lem4.4}}, $(T_p { C_0},o_p)$ is isometric to $\left(T_{f(p)}{ C_0}, o_{f(p)}\right)$. {Passing to a subsequence again,} $\eta:({s_i}^{-1}C_0,f(p))\to ({s_i}^{-1}X,x)$ also converges to an isometry {$\eta_\infty:\left(T_{f(p)}C_0,o_{f(p)}\right)\to (T_x X,o_x)$}. In particular, for any sufficiently large $i$ it holds
\[
B^X_{2s_i}(x)\subset \eta_0\left(B^{C_0}_{3s_i}(f(p))\right)\subset B^X_{4s_i}(x),
\]
from which we know $x_i\in X_2$ and $f(p_i)\to f(p)$.

}

\end{proof}

\begin{cor}\label{cor4.15}
$C_0^1$ and $X_1$ are closed in $C_0$ and $X$ respectively.
\end{cor}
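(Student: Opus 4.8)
The plan is to deduce both statements from the continuity of $f$ established in Proposition~\ref{prop4.8} together with the compactness of $C_0$ and $X$. The latter follows from Theorem~\ref{thm2.3}: each $\partial M_i$ has at most $J_2$ components of uniformly bounded intrinsic diameter, so the limit $C_0$ consists of finitely many compact components and is itself compact, whence $X=\eta_0(C_0)$ is compact as well.

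First I would handle $C_0^1$. By the definition of $f$ in $(\ref{feaohfoiaehfiohae})$ and of $C_0^k$ in Definition~\ref{11111defn3.9}, a point $p\in C_0$ lies in $C_0^1$ precisely when $f(p)=p$; hence $C_0^1=\{p\in C_0:\mathsf{d}(p,f(p))=0\}$ is the zero set of the continuous function $p\mapsto\mathsf{d}(p,f(p))$ (continuous because $f$ is continuous by Proposition~\ref{prop4.8}), and is therefore closed in $C_0$. Equivalently, $C_0^2$ is open.

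Next, for $X_1$, I would observe that $X_1=\eta_0(C_0^1)$: the inclusion $\eta_0(C_0^1)\subset X_1$ is immediate from $C_0^1=\eta_0^{-1}(X_1)$, and conversely if $x\in X_1$ then $\eta_0^{-1}(x)=\{p\}$ with $\eta_0(p)=x\in X_1$, so $p\in C_0^1$ and $x\in\eta_0(C_0^1)$. Since $C_0^1$ is a closed subset of the compact space $C_0$ and $\eta_0$ is continuous, $X_1=\eta_0(C_0^1)$ is compact, hence closed in $X$. (Alternatively one may argue directly: if $x_i\in X_1$ and $x_i\to x$, choose $p_i\in\eta_0^{-1}(x_i)$, so that $f(p_i)=p_i$; by compactness of $C_0$ pass to a subsequence with $p_i\to p$, and then $\eta_0(p)=x$ by continuity of $\eta_0$ while $f(p)=\lim_i f(p_i)=\lim_i p_i=p$ by continuity of $f$, so $p\in C_0^1$ and $x=\eta_0(p)\in X_1$.)

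There is no genuine obstacle here; the only point requiring a moment of care is the extraction of a convergent subsequence of the $p_i$, for which the compactness of $C_0$ furnished by Theorem~\ref{thm2.3} is used.
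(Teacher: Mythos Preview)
Your proof is correct and takes essentially the same approach as the paper, which states Corollary~\ref{cor4.15} without proof as an immediate consequence of Proposition~\ref{prop4.8}: the key observation is that $C_0^1$ is precisely the fixed-point set of the continuous map $f$, hence closed, and $X_1=\eta_0(C_0^1)$ then follows from compactness.
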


\begin{proof}[Proof of Theorem \ref{thmfffff4.5}]


{Fix $p,q\in C_0$ such that $\mathsf{d}(p,q)=l<\infty$. Let $\gamma$ be a unit speed minimal geodesic from $p$ to $q$.} For any $\delta>0$, set 
\[
A_\delta:=\left\{t\in [0,l]|\,{s}\geqslant (1-\delta)\mathsf{d}\big(f(p),f(\gamma(s))\big), \ \forall 0\leqslant s\leqslant t\right\}.
\]


Obviously $A_\delta$ is a closed set due to Proposition \ref{prop4.8}. If $\sup A_\delta=l_0<l$, {we claim there exists $\mu\in (0,l-l_0)$ such that
\begin{equation}\label{sorry4.20}
{s-l_0}=\mathsf{d}\left(\gamma(l_0),\gamma(s)\right)\geqslant (1-\delta) \mathsf{d}\left(f(\gamma(l_0)),f(\gamma(s))\right),\ \forall {s\in (l_0,l_0+\mu)}.
\end{equation}

Assume {there exists} $s_i\downarrow l_0$ such that (\ref{sorry4.20}) does not hold for every $s_i$. Then applying Propositions \ref{prop4.3} and \ref{prop4.8} yields that 
\[
1=\limsup_{i\rightarrow \infty} \frac{\mathsf{d}(\gamma (l_0),\gamma(s_i))}{\mathsf{d}_X(\gamma (l_0),\gamma(s_i))}\leqslant (1-\delta)\limsup_{i\rightarrow \infty} \frac{\mathsf{d}\left(f(\gamma (l_0)),f(\gamma(s_i))\right)}{\mathsf{d}_X\left(\gamma (l_0),\gamma(s_i)\right)}=1-\delta,
\]
which is a contradiction. 
}

Therefore for any $s\in [l_0,l_0+\mu)$ it holds{
\[
\begin{aligned}
s\geqslant(1-\delta) \Big(\mathsf{d}\big(f(p),f(\gamma(l_0))\big)+\mathsf{d}\big(f(\gamma(l_0)),f(\gamma(s))\big)\Big)\geqslant (1-\delta)\mathsf{d}\big(f(p),f(\gamma(s))\big), 
\end{aligned}
\]}contradicting $l_0<l$. Thus $\sup A_\delta=\max A_\delta=l$. Letting $\delta\rightarrow 0$ then implies {$\mathsf{d}(p,q)\geqslant \mathsf{d}(f(p),f(q))$.
} 

The converse inequality $\mathsf{d}(f(p),f(q))\geqslant \mathsf{d}(p,q)$ follows from the fact that $f$ is an involution. This completes the proof.
\end{proof}




{The following theorem can also be proved using the same method as in Theorem \ref{thmfffff4.5}. For brevity, we only outline the key steps of the proof.}

\begin{thm}\label{thm4.10}
  $\left(X,\mathsf{d}_X\right)$ is isometric to the quotient space $\left(C_0/  f,\mathsf{d}^\ast\right)$, where $\mathsf{d}^\ast$ is the quotient metric defined as in $($\ref{a2.3}$)$. {In particular, $(X,\mathsf{d}_X)$ is geodesically non-branching.}
\end{thm}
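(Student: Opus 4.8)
The plan is to realise $(X,\mathsf d_X)$ as the quotient $(C_0/f,\mathsf d^\ast)$ by showing that the map induced by $\eta_0$ is an isometry, and then to transfer geodesic non-branching from $C_0$ through the quotient.

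\emph{Reduction to a lifting statement.} By Lemma \ref{lem3.8} together with the definition \eqref{feaohfoiaehfiohae} of $f$, the fibres of $\eta_0$ are exactly the orbits of the $\mathbb Z_2$-action $G=\{\mathrm{id},f\}$ on $C_0$: one has $\eta_0^{-1}(\eta_0(p))=\{p\}$ for $p\in C_0^1$ and $\eta_0^{-1}(\eta_0(p))=\{p,f(p)\}$ for $p\in C_0^2$. Hence $\eta_0$ factors as $\eta_0=\bar\eta_0\circ\pi$, with $\pi\colon C_0\to C_0/f$ the quotient map and $\bar\eta_0\colon C_0/f\to X$ a continuous bijection. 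Since $f$ is an isometric involution (Theorem \ref{thmfffff4.5}), the quotient metric of \eqref{a2.3} is $\mathsf d^\ast([p],[q])=\min\{\mathsf d(p,q),\mathsf d(p,f(q))\}$, and because $\eta_0$ is $1$-Lipschitz with $\eta_0\circ f=\eta_0$ (Remark \ref{rmk3.6}) the map $\bar\eta_0$ is $1$-Lipschitz. Everything thus comes down to the reverse inequality $\mathsf d^\ast([p],[q])\leqslant\mathsf d_X(\eta_0(p),\eta_0(q))$ for all $p,q\in C_0$; equivalently, every $\mathsf d_X$-minimizing geodesic of $X$ can be lifted through $\eta_0$ to a curve in $C_0$ starting at $p$ and ending in $\eta_0^{-1}(\eta_0(q))$, with length multiplied by at most $1+o(1)$.

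\emph{The lifting argument (the heart of the proof, modelled on Theorem \ref{thmfffff4.5}).} Fix $p,q\in C_0$, put $x=\eta_0(p)$, $y=\eta_0(q)$, $l=\mathsf d_X(x,y)$, and let $\gamma\colon[0,l]\to X$ be a unit-speed $\mathsf d_X$-minimizing geodesic from $x$ to $y$. For $\delta>0$ set
\[
A_\delta:=\Big\{t\in[0,l]:\ \mathsf d\big(p,\eta_0^{-1}(\gamma(t))\big)\leqslant(1+\delta)\,t\Big\},
\]
where $\mathsf d(p,S)=\min_{z\in S}\mathsf d(p,z)$. Then $0\in A_\delta$, and $A_\delta$ is closed because the fibres of $\eta_0$ have at most two points (Lemma \ref{lem3.8}) and $\eta_0,\gamma$ are continuous. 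To see that $A_\delta$ is open in $[0,l]$, suppose $t_0:=\sup A_\delta<l$ and let $q_{t_0}\in\eta_0^{-1}(\gamma(t_0))$ realize $\mathsf d(p,q_{t_0})\leqslant(1+\delta)t_0$. Rescaling $C_0$ at $q_{t_0}$ and $X$ at $\gamma(t_0)$ and passing to a subsequence, we obtain tangent cones $T_{q_{t_0}}C_0$, $T_{\gamma(t_0)}X$ and a $1$-Lipschitz tangent map $\eta_\infty$ which, by Proposition \ref{surjmapeta} (surjectivity) and Propositions \ref{prop4.3}, \ref{lem4.4} (preservation of the distance to the origin, according as $\gamma(t_0)\in X_1$ or $\gamma(t_0)\in X_2$), satisfies $\mathsf d_{T_{q_{t_0}}C_0}(o,\xi)=\mathsf d_{T_{\gamma(t_0)}X}(o_x,\eta_\infty(\xi))$ for every $\xi$ and maps each metric ball about the origin onto the ball of the same radius. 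Translating to finite scale (and using Proposition \ref{prop3.5}(3) to compare $\mathsf d_X$ and $\mathsf d_Y$ at small scales), for $t>t_0$ close to $t_0$ one finds $q_t\in\eta_0^{-1}(\gamma(t))$ with $(t-t_0)^{-1}\mathsf d(q_{t_0},q_t)$ bounded; any subsequential tangent limit $\hat q$ of such $q_t$'s has $\eta_\infty(\hat q)=\gamma_\infty(1)$, where $\gamma_\infty$ is the limit of the rescalings of $\gamma|_{[t_0,t]}$, so $\mathsf d_{T_{q_{t_0}}C_0}(o,\hat q)=\mathsf d_{T_{\gamma(t_0)}X}(o_x,\gamma_\infty(1))=1$. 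Hence $\mathsf d(q_{t_0},q_t)\leqslant(1+\delta)(t-t_0)$ and so $\mathsf d(p,q_t)\leqslant(1+\delta)t$, i.e. $t\in A_\delta$, contradicting $t_0=\sup A_\delta$. (When $\gamma(t_0)\in X_2$ one decides which of the two preimage sheets of $\eta_0$ to continue along by the geodesic-gluing analysis of Lemma \ref{lem3.8}, legitimate because $C_0^2$ is open by Corollary \ref{cor4.15}.) Therefore $A_\delta=[0,l]$, giving $q_l\in\eta_0^{-1}(y)=\{q,f(q)\}$ with $\mathsf d(p,q_l)\leqslant(1+\delta)l$; letting $\delta\to0$ along a sequence for which $q_l$ stays fixed (the fibre being finite) yields $\mathsf d^\ast([p],[q])\leqslant\mathsf d(p,q_l)\leqslant l=\mathsf d_X(x,y)$. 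With the previous paragraph this shows $\bar\eta_0$ is an isometry.

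\emph{Non-branching, and the main obstacle.} By \eqref{3.3} the space $(C_0,\mathsf d,\mathfrak m)$ is $\mathrm{RCD}(K,n)$, hence geodesically non-branching by Theorem \ref{thm2.17}; the finite group $G=\{\mathrm{id},f\}\cong\mathbb Z_2$ acts isometrically on $(C_0,\mathsf d)$ by Theorem \ref{thmfffff4.5}, so Proposition \ref{prop2.15} shows $(C_0/f,\mathsf d^\ast)$ is geodesically non-branching, and therefore so is its isometric copy $(X,\mathsf d_X)$. The delicate point is the openness step above: one must upgrade the purely infinitesimal facts (surjectivity of $\eta_\infty$ in Proposition \ref{surjmapeta}, and the distance-from-origin identity of Propositions \ref{prop4.3} and \ref{lem4.4}) into a genuine finite-scale "almost local isometry" statement for $\eta_0$ near $q_{t_0}$ that permits prolonging the lift, and, when $\gamma$ meets $X_2$, choose the preimage sheet coherently as in Lemma \ref{lem3.8}; a secondary but necessary care is the systematic bookkeeping between the ambient metric $\mathsf d_Y$ and the intrinsic metric $\mathsf d_X$ on $X$, handled via Proposition \ref{prop3.5}(3).
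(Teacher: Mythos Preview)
Your proof is correct and follows essentially the same route as the paper: reduce to the inequality $\mathsf d^\ast([p],[q])\leqslant\mathsf d_X(\eta_0(p),\eta_0(q))$, run the $A_\delta$ continuity argument modelled on Theorem~\ref{thmfffff4.5} with Propositions~\ref{prop4.3} and~\ref{lem4.4} supplying the openness step, and deduce non-branching from Theorem~\ref{thm2.17} and Proposition~\ref{prop2.15}. The paper's proof is much terser (it simply says ``By Proposition~\ref{prop4.3} and Theorem~\ref{thmfffff4.5} and the continuity argument''), whereas you spell out the tangent-cone mechanism and the $X_1/X_2$ case distinction; your additional care about the passage between $\mathsf d_X$ and $\mathsf d_Y$ via Proposition~\ref{prop3.5}(3) is appropriate, and the boundedness of $(t-t_0)^{-1}\mathsf d(q_{t_0},q_t)$ that you assert follows from the same contradiction with Proposition~\ref{prop4.3} (rescale by $\mathsf d(q_{t_0},q_t)$ instead if it blows up).
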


\begin{proof}

{Fix $x,y\in X$ with $\mathsf{d}(x,y)=l$.} Given any $\delta>0$ we set
\[
A_\delta:=\left\{t\in [0,l]|\,s\geqslant (1-\delta)\,{ \mathsf{d}} (\eta^{-1}(x),\eta^{-1}(\gamma(s))), \ \forall 0\leqslant s\leqslant t\right\}.
\]

{By Proposition \ref{prop4.3} and Theorem \ref{thmfffff4.5} and the continuity argument,	we conclude that $\sup A_\delta=l$. Taking the limit as $\delta\to 0$ completes the proof {of the first statement. As for the second statement, it follows directly from Theorem \ref{thm2.17} and Proposition \ref{prop2.15}.}}
\end{proof}

{\begin{cor}
		Assume $p\in C_0^1$ and $x=\eta_0(p)$. Then under {the} convergence $(\ref{4.2})$, the isometry $f:C_0\to C_0$ converges to an isometry $f_\infty:T_p C_0\to T_p C_0$. Moreover, $(T_x X,\mathsf{d}_{T_x X})$ is isometric to $(T_p C_0/f_\infty,\mathsf{d}_{T_p C_0}^\ast)$, where { $\mathsf{d}_{T_p C_0}^\ast$ denotes} the quotient metric induced by $f_\infty$.
\end{cor}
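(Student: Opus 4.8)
The statement asserts two things: (1) that $f$ converges to an isometric involution $f_\infty$ of $T_p C_0$ under the rescalings, and (2) that the blow-up tangent cone $T_x X$ is the quotient $T_p C_0 / f_\infty$. I would prove these in that order, using the tools developed in Section \ref{sec4.1} (in particular Propositions \ref{prop4.1}, \ref{prop4.3} and Corollary \ref{cor4.11}) exactly as they were used in the global statements of Theorems \ref{thmfffff4.5} and \ref{thm4.10}, since the tangent-space picture is just a pointed rescaled copy of the global one.

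\textbf{Step 1: Convergence of $f$ to a map $f_\infty$.} Fix the monotone sequence $s_i \downarrow 0$ realizing the convergences \eqref{4.1}--\eqref{4.3}. For each $i$ regard $f$ as a self-map of $(C_0, s_i^{-1}\mathsf{d}, p)$; since $f$ is a $1$-Lipschitz (indeed isometric, by Theorem \ref{thmfffff4.5}) bijection fixing $p$ (because $p \in C_0^1$), the Arzelà--Ascoli theorem for maps between pGH-converging spaces gives, after passing to a subsequence, a limit map $f_\infty : T_p C_0 \to T_p C_0$ which is again $1$-Lipschitz and fixes $o_p$. Because $f$ is an involution, so is $f_\infty$, hence $f_\infty$ is an isometry of $T_p C_0$. (A priori $f_\infty$ could depend on the subsequence, but the next step will identify it intrinsically via $\eta_\infty$, so it is in fact determined by $\eta_\infty$.) The key compatibility relation to record is that $\eta_\infty \circ f_\infty = \eta_\infty$: this passes to the limit from $\eta_0 \circ f = \eta_0$, using that $\eta_i = \eta$ converges to $\eta_\infty$ and $f$ converges to $f_\infty$.

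\textbf{Step 2: Identifying the fibers of $\eta_\infty$.} By Corollary \ref{cor4.11} each fiber $\eta_\infty^{-1}(x_\infty)$, $x_\infty \in T_x X$, has one or two points. I claim that for $q_\infty \in T_p C_0$ the fiber through $q_\infty$ is exactly $\{q_\infty, f_\infty(q_\infty)\}$. The inclusion $\{q_\infty, f_\infty(q_\infty)\} \subseteq \eta_\infty^{-1}(\eta_\infty(q_\infty))$ is Step 1's relation. For the reverse, suppose $\eta_\infty(q_\infty') = \eta_\infty(q_\infty)$ with $q_\infty' \neq q_\infty, f_\infty(q_\infty)$; approximating $q_\infty, q_\infty'$ by sequences $q_i, q_i' \in C_0$ (in the $s_i^{-1}$-rescaled metric), the geometry of Lemma \ref{lem3.8} --- three distinct preimages of a single point produce three distinct perpendiculars and contradict non-branching (Theorem \ref{thm2.17}) --- transfers to the rescaled picture, giving a contradiction exactly as in the proof of Corollary \ref{cor4.11}. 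Hence the fibers of $\eta_\infty$ coincide with the orbits of the $\mathbb{Z}_2$-action $\{\mathrm{id}, f_\infty\}$ on $T_p C_0$, and $\eta_\infty$ surjects onto $T_x X$ by Proposition \ref{surjmapeta}.

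\textbf{Step 3: $\eta_\infty$ is the quotient map.} It remains to see $\eta_\infty : (T_p C_0, \mathsf{d}_{T_p C_0}) \to (T_x X, \mathsf{d}_{T_x X})$ induces an \emph{isometry} $T_p C_0/f_\infty \cong T_x X$. By definition of the quotient metric \eqref{a2.3}, for $x_\infty^1, x_\infty^2 \in T_x X$ with chosen preimages $p_\infty^j$,
\[
\mathsf{d}_{T_p C_0}^\ast(x_\infty^1, x_\infty^2) = \min_{a,b \in \{0,1\}} \mathsf{d}_{T_p C_0}\bigl(f_\infty^a(p_\infty^1), f_\infty^b(p_\infty^2)\bigr).
\]
Since $\eta_\infty$ is $1$-Lipschitz we get $\mathsf{d}_{T_x X}(x_\infty^1,x_\infty^2) \le \mathsf{d}_{T_p C_0}^\ast(x_\infty^1,x_\infty^2)$. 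For the opposite inequality one runs the continuity/open-closed argument of Theorem \ref{thm4.10} at the level of tangent cones: along a unit-speed minimizer $\sigma$ from $x_\infty^1$ to $x_\infty^2$ in $T_x X$ one shows, using Proposition \ref{prop4.3} (which says $\eta_\infty|_{T_p C_0}$ preserves distances from $o_p$, and, combined with Lemma \ref{lemma4.14}, more generally along such minimizers) together with the continuity of $f_\infty$ from Step 1, that the set of $t$ for which $t \ge (1-\delta)\,\mathsf{d}_{T_p C_0}(\eta_\infty^{-1}(x_\infty^1)\text{-branch}, \eta_\infty^{-1}(\sigma(t)))$ is open and closed in $[0, \mathsf{d}_{T_x X}(x_\infty^1,x_\infty^2)]$, hence everything; letting $\delta \to 0$ finishes it. This is the formal analogue of Theorem \ref{thm4.10}, now for the rescaled pointed spaces.

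\textbf{Main obstacle.} The routine parts --- Arzelà--Ascoli for the limit map, involution $\Rightarrow$ isometry, $1$-Lipschitz half of the quotient comparison --- are immediate. The genuine point is Step 3: verifying that the combination of Proposition \ref{prop4.3} and Lemma \ref{lemma4.14} really does give the distance-preservation \emph{along an arbitrary minimizing geodesic of $T_x X$}, including the bad case where $\# \eta_\infty^{-1}(\sigma(t)) = 2$ at interior points, so that the open-closed bootstrap of Theorem \ref{thm4.10} goes through verbatim. Since the proof of Theorem \ref{thm4.10} already handled precisely this issue (the set $A_\delta$ there), the cleanest write-up is to say the proof is identical to that of Theorems \ref{thmfffff4.5} and \ref{thm4.10} applied to the pointed rescaled spaces $(C_0, s_i^{-1}\mathsf{d}, p)$, $(X, s_i^{-1}\mathsf{d}_Y, x)$ and the maps $\eta$, $f$, invoking Proposition \ref{prop4.3} in place of the global Proposition \ref{prop4.3}'s role; hence only the key steps need to be spelled out, which is what the corollary's statement already signals.
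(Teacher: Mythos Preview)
Your approach is correct but takes a more laborious route than the paper. The paper disposes of the first statement as ``obvious'' (your Step~1) and for the quotient isometry simply \emph{passes the already-established global Theorem~\ref{thm4.10} to the limit}: given $p_\infty,q_\infty\in T_pC_0$ approximated by $p_i,q_i\in(C_0,s_i^{-1}\mathsf{d})$, Theorem~\ref{thm4.10} furnishes at each stage a point $q_i'\in\{q_i,f(q_i)\}$ with $\mathsf{d}_X(\eta_0(p_i),\eta_0(q_i))=\mathsf{d}(p_i,q_i')$; taking limits and invoking Proposition~\ref{surjmapeta} for surjectivity gives $\mathsf{d}_{T_xX}=\mathsf{d}^\ast_{T_pC_0}$ in three lines. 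By contrast you re-run the entire open-closed $A_\delta$ machinery of Theorems~\ref{thmfffff4.5}--\ref{thm4.10} at the tangent-cone level, which is valid (Lemma~\ref{lemma4.14} and the local claim~\eqref{fehifaohefioahf} in the proof of Proposition~\ref{prop4.3} supply exactly the infinitesimal distance preservation you need at single- and double-fiber points respectively) but redundant once the global quotient structure is already in hand.

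One remark on your Step~2: as stated it has a gap. If $f_\infty(q_\infty)=q_\infty$ and there is a second preimage $q_\infty'\neq q_\infty$, then $\{q_\infty,f_\infty(q_\infty),q_\infty'\}$ has only two elements, so neither Corollary~\ref{cor4.11} nor the three-perpendiculars non-branching argument yields a contradiction. Fortunately this step is unnecessary: once $\mathsf{d}_{T_xX}=\mathsf{d}^\ast_{T_pC_0}$ is established (by either route), the induced map $T_pC_0/f_\infty\to T_xX$ is a surjective distance-preserving map, hence injective, and the fiber identification drops out as a consequence rather than a prerequisite.
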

\begin{proof}
	The first statement is obvious. For the second one, let us take $p_\infty,q_\infty\in T_p C_0$ and two sequences $\{p_i\},\{q_i\}\subset C_0$ such that $p_i\to p_\infty$ and $q_i\to q_\infty$ respectively under (\ref{4.2}). 
	
	By Theorem \ref{thm4.10}, we may choose $q_i'\in {\eta_0}^{-1}(q_i)$ such that $\mathsf{d}_X(\eta_0(p_i),\eta_0(q_i))=\mathsf{d}(p_i,q_i')$. As $q_i'\to {\exists} q_\infty'\in T_p C_0$ and $\eta_0:({s_i}^{-1}C_0,p)\to ({s_i}^{-1}X,x)$ converges to $\eta_\infty:(T_p C_0,o_p)\to (T_x X,o_x)$ under (\ref{4.2}) {and (\ref{4.3})}, we see 
	\[
	\mathsf{d}_{T_p C_0}(p_\infty,q_\infty')=	\mathsf{d}_{T_p C_0}(f_\infty(p_\infty),f_\infty(q_\infty'))=\mathsf{d}_{T_x X}(\eta_\infty(p_\infty),\eta_\infty(q_\infty')).
	\]
	
	The proof is then completed by applying Proposition \ref{surjmapeta}.
	\end{proof}}

{We end this section by proving the following proposition. Note that Theorem \ref{thm1.3} is a direct consequence of this proposition.}
\begin{prop}\label{prop3.15}
  The number of components of $C_0$ is at most 2.
\end{prop}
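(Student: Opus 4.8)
The plan is to use the fact, established in Theorem \ref{thm4.10}, that $\eta_0\colon C_0\to X$ is precisely the quotient map for the $\mathbb{Z}_2$-action generated by the isometric involution $f$, together with the connectedness of $X$ and the finiteness of the set of components of $C_0$. First I would record two preliminary observations. On one hand, $\mathrm{diam}(M_i,\mathsf{d}_{\mathrm{g}_i})\leqslant D<\infty$ forces each $M_i$ to be connected, so the Gromov-Hausdorff limit of $\{(M_i,\mathsf{d}_{\mathrm{g}_i})\}$ is a connected (in fact geodesic) space, and hence by Theorem \ref{thm3.7} the space $(X,\mathsf{d}_X)$ is connected. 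On the other hand, Theorem \ref{thm2.3} bounds the number of components of $\partial M_i$ and their intrinsic diameters uniformly, so $C_0$ has only finitely many components $W_1,\dots,W_m$, each compact and hence clopen in $C_0$.

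Next, since $f$ is a homeomorphism of $C_0$ (Proposition \ref{prop4.8}) and an involution, it permutes $\{W_1,\dots,W_m\}$ with every orbit of size $1$ or $2$. Fixing an orbit $\mathcal{O}$ and setting $V:=\bigcup_{W\in\mathcal{O}}W$, we obtain an $f$-invariant clopen set, and using that $\eta_0(p)=\eta_0(q)$ holds precisely when $q\in\{p,f(p)\}$ (Theorem \ref{thm4.10}) one checks $\eta_0^{-1}(\eta_0(V))=V$. Because $\eta_0$ is a continuous surjection from a compact space onto a Hausdorff space it is closed, hence a quotient map, so $\eta_0(V)$ is clopen in $X$; it is moreover nonempty and connected, being equal to $\eta_0(W)$ for any $W\in\mathcal{O}$ (here one uses $\eta_0\circ f=\eta_0$). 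The same description of the fibres of $\eta_0$ shows that the sets $\eta_0(V)$ attached to distinct orbits are disjoint. Thus $X$ is a disjoint union of nonempty clopen connected sets indexed by the $f$-orbits of components of $C_0$; since $X$ is connected there is only one orbit, so $m=|\mathcal{O}|\leqslant 2$.

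I do not expect a genuine obstacle: this is a short topological argument, and all of its analytic substance — that $f$ is an isometric involution and that $X$ is isometric to $C_0/f$ — is already contained in Theorems \ref{thmfffff4.5} and \ref{thm4.10}. The only points requiring a little care are the verification that $\eta_0$ is a quotient (closed) map and the consequent clopenness of $\eta_0(V)$, both of which follow immediately from compactness of $C_0$ and the Hausdorff property of $X$.
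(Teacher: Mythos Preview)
Your argument is correct. It rests on the same core input as the paper — Theorem \ref{thm4.10}, which identifies $\eta_0$ with the quotient by the isometric involution $f$ — together with the connectedness of $X$, which you obtain from the diameter bound just as the paper does implicitly.

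The route differs somewhat in flavour. You argue topologically: $f$ permutes the finitely many components of $C_0$ in orbits of size at most two, and the image under the closed surjection $\eta_0$ of each orbit-union is clopen in the connected space $X$, forcing a single orbit. The paper instead argues metrically and case by case: if $C_0^1\neq\emptyset$, pick $p\in C_0^1$; then for every $q\in C_0$ Theorem \ref{thm4.10} gives $\mathsf{d}(p,q)=\mathsf{d}(p,f(q))=\mathsf{d}_X(\eta_0(p),\eta_0(q))\leqslant D$, so $C_0$ is connected; if $C_0^1=\emptyset$, the quotient-metric formula gives, for any fixed $p$ and arbitrary $q$, that $\min\{\mathsf{d}(p,q),\mathsf{d}(p,f(q))\}=\mathsf{d}_X(\eta_0(p),\eta_0(q))\leqslant D$, whence at most two components. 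The paper's argument is shorter and, as a bonus, yields the sharper statement that $C_0$ is actually connected whenever $C_0^1\neq\emptyset$; your topological version is cleaner conceptually but does not single out this case.
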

\begin{proof}

 If $C_0^1\neq \emptyset$, let $p\in C_0^1$. For any $q\in C_0$, applying Theorem \ref{thm4.10} implies 
  \[
  \mathsf{d}(p,q)=\mathsf{d}(p,f(q))=\mathsf{d}_X\left(\eta_0(p),\eta_0(q)\right)\leqslant D.
  \]
  Therefore in this case $C_0$ is connected.

  If $C_0^1= \emptyset$ and $C_0$ is not connected, we fix a point $p\in C_0$. For any $q\in C_0$, applying Theorem \ref{thm4.10} again, we know that either $\mathsf{d}(p,q)=\mathsf{d}_X\left(\eta_0(p),\eta_0(q)\right)\leqslant D$ or $\mathsf{d}(p,f(q))=\mathsf{d}_X\left(\eta_0(p),\eta_0(q)\right)\leqslant D$. This yields that the number of components of $C_0$ is at most 2.
\end{proof}

\begin{cor}\label{lem4.1}
	{Assume $C_0^1\neq \emptyset$ and the dimension of $C_0$ is $k$.} Let $p\in C_0^1$ and $x=\eta_0(p)$. Then under {the} convergences $(\ref{4.1})$-$(\ref{4.3})$, {$\eta_\infty:T_p C\rightarrow T_x Y$ is surjective. In particular, we have $X_1\subset Y\setminus \mathcal R_{k+1}(Y)$.}

	
\end{cor}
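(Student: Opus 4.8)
The plan is to reduce the surjectivity of $\eta_\infty$ to a statement about the reference measure of the tangent cone $T_xY$, and then transport the identity $\mathfrak{m}_Y=\eta_\sharp\mathfrak{m}_C$ of Remark~\ref{rrmk4.2} to the blow-up. By Proposition~\ref{prop4.1} we may write $T_pC=T_pC_0\times[0,\infty)$, and then Lemma~\ref{Busemann}, Corollary~\ref{cor4.10} and Proposition~\ref{surjmapeta} give
\[
\eta_\infty(T_pC)=\eta_\infty\bigl(T_pC_0\times\{0\}\bigr)\cup\bigcup_{r>0}\eta_\infty\bigl(T_pC_0\times\{r\}\bigr)=b^{-1}\bigl((-\infty,0]\bigr),
\]
so $\eta_\infty(T_pC)$ is a \emph{closed} subset of $T_xY$. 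Since the limit reference measure of $T_xY$ has full support, it therefore suffices to prove that this measure is concentrated on $\eta_\infty(T_pC)$, and that will force $\eta_\infty(T_pC)=T_xY$.

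The place where $p\in C_0^1$ is used is that $\eta$ does not "fold" near $x=\eta_0(p)$. Since $x\in X_1$, the isometry $f$ fixes $p$, hence the ball $B_r^{C_0}(p)$ is $f$-invariant; combining this with Theorem~\ref{thm4.10} and the definition of the quotient metric yields $\eta_0^{-1}\bigl(B_r^X(x)\bigr)=B_r^{C_0}(p)$ for all $r>0$. Tracking foot points (Proposition~\ref{prop3.3}) and using the metric comparison of Proposition~\ref{prop3.5}, one then checks that for all sufficiently small $r>0$,
\[
B_r^C(p)\ \subseteq\ \eta^{-1}\!\bigl(B_r^Y(x)\bigr)\ \subseteq\ B_{4r}^C(p),
\]
the left inclusion because $\eta$ is $1$-Lipschitz with $\eta(p)=x$, the right one by the previous sentence and the triangle inequality in $C$. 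Hence, by $\mathfrak{m}_Y=\eta_\sharp\mathfrak{m}_C$ and the doubling property of the RCD space $(C,\mathsf{d}_C,\mathfrak{m}_C)$, we get $\mathfrak{m}_C(B_r^C(p))\leqslant\mathfrak{m}_Y(B_r^Y(x))\leqslant\mathfrak{m}_C(B_{4r}^C(p))\leqslant\Lambda\,\mathfrak{m}_C(B_r^C(p))$ for a doubling constant $\Lambda$.

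Now I would blow up at $p$ and $x$. Put $V_i:=\mathfrak{m}_C(B_{s_i}^C(p))$. Passing to a further subsequence, $\mathfrak{m}_C/V_i$ converges weakly under \eqref{4.1} to some nonzero locally finite measure $\mathfrak{m}_\infty^C$ on $T_pC$, while $\eta_\sharp(\mathfrak{m}_C/V_i)=\mathfrak{m}_Y/V_i$ has mass on $B_1^{s_i^{-1}Y}(x)$ lying in $[1,\Lambda]$ by the previous step, so it converges under \eqref{a124.2} to a positive multiple $\mathfrak{m}_\infty^Y$ of the reference measure of $T_xY$. Since $\eta$ converges to $\eta_\infty$ and $\eta_\infty$ is proper — by Lemma~\ref{convex} and Proposition~\ref{prop4.3}, $\mathsf{d}_{T_xY}(\eta_\infty(q,r),o_x)\geqslant\max\{r,\,|\mathsf{d}_{T_pC_0}(q,o_p)-r|\}\geqslant\tfrac14\mathsf{d}_{T_pC}((q,r),o_p)$, so preimages of bounded sets are bounded — pushforward is stable under this convergence, giving $\mathfrak{m}_\infty^Y=(\eta_\infty)_\sharp\mathfrak{m}_\infty^C$. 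Consequently $\mathfrak{m}_\infty^Y$ is supported in $\overline{\eta_\infty(T_pC)}=\eta_\infty(T_pC)$, while $\mathrm{supp}(\mathfrak{m}_\infty^Y)=T_xY$; hence $\eta_\infty(T_pC)=T_xY$, which is the surjectivity. I expect the main obstacle to be exactly this measure bookkeeping: one must guarantee that no mass of $\mathfrak{m}_Y/V_i$ escapes to infinity in the blow-up (this is where properness of $\eta_\infty$, hence the hypothesis $p\in C_0^1$, is essential) and that the pushforward is continuous along the relevant pointed measured Gromov--Hausdorff convergence.

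For the last assertion, suppose some $x\in X_1$ lay in $\mathcal R_{k+1}(Y)$, and choose any sequence $s_i\downarrow 0$; then $(T_xY,\mathsf{d}_{T_xY},o_x)$ is isometric to $(\mathbb{R}^{k+1},0_{k+1})$, so $v(s):=\eta_\infty(o_p,s)$ is a ray from the origin, i.e.\ $v(s)=se$ with $|e|=1$, and its Busemann function is $b(y)=-\langle y,e\rangle$. By the identity $\eta_\infty(T_pC)=b^{-1}((-\infty,0])$ from the first paragraph, $\eta_\infty(T_pC)=\{y:\langle y,e\rangle\geqslant 0\}$ is a closed half-space, which contradicts the surjectivity just proved. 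Therefore $X_1\subseteq Y\setminus\mathcal R_{k+1}(Y)$.
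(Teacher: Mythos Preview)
Your argument is correct and follows a genuinely different route from the paper. The paper proves surjectivity directly: given $y_\infty\in T_xY$, it picks approximants $y_i\in Y$, writes $y_i=\eta(p_i,h_i)$ via Proposition~\ref{prop3.3}, and uses the quotient description (Theorems~\ref{thmfffff4.5}, \ref{thm4.10}) together with $p\in C_0^1$ to obtain $\mathsf{d}_X(x_i,x)=\mathsf{d}(p_i,p)$, so that $(p_i,h_i)$ stays bounded in $s_i^{-1}C$ and subconverges to a preimage $q_\infty\in T_pC$. This is elementary and avoids all measure theory. Your approach instead identifies $\eta_\infty(T_pC)=b^{-1}((-\infty,0])$ as a closed set and then shows it carries the full reference measure of the tangent cone by transporting $\mathfrak{m}_Y=\eta_\sharp\mathfrak{m}_C$ through the blow-up; the hypothesis $p\in C_0^1$ enters through Proposition~\ref{prop4.3}, which makes $\eta_\infty$ proper and keeps the mass from escaping. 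The measure-theoretic route yields the pleasant structural byproduct $\eta_\infty(T_pC)=b^{-1}((-\infty,0])$, and your derivation of the second assertion from it is cleaner than the paper's (which first argues $T_pC_0\cong\mathbb{R}^k$ via the local isometry on slices). The price you pay is the pushforward-stability step you flag: one must embed the convergences \eqref{4.1}, \eqref{a124.2} in a common ambient space and use that the $\eta_i$ are uniformly $1$-Lipschitz, together with properness of $\eta_\infty$, to pass $(\eta_i)_\sharp(\mathfrak{m}_C/V_i)\to(\eta_\infty)_\sharp\mathfrak{m}_\infty^C$; this is routine but needs to be spelled out.
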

\begin{proof}
	We first prove that 
	\begin{equation}\label{abcdefg11}
		\eta_\infty(T_p C)=T_x Y.
	\end{equation}
	
	For any $y_\infty\in T_x Y$, there exists a sequence of points $\{y_i\}\subset Y$ such that {$y_i\rightarrow y_\infty$} under {the} convergence (\ref{4.1}). By taking $x_i$ as the foot point of $y_i$ $(i\in \mathbb{N})$, we know that $\{x_i\}$ also converges to some $x_\infty\in T_x X$. 
	
	According to Proposition \ref{prop3.3}, we may take $p_i\in C_0$ such that $\eta(p_i,h_i)=y_i$ with $\mathsf{d}_Y(y_i, X)=h_i$. Then it follows from {Remark \ref{rmk3.3}}, {Theorems \ref{thmfffff4.5} and \ref{thm4.10}} that $\mathsf{d}_X(x_i,x)=\mathsf{d}(p_i,p)$. Applying Proposition \ref{prop3.5} we calculate that
	\[
	\begin{aligned}
		\frac{\mathsf{d}_C((p_i,h_i),p)}{s_i}&\leqslant \frac{\mathsf{d}_C((p_i,h_i),p_i)+\mathsf{d}_C(p_i,p)}{s_i}\\
		&\rightarrow \mathsf{d}_{T_x Y}(y_\infty,T_x X)+\mathsf{d}_{T_x X}(x_\infty,o_x)\ \ \text{as}\ i\rightarrow \infty.
	\end{aligned}
	\]
	Hence after passing to a subsequence, {we may assume} $(p_i,h_i)\rightarrow {\exists} q_\infty\in T_p C$ under {the} convergence (\ref{4.1}). In particular, we have $\eta_\infty(q_\infty)=y_\infty$, which completes the proof of (\ref{abcdefg11}).
	
	{As for the second statement, {by Remark \ref{rrmk4.2}, the dimension of $(Y,\mathsf{d}_Y)$ is $k+1$}. If $x\in X_1\cap \mathcal{R}_{k+1}(Y)$, then $T_x Y$ is isometric to $\mathbb{R}^{k+1}$. For any $R>0$, the local isometry property of $\eta_\infty|_{T_p C\setminus T_p C_0}$ implies that $\eta_\infty:B_R^{T_p C_0}(o_p)\times \{2R\} \rightarrow \mathbb{R}^{k}$ is an isometry. Since this holds for all $R$, applying Proposition \ref{prop4.1} then yields that $T_p C_0$ is isometric to $\mathbb{R}^k$ and $T_p C$ is isometric to the $(k+1)$-dimensional upper half plane in $\mathbb{R}^{k+1}_+$.

		However, since it is obvious that $\eta_\infty:T_p C\to \eta_\infty(T_p C)$ is an isometry, we deduce a contradiction with (\ref{abcdefg11}). }

\end{proof}

\subsection{The RCD$(K,n-1)$ structure of limit spaces}\label{sec5}

The main purpose of this section to introduce the RCD$(K,n-1)$ structure on $(X,\mathsf{d}_X)$. 

{When $C_0=C_0^2$ and $C_0$ is not connected,} by 
 Theorem \ref{thm4.10},
each component of $(C_0,\mathsf{d})$ is isometric to $(X,\mathsf{d}_X)$. By selecting any such component (still denoted as $(C_0,\mathsf{d},\mathfrak{m})$), we immediately see that $(X,\mathsf{d}_X,\mathfrak{m})$ becomes an RCD$(K,n-1)$ space. 

{When $C_0=C_0^1$, }$(C_0,\mathsf{d})$ is isometric to $(X,\mathsf{d}_X)$, making $(X,\mathsf{d}_X,\mathfrak{m})$ an RCD$(K,n-1)$ space as well. 

{We now turn to the case where both $C_0^1$ and $C_0^2$ are non-empty.} Here, Theorem \ref{thm4.10} directly implies the connectedness of $C_0$. {Refer to Examples \ref{exmp5.2} and \ref{rmk4.2} for examples.} 
{\begin{lem}\label{lem4.18}
	Assume both $C_0^1$ and $C_0^2$ are non-empty. Then the interior of $C_0^1$ is empty. Moreover, $X_2$ is geodesically convex.
\end{lem}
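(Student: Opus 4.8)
The plan is to obtain both conclusions by \emph{reflecting minimizing geodesics} across the fixed-point set $C_0^1$ of the isometric involution $f$ and invoking the non-branching property of $C_0$ (Theorem \ref{thm2.17}), which applies since $C_0$ carries an $\mathrm{RCD}(K,n-1)$ structure (cf.\ (\ref{3.3})) and is therefore a proper geodesic space. Recall also that $C_0$ is connected, because $C_0^1\neq\emptyset$ (see the proof of Proposition \ref{prop3.15}).

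For the first statement I would argue by contradiction. Suppose $U:=\mathrm{int}(C_0^1)$ is non-empty and fix $p\in U$. Given any $q\in C_0$, choose a unit-speed minimizing geodesic $\gamma\colon[0,d]\to C_0$ from $p$ to $q$. Since $U$ is open and $\gamma(0)=p\in U$, we have $\gamma([0,\varepsilon])\subset U\subset C_0^1$ for some $\varepsilon>0$, on which $f$ is the identity. Then $f\circ\gamma$ is again a unit-speed minimizing geodesic, now from $f(p)=p$ to $f(q)$, and it agrees with $\gamma$ on $[0,\varepsilon]$; Theorem \ref{thm2.17} (after rescaling to the unit interval) forces $f\circ\gamma\equiv\gamma$, hence $f(q)=q$. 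As $q$ was arbitrary, $f=\mathrm{id}_{C_0}$, so $C_0^2=C_0\setminus C_0^1=\emptyset$, contradicting the hypothesis; therefore $\mathrm{int}(C_0^1)=\emptyset$.

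For the second statement, since $X=X_1\sqcup X_2$ it suffices to show that no point of $X_1$ lies between two points of $X_2$, i.e.\ that there are no $x,y\in X_2$, $z\in X_1$ with $\mathsf{d}_X(x,z)+\mathsf{d}_X(z,y)=\mathsf{d}_X(x,y)=:l$ and $t^\ast:=\mathsf{d}_X(x,z)\in(0,l)$. I would descend to $C_0$: let $\tilde z$ be the unique point of $\eta_0^{-1}(z)$, so $f(\tilde z)=\tilde z$, and pick $\tilde x\in\eta_0^{-1}(x)$, $\tilde y\in\eta_0^{-1}(y)$. Since $f$ is an isometry fixing $\tilde z$, both preimages of $x$ (resp.\ of $y$) are equidistant from $\tilde z$, so Theorem \ref{thm4.10} gives $\mathsf{d}(\tilde x,\tilde z)=t^\ast$ and $\mathsf{d}(\tilde z,\tilde y)=l-t^\ast$; combined with $\mathsf{d}(\tilde x,\tilde y)\geqslant\mathsf{d}_X(x,y)=l$ this forces $\mathsf{d}(\tilde x,\tilde y)=l$, so the concatenation $\tilde\gamma$ of a minimizing geodesic $\tilde x\to\tilde z$ with one $\tilde z\to\tilde y$ is minimizing in $C_0$. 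Now form the reflected path $\hat\gamma$ that follows $\tilde\gamma|_{[0,t^\ast]}$ and then $f\circ\tilde\gamma|_{[t^\ast,l]}$; it has length $l$, joins $\tilde x$ to $f(\tilde y)$, and is minimizing because $\eta_0\circ\hat\gamma=\eta_0\circ\tilde\gamma$, $\eta_0$ is $1$-Lipschitz, and $\mathsf{d}(\tilde x,f(\tilde y))\geqslant\mathsf{d}_X(x,y)=l$. Since $\tilde\gamma$ and $\hat\gamma$ agree on $[0,t^\ast]$, Theorem \ref{thm2.17} yields $\tilde\gamma\equiv\hat\gamma$, in particular $\tilde y=f(\tilde y)$; but $\tilde y\in\eta_0^{-1}(y)$ with $y\in X_2$, so $\tilde y\neq f(\tilde y)$, a contradiction. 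Hence every minimizing geodesic of $X$ between points of $X_2$ avoids $X_1$, i.e.\ stays in $X_2$, which is the asserted geodesic convexity.

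I expect the delicate point to be the descent to $C_0$ in the second part: one must verify, using the quotient-metric description of Theorem \ref{thm4.10} together with the fact that $f$ is an isometry fixing $\tilde z$, both that $\hat\gamma$ is genuinely minimizing and that the two geodesics really branch (which is exactly the inequality $\tilde y\neq f(\tilde y)$, valid precisely because $y\in X_2$). Once these are in place, the non-branching theorem closes both arguments, and no curvature estimate beyond what is already recorded in Section \ref{sec3} is needed.
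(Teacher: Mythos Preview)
Your proof is correct and uses essentially the same mechanism as the paper: lift to $C_0$, exploit that $f$ is an isometry fixing points over $X_1$, and obtain two distinct minimizing geodesics in $C_0$ that share a nontrivial segment, contradicting Theorem~\ref{thm2.17}. The only cosmetic difference in the second part is the direction of branching: the paper lifts both preimages $p_1,p_2$ of $x$, concatenates each with a common geodesic $\tilde\tau$ from the fixed point to some $q\in\eta_0^{-1}(y)$, and obtains two minimal geodesics $\tau_1,\tau_2$ from $p_1\neq p_2$ to $q$ sharing the terminal segment $[l_0,l]$; you instead keep the initial segment fixed and reflect the terminal half via $f$, landing at $\tilde y\neq f(\tilde y)$. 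These are dual formulations of the same contradiction. For the first statement the paper simply cites \cite[Lemma~4.26]{YZ19}, whereas your argument (open initial segment in $\mathrm{int}(C_0^1)$ forces $f\circ\gamma=\gamma$ by non-branching, hence $f=\mathrm{id}$) is a clean self-contained version.
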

\begin{proof}
	The first statement follows directly from \cite[Lemma 4.26]{YZ19}. The second statement is a generalized version of \cite[Lemma 4.27]{YZ19}, we give an alternative proof here.
	
	Let $x,y\in X_2$ and $\{p_1,p_2\}={\eta_0}^{-1}(x)$. Let $\gamma:[0,l]\to X$ be a unit speed minimal geodesic from $x$ to $y$. 
	
	If $\gamma$ is not fully contained in $X_2$, then by Corollary \ref{cor4.15} there exists $l_0\in (0,l)$ which is the first parameter such that ${ \gamma(l_0)\in X_1}$. Take $\tilde{\tau}_1$, $\tilde{\tau}_2$ as unit speed minimal geodesics from $p_1$ to ${\eta_0}^{-1}(\gamma(l_0))$ and from $p_2$ to ${\eta_0}^{-1}(\gamma(l_0))$ respectively. The 1-Lipschitz property of $\eta_0$ together with Theorems \ref{thmfffff4.5} and \ref{thm4.10} implies $\eta_0(\tilde{\tau}_i)=\gamma|_{[0,l_0]}$ ($i=1,2$). Similarly, by taking $\tilde{\tau}$ as a unit speed minimal geodesic from ${\eta_0}^{-1}(\gamma(l_0))$ to a point $q\in {\eta_0}^{-1}(y)$, we know $\eta_0(\tilde{\tau})=\gamma|_{[l_0,l]}$.
	
	For $i=1,2$, define the curve $\tau_i:[0,l]\to C_0$ as
	\[
\tau_i(t)=\left\{\begin{aligned}
		\tilde{\tau}_i(t),\ \ &t\in [0,l_0],\\
		\tilde{\tau}(t),\ \ &t\in (l_0,l].
	\end{aligned}\right.
	\]It is obvious that $L^{\mathsf{d}}(\tau_i)=l$. However, since $\eta_0$ is 1-Lipschitz, one has 
	\[
	L^{\mathsf{d}}(\tau_i)\geqslant \mathsf{d}(p_i,q)\geqslant \mathsf{d}_X(x,y)=l.
	\] 
	Therefore, each $\tau_i$ is a minimal geodesic from $p_i$ to $q$, which contradicts Theorem \ref{thm2.17}.
\end{proof}
}{\begin{cor}[{\cite[Corollary 4.21]{YZ19}}]\label{cor4.19}
	Assume $C_0^2$ is non-empty. Then $\eta_0:C_0^2\to X_2$ is a locally isometric double covering space. 
\end{cor}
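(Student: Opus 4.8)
\textbf{Proof proposal for Corollary \ref{cor4.19}.}

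The plan is to verify the two requirements of a locally isometric double covering space separately: first that $\eta_0|_{C_0^2}$ is a local isometry, and then that it is a $2$-to-$1$ covering map, i.e.\ every point of $X_2$ has an evenly covered neighbourhood. The fiber count is already settled: by Definition \ref{11111defn3.9} and the definition of $C_0^2$, for every $x\in X_2$ the preimage ${\eta_0}^{-1}(x)$ consists of exactly two points $\{p_1,p_2\}$, and both lie in $C_0^2$ because $f$ exchanges them and $f(C_0^2)=C_0^2$. So the content is the local isometry statement together with the openness/evenly-covered property.

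First I would prove that $\eta_0$ is a local isometry near any $p\in C_0^2$. Fix $p\in C_0^2$, set $x=\eta_0(p)$, and let $q={\eta_0}^{-1}(x)\setminus\{p\}=f(p)$. Choose $r>0$ small enough that $3r<\phi(t_0)\,\mathsf{d}(p,q)$ and also $3r$ is below the threshold $(2J)^{-1/2}$ appearing in Proposition \ref{prop3.5}. I claim $\eta_0:B_r^{C_0}(p)\to B_r^X(x)$ is an isometry onto its image. Injectivity: if $p',p''\in B_r^{C_0}(p)$ with $\eta_0(p')=\eta_0(p'')$ and $p'\neq p''$, then $\{p',p''\}={\eta_0}^{-1}(\eta_0(p'))$, so by Lemma \ref{lem3.8}(2) the concatenation of the perpendiculars $\eta(p',\cdot)$ and $\eta(p'',\cdot)$ up to length $\tfrac12\phi(t_0)\mathsf{d}(p',p'')$ is a minimal geodesic; but $\mathsf{d}(p',p'')\leqslant 2r$ so this requires $\mathsf{d}(p',p'')$ to be comparable with $\mathsf{d}_X(\eta_0(p'),\eta_0(p''))=0$, hence $\mathsf{d}(p',p'')=0$, a contradiction. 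For the metric-preserving property I would invoke Theorem \ref{thmfffff4.5} (that $f$ is an isometry) and Theorem \ref{thm4.10} (that $\mathsf{d}_X$ is the quotient metric for the $\mathbb Z_2$-action $\{\mathrm{id},f\}$): for $p',p''\in B_r^{C_0}(p)$, the quotient-metric formula $\mathsf{d}_X(\eta_0(p'),\eta_0(p''))=\min\{\mathsf{d}(p',p''),\mathsf{d}(p',f(p''))\}$ holds, and since $f(p'')$ is close to $f(p)=q$ while $p'$ is close to $p$, the second term is at least $\mathsf{d}(p,q)-2r>\tfrac23\phi(t_0)\mathsf{d}(p,q)$, which exceeds $\mathsf{d}(p',p'')\leqslant 2r$; hence the minimum is the first term and $\eta_0$ preserves distances on $B_r^{C_0}(p)$.

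Next I would establish the evenly-covered property. Given $x\in X_2$ with ${\eta_0}^{-1}(x)=\{p_1,p_2\}$, pick $r$ as above simultaneously for $p_1$ and $p_2$ (also requiring $2r<\mathsf{d}(p_1,p_2)$, so the balls $B_r^{C_0}(p_i)$ are disjoint). By the previous paragraph each $\eta_0|_{B_r^{C_0}(p_i)}$ is an isometric embedding; using the quotient-metric description and continuity of $f$ one checks $\eta_0\big(B_r^{C_0}(p_1)\big)=\eta_0\big(B_r^{C_0}(p_2)\big)=:U$ is an open neighbourhood of $x$, and that ${\eta_0}^{-1}(U)=B_r^{C_0}(p_1)\sqcup B_r^{C_0}(p_2)$ (anything mapping into $U$ is within $r$ of $p_1$ or $p_2$ by the triangle inequality and the fiber being $\{p_1,p_2\}$). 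Shrinking to $U'\subset X_2$ (possible since $X_2$ is open, being the complement of the closed set $X_1$ by Corollary \ref{cor4.15}) gives an evenly covered neighbourhood inside $X_2$. This shows $\eta_0:C_0^2\to X_2$ is a two-sheeted covering and a local isometry.

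The main obstacle is keeping the two metrics on $C_0$ straight and making sure the local-isometry constant $r$ can be chosen uniformly near a fixed fiber: one must use Proposition \ref{prop3.5} to pass between $\mathsf{d}_C$ and the intrinsic $\mathsf{d}$ on $C_0$ without distortion on small scales, and one must use Lemma \ref{lem3.8}(2) together with the non-branching Theorem \ref{thm2.17} to rule out that a short geodesic in $X_2$ lifts ambiguously. Since this is essentially the content of \cite[Corollary 4.21]{YZ19} with the sectional-curvature hypothesis replaced by the RCD/non-branching input already assembled in Sections \ref{sec3}–\ref{sec4}, the argument is a transcription rather than a new idea; the care needed is purely in the bookkeeping of radii.
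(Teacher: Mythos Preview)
The paper gives no proof of its own for this corollary---it simply records the statement with a citation to \cite[Corollary 4.21]{YZ19}, since once Theorems \ref{thmfffff4.5} and \ref{thm4.10} are in place the result is a general fact about quotients by a finite isometric group action: the quotient map restricted to the complement of the fixed-point set is automatically a locally isometric covering with fibres of cardinality equal to the group order.

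Your argument is correct in substance and follows exactly this route via the quotient-metric formula. One remark: the injectivity step you wrote using Lemma \ref{lem3.8}(2) is garbled---that lemma produces a minimal geodesic in $Y$ of length $2t$, which does not by itself force $\mathsf{d}(p',p'')$ to vanish. But the step is also unnecessary: your distance-preserving computation $\mathsf{d}_X(\eta_0(p'),\eta_0(p''))=\min\{\mathsf{d}(p',p''),\mathsf{d}(p',f(p''))\}=\mathsf{d}(p',p'')$ already yields injectivity immediately (equal images imply $\mathsf{d}(p',p'')=0$). Alternatively, injectivity follows directly from $p''=f(p')\in B_r(f(p))=B_r(q)$ by Theorem \ref{thmfffff4.5}, which contradicts $p''\in B_r(p)$ once $2r<\mathsf{d}(p,q)$. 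With that correction your proof is complete and matches the intended argument.
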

}

%
%
%
%
%
%
%
%

{The next} lemma is a direct consequence of \cite[Lemma 4.1]{GS19}.
 \begin{lem}\label{lem4.24}
	Assume both $C_0^1$ and $C_0^2$ are non-empty. Then $\mathfrak{m}(C_0^1)=0$.
\end{lem}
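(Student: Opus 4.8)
The plan is to recognize $C_0^1$ as the fixed-point set of the involution $f$ and then to quote the measure-theoretic rigidity of such sets for RCD spaces. First I would note that $C_0^1=\{p\in C_0: f(p)=p\}$: by the very definition $(\ref{feaohfoiaehfiohae})$ of $f$ every point of $C_0^1$ is fixed, while conversely, if $p\in C_0^2$ then by Lemma $\ref{lem3.8}$ the fibre $\eta_0^{-1}(\eta_0(p))$ consists of the two \emph{distinct} points $p$ and $f(p)$, whence $f(p)\ne p$. Since the hypothesis guarantees $C_0^2\ne\emptyset$, the map $f$ is not the identity, and by Theorem $\ref{thmfffff4.5}$ it is an isometry of $(C_0,\mathsf{d})$, where $(C_0,\mathsf{d},\mathfrak{m})$ is the RCD space obtained in $(\ref{3.3})$. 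Thus $C_0^1$ is exactly the fixed-point set of a non-trivial isometry of an RCD space; note also that it is closed, by Corollary $\ref{cor4.15}$ (or simply because $f$ is continuous).

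Next I would bring in the structure of $C_0^1$ already established: by Lemma $\ref{lem4.18}$ — applicable precisely because both $C_0^1$ and $C_0^2$ are non-empty — the interior of $C_0^1$ is empty. The conclusion $\mathfrak{m}(C_0^1)=0$ is then immediate from \cite[Lemma 4.1]{GS19}, which asserts that the fixed-point set of a non-trivial isometry of an $\mathrm{RCD}(K,N)$ space is $\mathfrak{m}$-negligible once it has empty interior (equivalently, a fixed-point set of positive measure must contain a non-empty open set; heuristically, one blows up at an $\mathfrak{m}$-density point of the fixed set, which may be taken regular so that the rescaled space is Euclidean and the rescaled set is the whole tangent space, forcing the blown-up isometry to be trivial and hence $f$ to fix a neighbourhood).

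The step that needs the most care is verifying that the hypotheses of \cite[Lemma 4.1]{GS19} are actually met: one must check that it suffices for $f$ to be a metric isometry of $(C_0,\mathsf{d})$, rather than an isometry of the metric measure structure that preserves $\mathfrak{m}$. Since that lemma only exploits the infinitesimal (tangential) structure of the RCD space and the measure-density of the fixed set, and does not require the isometry to preserve $\mathfrak{m}$, the metric-isometry property provided by Theorem $\ref{thmfffff4.5}$ is enough. I do not expect any genuine obstacle beyond this bookkeeping: once $C_0^1$ has been identified with $\mathrm{Fix}(f)$ and its empty interior recorded via Lemma $\ref{lem4.18}$, the vanishing of $\mathfrak{m}(C_0^1)$ is a black-box application of \cite[Lemma 4.1]{GS19}.
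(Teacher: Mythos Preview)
Your proposal is correct and follows the same route as the paper, which simply records the lemma as ``a direct consequence of \cite[Lemma 4.1]{GS19}'' without further comment; you have supplied exactly the missing identification of $C_0^1$ with $\mathrm{Fix}(f)$ via Theorem \ref{thmfffff4.5} and the definition $(\ref{feaohfoiaehfiohae})$. Your caution about whether \cite[Lemma 4.1]{GS19} requires $f$ to preserve $\mathfrak{m}$ is well taken---indeed Example \ref{rmk4.2} in this very paper shows $f$ need not be measure-preserving---and your resolution is correct: that lemma only uses the metric-isometry property together with the regular-set/tangent-cone structure of the RCD space, so no measure invariance of $f$ is needed. One small remark: the appeal to Lemma \ref{lem4.18} for empty interior is not strictly necessary as a \emph{hypothesis} of \cite[Lemma 4.1]{GS19}; that result already yields $\mathfrak{m}(\mathrm{Fix}(f))=0$ directly once $f$ is a non-trivial isometry of a connected RCD space (connectedness of $C_0$ here comes from Proposition \ref{prop3.15} since $C_0^1\neq\emptyset$), so you may streamline by dropping that step.
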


Provided $C_0^1$ has no contribution to the measure, under the assumption of the following {proposition}, there exists a natural RCD structure on $(X,\mathsf{d}_X)$. {Note that when $C_0^2$ is connected, there is an counterexample in Example \ref{fhaeiofhiaeohfae}.}

\begin{prop}\label{prop4.24}
	Assume both $C_0^1$ and $C_0^2$ are non-empty {and $C_0^2$ has two components.} Then there exists a Randon measure $\mathfrak{m}_X$ such that $(X,\mathsf{d}_X,\mathfrak{m}_X)$ is an $\mathrm{RCD}(K,n-1)$ space.
\end{prop}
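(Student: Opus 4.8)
The plan is to realize $(X,\mathsf{d}_X)$ as an isometric copy of the closure of \emph{one} of the two components of $C_0^2$, equipped with the restriction of $\mathfrak{m}$, and then to verify that this restricted metric measure space is still $\mathrm{RCD}(K,n-1)$. First I would record the structural consequences of the hypotheses. Since $C_0^1\neq\emptyset$, the argument in Proposition \ref{prop3.15} shows that $C_0$ is connected; write $C_0^2=U_1\sqcup U_2$ for its two components. The involution $f$ preserves $C_0^2$ and fixes $C_0^1$ pointwise, while by Corollary \ref{cor4.19} the map $\eta_0|_{C_0^2}$ is an open locally isometric double covering onto $X_2$; hence $\eta_0(U_1),\eta_0(U_2)$ are disjoint open subsets of $X_2$ covering $X_2$. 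As $X_2$ is connected (it is geodesically convex by Lemma \ref{lem4.18}), $f$ cannot preserve each $U_j$, so $f(U_1)=U_2$ and $f|_{U_1}\colon U_1\to U_2$ is an isometry. Moreover $\overline{U_1}\cap\overline{U_2}=C_0^1$: the inclusion $\overline{U_j}\setminus U_j\subseteq C_0^1$ holds because $U_1,U_2$ are disjoint open sets, and conversely $\mathrm{int}(C_0^1)=\emptyset$ (Lemma \ref{lem4.18}) forces $C_0^1\subseteq\overline{C_0^2}=\overline{U_1}\cup\overline{U_2}$, and if $p\in C_0^1\cap\overline{U_2}$ then $p=f(p)\in f(\overline{U_2})=\overline{U_1}$, so $C_0^1\subseteq\overline{U_1}$, and symmetrically $C_0^1\subseteq\overline{U_2}$. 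Thus $\overline{U_1}=U_1\cup C_0^1$ and $C_0$ is the ``$f$-double'' of $\overline{U_1}$ glued along $C_0^1$.

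Next I would identify $(X,\mathsf{d}_X)$ with $(\overline{U_1},\mathsf{d}|_{\overline{U_1}})$ via $\eta_0$. The restriction $\eta_0|_{\overline{U_1}}$ is a bijection onto $X$: surjectivity because $\eta_0(U_1)=X_2$ (for $x\in X_2$ exactly one point of $\eta_0^{-1}(x)$ lies in $U_1$) and $\eta_0(C_0^1)=X_1$; injectivity because $\eta_0^{-1}(\eta_0(p))=\{p,f(p)\}$, because $f(\overline{U_1})\cap\overline{U_1}=C_0^1$, and because $\eta_0$ is injective on $C_0^1$. For distance preservation, recall from Theorem \ref{thm4.10} that via $\eta_0$ one has $\mathsf{d}_X(\eta_0(p),\eta_0(q))=\min\{\mathsf{d}(p,q),\mathsf{d}(p,f(q))\}$ for $p,q\in\overline{U_1}$ (the two remaining terms of the quotient distance equal these, $f$ being an isometric involution). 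Given a minimal geodesic $\gamma\colon[0,L]\to C_0$ from $p\in\overline{U_1}$ to $f(q)\in\overline{U_2}$, the sets $\{t:\gamma(t)\in\overline{U_1}\}$ and $\{t:\gamma(t)\in\overline{U_2}\}$ are closed, nonempty and cover $[0,L]$, so there is $z=\gamma(t_0)\in\overline{U_1}\cap\overline{U_2}=C_0^1$; since $f(z)=z$,
\[
\mathsf{d}(p,f(q))=\mathsf{d}(p,z)+\mathsf{d}(z,f(q))=\mathsf{d}(p,z)+\mathsf{d}(z,q)\geqslant \mathsf{d}(p,q).
\]
Hence $\mathsf{d}_X(\eta_0(p),\eta_0(q))=\mathsf{d}(p,q)$, so $\eta_0|_{\overline{U_1}}$ is an isometry onto $(X,\mathsf{d}_X)$.

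Finally I would transport the $\mathrm{RCD}$ structure. Set $\mathfrak{m}_X:=(\eta_0|_{\overline{U_1}})_\sharp(\mathfrak{m}|_{\overline{U_1}})$; it is Radon with full support, since $\mathfrak{m}$ has full support on $C_0$ and every point of $C_0^1$ is a limit of points of $U_1$. So it suffices to prove that $(\overline{U_1},\mathsf{d}|_{\overline{U_1}},\mathfrak{m}|_{\overline{U_1}})$ is $\mathrm{RCD}(K,n-1)$. The geometric heart of the matter is that $\overline{U_1}$ is geodesically convex in $C_0$ away from a null set: I claim every minimal geodesic $\gamma$ of $C_0$ with both endpoints in $\overline{U_1}$, at least one of which lies in $U_1$, is contained in $\overline{U_1}$. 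Indeed, if $\gamma$ had a maximal excursion into $U_2$ on a subinterval $(a,b)\subsetneq[0,L]$, then $a>0$, $\gamma(a),\gamma(b)\in C_0^1$, and replacing $\gamma|_{[a,b]}$ by $f\circ\gamma|_{[a,b]}$ yields another minimal geodesic with the same endpoints which agrees with $\gamma$ on the nondegenerate segment $[0,a]$ but differs from it immediately afterwards, contradicting non-branching (Theorem \ref{thm2.17}). Since $\mathfrak{m}(C_0^1)=0$ by Lemma \ref{lem4.24}, any two absolutely continuous probability measures on $\overline{U_1}$ are joined in $C_0$ by a $W_2$-geodesic carried by geodesics whose initial point lies off $C_0^1$, hence (by the claim) by geodesics contained in $\overline{U_1}$; thus this $W_2$-geodesic stays in $\mathcal{P}(\overline{U_1})$ and along it the entropy convexity inequality is inherited from $C_0$, so $(\overline{U_1},\mathsf{d}|_{\overline{U_1}},\mathfrak{m}|_{\overline{U_1}})$ is $\mathrm{CD}(K,n-1)$; infinitesimal Hilbertianity also passes to $\overline{U_1}$, the Cheeger energy being computed by the same minimal weak upper gradients on this essentially convex subset. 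Therefore $(\overline{U_1},\mathsf{d}|_{\overline{U_1}},\mathfrak{m}|_{\overline{U_1}})$, and hence $(X,\mathsf{d}_X,\mathfrak{m}_X)$, is $\mathrm{RCD}(K,n-1)$. (Alternatively one could first check that $f$ is $\mathfrak{m}$-preserving and then invoke Theorem \ref{thm2.18} for the $\mathbb{Z}_2$-action $\{\mathrm{id},f\}$.) The essential use of the two-component hypothesis is that it makes $f$ exchange two isometric halves of $C_0$ meeting \emph{exactly} along $C_0^1$, which is what permits the reflection argument and the distance identity above; when $C_0^2$ is connected this mechanism breaks down, and indeed Example \ref{fhaeiofhiaeohfae} shows that then no full-support $\mathrm{RCD}$ reference measure on $X$ need exist. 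The step I expect to be the main obstacle is precisely this last one: making fully rigorous that ``geodesic convexity up to a null set'' is enough to inherit both the $\mathrm{CD}(K,n-1)$ inequality and infinitesimal Hilbertianity on $\overline{U_1}$.
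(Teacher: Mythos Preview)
Your main argument follows the paper's strategy: identify $(X,\mathsf{d}_X)$ with $(\overline{U_1},\mathsf{d}|_{\overline{U_1}})$ via $\eta_0$, where $U_1$ is one component of $C_0^2$, and then show that $(\overline{U_1},\mathsf{d},\mathfrak{m}|_{\overline{U_1}})$ inherits the $\mathrm{RCD}(K,n-1)$ condition from $C_0$. The paper builds the isometry by fixing a basepoint in $U$ and lifting geodesics in $X_2$ back to $U$ (using Lemma \ref{lem4.18} and Corollary \ref{cor4.19}), then extending to $X_1$ by continuity; your direct verification via the quotient-distance formula and the observation that any geodesic from $\overline{U_1}$ to $\overline{U_2}$ must cross $C_0^1$ is an equally valid and somewhat more transparent route to the same isometry. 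For the final step, rather than arguing from scratch about $W_2$-geodesics and Cheeger energies on an ``almost convex'' subset, the paper simply invokes \cite[Proposition 7.2]{AMS16} together with $\mathfrak{m}(C_0^1)=0$ (Lemma \ref{lem4.24}); this dispatches cleanly exactly the point you flag as the main obstacle.

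One genuine correction: your parenthetical alternative of checking $f_\sharp\mathfrak{m}=\mathfrak{m}$ and then applying Theorem \ref{thm2.18} does not work in general. Example \ref{rmk4.2} produces inradius collapsed manifolds with $C_0\cong\mathbb{S}^1(2/\pi)$ and $f$ a reflection, so that $C_0^1$ consists of two points and $C_0^2$ has exactly two components, yet the limit measure $\mathfrak{m}$ constructed there fails to be $f$-invariant. The Remark immediately following Proposition \ref{prop4.24} is written precisely to record this obstruction; the two-component hypothesis is what makes the restriction-to-one-half argument go through, not measure invariance of $f$.
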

{
\begin{proof}
	Fix $p\in C_0^2$ and let $x=\eta_0(p)$. Denote by $U$ the component of $C_0^2$ containing $p$. For any $y\in X_2$, Lemma \ref{lem4.18} ensures that any minimal geodesic $\gamma$ from $x$ to $y$ lies entirely in $\gamma\subset X_2$. By Corollary \ref{cor4.19}, ${\eta_0}^{-1}(\gamma)$ is also a minimal geodesic in {$U$}, connecting $p$ to a unique point $q\in {\eta_0}^{-1}(y)$ and satisfying $\mathsf{d}(p,q)=\mathsf{d}_X(\eta_0(p),\eta_0(q))$. This defines an isometry $\tau:X_2\to U\subset C_0^2$ via $\tau(y)=q$. 
	
	Next, observe from the proof of Lemma \ref{lem4.18} that the interior of $X_1$ is also empty. For any $y\in X_1$, take a sequence $\{y_i\}\subset X_2$ converging to $y$. The continuity of $\eta_0$ ensures $\tau(y_i)\to {\eta_0}^{-1}(y)$, allowing $\tau$ to extend to an isometry $\tau:X\to U\cup C_0^1$.

	Finally, letting $\mathfrak{m}_X=({\tau}^{-1})_\sharp \mathfrak{m}$ and using \cite[Proposition 7.2]{AMS16} and Lemma \ref{lem4.24}, we complete the proof.
	
\end{proof}
}

{

\begin{remark}
	If we let $\mathfrak{n}:=(\eta_0)_\sharp \mathfrak{m}$ be the push-forward measure of $\mathfrak{m}$ under $\eta_0$, it remains unknown whether $(X,\mathsf{d}_X,\mathfrak{n})$ is an $\mathrm{RCD}(K,n-1)$ space, for the following reason. {Although some of the following was {discussed} in \cite{GMMS18}, we give the details here for reader's convenience.}
	
	For any $g\in \mathrm{Lip}(X,\mathsf{d}_X)$, we define its lift $\tilde{g}:C_0\to\mathbb{R}$ as $\tilde{g}=g\circ\eta_0$. Theorem \ref{thm4.10} implies that {$\mathrm{lip}^{C_0}\,\tilde{g}=(\mathrm{lip}^{X}\,g)\circ\eta_0$.}

	Consider $g\in H^{1,2}(X,\mathsf{d}_X,\mathfrak{n})$ with $|\nabla^X g|$ denoting its minimal relaxed slope. By Remark \ref{rmk2.6}, there exists $\{g_i\}\subset \mathrm{Lip}(X,\mathsf{d}_X)\cap L^2(\mathfrak{n})$ such that $g_i\to g$ and $\mathrm{lip}^X\,g_i\to |\nabla^X g|$ in $L^2(\mathfrak{n})$. Therefore by Lemma \ref{lem4.24}, both $\{\tilde{g}_i\}$ and $\{\mathrm{lip}^{C_0}\,\tilde{g}_i\}$ are Cauchy sequences in $L^2(\mathfrak{m})$. Since $\{\tilde{g}_i\}\subset \mathrm{Lip}(C_0,\mathsf{d})$ satisfies $\tilde{g}_i\to \tilde{g}$ in $L^2(\mathfrak{m})$, Remark \ref{rmk2.11} combined with the completeness of Sobolev space yields $\mathrm{lip}^{C_0}\,\tilde{g}_i\to |\nabla^{C_0} \tilde{g}|$ in $L^2(\mathfrak{m})$. {After passing to a subsequence, for $\mathfrak{m}$-a.e. $p\in C_0$ (resp. $\mathfrak{n}$-a.e. $x\in X$) we have $\mathrm{lip}^{C_0}\,\tilde{g}_i(p)\to |\nabla^{C_0} \tilde{g}|(p)$ (resp. $\mathrm{lip}^X\,g_i(x)\to |\nabla^X g|(x)$)}, which then shows $|\nabla^{C_0} \tilde{g}|=|\nabla^X g|\circ \eta_0$ $\mathfrak{m}$-a.e. 
	
	The infinitesimally Hilbertian property of $(C_0,\mathsf{d},\mathfrak{m})$ implies that $(X,\mathsf{d}_X,\mathfrak{n})$ is also infinitesimally Hilbertian. Moreover, for any $g,h\in H^{1,2}(X,\mathsf{d}_X,\mathfrak{n})$, Remark \ref{rmk2.12} gives the equality 
	\[
	\langle\nabla^X g,\nabla^X h\rangle(\eta_0(p))=\langle\nabla^{C_0} \tilde{g},\nabla^{C_0} \tilde{h}\rangle(p) \text{ for $\mathfrak{m}$-a.e. }p\in C_0.
	\]
	
	However, for $g\in D(\Delta^X)\cap \mathrm{Lip}(X,\mathsf{d}_X)$, we can not verify that $\tilde{g}\in D(\Delta^{C_0})$. Therefore, we can not establish $\Delta^{C_0}\tilde{g}=(\Delta^X g)\circ\eta_0$ $\mathfrak{m}$-a.e. {unless we know $f$ preserves the measure $\mathfrak{m}$.} This obstruction prevents us from directly verifying the RCD$(K,n-1)$ condition for $(X,\mathsf{d}_X,\mathfrak{n})$ through Definition \ref{defn2.9}.  
\end{remark}
}
{Although in this case $(X,\mathsf{d}_X,\mathfrak{n})$ does not necessarily bear an RCD$(K,n-1)$ structure, it inherits rectifiablity from $(C_0,\mathsf{d},\mathfrak{m})$.
 \begin{prop}
	Let $k$ be the dimension of $C_0$. Then $(X,\mathsf{d}_X,\mathfrak{n})$ is $k$-rectifiable in the following sense: for any $\epsilon>0$, there exists a sequence $\{(G_i,\varphi_i)\}$, where $G_i\subset X$ are Borel sets satisfying $\mathfrak{n}(X\setminus \cup_i G_i)=0$ and the maps $\varphi_i:G_i\to \mathbb{R}^k$ are $(1+\epsilon)$-biLipschitz with their images.
\end{prop}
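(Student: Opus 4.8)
\quad The plan is to transport the rectifiability of $(C_0,\mathsf{d},\mathfrak{m})$ to $(X,\mathsf{d}_X,\mathfrak{n})$ along the map $\eta_0$, using that $\eta_0$ is a local isometry off the $\mathfrak{m}$-null set $C_0^1$. Recall first that $(C_0,\mathsf{d},\mathfrak{m})$ is an RCD$(K,n-1)$ space of dimension $k$, hence $k$-rectifiable: for the prescribed $\epsilon>0$ there are Borel sets $F_j\subset C_0$ with $\mathfrak{m}\big(C_0\setminus\bigcup_j F_j\big)=0$ together with $(1+\epsilon)$-biLipschitz maps $\psi_j\colon F_j\to\mathbb{R}^k$; this is part of the structure theory of Ricci limit (equivalently, RCD$(K,N)$) spaces. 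If $C_0^2=\emptyset$, i.e. $C_0=C_0^1$, then by Theorem \ref{thm4.10} the map $\eta_0\colon(C_0,\mathsf{d})\to(X,\mathsf{d}_X)$ is an isometry and the conclusion is immediate; so I may assume $C_0^2\neq\emptyset$. In that case $\mathfrak{m}(C_0^1)=0$ (trivially if $C_0^1=\emptyset$, and by Lemma \ref{lem4.24} otherwise), hence, since $\eta_0^{-1}(X_1)=C_0^1$,
\[
\mathfrak{n}(X_1)=(\eta_0)_\sharp\mathfrak{m}(X_1)=\mathfrak{m}(C_0^1)=0.
\]
It therefore suffices to build a countable family of $(1+\epsilon)$-biLipschitz charts covering $X_2$ up to an $\mathfrak{n}$-null set.

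Next I would fix a convenient countable open cover of $C_0^2$. Since $X_1$ is closed in $X$ by Corollary \ref{cor4.15}, the set $X_2=X\setminus X_1$ is open in $X$ (and $C_0^2=\eta_0^{-1}(X_2)$ is open in $C_0$); by Corollary \ref{cor4.19} the map $\eta_0\colon C_0^2\to X_2$ is a locally isometric double covering. Thus every $p\in C_0^2$ has an open neighbourhood, which I may shrink to a ball small enough that $\eta_0$ restricted to it is a distance-preserving bijection onto an open subset of $X_2$. As $C_0^2$ is separable, hence Lindel\"of, I extract a countable subfamily $\{V_l\}_l$ of such neighbourhoods with $\bigcup_l V_l=C_0^2$, and set $W_l:=\eta_0(V_l)$, which is open in $X$; then $\eta_0|_{V_l}\colon V_l\to W_l$ is an isometry and $\bigcup_l W_l=\eta_0(C_0^2)=X_2$.

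Then I would push the charts of $C_0$ forward through these isometries. For each pair $(j,l)$ put
\[
G_{j,l}:=\eta_0(F_j\cap V_l)\subset W_l,\qquad \varphi_{j,l}:=\psi_j\circ\big(\eta_0|_{V_l}\big)^{-1}\colon G_{j,l}\longrightarrow\mathbb{R}^k,
\]
and re-index the countable family $\{(G_{j,l},\varphi_{j,l})\}$ by $\mathbb{N}$. Since $F_j\cap V_l$ is Borel and $\eta_0|_{V_l}$ is a homeomorphism onto the open set $W_l$, each $G_{j,l}$ is Borel in $X$; since $\big(\eta_0|_{V_l}\big)^{-1}$ preserves distances and $\psi_j$ is $(1+\epsilon)$-biLipschitz, each $\varphi_{j,l}$ is $(1+\epsilon)$-biLipschitz with its image. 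Finally, from $\eta_0^{-1}(G_{j,l})\supset F_j\cap V_l$ we get $\eta_0^{-1}\big(\bigcup_{j,l}G_{j,l}\big)\supset\big(\bigcup_j F_j\big)\cap C_0^2$, so
\[
\mathfrak{n}\Big(X_2\setminus\bigcup_{j,l}G_{j,l}\Big)=\mathfrak{m}\Big(C_0^2\setminus\eta_0^{-1}\big(\bigcup_{j,l}G_{j,l}\big)\Big)\leq\mathfrak{m}\Big(C_0\setminus\bigcup_j F_j\Big)=0,
\]
which together with $\mathfrak{n}(X_1)=0$ gives $\mathfrak{n}\big(X\setminus\bigcup_{j,l}G_{j,l}\big)=0$, as required.

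The substantive input is the rectifiability of $(C_0,\mathsf{d},\mathfrak{m})$, which I invoke as a known result; everything else is bookkeeping with the local isometry $\eta_0|_{C_0^2}$ and the identity $\mathfrak{n}=(\eta_0)_\sharp\mathfrak{m}$. The step that deserves the most attention is arranging that the neighbourhoods on which $\eta_0$ is an isometry cover \emph{all} of $C_0^2$ rather than merely all of $X_2$ --- this is exactly what the covering-space statement of Corollary \ref{cor4.19} provides --- together with checking that all the sets in play are Borel, so that $\mathfrak{n}$ may legitimately be evaluated on them.
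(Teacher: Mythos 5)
Your proposal is correct and takes essentially the same route as the paper: both transport the Cheeger--Colding rectifiability charts of $(C_0,\mathsf{d},\mathfrak{m})$ through $\eta_0$, discarding $C_0^1$ via Lemma \ref{lem4.24} and using the locally isometric double covering of Corollary \ref{cor4.19} to obtain neighbourhoods on which $\eta_0$ is distance-preserving, then pushing the charts forward with $\mathfrak{n}=(\eta_0)_\sharp\mathfrak{m}$. The only difference is bookkeeping: the paper covers $C_0^2$ by finitely many such balls (using compactness) when $C_0^1=\emptyset$ and exhausts by the sets $\{p\in C_0^2:\mathsf{d}(p,C_0^1)\geqslant i^{-1}\}$ otherwise, whereas you extract a countable (Lindel\"of) subcover of evenly covered neighbourhoods directly, which treats both cases uniformly.
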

\begin{proof}
	Since the $k$-rectifiablity of $(C_0,\mathsf{d},\mathfrak{m})$ was established by Cheeger-Colding in \cite{ChCo3},  there exists a sequence $\{(U_i,\varphi_i)\}$, where $U_i\subset C_0$ are Borel sets satisfying $\mathfrak{m}(C_0\setminus \cup_i U_i)=0$ and the maps $\varphi_i:U_i\to \mathbb{R}^k$ are $(1+\epsilon)$-biLipschitz with their images. By Lemma \ref{lem4.24}, we may assume $U_i\subset C_0^2$ for all $i\in \mathbb{N}$.

	First, consider the case where $C_0^1=\emptyset$. Because {each component of} $C_0^2$ is compact, owing to Corollary \ref{cor4.19}, there exists a finite open covering $\{B_{r_i}(p_j)\}$ of $C_0^2$ such that $\eta_0:B_{r_j}(p_j)\to \eta_0(B_{r_j}(p_j))$ is an isometry for each $j$. By refining the sets $U_i$ via  $U_{i,j}=U_i\cap B_{r_j}(p_j)$ and relabelling, we may assume $\eta_0:U_i\to \eta_0(U_i)$ is an isometry for every $i$. 
	The desired conclusion follows by setting $(G_i,\psi_i)={ (\eta_0(U_i),\varphi_i\circ (\eta_0|_{U_i})^{-1})}$, ensuring each $\psi_i$ remains $(1+\epsilon)$-biLipschitz.
	
	For the case $C_0^1\neq\emptyset$, the same construction applies for $A_j:=\{p\in C_0^2:\mathsf{d}(p,C_0^1)\geqslant i^{-1}\}$ after invoking Lemma  \ref{lem4.24}, which thereby completes the proof.
\end{proof}
}
{The rest of this section is devoted to introduce an RCD$(K,n-1)$ structure to $(X,\mathsf{d}_X,\mathscr{H}^{ n-1})$ under the assumption that $C_0$ is a non-collapsed Ricci limit space.}

\begin{proof}[Proof of Theorem \ref{thm1.4}]
Combining (\ref{1.1}), (\ref{1.2}) with Theorem \ref{ncRiccilimit}{,} we know 
\[
\mathfrak{m}=\frac{\mathscr{H}_{\mathsf{d}}^{n-1}}{\mathscr{H}_{\mathsf{d}}^{n-1}(C_0)}.
\]
Then it follows from Proposition \ref{thmfffff4.5} that $f_\sharp \mathfrak{m}=\mathfrak{m}$. From Theorems \ref{thm2.18} and \ref{thm4.10} we see that $(X,\mathsf{d}_X,(\eta_0)_\sharp \mathfrak{m})$ is an RCD$(K,n-1)$ space. 

{When $C_0^2=\emptyset$, Theorem \ref{thm4.10} implies that $\eta_0:(C_0,\mathsf{d})\to (X,\mathsf{d}_X)$ is an isometry, from which the conclusion follows.

Therefore it suffices to consider the case where $C_0^2$ is non-empty. We claim that $\mathscr{H}^{n-1}_\mathsf{d}(C_0)(\eta_0)_\sharp \mathfrak{m}$ coincides with 2$\mathscr{H}^{n-1}_{\mathsf{d}_X}$.}

{By Lemma \ref{lem4.24}, we have $\mathscr{H}^{n-1}_\mathsf{d}(C_0^1)=0$. Since $\eta_0:(C_0,\mathsf{d})\to (X,\mathsf{d}_X)$ is 1-Lipschitz, {we get} $\mathscr{H}^{n-1}_{\mathsf{d}_X}(X_1)=0$.}

%
%
%
%

On the other hand, for any $p\in C_0^2$, by taking $\epsilon>0$ such that $B_\epsilon(p)\cap B_\epsilon(f(p))=\emptyset$, we know 
\[
\mathscr{H}^{n-1}_{\mathsf{d}}(B_\epsilon(p))=\mathscr{H}^{n-1}_{\mathsf{d}_X}\left(B_\epsilon (\eta_0(p))\right),
\]
which yields 
\[
\frac{\mathrm{d}(\eta_0)_\sharp(\mathfrak{m})}{\mathrm{d}\mathscr{H}^{n-1}_{\mathsf{d}_X}}=\frac{2}{\mathscr{H}^{n-1}_\mathsf{d}(C_0)}\ \ \text{on}\ X_2.
\]
This is enough to conclude.
\end{proof}
{
\begin{remark}
	Under (\ref{1.2}), the convergence (\ref{3.1}) is also non-collapsing. By Theorem \ref{ncRiccilimit} and Corollary \ref{lem4.1}, the Hausdorff dimension of $X_1$ with respect to $\mathsf{d}_Y$ is at most $n-2$. {It immediately follows from Proposition \ref{prop3.5} that the Hausdorff dimension of $X_1$ with respect to $\mathsf{d}_X$ is at most $n-2$. Moreover, by Theorem \ref{thm4.10}, we know the Hausdorff dimension} of $C_0^1$ with respect to $\mathsf{d}$ is at most $n-2$. Indeed, Example \ref{exmp5.2} shows that this upper bound is sharp.
	
	\end{remark}}

{By \cite[Proposition 2.10]{F87} and \cite[Theorem 1.9]{W08}, the family $\mathcal{M}(n,H,K,\lambda,D)$ is precompact in the renormalized measured Gromov-Hausdorff topology in the following sence: given any sequence of manifolds with boundary $\{(M_i,\mathrm{g}_i)\}\subset \mathcal{M}(n,H,K,\lambda,D)$, there exists a subsequence which is still denoted as $\{(M_i,\mathrm{g}_i)\}$, and a metric measure space $(Z,\mathsf{d}_Z,\mathfrak{m}_Z)$ such that
\[
\left(M_i,\mathsf{d}_{\mathrm{g}_i},\frac{\mathrm{vol}_{\mathrm{g}_i}}{\mathrm{vol}_{\mathrm{g}_i}(M_i)}\right)\xrightarrow{\mathrm{mGH}}(Z,\mathsf{d}_Z,\mathfrak{m}_Z).
\]

If moreover $\{(M_i,\mathrm{g}_i)\}$ inradius collapses, then by Theorem \ref{thm3.7} and writing $\mathfrak{m}_Z$ as $\mathfrak{m}_X$, we may assume 
\begin{align}\label{ffaeohof}
\left(M_i,\mathsf{d}_{\mathrm{g}_i},\frac{\mathrm{vol}_{\mathrm{g}_i}}{\mathrm{vol}_{\mathrm{g}_i}(M_i)}\right)\xrightarrow{\mathrm{mGH}}(X,\mathsf{d}_X,\mathfrak{m}_X).
\end{align}

We first show that $(X,\mathsf{d}_X)$ bears the weak infinitesimal splitting property, which is defined as follows.
\begin{defn}
	A metric space $(X,\mathsf{d}_X)$ is said to have the \textit{weak infinitesimal splitting property} if, for every $x \in X $, there exists a sequence $s_i\downarrow 0$ such that $({s_i}^{-1}X,x)$ pGH converges to a metric space $(T_x X, o_x)$ which satisfies the splitting property: whenever $\gamma \subset T_x X$ is a geodesic line, there exists a metric space $\Omega$ such that $T_x X$ is isometric to $ \Omega\times\mathbb{R}$ with $\pi(\gamma)$ being a single point $\omega\in\Omega$, where $\pi$ is the projection onto the first coordinate. 
\end{defn}
\begin{thm}\label{thm4.30}
	The metric space $(X,\mathsf{d}_X)$ {obtained} in (\ref{ffaeohof}) satisfies the weak infinitesimal splitting property. 
\end{thm}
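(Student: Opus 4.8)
The plan is to verify the splitting property directly on the tangent spaces $T_xX$, exploiting that each such tangent sits isometrically and \emph{convexly} inside the tangent space $T_xY$ of the ambient RCD space $Y$. First I would fix $x\in X$ and an arbitrary sequence $s_i\downarrow 0$. Since $(Y,\mathsf{d}_Y,\mathfrak{m}_Y)$ is an $\mathrm{RCD}(-J,n)$ space, Theorem \ref{thm2.16} lets me pass to a subsequence so that the rescaled pointed spaces $\bigl(Y,s_i^{-1}\mathsf{d}_Y,\mathfrak{m}_Y(B_{s_i}(x))^{-1}\mathfrak{m}_Y,x\bigr)$ $\mathrm{pmGH}$-converge to a pointed $\mathrm{RCD}(0,n)$ space whose underlying metric space is $(T_xY,\mathsf{d}_{T_xY},o_x)$ as in $(\ref{a124.2})$; passing to a further subsequence I also arrange $(\ref{4.3})$, i.e. $(X,s_i^{-1}\mathsf{d}_Y,x)\to(T_xX,\mathsf{d}_{T_xY},o_x)\subset T_xY$ as convergence of subsets. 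By Proposition \ref{prop3.5}$(3)$, on each ball $B_{s_iR}(x)$ the metrics $s_i^{-1}\mathsf{d}_X$ and $s_i^{-1}\mathsf{d}_Y$ are $(1+o(1))$-biLipschitz, so $(s_i^{-1}X,x)$ $\mathrm{pGH}$-converges to the same limit, which by Lemma \ref{convex} is exactly $(T_xX,\mathsf{d}_{T_xX},o_x)$. This pins down one good sequence $\{s_i\}$, and it remains to check the splitting property for $(T_xX,\mathsf{d}_{T_xX})$.

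Next, suppose $\gamma\subset T_xX$ is a geodesic line. By Lemma \ref{convex} the intrinsic metric $\mathsf{d}_{T_xX}$ coincides with the restriction of $\mathsf{d}_{T_xY}$, so $\gamma$ is also a geodesic line inside $T_xY$. Since $(T_xY,\mathsf{d}_{T_xY})$ is an $\mathrm{RCD}(0,n)$ space --- equivalently a rescaled iterated limit of the (essentially smooth, by Remark \ref{rmk2.2}) manifolds approximating $\widetilde M_i$ --- I can apply the splitting theorem (Theorem \ref{CCsplit}, exactly as it is used on tangent spaces in Section \ref{sec4.1}) to obtain a length space $(\Omega,\mathsf{d}_\Omega)$ and an isometry $T_xY\cong\Omega\times\mathbb{R}$ under which $\gamma(t)=(\omega_0,t)$ for a single point $\omega_0\in\Omega$.

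The heart of the argument is then that $T_xX$ is saturated under the $\mathbb{R}$-fibers of this splitting. Fix $z=(\omega_z,s_z)\in T_xX$ and, for each $t$, a unit-speed minimal geodesic $c_t$ in $T_xY$ from $z$ to $\gamma(t)$; by the convexity in Lemma \ref{convex}, $c_t$ lies entirely in $T_xX$. Because $T_xY$ is a metric product, every such $c_t$ has the product form ``(geodesic in $\Omega$ from $\omega_z$ to $\omega_0$) $\times$ (affine in $\mathbb{R}$ from $s_z$ to $t$)'', so along any sequence $t_j\to+\infty$ (resp. $t_j\to-\infty$) the curves $c_{t_j}$ converge locally uniformly (using properness of $T_xY$) to the ray $v\mapsto(\omega_z,s_z+v)$ (resp. $v\mapsto(\omega_z,s_z-v)$): the $\mathbb{R}$-slope tends to $\pm 1$ while the $\Omega$-component traverses a length tending to $0$. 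As $T_xX$ is closed, the whole line $\{(\omega_z,t):t\in\mathbb{R}\}$ lies in $T_xX$. Hence $T_xX=\Omega_X\times\mathbb{R}$ with $\Omega_X:=\{\omega\in\Omega:(\omega,0)\in T_xX\}$, and since $T_xX$ is convex in $T_xY$ the set $\Omega_X$ is convex in $\Omega$, so the metric inherited on $\Omega_X\times\mathbb{R}$ is precisely the product metric. As $\pi(\gamma)=\{\omega_0\}$, this is the required splitting, and since the same $\{s_i\}$ handled an arbitrary line $\gamma$, $(X,\mathsf{d}_X)$ has the weak infinitesimal splitting property.

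I expect the only delicate points to be, first, confirming that the two rescaled metrics $s_i^{-1}\mathsf{d}_X$ and $s_i^{-1}\mathsf{d}_Y$ really generate the same tangent object (handled by the quadratic comparison of Proposition \ref{prop3.5}$(3)$, which must be applied uniformly on balls of fixed rescaled radius), and second, the limiting step identifying $T_xX$ as a union of $\mathbb{R}$-fibers, where one has to note that the minimal geodesics $c_t$ --- though possibly non-unique in $T_xY$ --- all share the same product shape and therefore the same limiting ray. Neither is a genuine obstacle; the substantive inputs are the convexity of $T_xX$ in $T_xY$ (Lemma \ref{convex}) and the fact that the ambient tangent $T_xY$ is an $\mathrm{RCD}(0,n)$ space to which the splitting theorem applies.
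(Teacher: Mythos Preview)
Your proof is correct and follows essentially the same route as the paper: embed $T_xX$ convexly in $T_xY$ via Lemma \ref{convex}, split $T_xY$ along the line $\gamma$ using Theorem \ref{CCsplit}, then show $T_xX$ is a union of $\mathbb{R}$-fibers by taking geodesics from an arbitrary point of $T_xX$ to $\gamma(t)$ and letting $t\to\pm\infty$. The only cosmetic differences are that the paper first disposes of the case $x\in X_2$ via Proposition \ref{lem4.4} before running the convexity argument, and that your phrase ``the $\Omega$-component traverses a length tending to $0$'' should more precisely read ``the $\Omega$-displacement on any bounded parameter interval tends to $0$'' (the total $\Omega$-length is fixed at $\mathsf{d}_\Omega(\omega_z,\omega_0)$); neither affects the argument.
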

\begin{proof}
By Proposition \ref{lem4.4}, to show the weakly infinitesimal splitting property, it suffices to consider the case where $x\in X_1$ and $T_x X$ contains a line $\gamma$. 

Theorem \ref{CCsplit} ensures the existence of a metric space $(\Omega,\mathsf{d}_\Omega)$ such that $(T_x Y,\mathsf{d}_{T_x Y})$ is isometric to $(\Omega\times \mathbb{R},\sqrt{{\mathsf{d}_\Omega}^2+{\mathsf{d}_\mathbb{R}}^2})$ with $\pi(\gamma)=\omega\in \Omega$. 

Consider $x_\infty=(\omega',t')\in T_x X$ with $\omega'\neq \omega$. For each $t\in \mathbb{R}$, let $\gamma_t$ be a unit speed minimal geodesic from $x_\infty$ to $(\omega,t)$. By Lemma \ref{convex}, each $\gamma_t$ lies entirely in $T_x X$. The product structure of $T_x Y$ implies that as $t\to\infty$, $\gamma_t$ converges to the ray $v_+:[t',\infty)\to X$ defined by $v_+(s)=(\omega',s)$. Similarly, we obtain the ray $v_-:(-\infty,t']\to X$ with $v_-(s)=(\omega',s)$.

Consequently, the line $v:s\mapsto (\omega',s)$ is parallel to $\gamma$ and entirely contained in $T_x X$. Applying Lemma 
\ref{convex} once more, we conclude that $(T_x X,\mathsf{d}_{T_x X})$ is isometric to $(\Gamma\times\mathbb{R},\sqrt{{\mathsf{d}_\Omega}^2+{\mathsf{d}_\mathbb{R}}^2})$ for some subset $\Gamma\subset \Omega$, completing the proof.
\end{proof}

Using \cite[Theorem 1.2]{NPS25}, we immediately get the following proposition.
\begin{prop}\label{prop4.31}
	The metric space $(X,\mathsf{d}_X)$ defined in (\ref{ffaeohof}) is universally infinitesimally Hilbertian. In particular, $(X,\mathsf{d}_X,\mathfrak{m}_X)$ is infinitesimally Hilbertian.
\end{prop}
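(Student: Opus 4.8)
The plan is to deduce the statement directly from \cite[Theorem 1.2]{NPS25}, which asserts that every complete separable metric space enjoying the weak infinitesimal splitting property is universally infinitesimally Hilbertian. First I would record that $(X,\mathsf{d}_X)$ meets the standing hypotheses of that theorem: by (\ref{ffaeohof}) together with Theorem \ref{thm3.7}, $(X,\mathsf{d}_X)$ is the Gromov--Hausdorff limit of the compact manifolds $\{(M_i,\mathsf{d}_{\mathrm{g}_i})\}$, hence is itself a compact---in particular complete and separable---metric space. Next, Theorem \ref{thm4.30} supplies exactly the decisive ingredient, namely that $(X,\mathsf{d}_X)$ has the weak infinitesimal splitting property. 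Invoking \cite[Theorem 1.2]{NPS25} then yields that $(X,\mathsf{d}_X)$ is universally infinitesimally Hilbertian.

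It remains to derive the ``in particular'' clause. By construction in (\ref{ffaeohof}), $\mathfrak{m}_X$ is a probability measure on the compact space $X$, hence a Radon measure, and it has full support $\mathrm{supp}(\mathfrak{m}_X)=X$ (this is part of our standing convention for metric measure spaces, cf. Section \ref{sec2.3}, and is consistent with $\mathfrak{m}_X$ being the measured Gromov--Hausdorff limit of the renormalized volume measures, which have full support). Applying the definition of universal infinitesimal Hilbertianity with the choice $\mu=\mathfrak{m}_X$ and using $\mathrm{supp}(\mathfrak{m}_X)=X$, we conclude that $H^{1,2}(X,\mathsf{d}_X,\mathfrak{m}_X)$ is a Hilbert space, i.e. $(X,\mathsf{d}_X,\mathfrak{m}_X)$ is infinitesimally Hilbertian.

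Since the argument amounts to a direct citation, there is no genuine obstacle; the only point demanding (minor) care is verifying that the hypotheses of \cite[Theorem 1.2]{NPS25} are all in force---completeness and separability of $(X,\mathsf{d}_X)$, which follow from its description as a Gromov--Hausdorff limit of compact manifolds, and the weak infinitesimal splitting property, which is precisely the content of Theorem \ref{thm4.30}. (Should the cited theorem additionally require a non-branching hypothesis, this too is available from Theorem \ref{thm4.10}.)
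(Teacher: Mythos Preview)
Your proof is correct and follows exactly the paper's approach: the paper simply states that Proposition~\ref{prop4.31} follows immediately from \cite[Theorem 1.2]{NPS25} once Theorem~\ref{thm4.30} is in hand, which is precisely what you do.
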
}
{When $\lambda=0$, the following result holds. As demonstrated in {Example \ref{exmp5.2}}, the reference measure $\mathfrak{m}_X$ may not coincide with the $(n-1)$-dimensional Hausdorff measure of $(X,\mathsf{d}_X)$. {Moreover, from Example \ref{eexmp5.4} we know if $\lambda>0$, then in general $(X,\mathsf{d}_X,\mathfrak{m}_X)$ may not satisfy some RCD condition.}
\begin{thm}
Assume $\lambda=0$. {Then $(X,\mathsf{d}_X,\mathfrak{m}_X)$ is an $\mathrm{RCD}(K,n)$ space. If moreover}  $(C_0,\mathsf{d})$ is a non-collapsed Ricci limit space of $(\partial M_i,\mathrm{g}_{\partial M_i})$ $($i.e. (\ref{1.2}) holds$)$, {then we have} $\mathfrak{m}_X\ll \mathscr{H}^{n-1}$. 
\end{thm}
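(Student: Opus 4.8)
The plan is to treat the two assertions in turn. For the first one, when $\lambda=0$ the assumption $\mathrm{II}_{\partial M_i}\geqslant-\lambda$ reads $\mathrm{II}_{\partial M_i}\geqslant 0$; since everything is argued component-wise, each component of $M_i$ is path-connected, so with $\mathrm{Ric}_{\mathrm{g}_i}\geqslant H$ Theorem~\ref{thm2.15} gives that $(M_i,\mathsf{d}_{\mathrm{g}_i},\mathrm{vol}_{\mathrm{g}_i})$, hence its renormalization, is an $\mathrm{RCD}(H,n)$ space. These spaces have unit total mass and diameter at most $D$, so by Bishop--Gromov the mass of every unit ball is bounded below by a constant depending only on $n,H,D$; Theorem~\ref{thm2.16} and the stability of the $\mathrm{RCD}$ condition under measured Gromov--Hausdorff convergence then apply, and in view of (\ref{ffaeohof}) one concludes that $(X,\mathsf{d}_X,\mathfrak{m}_X)$ is an $\mathrm{RCD}(H,n)$ space.

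For the second assertion I would use the extra hypothesis (\ref{1.2}). By Theorem~\ref{ncRiccilimit} the limit measure $\mathfrak{m}$ of the renormalized boundary volumes equals a constant multiple of $\mathscr{H}^{n-1}_{\mathsf{d}}$, so $(C_0,\mathsf{d})$ has Hausdorff dimension $n-1$; moreover, as in the proof of Theorem~\ref{thm1.4}, $f$ preserves $\mathfrak{m}$ and $(X,\mathsf{d}_X,\mathscr{H}^{n-1}_{\mathsf{d}_X})$ is a non-collapsed $\mathrm{RCD}(K,n-1)$ space, so in particular $\dim_{\mathrm H}(X,\mathsf{d}_X)=n-1$ and $X\setminus\mathcal{R}_{n-1}(X)$ is $\mathscr{H}^{n-1}_{\mathsf{d}_X}$-null. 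Combining this with the $\mathrm{RCD}(H,n)$ structure from the first part and invoking the structure theory of $\mathrm{RCD}$ spaces, $\mathfrak{m}_X$ is concentrated on the regular set $\mathcal{R}_k(X)$ of its essential dimension $k$ and is absolutely continuous with respect to $\mathscr{H}^k$ there; since the essential dimension of an $\mathrm{RCD}$ space coincides with the Hausdorff dimension of the underlying metric space, $k=n-1$, whence $\mathfrak{m}_X(X\setminus\mathcal{R}_{n-1}(X))=0$ and $\mathfrak{m}_X\ll\mathscr{H}^{n-1}_{\mathsf{d}_X}$. A more self-contained alternative, in the spirit of Section~\ref{sec4}, is to use a tube estimate for the inward normal exponential map of $\partial M_i$: because $\mathrm{II}_{\partial M_i}\geqslant 0$ and $\mathrm{Ric}_{\mathrm{g}_i}\geqslant H$, its Jacobian $J_i$ satisfies $\partial_t\log J_i\leqslant C(n,H)$ on a fixed scale, while every point of $M_i$ lies within $\mathrm{inrad}(M_i)\to0$ of $\partial M_i$; integrating gives, for Borel $E\subseteq\partial M_i$,
\[
\mathrm{vol}_{\mathrm{g}_i}\big(\{\,x\in M_i:\ \pi_i(x)\in E\,\}\big)\ \leqslant\ C(n,H)\,\mathrm{inrad}(M_i)\,\mathrm{vol}_{\mathrm{g}_{\partial M_i}}(E),
\]
$\pi_i$ being the nearest-point projection. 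Passing this to the limit along (\ref{3.1})--(\ref{3.3}) and (\ref{ffaeohof}), using Proposition~\ref{prop3.5} and that $\eta_0$ is the limit of the identity maps, would give $\mathfrak{m}_X\ll(\eta_0)_\sharp\mathfrak{m}$; since $\eta_0|_{C_0^2}$ is a locally isometric double covering onto $X_2$ (Corollary~\ref{cor4.19}) and $\mathfrak{m}(C_0^1)=\mathscr{H}^{n-1}_{\mathsf{d}}(C_0^1)=0$ (Lemma~\ref{lem4.24}), $(\eta_0)_\sharp\mathfrak{m}$ is a constant multiple of $\mathscr{H}^{n-1}_{\mathsf{d}_X}$ on $X_2$ and charges no $\mathscr{H}^{n-1}_{\mathsf{d}_X}$-null subset of $X_1$, so again $\mathfrak{m}_X\ll\mathscr{H}^{n-1}_{\mathsf{d}_X}$.

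The genuine difficulty is confined to the second assertion: one must exclude that $\mathfrak{m}_X$ concentrates on an $\mathscr{H}^{n-1}_{\mathsf{d}_X}$-null, effectively lower-dimensional, subset of $X$ (only absolute continuity, not equality, can hold, as Example~\ref{exmp5.2} shows). In the first route this is exactly the assertion that the essential dimension of $(X,\mathsf{d}_X,\mathfrak{m}_X)$ is the full $n-1$, which must be harvested either from the non-elementary coincidence of essential and Hausdorff dimension for $\mathrm{RCD}$ spaces or from the coexisting $\mathrm{RCD}(K,n-1)$ structure of Theorem~\ref{thm1.4}. In the tube-estimate route the obstacle is hidden in the limit: one needs a matching reverse inequality $\mathrm{vol}_{\mathrm{g}_i}(M_i)\geqslant c(n,H,D)\,\mathrm{inrad}(M_i)\,\mathrm{vol}_{\mathrm{g}_{\partial M_i}}(\partial M_i)$ — equivalently, a bound on the contribution of the cut locus of $\partial M_i$ to $\mathrm{vol}_{\mathrm{g}_i}(M_i)$ — so that the Radon--Nikodym density of $\mathfrak{m}_X$ with respect to $(\eta_0)_\sharp\mathfrak{m}$ stays finite after renormalization. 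Securing one of these two inputs is the crux.
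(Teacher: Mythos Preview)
Your first route is essentially the paper's proof. For the first assertion the paper also just invokes Theorems~\ref{thm2.15} and~\ref{thm2.16}. For the second, the paper uses exactly the scheme you outline: from Theorem~\ref{thm1.4} it knows $(X,\mathsf{d}_X,\mathscr{H}^{n-1})$ is a non-collapsed $\mathrm{RCD}(K,n-1)$ space, then pins down the essential dimension of $(X,\mathsf{d}_X,\mathfrak{m}_X)$ as $n-1$ via the metric invariance of the essential dimension (the paper cites \cite[Remark~4.3]{K19} for this, i.e.\ your second option ``coexisting $\mathrm{RCD}(K,n-1)$ structure'' rather than the identity with Hausdorff dimension), and then concludes $\mathfrak{m}_X(X\setminus\mathcal{R}_{n-1}(X))=0$ via \cite[Theorem~3.8]{BS20}. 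The only cosmetic difference is the last step: where you invoke the black-box $\mathfrak{m}\ll\mathscr{H}^k$ on $\mathcal{R}_k$, the paper does it by hand, using \cite[Theorem~4.1]{AHT18} to get a.e.\ finite positive $(n-1)$-density for $\mathfrak{m}_X$ and then a standard covering lemma \cite[Theorem~2.4.3]{AT04}. Your tube-estimate alternative is not pursued in the paper.
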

\begin{proof}
The first statement follows directly from Theorems \ref{thm2.15} and \ref{thm2.16}. {If (\ref{1.2}) holds,} Theorem \ref{thm1.4} and Definition \ref{defn2.9} yield that $(X,\mathsf{d}_X,\mathscr{H}^{n-1})$ is an RCD$(H,n)$ space. \cite[Remark 4.3]{K19} shows the dimension of $(X,\mathsf{d}_X,\mathfrak{m}_X)$ is also $n-1$, which combined with \cite[Theorem 3.8]{BS20} gives 
\[
\mathfrak{m}_X(X\setminus \mathcal{R}_{n-1}(X))=0.
\]

\cite[Theorem 4.1]{AHT18} guarantees the existence of a subset $\mathcal{R}_{n-1}^\ast(X)\subset \mathcal{R}_{n-1}{(X)}$ such that 
\[\mathfrak{m}_X(\mathcal{R}_{n-1}(X)\setminus\mathcal{R}_{n-1}^\ast(X))=0,
\]
and that for $\mathfrak{m}$-a.e. $x\in \mathcal{R}_{n-1}^\ast(X)$, the density limit $\lim_{r\to 0}r^{-(n-1)}\mathfrak{m}_X(B_r(x))$ exists and is positive and finite.

For any Borel set $A\subset X$ with $\mathscr{H}^{n-1}(A)=0$, define
\[
A_i:=\left\{y\in A\cap \mathcal{R}_{n-1}^\ast(X)\Big{|}\lim_{r\to 0}\frac{\mathfrak{m}_X(B_r(x))}{r^{n-1}}\leqslant i\right\}, i\in\mathbb{N}.
\]
Applying \cite[Theorem 2.4.3]{AT04} shows $\mathfrak{m}_X(A_i)=0$ for each $i$. The conclusion follows since
\[
\mathfrak{m}_X(A)=\mathfrak{m}_X\left((A\setminus\mathcal{R}_{n-1}^\ast(X))\cup_{i=1}^\infty A_i\right)=0.
\]  
\end{proof}}

{\begin{remark}\label{rmk4.33}
		In summary, there exists a Randon measure $\mu$ on $X$ with $\mathrm{supp}(\mu)=X$ such that $(X,\mathsf{d}_X,\mu)$ is an RCD$(K,n)$ space, except in the following cases:
{ \begin{itemize}
	\item[$(1)$] 	$\lambda>0$, $\lim_{i\to\infty}\mathrm{vol}_{\mathrm{g}_{\partial M_i}}({\partial M_i})=0$ and $C_0=C_0^2$  is connected.
	\item[$(2)$]   $\lambda>0$, $\lim_{i\to\infty}\mathrm{vol}_{\mathrm{g}_{\partial M_i}}({\partial M_i})=0$, both $C_0^1$ and $C_0^2$ are non-empty and $C_0^2$ is connected.
\end{itemize}
In either of the above cases,} whether such a Radon measure exists remains unknown.
\end{remark}}

{Let us also consider the special case where the limit space of inradius collapsed manifolds in $\mathcal{M}(n,H,K,\lambda,D)$ is a smooth Riemannian manifold. Compare \cite[Theorem 5.4]{YZ19}.}  
{
\begin{thm}\label{prop4.33}
	Assume $(X,\mathsf{d}_X)$ is an $(n-1)$-dimensional smooth closed Riemannian manifold. Then for any sufficiently large $i$, $M_i$ is diffeomorphic to {$X\times I$} or a twisted product $X\tilde{\times}\, I $, where $I=[-1,1]$. 
\end{thm}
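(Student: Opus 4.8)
The plan is to run the argument of \cite[Theorem 5.4]{YZ19} in the present $\mathrm{RCD}$ setting, using the structural results of Sections \ref{sec3} and \ref{sec4} in place of the Alexandrov-geometry tools available in \cite{YZ19}. First I would distinguish two cases according to Proposition \ref{prop3.15}: either $C_0$ is connected, or $C_0$ has exactly two components. The key observation is that, since $(X,\mathsf{d}_X)$ is a smooth closed $(n-1)$-manifold, it is in particular an $(n-1)$-dimensional Ricci limit space that is smooth, so by Corollary \ref{lem4.1} the set $X_1\subset Y\setminus\mathcal R_{k+1}(Y)$ must have sufficiently high codimension; combined with the local structure of $\eta_0$ from Corollary \ref{cor4.19} and Lemma \ref{lem4.18}, I expect to conclude that either $C_0^1=\emptyset$ (so $\eta_0\colon C_0^2\to X_2=X$ is a locally isometric double covering by Corollary \ref{cor4.19}, and $C_0$ is either $X\sqcup X$ or a connected double cover of $X$), or $C_0=C_0^1$ (so $\eta_0\colon C_0\to X$ is an isometry and $f=\mathrm{id}$). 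The case with both $C_0^1$ and $C_0^2$ nonempty should be excluded: the interior of $X_1$ is empty by Lemma \ref{lem4.18}, but a nonempty closed $X_1$ in a smooth closed manifold $X$ with $\dim X_1\le n-2$ would make $\eta_0$ a branched covering with branch locus of codimension $\ge 1$, and one argues via the tangent-cone analysis (Proposition \ref{lem4.4}, Corollary \ref{cor4.11}) that this is incompatible with $X$ being smooth unless $X_1=\emptyset$.

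Once the covering picture is established, the diffeomorphism statement follows from the geometric convergence. For large $i$, the manifold $\widetilde M_i=M_i\cup_{\mathrm{id}}C_{M_i}$ is Gromov–Hausdorff close to $Y$, which fibers over $X$ with fibers the perpendicular segments (Remark \ref{rmk3.3}, Proposition \ref{prop3.3}). Using the smoothness of $X$ and an Anderson–Cheeger–Fukaya–Yamaguchi-type fibration theorem for collapsing with lower Ricci bounds to a smooth limit — or, since here the fiber is $1$-dimensional, a more elementary normal-exponential-map argument from $\partial M_i$ — I would produce for large $i$ a smooth fiber bundle $M_i\to X$ with fiber a compact $1$-manifold. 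A compact connected $1$-manifold with boundary is $I=[-1,1]$, so $M_i\to X$ is an $I$-bundle; such bundles over a manifold $X$ are classified by $H^1(X;\mathbb Z/2)$ and are precisely the trivial product $X\times I$ and twisted products $X\,\tilde\times\, I$. The correspondence with the two cases above is exact: $C_0^2$ connected (a connected double cover) matches the twisted bundle, whose boundary is connected, and $C_0$ disconnected or $C_0=C_0^1$ matches the product $X\times I$ (with $\partial M_i$ having two components, or one component glued to itself in the limit, respectively).

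The steps in order: (1) reduce to the covering dichotomy via Section \ref{sec4} (Corollary \ref{cor4.19}, Lemma \ref{lem4.18}, Corollary \ref{lem4.1}, Proposition \ref{lem4.4}); (2) rule out the mixed case $C_0^1,C_0^2\neq\emptyset$ using emptiness of the interior of $X_1$ and smoothness of $X$; (3) build, for large $i$, a smooth $1$-dimensional fiber bundle $M_i\to X$ from the GH-convergence and the structure of perpendiculars, with uniform control near $\partial M_i$ from the warped-cylinder extension; (4) classify $I$-bundles over $X$ and match the two topological types to the two cases. The main obstacle is step (3): one must upgrade Gromov–Hausdorff closeness of $M_i$ to the singular-fibered space $Y\to X$ into a genuine smooth fibration $M_i\to X$ with interval fibers. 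The cleanest route is to exploit the boundary: near $\partial M_i$ the normal exponential map gives a collar, and the inradius bound forces $M_i$ to be (diffeomorphic to) a union of such collars; the lower Ricci and second-fundamental-form bounds, together with $\mathrm{inrad}(M_i)\to 0$ and convergence to the smooth $X$, should yield that the distance-to-boundary function has no interior critical points for large $i$, so that $M_i$ deformation retracts onto its "cut locus", which is a section or double-section over $X$. Making this last point rigorous — essentially a quantitative Morse-theoretic argument for $\mathsf d_{\mathrm g_i}(\cdot,\partial M_i)$ under a lower Ricci bound — is where the real work lies, and I would either invoke an existing collapsing-fibration theorem in the Ricci setting applicable because the limit is a smooth manifold, or carry out the argument directly using the almost-splitting and the smooth structure on $X$.
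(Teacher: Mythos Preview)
Your overall strategy is sound, but there are two concrete issues and one major simplification you are missing.

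\textbf{The case $C_0=C_0^1$ is not a valid case.} You treat $C_0=C_0^1$ as one of the possibilities, claiming it ``matches the product $X\times I$''. This is inconsistent: if $C_0=C_0^1$ then $\eta_0\colon C_0\to X$ is an isometry, so $\partial M_i$ is diffeomorphic to $X$ (one component) for large $i$ by Cheeger--Colding, while $\partial(X\times I)=X\sqcup X$ has two. The paper rules this case out by a boundary-component count applied to the \emph{extended} space: if $C_0=C_0^1$ then $(Y,\mathsf d_Y)$ is isometric to $(C,\mathsf d_C)=C_0\times_\phi[0,t_0]$, whose topological boundary has two components, whereas $\widetilde M_i$ has only one (namely $\partial M_i\times\{t_0\}$); since $\widetilde M_i$ is diffeomorphic to $Y$ for large $i$ by \cite[Theorem A.1.12]{ChCo1}, this is a contradiction. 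So in fact $X=X_2$ always.

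\textbf{Excluding the mixed case.} Your plan here (``tangent-cone analysis incompatible with smoothness'') is vague. The paper's argument is cleaner and purely metric: pick $x\in X_1$, take $y\in X_2$ in a geodesically convex neighborhood, and \emph{extend} the minimal geodesic $\gamma_{y,x}$ past $x$ to some $z$ --- possible exactly because $X$ is a smooth manifold. Then the lifting argument from the proof of Lemma \ref{lem4.18} produces two distinct minimal geodesics in $C_0$ (from the two preimages of $y$) that merge at ${\eta_0}^{-1}(x)$ and share the continuation to a preimage of $z$, contradicting non-branching of $C_0$ (Theorem \ref{thm2.17}).

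\textbf{The main simplification you are missing is to work with $\widetilde M_i$ and $Y$, not $M_i$ and $X$.} You correctly flag step (3) as the obstacle and propose a Morse-theoretic or fibration-theorem argument to get $M_i\to X$. The paper bypasses this entirely. Once $X=X_2$, the limit $Y$ is explicitly a smooth manifold with boundary: it is diffeomorphic to $X\times I$ if $C_0$ has two components, and to $C_0\times I/\!\!\sim$ (the twisted $I$-bundle over $X$ via $f$) if $C_0$ is connected. Then \cite[Theorem A.1.12]{ChCo1} gives a \emph{diffeomorphism} $\widetilde M_i\cong Y$ for large $i$, and the structure of $M_i$ is read off from the explicit decomposition of $Y$. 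No fibration theorem or distance-to-boundary Morse theory is needed.
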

\begin{proof}
We first show $X_2\neq \emptyset$.

Assume $X=X_1$. Then by Theorem \ref{thm4.10}, we have $C_0=C_0^1$ and $(C_0,\mathsf{d})$ is isometric to $(X,\mathsf{d}_X)$. According to \cite[Theorem A.1.12]{ChCo1}, for all sufficiently large $i$, $\partial M_i$ is diffeomorphic to $C_0$,  and the warped cylinder $C_{M_i}$ is diffeomorphic to $C=C_0\times_\phi[0,t_0]$. It follows that the boundary of $C_{M_i}$ has exactly two components.

On the other hand, observe that the glued space $\widetilde{M}_i$ has only one boundary component for all sufficiently large $i$. Since $(Y,\mathsf{d}_Y)$ is now isometric to $(C,\mathsf{d}_C)$, applying \cite[Theorem A.1.12]{ChCo1} again yields {that $\widetilde{M}_i$ is diffeomorphic to $Y$ for sufficiently large $i$, implying} the topological of $Y=C$ must also have one component. A contradiction. 

Suppose there exists a point $x\in X_1$. We take a geodesic convex neighborhood $U$ near $x$ whose diameter is less than the injectivity radius of $X$. By Lemma \ref{lem4.18}, one can first take $y\in U\cap X_2$ sufficiently close to {$x$} and extend the geodesic $\gamma_{y,x}$ to geodesic $\gamma_{y,z}$ for some $z\in U$. This is impossible due to the proof of Lemma \ref{lem4.18}. Hence $X=X_2$.

Using \cite[Theorem A.1.12]{ChCo1} again, we see for sufficiently large $i$, $\widetilde{M}_i$ is diffeomorphic to $Y$. Since $Y$ is diffeomorphic to either $X\times I$ or $C_0\times I/ (x,t) \sim (f(x),-t)$, which is a twisted $I$-{product} over $X$, the conclusion follows.

\end{proof}}
\subsection{The unbounded diameter case}

{
In this subsection, we introduce the unbounded diameter case of inradius collapsed manifolds in connection with \cite[Section 6]{YZ19}. Since the proof of the results are almost the same, we omit some details.

		Let $\{(M_i,\mathrm{g}_i,p_i)\}$ be a sequence of manifolds with boundary in $\mathcal{M}(n,H,K,\lambda)$ such that $p_i\in\partial M_i$ for all $i\in \mathbb{N}$ and that $\mathrm{inrad}(M_i)\to 0$ as $i\to\infty$.

As in the beginning of Section \ref{sec3}, we still attach to each $M_i$ a warped cylinder $C_{M_i}:=M_i\times_\phi [0,t_0]$, obtaining a pointed metric space $(\widetilde{M}_i,\mathsf{d}_{\widetilde{M}_i},p_i)$. Because each $(\widetilde{M}_i,\mathsf{d}_{\widetilde{M}_i},(\mathscr{H}^n(B_1(p_i)))^{-1}\mathscr{H}^n,p_i)$ is an RCD$(-J(n,H,K,\lambda,\varepsilon_0,t_0),n)$ space, after passing to a subsequence, we may assume $(\widetilde{M}_i,\mathsf{d}_{\widetilde{M}_i},p_i)$ converge to $(Y,\mathsf{d}_Y,{x})$ in the {pointed} Gromov-Hausdorff sense. Viewing the convergence of subsets, we also assume $\{(M_i,\mathsf{d}_{\widetilde{M}_i},{p_i})\}$ pGH converges to $(X,\mathsf{d}_Y,x)$ as $i\to \infty$. 

Let
\[
(C_{t_0}^Y,\tilde{\mathsf{d}}_{t_0}):=(\{y\in Y|\mathsf{d}_Y(y,X)=t_0\},\mathsf{d}_Y),
\]
which is considered component-wisely. We introduce the metrics $\mathsf{d}_X$ on $X$ and $\mathsf{d}_{t_0}$ on $C_{t_0}^Y$ as the intrinsic metrics induced by $\mathsf{d}_Y$ and $\tilde{\mathsf{d}}_{t_0}$ respectively. We set 
\[
\ (C_0,\mathsf{d}):=(C_{t_0}^Y\times\{0\},(\phi(t_0))^{-1}{ \mathsf{d}_{t_0}}),\ C:=C_0\times_\phi[0,t_0].
\]

Notice that for each $p\in C_0$, one can identify an element $y\in C_{t_0}^Y$, denoted by $\eta(p,t_0)$. Moreover, there exists a unique perpendicular $\gamma_{\eta(p,t_0)}:[0,t_0]\to Y$ to $X$ such that ${\gamma_{\eta(p,t_0)}(t_0)=\eta(p,t_0)}$. Then we define the surjective 1-Lipschitz maps $\eta:C\to Y$ and $\eta_0:C_0\to X$, respectively, by 
\[
\eta(p,t):={\gamma_{\eta(p,t_0)}(t)},\ \eta_0(p)=\eta(p,0).
\] 
It is clear that Proposition \ref{prop3.3}, Proposition \ref{prop3.5}, Theorem \ref{thm3.7} and Lemma \ref{lem3.8} continue to hold in this setting. Therefore, we may define the subsets as in Definition \ref{11111defn3.9} and the involution $f:C_0\to C_0$ as in (\ref{feaohfoiaehfiohae}).

Since the local isometry property of $\eta:Y\setminus X\to C\setminus C_0$, together with Theorem \ref{CCsplit}, guarantees the existence of the tangent space $T_p C_0$ for every $p\in C_0$, the local arguments in {Subsections \ref{sec4.1}-\ref{sec5}} remain valid, and all the results continue to hold for the present noncompact $C_0$. In particular, we know the metric space $(X,\mathsf{d}_X)$ is isometric to $(C_0,\mathsf{d})/f$ and the number of components of $C_0$ is at most two. {This proves Theorem \ref{thm1.10}(1). Arguing similarly as in Theorem \ref{thm4.10} and Proposition \ref{prop4.31}, we obtain Theorem \ref{thm1.10}(2). The proof of Theorem \ref{thm1.10}(3) follows from the same reasoning as that of Theorem \ref{thm1.4}.} 

{Next we prove Theorem \ref{thm1.11}, which follows from the same monodromy argument as \cite{YZ19}. We briefly outline the ideas for reader's convenience.}

Define $\widetilde{\mathcal{M}}\mathcal{M}(n,H,K,\lambda)$ as the set of all triples $(\widetilde{M},M,p)$, where $(M,\mathrm{g})\in \mathcal{M}(n,H,K,\lambda)$, $p\in \partial M$, and $\widetilde{M}$ is obtained via the extension procedure. Let $\partial_0\widetilde{\mathcal{M}}\mathcal{M}(n,H,K,\lambda)$ be the set of all pointed Gromov-Hausdorff limit spaces $(Y,X,x)$ of sequences $\{(\widetilde{M}_i,M_i,p_i)\}\subset {\widetilde{\mathcal{M}}}\mathcal{M}(n,H,K,\lambda)$ satisfying $\lim_{i\to \infty}\mathrm{inrad}(M_i)=0$.

Now for every $\delta>0$, there exists $\epsilon=\epsilon(\delta)>0$ such that for every $(M,\mathrm{g},p)\in \mathcal{M}(n,H,K,\lambda)$ with $\mathrm{inrad}(M_i)<\epsilon$, one can find $(Y,X,x)\in\partial_0\widetilde{\mathcal{M}}\mathcal{M}(n,H,K,\lambda)$ satisfying the following properties.
\begin{itemize}
	\item $\mathsf{d}_{\mathrm{pGH}}((\widetilde{M},\mathsf{d}_{\widetilde{M}},p),(Y,\mathsf{d}_Y,x))<\delta$.
	\item $\mathsf{d}_{\mathrm{pGH}}((B_{1/\delta}^{(M,\mathsf{d}_{\widetilde{M}})}{(p)},\mathsf{d}_{\widetilde{M}},p),(B^{(X,\mathsf{d}_Y)}_{1/\delta}(x),\mathsf{d}_Y,x))<\delta$.
	\item $\mathsf{d}_{\mathrm{pGH}}((\partial M\cap B^{\widetilde{M}}_{1/\delta}{(p)},\mathsf{d}_{\mathrm{g}_{\partial M}},p),(C_0\cap B^{C}_{1/\delta}({\tilde{x}}),\mathsf{d},{\tilde{x}}))<\delta$. Here { $\tilde{x}\in {\eta_0}^{-1}(x)$} and the pGH distance can be measured component-wisely.
\end{itemize}
Such space is called a {\textit{$\delta$-limit space}} of $(\widetilde{M},M,p)$.

Fix a sufficiently large $R\gg t_0$. First, choose a sufficiently small $\delta=\delta(n,H,K,\lambda,R)>0$ and take $\epsilon=\epsilon(\delta)$ such that for any $(M,\mathrm{g})\in \mathcal{M}(n,H,K,\lambda)$ with $\mathrm{inrad}(M)<\epsilon$, the following holds: if no $\delta$-limit space $(Y,X,x)$ of $(\widetilde{M},M,p)$ has the property that every point $z\in X\cap B^Y_{16R}(x)$ satisfies $\#{(\eta_0)}^{-1}(z)=2$ (a property we will henceforth call { \textit{double in scale $16R$}} for convenience), then any $q\in \partial M\cap B_{8R}^{\widetilde{M}}(p)$ can be connected to $p$ by a continuous curve lying in $\partial M$. Otherwise, $\partial M\cap B_{8R}^{\widetilde{M}}(p)$ has two components, and there exists a point $p'\in \partial M\cap B_{8R}^{\widetilde{M}}(p)$ such that any point $q\in \partial M\cap B_{8R}^{\widetilde{M}}(p)$ can be connected by a continuous curve in $\partial M$ either to $p$ or $p'$. 

Therefore, if $\partial M$ is disconnected, we claim that for every point $p\in \partial M$, any $\delta$-limit space $(Y,X,x)$ of $(\widetilde{M},M,p)$ is double in scale $16R$. 

Assume there exists $p\in \partial M$ such that some $\delta$-limit space is not double. Take a point $q$ in a different component of $\partial M$ {with $\mathsf{d}_\mathrm{g}(p,q)=l>100R$} and let $\gamma:[0,l]\to M$ be a unit speed minimal geodesic from $p$ to $q$ in $M$. For $\gamma(R)$, one can choose a nearest point $p_1\in \partial M$ with $\mathsf{d}_{\widetilde{M}}(p,p_1)\leqslant\mathsf{d}_\mathrm{g}(p,p_1)\leqslant 2R$. As a result, any $\delta$-limit space of $(\widetilde{M},M,p_1)$ in scale $16R$ is not double, implying that $\partial M\cap B_{8R}^{\widetilde{M}}(p_1)$ is connected. Applying a monodromy argument, this property also holds for $\gamma(kR)$ with $k\leqslant [l/R]$, ultimately leading to a contradiction.

We remark that, via a similar monodromy argument, one can show that for $\partial M$ for $(M,\mathrm{g})\in \mathcal{M}(n,H,K,\lambda)$ with $\mathrm{inrad}(M)<\epsilon$, the number of components of $\partial M$ is at most two. {This completes the proof of Theorem \ref{thm1.11}.}
%
%
%
%
%

}

\section{Examples}\label{section5}

We present several {typical examples of inradius collpased manifolds} in this section. {The first example illustrates why the additional assumption of a lower Ricci curvature bound on the boundaries is necessary.}

\begin{exmp}\label{ex5.6}
	
	{{Let $n\geqslant 3$. For each $i\in \mathbb{N}$ we take $B_i$ as the unit ball in the space form of constant curvature $-i/(n-2)$ and glue $B_i$ with itself along $\partial B_i$. After smoothing out near $\partial B_i$ (Theorem \ref{thm2.1}), we obtain a sequence of $(n-1)$-dimensional compact Riemannian manifolds without boundary $\{(N_i,\mathrm{h}_i)\}$ such that $\mathrm{diam}(N_i)\in (1,3)$ and that $-(i+1)\leqslant \mathrm{Ric}_{\mathrm{h}_i}\leqslant -(i-1)$. }
		
		Let us consider the warped products $(M_i,\mathrm{g}_i)=\left(N_i\times_{f_i} [0,1/i],\mathrm{d}t^2+{f_i}^2 \mathrm{h}_i\right)$, where $f_i(t)=-2it^2+1$ ($i\geqslant 2$). Then $\{(M_i,\mathrm{g}_i)\}$ has a uniform lower Ricci curvature bound and a uniform two sided second fundamental form bound. Moreover, we have $\mathrm{inrad}(M_i)\rightarrow 0$ as $i\rightarrow { \infty}$. However, there does not exists a real number $K\in \mathbb{R}$ such that $\mathrm{Ric}_{\mathrm{h}_i}\geqslant K$ for all $i$.}
\end{exmp}

	{ In the remainder of this section, we denote by {$\mathbb{S}^n(r)$ the $n$-dimensional sphere with radius $r$ and by $\mathbb{S}^n=\mathbb{S}^n(1)$ for brevity.}}
{ \begin{exmp}\label{fhaeiofhiaeohfae}
	Let $\varphi:\mathbb{S}^2\to \mathbb{S}^2$ be the involution defined as
	$\varphi(x,y,z)=(-x,-y,z)$. For $i\in \mathbb{N}$, let $(N_i,h_i)$ be the product Riemannian manifold defined by $N_i:=\mathbb{S}^2\times\mathbb{S}^1(i^{-1})\times[-i^{-1},i^{-1}]$. We also define the map $\Phi_i:N_i\to N_i$ as $\Phi((x,y,z),\theta,t)=(\varphi(x,y,z),-\theta,-t)$. 
	
	By taking $(M_i,\mathrm{g}_i)$ as the quotient Riemannian manifold $(N_i,\mathrm{h}_i)/\Phi_i$, we know as $i\to\infty$, $(M_i,\mathsf{d}_{\mathrm{g}_i})$ GH converges to $\mathbb{S}^2/\varphi$ and $(\partial M_i,\mathsf{d}_{\mathrm{g}_{\partial M_i}})$ GH converges to $\mathbb{S}^2$. However, there does not exist an isometric embedding from $\mathbb{S}^2/\varphi$ to $\mathbb{S}^2(r)$ for { any} $r>0$. { Compare Proposition \ref{prop4.24}.}
\end{exmp}}


{We give an example of inradius collapsed manifolds whose boundaries are non-collapsed.  
 \begin{exmp}\label{exmp5.2}
{	For $i\in \mathbb{N}$, let 
	\[
	{ (N_i,\mathrm{h}_i)}=\left(\left\{(x,y,z)\in \mathbb{R}^3|\,x^2+i^2(y^2+z^2)=1,z\leqslant 0\right\},{ \mathrm{h}_i}\right),
	\]
where {$\mathrm{h}_i$} is the natural Riemannian metric induced by $\mathrm{g}_{\mathbb{R}^3}$. {Let $(M_i,\mathrm{g}_i)$ be the product of $N_i$ and $\mathbb{S}^1$, i.e. $M_i=N_i\times\mathbb{S}^1$ and $\mathrm{g}_i=\mathrm{h}_i+\mathrm{g}_{\mathbb{S}^1}$.} 
	It is clear that 
	\[
	\mathrm{Ric}_{\mathrm{{ g}}_i}\geqslant 0, \ 	\mathrm{Ric}_{\mathrm{{ g}}_{\partial { M_i}}}\geqslant 0,\ \text{\Rmnum{2}}_{\partial { M_i}}=0.
	\] }
As $i\to\infty$, we have
	\[
	\left(\partial M_i,\mathrm{g}_{\partial M_i},
	\mathrm{vol}_{\mathrm{g}_{\partial M_i}}\right)
	\xrightarrow{\mathrm{mGH}}(C_0,\mathsf{d},\mathfrak{m}):={\left(\mathbb{S}^1\left(2/\pi\right)\times \mathbb{S}^1,\mathsf{d}_{\mathbb{S}^1(2/\pi)\times\mathbb{S}^1},\mathscr{H}^2\right)},
	\]
	and
	\[
	\left(M_i,\mathsf{d}_{\mathrm{g}_i},\frac{\mathrm{vol}_{\mathrm{g}_i}}{\mathrm{vol}_{\mathrm{g}_i}(M_i)}\right)\xrightarrow{\mathrm{mGH}}(X,\mathsf{d}_X,\mathfrak{m}_X):={\left([-1,1]\times\mathbb{S}^1,\mathsf{d}_{[-1,1]\times \mathbb{S}^1},\frac{3(1-t^2)}{4}\mathscr{H}^2\right)},
	\]
{where $\mathsf{d}_{\mathbb{S}^1(2/\pi)\times\mathbb{S}^1}$ and $\mathsf{d}_{[-1,1]\times \mathbb{S}^1}$ are the distances induced by standard product structures, and $t$ is the coordinate on $[-1,1]$. Moreover, the map $\eta_0:C_0\to X$ is given by
\[
\eta_0\left(\frac{2}{\pi}\exp(\sqrt{-1}\,\alpha),\exp(\sqrt{-1}\,\beta)\right)=\left(-1+\frac{2|\alpha-\pi|}{\pi},\exp(\sqrt{-1}\beta)\right),\ \forall \alpha,\beta\in [0,2\pi).
\]
It is obvious that $X_1=\{\{\pm 1\}\times \mathbb{S}^1\}$ and $C_0^1={\eta_0}^{-1}(X_1)$, and that these sets both have Hausdorff dimension 1.}
\end{exmp}
}
%
%
The following example shows that if we release the assumption (\ref{1.2}), $f$ may not {preserve the measure $\mathfrak{m}$}. 
\begin{exmp}\label{rmk4.2}
{ For any two sufficiently large number $t_1,t_2$,} we will construct a sequence of inradius collapsed manifolds $(M_i,\mathrm{g}_i)$ such that 
\begin{equation}\label{aaaaa5.1}
\left(\partial M_i,\mathsf{d}_{\mathrm{g}_{\partial M_i}},\frac{\mathrm{vol}_{\mathrm{g}_{\partial M_i}}}{\mathrm{vol}_{\mathrm{g}_{\partial M_i}}(\partial M_i)}\right)\xrightarrow{\mathrm{mGH}}  \left({\mathbb{S}^1{ \left(2/\pi\right)},\mathsf{d}_{ \mathrm{g}_{\mathbb{S}^1(2/\pi)}},\mathfrak{m}}\right),
\end{equation}
and that the measure $\mathfrak{m}$ satisfies
{
    \begin{equation}\label{eqnfjlajfleajflea}
\frac{\mathfrak{m}(U)}{\mathfrak{m}(V)} \geqslant\frac{t_1}{4t_2}.
  \end{equation}}{ Here $U,V$ are defined as
\[
U:=\left\{\frac{2}{\pi}\,{ \exp(\sqrt{-1}\, \alpha)}\Big{|}\,\alpha\in \left[\frac{\pi}{2}\cos\left(\frac{\pi}{4} +\epsilon\right),\frac{\pi}{2}\cos\left(\frac{\pi}{4}-\epsilon\right)\right]\right\};
\]
\[
V:=\left\{\frac{2}{\pi}\,{ \exp(-\sqrt{-1}\, \alpha)}\Big{|}\,\alpha\in \left[\frac{\pi}{2}\cos\left(\frac{\pi}{4} +\epsilon\right),\frac{\pi}{2}\cos\left(\frac{\pi}{4}-\epsilon\right)\right]\right\}.
\]}

Let $M_i=\mathbb{S}^1\times N_i$, where 
\[
{ N_i:=\left\{(x,y,z)\in \mathbb{R}^3|\,\mathop{x^2+i^2(y^2+z^2)=1,z\geqslant 0}\right\}},
\]
and $\mathrm{g}_{N_i}$ be the natural Riemannian metric induced by $\mathrm{g}_{\mathbb{R}^3}$.  

For $ \mu \in {\left[0,\frac{\pi}{2}\right]}$, $\theta\in [0,2\pi)$, take the coordinate chart on $N_i$ as 
\[
{ \left\{
  \begin{aligned}
   x= \sin \mu \cos \theta,\ \ \ \ \\
 y=i^{-1}\sin \mu \sin \theta,\\
z=i^{-1} \cos \mu,\ \ \ \ \ \ 
  \end{aligned}
  \right.}
\]
We define the Riemannian metric $\mathrm{g}_i$ on $M_i$ as
  \begin{equation}\label{5555.3}
 \mathrm{g}_i:= { i^{-2}}\varphi^2(\theta)\,{ \mathrm{g}_{\mathbb{S}^1}}+\mathrm{g}_{N_i},
  \end{equation}
where 
\[
\begin{aligned}
  \varphi:[0,2\pi]&\longrightarrow \mathbb{R}\\
              \theta&\longmapsto
  \left\{
  \begin{aligned}
    t_1 \exp\left(\frac{1}{\left[\theta-\left(\frac{\pi}{4}-\epsilon\right)\right]\left[\theta-\left(\frac{\pi}{4}+\epsilon\right)\right]}\right)+1,\ \ &\  \text{if}\ \theta\in \left[\frac{\pi}{4}-\epsilon,\frac{\pi}{4}+\epsilon\right];\\
      t_2 \exp\left(\frac{1}{\left[\theta-\left(\frac{7\pi}{4}-\epsilon\right)\right]\left[\theta-\left(\frac{7\pi}{4}+\epsilon\right)\right]}\right)+1,&\  \text{if}\ \theta\in \left[\frac{7\pi}{4}-\epsilon,\frac{7\pi}{4}+\epsilon\right];\\
      1, \ \ \ \ \ \ \ \ \ \ \ \ \ \ \ \ \ \ \ \ \ \ \ \ \ \ \ \ \ \ \ \ \ \ \ \ \ \ \ \ \ \ \ \ \ \ \ \ \ \ \ \ \   &\  \text{otherwise},
  \end{aligned}
  \right.
\end{aligned}
  \]
with $t_1>t_2>0$ and $\epsilon\in { (}0,\pi /8]$ { to be determined later}.

 Then we have
{\begin{equation}\label{sosorry5.3}
\mathrm{Ric}_{\mathrm{g}_i}\geqslant -J \ \text{and}\ \mathrm{Ric}_{\mathrm{g}_{\partial M_i}}\geqslant -J\ \ \text{for some constant }J=J(\epsilon,t_1,t_2).
\end{equation}
}
{Let
\[ 
U_i:=\left\{(x,y,z)\in N_i| \mathop{\theta\in  \left[\frac{\pi}{4}-\epsilon,\frac{\pi}{4}+\epsilon\right], { \mu=\frac{\pi}{2}}}\right\},
\]
\[
V_i:=\left\{ (x,y,z)\in N_i| \mathop{\theta\in \left[\frac{7\pi}{4}-\epsilon,\frac{7\pi}{4}+\epsilon\right], { \mu=\frac{\pi}{2}}}\right\}.
\] 

Since we can control the value of cosine on $\left[\frac{\pi}{4}-\epsilon,\frac{\pi}{4}+\epsilon\right]$ and $\left[\frac{7\pi}{4}-\epsilon,\frac{7\pi}{4}+\epsilon\right]$, we calculate that for sufficiently large $i$,
{\begin{equation}\label{sorry5.2}
\frac{\mathrm{vol}_{\mathrm{g}_{\partial M_i}}(\mathbb{S}^1\times U_i)}{\mathrm{vol}_{\mathrm{g}_{\partial M_i}}(\mathbb{S}^1\times V_i)}=\frac{\int_{\frac{\pi}{4}-\epsilon}^{\frac{\pi}{4}+\epsilon} \varphi(\theta)\sqrt{\cos^2\theta+{ i^{-2}} \sin^2 \theta}\mathop{\mathrm{d}\theta}}{\int_{\frac{7\pi}{4}-\epsilon}^{\frac{7\pi}{4}+\epsilon} \varphi(\theta)\sqrt{\cos^2\theta+{ i^{-2}} \sin^2 \theta}\mathop{\mathrm{d}\theta}}\geqslant { \Psi}(\epsilon) \frac{\int_{\frac{\pi}{4}-\epsilon}^{\frac{\pi}{4}+\epsilon} \varphi(\theta)\mathop{\mathrm{d}\theta}}{\int_{\frac{7\pi}{4}-\epsilon}^{\frac{7\pi}{4}+\epsilon} \varphi(\theta)\mathop{\mathrm{d}\theta}},
\end{equation}}
{ where $\Psi(\epsilon)$ is a constant depending on $\epsilon$ with $\lim_{\epsilon\to 0}\Psi(\epsilon)=1$.}

If we take $t_1,t_2$ to be sufficiently large, then the last term of (\ref{sorry5.2}) can be sufficiently close to ${\Psi}(\epsilon) t_2/t_1$. Therefore we know for any two sufficiently large $t_1,t_2$ it holds that
\begin{equation}\label{imsorry5.4}
\frac{\mathrm{vol}_{\mathrm{g}_{\partial M_i}}(\mathbb{S}^1\times U_i)}{\mathrm{vol}_{\mathrm{g}_{\partial M_i}}(\mathbb{S}^1\times V_i)}\geqslant \frac{{\Psi}(\epsilon)t_1}{2t_2}.
\end{equation}
Let us just fix a small $\epsilon$ and rewrite (\ref{imsorry5.4}) as
\[
\frac{\mathrm{vol}_{\mathrm{g}_{\partial M_i}}(\mathbb{S}^1\times U_i)}{\mathrm{vol}_{\mathrm{g}_{\partial M_i}}(\mathbb{S}^1\times V_i)}\geqslant \frac{t_1}{4t_2}.
\]

}

According to (\ref{sosorry5.3}), we have mGH convergence (\ref{aaaaa5.1}). Moreover, {because $U_i,V_i$ converge to $U,V$ respectively under (\ref{aaaaa5.1})}, the measure $\mathfrak{m}$ satisfies (\ref{eqnfjlajfleajflea}).

{Let us make a comment that by taking $t_2\gg t_1\gg 1$, the ratio
in (\ref{eqnfjlajfleajflea}) can be sufficiently large, which means that the reflection
$f:C_0\to C_0$ does not preserve the measure $\mathfrak{m}$.}
\end{exmp}

{
\begin{exmp}\label{eexmp5.4}
	
	Let $\{g_i\}$ be a sequence of continuous positive functions on $[-1,1]$ such that
    \[
    g_i(x)=\left\{\begin{aligned}
    	\sqrt{i^{-2}-{(x+1-i^{-1})}^2},\ \ & x\in [-1,-1+i^{-1}];\\
    	i^{-2},\ \ \ \ \ \ \ \ \ \ \ \ \ \ \ \ \ \ \ \ \ \ \ \ \ \ &x\in [1/4,3/4];\\
    		\sqrt{i^{-2}-{(x-1+i^{-1})}^2},\ \ & x\in [1,1-i^{-1}],
    \end{aligned}\right. 
\]
and that $g_i$ is smooth on $(-1,1)$ with $\max_{[-1,1]}g_i\leqslant i^{-1}$, $\max_{[-1+i^{-1},1-i^{-1}]}|g_i'|\leqslant J$ and $\sup_{(-1,1)}|(g_i)''|\leqslant J$ for some constant $J>0$.

	For each $i$, let 
	\[
	(N_i,\mathrm{h}_i):=\{(x,y)\in\mathbb{R}^2|\,|y|\leqslant g_i(x)\},
	\]
	where $\mathrm{h}_i:=\mathrm{d} x^2+\mathrm{d} y^2$. Let $(M_i,\mathrm{g}_i)$ be the standard product Riemannian manifold of $(N_i,\mathrm{h}_i)$ and $(\mathbb{S}^1(i^{-1}),{\mathrm{d}t^2})$. It is clear that 
\[
 	\mathrm{Ric}_{\mathrm{g}_i}= 0,\  \mathrm{Ric}_{\mathrm{g}_{\partial M_i}}=0,
\] 
and
\[
|\text{\Rmnum{2}}_{\partial M_i}|\leqslant \frac{|g_i''|}{\left(1+{(g_i')}^2\right)^{3/2}}\leqslant J.
\]

Under the convergence (\ref{ffaeohof}), we have $(X,\mathsf{d}_X)=([-1,1],\mathrm{d}x,f\mathrm{d}x)$, with 
\[
f=\lim_{i\to\infty}\frac{g_i}{\int_{[-1,1]}g_i(x)\mathrm{d}x}.
\] 

From our construction, we know $g_i$ is not convex on $(-1,1)$,  {indicating that $\text{\Rmnum{2}}_{\partial M_i}\geqslant 0$ does not hold for all $i$. Moreover, we have} $f=0$ on $[1/4,3/4]$, which means and the metric measure space $([-1,1],\mathrm{d}x,f\mathrm{d}x)$ does not satisfy the RCD$(K,N)$ condition for any $K\in\mathbb{R}$ and $N\in [1,\infty)$ {because $\mathrm{supp}(f)\neq [-1,1]$}.
\end{exmp}
}

We also make a comment about the least dimension upper bound through the following example.

\begin{exmp}
{{Under the assumption of Theorem \ref{aaathm1.3}, if $\lim_{i\to \infty}\mathrm{vol}_{\mathrm{g}_{\partial M_i}}(\partial M_i)=0$, i.e. }$(C_0,\mathsf{d},\mathfrak{m})$ is a collapsed Ricci limit space, then it follows from \cite[{Theorem 3.1}]{ChCo1} that the Hausdorff dimension of the $(C_0,\mathsf{d})$ is {at most} $n-2$. We will give an example about collapsing limit space of boundaries of inradius collapsed manifolds. The {synthetic upper} dimension bound of this limit space can not be improved to any {$N\in (1,n-1)$}.}

{ Let ${(M_i,\mathrm{g}_i)}$ be the product Riemannian manifold ${M_i:=N_i\times [0,i^{-1}]}$, where 
\[
{(N_i,\mathrm{h}_i)}:=\left\{(x,y,z{,w})\in {\mathbb{R}^4}|\mathop{ x^2+i^2(y^2+z^2{+w^2})=1}\right\},
\]
with $\mathrm{{ h}}_i$ being the natural Riemannian metric induced by $\mathrm{g}_{{\mathbb{R}^4}}$, and $\mathrm{{g}}_i=\mathrm{{h}}_i+{\mathrm{d}t^2}$.


{As $i\rightarrow \infty$, we see $\left(\partial M_i,\mathsf{d}_{\mathrm{g}_{\partial M_i}},\frac{\mathrm{vol}_{\mathrm{g}_{\partial M_i}}}{\mathrm{vol}_{\mathrm{g}_{\partial M_i}}(\partial M_i)}\right)$ $\mathrm{mGH}$ converges to the RCD$(0,{3})$ space $\left([-1,1],\mathrm{d}t,{\frac{3t-t^3+2}{4}\mathrm{d}t} \right)$.
}

Let us recall the one-dimensional specialization of the Bakry–\'{E}mery
CD(K, N) condition for smooth weighted Riemannian manifolds (see \cite{BE85,B94}). For an open interval $I\subset \mathbb{R}$, a function $h\in C^2_{loc}(I)$ is a CD$(K,N)$ density on $I$ (i.e. $(I,\mathrm{d}t,h\mathrm{d}t)$ is a CD$(K,N)$ metric measure space) if and only if 
\[
(N-1)\frac{(h^{\frac{1}{N-1}})''(x)}{h^{\frac{1}{N-1}}(x)}\leqslant -K,\ \forall x\in I.
\]

{Now for $h=\frac{3t-t^3+2}{4}$ and $N\in (1,3)$, we set $\varphi=h^{1/(N-1)}$. Because $\lim_{t\downarrow -1}\varphi''/\varphi=+\infty$, } the upper dimension bound of the metric measure space $([-1,1],\mathrm{d}t,{\frac{3t-t^3+2}{4}}\mathrm{d}t)$ can not be reduced to any $N\in {(1,3)}$.}


\end{exmp}

Finally, {we give an example based on Menguy’s construction \cite[Theorem 0.1]{M00}. This example illustrates the distinction} between the lower sectional curvature bound case (\ref{jfieajifoa}) and the lower Ricci curvature bound case (\ref{1.1}). {For clarity, we adhere to Menguy’s original notation.}
\begin{exmp}
	{	Let $t_i$ be a monotone increasing sequence such that 
		\[
		\lim_{i\to\infty}(t_{i+1}-t_{i})=\infty,\ \lim_{i\to \infty} \frac{r_i}{i}=\infty,
		\] where $r_i=4t_i/\log(\log t_i)$. Let {$M:=[2,\infty)\times\Sigma\,\mathbb{S}^2$, where $\Sigma\,\mathbb{S}^2$ is the suspension over $\mathbb{S}^2$, defined as $[0,\pi]\times\mathbb{S}^2/\sim$, with $\{0\}\times\mathbb{S}^2$ and $\{\pi\}\times\mathbb{S}^2$ identified to south pole $\mathbb{S}^3_-$ and north pole $\mathbb{S}^3_+$ respectively.} The metric $\mathrm{g}$ on $M$ is defined as 
		\[
		\mathrm{g}:=\mathrm{d}t^2+u^2(t)\left(\mathrm{d}x^2+f^2(t,x)\,\mathrm{d}\sigma^2\right).
		\]
	Here $\mathrm{d}\sigma^2$ is the standard metric on $\mathbb{S}^2$. {The function $f:[2,\infty)\times [0,\pi]\to\mathbb{R}$ is defined as 
		\begin{equation}\label{5.7}
		f(t,x)=\left\{\begin{aligned}
			\frac{1}{l(t)}\sin(l(t)x),
			\ \ \ \ \ \ \ \ \ \ \ \ \ \ 
			 \ \ \ \ \ \ \ \ \ \ \ \ \ \ \ \ \ \ \ \ \ \ \ \ \ \  &x\in[0,b(t)],\\
			\frac{1}{l(t)}\sin(l(t)b(t))\exp\left((1-\epsilon(t)\log\left(\frac{x}{b(t)}\right)\right),\ &x\in \left[b(t),\epsilon(t)\right],\\
			R\sin(x+\delta(t)\theta(x)),\ \ \ \ \ \ \ \ \ \ \ \ \ \ \ \ \ \ \ \ \ \ \ \ \ \ \ \ \ \ \ \ \ &x\in \left[\epsilon(t),\frac{\pi}{2}\right]
		\end{aligned}\right.
		\end{equation} such that $f(t,x)=f(t,\pi-x)$, where $R\in (0,1)$ is sufficient small, $\epsilon(t)=(\log^{1/4}t)^{-1}$, $\theta$ is a smooth monotone non-increasing function such that 
		\[
\theta=\left\{\begin{aligned}
			1,\ \ &\text{for}\ x\leqslant 1/2;\\
			0,\ \ &\text{for}\ x\geqslant 1,
		\end{aligned}\right.
		\] and $l,b,\delta$ are smooth positive functions on $(0,\infty)$ defined by \cite[(1.52)-(1.58)]{M00}. The function $u$ is defined as \cite[(1.13), (1.14), (1.19-1.23)]{M00}. Both $f$ and $u$} are piecewise $C^2$ functions with jumps in the second derivative at a discrete set of points, but can be smoothed to $C^2$
		while preserving positive Ricci curvature.

	{Denote by $\Gamma_t:\{t\}\times\Sigma \,\mathbb{S}^2$.} {From our construction we know for every} $t\in (2,\infty)$, the hypersurface $\Gamma_t$ is homeomorphic to $\mathbb{S}^3$, though not necessarily $C^2$-isometric to $\mathbb{S}^3$. Let $X=u^{-1}\partial x$ and $\Sigma_1,\Sigma_2$ be dual vector fields of $u(t)f(t,x)d\sigma$. For every hypersurface $\Gamma_t$, { we denote by $\text{\Rmnum{2}}_t$} the second fundamental form of $\Gamma_t$. {With respect to the orthonormal basis 
		 $\{X,\Sigma_1,\Sigma_2\}$, an application of Koszul formula guarantees that $\text{\Rmnum{2}}_t$ has the following expression.} 
	\[
		\text{\Rmnum{2}}_t:=
			\left(\begin{array}{lll}
				\frac{u_t}{u}&0&0\\
				0&\frac{u_t}{u}+\frac{f_t}{f}&0\\
				0&0&\frac{u_t}{u}+\frac{f_t}{f}\\	\end{array}\right).
	\]
	By \cite[(1.30), (1.69)]{M00}, for sufficiently large $t$, $\text{\Rmnum{2}}_t$ has a two sided bound $O(1/t)$. {Owing to \cite[(1.168), (1.169)]{M00},} the intrinsic sectional curvature $\mathrm{Sec}_t$ of $\Gamma_t$ satisfies
	\[
	\mathrm{Sec}_t(\Sigma_1\wedge X)=-\frac{f_{xx}}{u^2 f},\ \mathrm{Sec}_t(\Sigma_1\wedge \Sigma_2)=\frac{1-{f_x}^2}{u^2 f^2},
	\] 
	both of which are positive by \cite[(1.64)-(1.66), (1.114)]{M00}. Moreover, by \cite[(1.107)-(1.116)]{M00}, the Ricci curvature of $(M,\mathrm{g})$ is positive everywhere.

	For each $i$ and $j=0,\ldots,i-1$, by \cite[Section 1.6]{M00},  $B_{2^{-i}r_i}((t_i+jr_i/i)\times{\mathbb{S}^3_-})$ satisfies the Perelman's property \cite[Definition 0.13]{M00}. This allows us to remove each such ball and glue in a copy of $\mathbb{C}\mathrm{P}^2$ along its boundary within the annulus  \[
	{([t_i-2r_i,t_i+2r_i]\times \Sigma\, \mathbb{S}^2,\mathrm{g}).}
	\]
	After smoothing, the resulting Riemannian manifold, which is originally the above annulus, is denoted by $(N_i,\mathrm{h}_i)$.

	According to \cite[(1.28), (1.29)]{M00}, there exists a constant $J>1$ such that for all sufficiently large $t$ it holds $Jt\geqslant u(t)\geqslant J^{-1}t$. Define the rescaled manifold $(M_i,\mathrm{g_i}):=(N_i,{t_i}^{-2}\mathrm{h}_i)$. Then we have $(M_i,\mathrm{g_i})\in \mathcal{M}(4,0,0,J,J)$, and \[
	{ 0<\liminf_{i\to\infty}\mathrm{diam}(M_i)\leqslant \limsup_{i\rightarrow \infty}\mathrm{diam}(M_i)<\infty},\ \lim_{i\to \infty}\mathrm{inrad}(M_i)=0.
	\] However, {it is obvious} that $M_i$ is not diffeomorphic to $\mathbb{S}^3\times [0,1]$. {After passing to a subsequence, assume $u(t_i)/t_i\to u_\infty\in [J^{-1},J]$ as $i\to\infty$. Then a combination of (\ref{5.7}) and \cite[(1.52)]{M00} implies that $\lim_{t\to\infty}\delta(t)=0$ and $\{(M_i,\mathsf{d}_{\mathrm{g}_i})\}$ Gromov-Hausdorff converges to $(\Sigma \,\mathbb{S}^2,\mathsf{d}_{\mathrm{g}_\infty})$, where the metric $\mathrm{g}_\infty$ is defined as
	\[
	\mathrm{g}_\infty:=u_\infty \left(\mathrm{d}x^2+R^2\sin^2 x\,\mathrm{d}\sigma^2\right).
	\]This space is smooth except for singularities at the two poles.} 
	}

\end{exmp}

%
%





\bibliographystyle{plain}
\bibliography{ref}

\begin{thebibliography}{10}

\bibitem{AB98}
S.~Alexander and R.~Bishop.
\newblock Thin {R}iemannian manifolds with boundary.
\newblock {\em Math. Ann.}, 311(1):55--70, 1998.

\bibitem{ABS19}
L.~Ambrosio, E.~Bru\'{e}, and D.~Semola.
\newblock Rigidity of the 1-{B}akry-\'{E}mery in equality and sets of finite
  perimeter in {RCD} spaces.
\newblock {\em Geom. Funct. Anal.}, 19(4):949–1001, 2019.

\bibitem{ACD15}
L.~Ambrosio, M.~Colombo, and S.~{Di Marino}.
\newblock Sobolev spaces in metric measure spaces: reflexivity and lower
  semicontinuity of slope.
\newblock In {\em Variational methods for evolving objects}, volume~67 of {\em
  Adv. Stud. Pure Math.}, pages 1--58. Math. Soc. Japan, 2015.

\bibitem{AGS13}
L.~Ambrosio, N.~Gigli, and G.~Savar\'e.
\newblock Density of {L}ipschitz functions and equivalence of weak gradients in
  metric measure spaces.
\newblock {\em Rev. Mat. Iberoam.}, 29(3):969--996, 2013.

\bibitem{AGS14a}
L.~Ambrosio, N.~Gigli, and G.~Savar{\'e}.
\newblock {Metric measure spaces with Riemannian Ricci curvature bounded from
  below}.
\newblock {\em Duke Math. J.}, 163(7):1405--1490, 2014.

\bibitem{AGS15}
L.~Ambrosio, N.~Gigli, and G.~Savar{\'e}.
\newblock Bakry--\'{E}mery curvature-dimension condition and {R}iemannian
  \text{R}icci curvature bounds.
\newblock {\em Ann. Probab.}, 43(1):339--404, 2015.

\bibitem{AHT18}
L.~Ambrosio, S.~Honda, and D.~Tewodrose.
\newblock {Short-time behavior of the heat kernel and Weyl's law on
  RC$\mathrm{D}^\ast (K,N)$ spaces}.
\newblock {\em Ann. Glob. Anal. Geom.}, 53:97--119, 2018.

\bibitem{AMS16}
L.~Ambrosio, A.~Mondino, and G.~Savar\'{e}.
\newblock {On the Bakry–\'{E}mery Condition, the Gradient Estimates and the
  Local-to-Global Property of $\mathrm{RCD}^\ast$(K,N) Metric Measure Spaces}.
\newblock {\em J. Geom. Anal.}, 26(1):24 -- 56, 2016.

\bibitem{AMS19}
L.~Ambrosio, A.~Mondino, and G.~Savar{\'e}.
\newblock {\em Nonlinear diffusion equations and curvature conditions in metric
  measure spaces}, volume 262.
\newblock American mathematical society, 2019.

\bibitem{AT04}
L.~Ambrosio and P.~Tilli.
\newblock {\em Topics on analysis in metric spaces}, volume~25.
\newblock Oxford University Press on Demand, 2004.

\bibitem{AKKLT04}
M.~Anderson, A.~Katsuda, Y.~Kurylev, M.~Lassas, and M.~Taylor.
\newblock Boundary regularity for the {R}icci equation, geometric convergence,
  and {G}el\textquotesingle fand's inverse boundary problem.
\newblock {\em Invent. Math.}, 158(2):261--321, 2004.

\bibitem{B94}
D.~Bakry.
\newblock L'hypercontractivit{\'e} et son utilisation en th{\'e}orie des
  semigroupes.
\newblock In {\em Lectures on Probability Theory (Saint-Flour, 1992)}, volume
  1581 of {\em Lecture Notes in Mathematics}, pages 1--114. Springer, Berlin,
  1994.

\bibitem{BE85}
D.~Bakry and M.~\'Emery.
\newblock Diffusions hypercontractives.
\newblock In {\em S{\'e}minaire de probabilit{\'e}s, XIX, 1983/84}, volume 1123
  of {\em Lecture Notes in Mathematics}, pages 177--206. Springer.

\bibitem{BWW19}
B.~Botvinnik, M.~Walsh, and D.~Wraith.
\newblock {Homotopy groups of the observer moduli space of Ricci positive
  metrics}.
\newblock {\em Geom. Topol.}, 23(6):3003--3040, 2019.

\bibitem{BS20}
E.~Bru\'{e} and D.~Semola.
\newblock Constancy of the dimension for {RCD} (${K, N}$) spaces via regularity
  of {L}agrangian flows.
\newblock {\em Commun. Pure Appl. Math.}, 73(6):1141--1204, 2020.

\bibitem{Ch99}
J.~Cheeger.
\newblock Differentiability of {L}ipschitz functions on metric measure spaces.
\newblock {\em Geom. Funct. Anal.}, 9(3):428--517, 1999.

\bibitem{CC96}
J.~Cheeger and T.~Colding.
\newblock {Lower bounds on Ricci curvature and the almost rigidity of warped
  products}.
\newblock {\em Ann. Math.}, 144(2):189--237, 1996.

\bibitem{ChCo1}
J.~Cheeger and T.~Colding.
\newblock {On the structure of spaces with Ricci curvature bounded below. I}.
\newblock {\em J. Differential Geom.}, 46(3):406--480, 1997.

\bibitem{ChCo2}
J.~Cheeger and T.~Colding.
\newblock {On the structure of spaces with Ricci curvature bounded below. II}.
\newblock {\em J. Differential Geom.}, 54(1):13--35, 2000.

\bibitem{ChCo3}
J.~Cheeger and T.~Colding.
\newblock {On the structure of spaces with Ricci curvature bounded below. III}.
\newblock {\em J. Differential Geom.}, 54(1):37--74, 2000.

\bibitem{C97}
T.~Colding.
\newblock Ricci curvature and volume convergence.
\newblock {\em Ann. Math.}, 145(3):477--501, 1997.

\bibitem{CN12}
T.~Colding and A.~Naber.
\newblock {Sharp H{\"o}lder continuity of tangent cones for spaces with a lower
  Ricci curvature bound and applications}.
\newblock {\em Ann. Math.}, 176:1173--1229, 2012.

\bibitem{DG18}
G.~{De Philippis} and N.~Gigli.
\newblock {Non-collapsed spaces with Ricci curvature bounded from below}.
\newblock {\em J. \'{E}c. Polytech. Math.}, 5:613--650, 2018.

\bibitem{D20}
Q.~Deng.
\newblock {H\"{o}lder continuity of tangent cones in RCD $(K, N)$ spaces and
  applications to non-branching}.
\newblock {\em Geom. Topol.}, 29(2):1037--1114, 2025.

\bibitem{EKS15}
M.~Erbar, K.~Kuwada, and K.~Sturm.
\newblock {On the equivalence of the entropic curvature-dimension condition and
  Bochner's inequality on metric measure spaces}.
\newblock {\em Invent. Math.}, 201(3):993--1071, 2015.

\bibitem{F87}
K.~Fukaya.
\newblock Collapsing of {R}iemannian manifolds and eigenvalues of {L}aplace
  operator.
\newblock {\em Invent. Math.}, 87(3):517--547, 1987.

\bibitem{GMMS18}
F.~{Galaz-Garcia}, M.~Kell, A.~Mondino, and G.~Sosa.
\newblock {On quotients of spaces with Ricci curvature bounded below}.
\newblock {\em J. Funct. Anal.}, 275(6):1368--1446, 2018.

\bibitem{G13}
N.~Gigli.
\newblock The splitting theorem in non-smooth context.
\newblock {\em arXiv preprint arXiv:1302.5555}, 2013.

\bibitem{G15}
N.~Gigli.
\newblock {\em {On the differential structure of metric measure spaces and
  applications}}, volume 236.
\newblock Mem. Amer. Math. Soc., 2015.

\bibitem{GMS15}
N.~Gigli, A.~Mondino, and G.~Savar{\'e}.
\newblock Convergence of pointed non-compact metric measure spaces and
  stability of {R}icci curvature bounds and heat flows.
\newblock {\em Proc. London Math. Soc.}, 111(5):1071--1129, 2015.

\bibitem{G78}
M.~Gromov.
\newblock {Synthetic geometry in Riemannian manifolds}.
\newblock In {\em Proc. ICM}, volume~1, pages 415--419, 1978.

\bibitem{GS19}
L.~Guijarro and J.~{Santos-Rodríguez}.
\newblock {On the isometry group of $\mathrm{RCD}^\ast(K,N)$-spaces}.
\newblock {\em Manuscr. Math.}, 158(3):441–461, 2019.

\bibitem{H20}
B.~Han.
\newblock {Measure rigidity of synthetic lower Ricci curvature bound on
  Riemannian manifolds}.
\newblock {\em Adv. Math.}, 373:107327, 2020.

\bibitem{KM21}
V.~Kapovitch and A.~Mondino.
\newblock On the topology and the boundary of {$N$}-dimensional
  {$\operatorname{RCD}(K,N)$} spaces.
\newblock {\em Geom. Topol.}, 25:445--495, 2021.

\bibitem{K19}
Y.~Kitabeppu.
\newblock A sufficient condition to a regular set being of positive measure on
  spaces.
\newblock {\em Potent. Anal.}, 51(2):179--196, 2019.

\bibitem{K90}
S.~Kodani.
\newblock Convergence theorem for {R}iemannian manifolds with boundary.
\newblock {\em Compos. Math.}, 75(2):171--192, 1990.

\bibitem{LV09}
J.~Lott and C.~Villani.
\newblock Ricci curvature for metric-measure spaces via optimal transport.
\newblock {\em Ann. Math.}, 169(3):903--991, 2009.

\bibitem{M00}
X.~Menguy.
\newblock {Noncollapsing examples with positive Ricci curvature and infinite
  topological type}.
\newblock {\em Geom. Funct. Anal.}, 10:600--627, 2000.

\bibitem{NPS25}
J.~{N{\'u}{\~n}ez-Zimbr{\'o}n}, E.~Pasqualetto, and E.~Soultanis.
\newblock {Spaces with Riemannian curvature bounds are universally
  infinitesimally Hilbertian}.
\newblock {\em arXiv preprint arXiv:2508.05483}, 2025.

\bibitem{P97}
G.~Perelman.
\newblock {Construction of manifolds of positive Ricci curvature with big
  volume and large Betti numbers}.
\newblock In {\em Comparison Geometry}, Math. Sci. Res. Inst. Publ., page
  157–164. Cambridge University Press, 1997.

\bibitem{R12}
T.~Rajala.
\newblock { Local Poincar\'{e} inequalities from stable curvature conditions on
  metric spaces}.
\newblock {\em Calc. Var. Partial Differential Equations}, 44(3):477--494,
  2012.

\bibitem{S20}
Y.~Sakurai.
\newblock {Comparison geometry of manifolds with boundary under a lower
  weighted Ricci curvature bound}.
\newblock {\em Can. J. Math.}, 72(1):243–280, 2020.

\bibitem{St06a}
K.~Sturm.
\newblock On the geometry of metric measure spaces.
\newblock {\em Acta Math.}, 196(1):65--131, 2006.

\bibitem{St06b}
K.~Sturm.
\newblock {On the geometry of metric measure spaces. II}.
\newblock {\em Acta Math.}, 196(1):133--177, 2006.

\bibitem{W08}
J.~Wong.
\newblock An extension procedure for manifolds with boundary.
\newblock {\em Pacific J. Math.}, 235(1):173--199, 2008.

\bibitem{W10}
J.~Wong.
\newblock Collapsing manifolds with boundary.
\newblock {\em Geom. Dedicata}, 149(1):291--334, 2010.

\bibitem{YZ19}
T.~Yamaguchi and Z.~Zhang.
\newblock Inradius collapsed manifolds.
\newblock {\em Geom. Topol.}, 23(6):2793--2860, 2019.

\end{thebibliography}

\bigskip


\end{document}